\definecolor{mygreen}{RGB}{28,172,0}
\definecolor{mylilas}{RGB}{170,55,241}
\theoremstyle{plain}
\newtheorem{thm}{Theorem}[section]
\newtheorem{lem}[thm]{Lemma}
\newtheorem{prop}[thm]{Proposition}
\newtheorem{cor}[thm]{Corollary}
\theoremstyle{definition}
\newtheorem{defn}[thm]{Definition}
\newtheorem{rmk}[thm]{Remark}
\newtheorem{exam}[thm]{Example}
\numberwithin{thm}{section}
\def\multiset#1#2{\ensuremath{\left(\kern-.3em\left(\genfrac{}{}{0pt}{}{#1}{#2}\right)\kern-.3em\right)}}
\def\supermultiset#1#2{\ensuremath{\:\left(\kern-.5em\left(\genfrac{}{}{0pt}{}{#1}{#2}\right)\kern-.5em\right)\:}}
\def\triset#1#2{\ensuremath{\left \langle \genfrac{}{}{0pt}{}{#1}{#2}\right \rangle}}
\newcommand*{\lcdot}{\raisebox{-0.25ex}{\scalebox{4}{$\cdot$}}}
\DeclareMathOperator\Hom{Hom}
\DeclareMathOperator\Ext{Ext}
\DeclareMathOperator\Tor{Tor}
\DeclareMathOperator\Map{Map}
\begin{document}

\begin{center}
	{\huge \bf The cohomology of free loop spaces of homogeneous spaces} \\
	\vspace{1.5cm}
	{\bf Matthew Burfitt}
	\vspace{2cm}
\end{center}

\begin{abstract}
	The free loops space $\Lambda X$ of a space $X$ has become an important object of study particularly in the case when $X$ is a manifold.
	The study of free loop spaces is motivated in particular by two main examples.
	The first is their relation to geometrically distinct periodic geodesics on a manifold, originally studied by Gromoll and Meyer in $1969$.
	More recently the study of string topology and in particular the Chas-Sullivan loop product has been an active area of research. 
	
	A complete flag manifold is the quotient of a Lie group by its maximal torus and is one of the nicer examples of a homogeneous space.
	Both the cohomology and Chas-Sullivan product structure are understood for spaces $S^n$, $\mathbb{C}P^n$ and most simple Lie groups.
	Hence studying the topology of the free loops space on homogeneous space is a natural next step.
	
	In the thesis we compute the differentials in the integral Leray-Serre spectral sequence associated
	to the free loops space fibrations in the cases of $SU(n+1)/T^n$ and $Sp(n)/T^n$.
	Study in detail the structure of the third page of the spectral sequence in the case of $SU(n)$ and give the module structure of
	$H^*(\Lambda(SU(3)/T^2);\mathbb{Z})$ and $H^*(\Lambda(Sp(2)/T^2);\mathbb{Z})$.
\end{abstract}

\newpage
\tableofcontents

\newpage
\section{Introduction}
		
			The free loop space of a topological space $X$ is defined to be the mapping space $Map(S^1,X)$, the space of all unpointed maps from the circle to $X$.
			This differs from the based loops space $\Omega X=Map_*(S^1,X)$, the space of all pointed maps from the circle to $X$.
			The based loop space functor is an important classical object in algebraic topology and has been well studied.
			The topology of free loop spaces is much less well behaved and is still only well understood in a handful of examples.
			In this thesis we will explore the cohomology of the free loop space of homogeneous spaces.
			In doing so we will uncover some surprising combinatorial connections and we will compute the cohomology algebras for some flag manifolds of low rank Lie groups.
		
			There are two main motivations behind the study of the topology of the free loop space, which we now discuss.
			It is a classical question to ask about the closed geodesics on a closed manifold $M$.
			In particular how many distinct closed geodesics are there on $M$. 
			In general the answer to this question is not fully understood,
			however some problems we can answer by understanding the topology of $M$.
			If $M$ is not simply connected then information on its geodesics can be obtained by studying the conjugacy classes of $\pi_1(M)$.
			If $M$ is simply connected one can consider the free loop space of $M$.
			In particular Gromoll and Meyer prove in \cite{PeriodicGeodesics}, that
			for simply connected closed manifold $M$, if the Betti numbers are unbounded then $M$ has infinitely many distinct closed geodesics. 
			For more information on this subject see for example \cite{ClosedGeodesicServay}.
			
			String topology in its most general sense is the study of algebraic structure on the homology of the space of free loops.
			The area of study began with the unpublished paper \cite{StringTopology} of Chas and Sullivan, released in $1999$.
			In the paper new algebraic structures were presented concerning the homology of free loop spaces of a manifold.
			In particular for a manifold $M$ of dimension $d$, there is an intersection product
			\begin{center}
			$\circ \; \colon H_p(\Lambda M) \otimes H_q(\Lambda M) \to H_{p+q-d}(\Lambda M)$
			\end{center}
			for each $p,q\geq 0$, which has become know as the Chas-Sullivan loop product.
			More recently much work has been done on the subject and connections with many other interesting areas in algebraic topology including
			topological quantum field theory, operads and topological cyclic homology have been established.
			For more information see for example \cite{MR2251006}.

			In $2002$ Cohen, Jones and Yan \cite{loop_homology_spectral_sequence} gave a homotopy theoretic interpretation
			of the Chas-Sullivan product based on earlier work of Cohen and Jones \cite{MR1942249}.
			They then used this description of the Chas-Sullivan product to produce a second quadrant homology spectral sequence, converging
			to the Chas-Sullivan product.
			As a consequence of these results,
			the cohomology of the free loop space would give us the module structure of the homology,
			hence could help us understanding the Chas-Sullivan structure.
		
			A manifold is called homogeneous if it comes equipped with a transitive Lie group action.
			This captures the intuitive idea that a homogeneous space looks the same wherever on it you are.
			Under loose conditions all homogeneous space are the quotient of a Lie groups by a closed subgroup.
			One of the nicest examples of a homogeneous space are the complete flag manifolds, the quotient of a Lie group by its maximal torus.
			When studying Lie groups as a consequence of the classification, it is most important to study the simple Lie groups.
			Hence when studying homogeneous spaces it is most important to study the homogeneous space obtained as the quotient of a simple Lie group.
			
			The Chas-Sullivan products for some low dimensional spheres were computed by Menichi in \cite{StringTop2006}.
			Then in $2002$ the loop product of spheres and projective spaces was given in full by Cohen, Jones and Yan, in \cite{loop_homology_spectral_sequence}.
			More recently Hepworth \cite{HepwothString} gave the rational and $\mathbb{Z}_2$ descriptions of the Chas-Sullivan product on $SO(n)$.
			Following this Kupers \cite{Kupers1010} gave the Chas-Sullivan product for $SU(n)$ and $Sp(n)$ integrally,
			$G_2$ rationally and over $\mathbb{Z}_2$ and $F_4$ rationally.
			Therefore it would be a natural next step to investigate the free loop space of homogeneous spaces.
			
			In this thesis our primary goal is the investigate the cohomology algebra of the free loop space complete flag manifolds.
			Our main tool to achieve this is the cohomology Leray-Serre spectral sequence associated with the free loop fibration of the flag manifolds.
			We give constructions in the cases $SU(n+1)/T^n$ and $Sp(n)/T^n$, though our methods should be applicable more generally.
			In both cases our first main result is the explicit calculation of all non-trivial differentials in the spectral sequences.
			For the Leray-Serre spectral sequence associated with the free loop fibration of $SU(n+1)/T^n$, we investigate in detail the structure of the third page
			of the spectral sequence.
			These more general results allow us to deduce the cohomology of $SU(3)/T^2$ and $Sp(2)/T^2$ as $\mathbb{Z}$-modules,
			which we give in Theorems \ref{thm:L(SU(3)/T^2)} and \ref{thm:FreeLoopSp(2)/T}.
			
			{
			\renewcommand{\thethm}{\ref{thm:L(SU(3)/T^2)}}
			\begin{thm}
				The free loop cohomology of $SU(3)/T^2$ is given by
				\begin{equation*}
					H^*(\Lambda(SU(3)/T^2);\mathbb{Z})=A/I,
				\end{equation*}
				where
				\begin{equation*}
					A=\Lambda_{\mathbb{Z}}(\gamma_i,\; (x_4)_m,\; y_i,\; (x_2)_m(y_1(\gamma_1+\gamma_2)-y_2\gamma_2),\; (x_2)_my_2(\gamma_1^2-\gamma_1\gamma_2),\;
					(x_2)_m\gamma_1^2\gamma_2)
				\end{equation*}
				and
				\begin{align*}
					I=[(x_2)_1^m-m!(x_2)_m,\; (x_4)_1^m-m!(x_4)_m,\; \gamma_1^2+\gamma_2^2+\gamma_1\gamma_2,\;
					\gamma_1^3,\; y_1(2\gamma_1+\gamma_2)-y_2(\gamma_1+2\gamma_2),\; \\
					3(x_2)_m(y_1\gamma_1^2+y_2\gamma_2^2),\; 3(x_2)_my_1y_2(\gamma_1-\gamma_2),\; 3(x_2)_my_1y_2\gamma_1,\; (x_2)_my_1y_2\gamma_1^2\gamma_2]
				\end{align*}
				where $1\leq i,j \leq n$, $m\geq 1$,
				$|\gamma_i|=2$, $|y_i|=1$, $|(x_2)_k|=2k$ and $|(x_4)_k|=4k$.
			\end{thm}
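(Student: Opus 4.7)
The plan is to specialise the general machinery developed earlier in the thesis for the integral Leray--Serre spectral sequence of $\Lambda(SU(n+1)/T^n)$ to the case $n=2$.

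First, I would identify the $E_2$-page of the Leray--Serre spectral sequence of the free loop fibration
$$\Omega F\longrightarrow \Lambda F\longrightarrow F,\qquad F=SU(3)/T^2,$$
as $E_2^{p,q}=H^p(F;\mathbb{Z})\otimes H^q(\Omega F;\mathbb{Z})$. By Borel's theorem the base cohomology is $\mathbb{Z}[\gamma_1,\gamma_2]$ modulo the positive symmetric polynomials in $\gamma_1,\gamma_2,-(\gamma_1+\gamma_2)$, contributing the generators $\gamma_i$ together with the relations $\gamma_1^2+\gamma_1\gamma_2+\gamma_2^2$ and $\gamma_1^3$ (the latter being a consequence of $\gamma_1\gamma_2(\gamma_1+\gamma_2)=0$). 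The fibre cohomology contributes the degree-one classes $y_i$ together with the divided-power families $(x_2)_m$ and $(x_4)_m$ in even degrees, subject to the expected relations $(x_2)_1^m=m!(x_2)_m$ and $(x_4)_1^m=m!(x_4)_m$.

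Second, I would invoke the general differentials already computed for $\Lambda(SU(n+1)/T^n)$ and evaluate them at $n=2$. These differentials transgress the $y_i$ to a linear combination of the $\gamma_j$ yielding the relation $y_1(2\gamma_1+\gamma_2)-y_2(\gamma_1+2\gamma_2)$, and propagate through the divided-power generators to produce the higher relations $3(x_2)_m(y_1\gamma_1^2+y_2\gamma_2^2)$, $3(x_2)_my_1y_2(\gamma_1-\gamma_2)$, $3(x_2)_my_1y_2\gamma_1$ and $(x_2)_my_1y_2\gamma_1^2\gamma_2$ on kernel and image classes surviving to $E_3$. I would then appeal to the detailed analysis of the third page for the $SU(n+1)/T^n$ spectral sequence to see that $E_3=E_\infty$ in this case, since bidegree considerations at $n=2$ leave no room for further nontrivial differentials.

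Finally, I would reassemble the associated graded into the stated module by resolving the additive extensions between filtration quotients; since only the $\mathbb{Z}$-module structure is claimed, this is a matter of exhibiting representatives and checking that no hidden $\mathbb{Z}$-extensions occur. The main obstacle will be the bookkeeping of the torsion coming out of the differentials: the factor of $3$ appearing in several generators of $I$ reflects delicate $3$-torsion that is special to $SU(3)$, and verifying that the general formulas for $SU(n+1)/T^n$ specialise at $n=2$ to produce exactly these relations and no others is the key technical point.
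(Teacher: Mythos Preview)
Your outline has the right overall shape but contains two genuine errors that would derail the argument.

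First, you say the differentials ``transgress the $y_i$ to a linear combination of the $\gamma_j$''. This is false in the free loop spectral sequence: the whole point of Theorem~\ref{thm:allDiff} is that $\bar d^2(y'_i)=0$ (in contrast to the path space spectral sequence, where $d^2(y_i)=v_i$). The relation $y_1(2\gamma_1+\gamma_2)-y_2(\gamma_1+2\gamma_2)$ is not a transgression of the $y_i$; it is the \emph{image} of $d^2(x_2)$, so it becomes a relation because it is killed, not because it hits something. Getting this backwards will make the kernel/image bookkeeping on the $E_3$-page come out wrong.

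Second, your reason for $E_3=E_\infty$ is incorrect. Bidegree considerations do \emph{not} rule out $d^4$: the differential $d^4\colon E_4^{0,4}\to E_4^{4,1}$ has perfectly good source and target, and Theorem~\ref{thm:allDiff} gives a nonzero formula $d^4([x_4])=[y_1(\gamma_1^2+2\gamma_1\gamma_2)+y_2(\gamma_2^2+2\gamma_1\gamma_2)]$ on the $E_2$-level. The actual argument in the paper is a computation: one checks that this class equals $d^2([(\gamma_1+\gamma_2)x_2])$ modulo the symmetric ideal, so it is already zero on $E_3$. Without this step you have not shown the spectral sequence collapses.

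Finally, your plan for the extension problems (``exhibiting representatives and checking that no hidden $\mathbb{Z}$-extensions occur'') is too vague. The paper resolves them by running the same spectral sequence with $\mathbb{Z}_3$ coefficients, comparing ranks via Corollary~\ref{cor:UniversalModp}, and concluding that all the $3$-torsion visible on $E_\infty$ survives into $H^*(\Lambda(SU(3)/T^2);\mathbb{Z})$. You should say explicitly how you intend to detect or exclude hidden extensions.
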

			\addtocounter{thm}{-1}
			}
			
			{
			\renewcommand{\thethm}{\ref{thm:FreeLoopSp(2)/T}}
			\begin{thm}
				The integral cohomology of the free loop space of the complete flag manifold of $Sp(2)$ is given by
				\begin{align*}
					H^*(\Lambda(Sp(2)/T^2);\mathbb{Z})= A/I,
				\end{align*}
				where
				\begin{align*}
					A=\Lambda_{\mathbb{Z}}&((x_6)_b\gamma_i,\;y_1y_2(x_2)_a(x_6)_b,\;(x_6)_by_i,\;(x_2)_m(x_6)_b(y_1\gamma_2+y_2\gamma_1), \\
						&\;(x_2)_m(x_6)_b(y_1\gamma_1-y_2\gamma_2),\;(x_2)_a(x_6)_b\gamma_1^3\gamma_2)
				\end{align*}
				and
				\begin{equation*}
					I=[(x_2)_1^m-m!(x_2)_m,\;(x_6)_1^m-m!(x_6)_m,\;\gamma_1^2+\gamma_2^2,\;\gamma_1^2\gamma_2^2,\;2(y_1\gamma_1+y_2\gamma_2),\;jy_1(x_2)_a\gamma_1^3)]
				\end{equation*}
				for $i,j=1,2$, $m\geq 1$, $a,b\geq 1$ either $j=2$ or $j=4$ and where $|(x_2)_m|=2m$, $|(x_6)_m|=6m$, $|y_i|=1$ and $|\gamma_i|=2$.
			\end{thm}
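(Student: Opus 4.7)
The plan is to specialize the general computation of differentials in the Leray--Serre spectral sequence for $Sp(n)/T^n$ developed earlier in the thesis to the case $n=2$, and then reconstruct the ring structure on the cohomology of $\Lambda(Sp(2)/T^2)$ from the resulting $E_\infty$ page.

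I would begin by identifying the $E_2$ page of the cohomology Leray--Serre spectral sequence associated with the evaluation fibration
$$\Omega(Sp(2)/T^2) \longrightarrow \Lambda(Sp(2)/T^2) \longrightarrow Sp(2)/T^2.$$
Since $Sp(2)/T^2$ is simply connected, the local coefficient system is trivial and $E_2^{p,q} = H^p(Sp(2)/T^2;\mathbb{Z}) \otimes H^q(\Omega(Sp(2)/T^2);\mathbb{Z})$. The base cohomology has the Borel presentation $\mathbb{Z}[\gamma_1,\gamma_2]/(\gamma_1^2+\gamma_2^2,\gamma_1^2\gamma_2^2)$, which accounts for the generators $\gamma_i$ and for the base relations listed in $I$. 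The fiber cohomology $H^*(\Omega(Sp(2)/T^2);\mathbb{Z})$ is obtained from the looped fibration $\Omega(Sp(2)/T^2) \to T^2 \to Sp(2)$ and is an exterior algebra on $y_1,y_2$ in degree $1$ tensored with the divided--power algebra on classes in degrees $2$ and $6$ coming from $\Omega Sp(2)$. These contributions yield the generators $y_i$, $(x_2)_m$ and $(x_6)_m$, and the divided--power structure is responsible for the relations $(x_2)_1^m - m!(x_2)_m$ and $(x_6)_1^m - m!(x_6)_m$ in $I$.

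Next I would invoke the main theorem on the explicit differentials for the $Sp(n)/T^n$ spectral sequence specialized to $n=2$, determining $E_\infty$. Organising the surviving classes into products identifies them term by term with the generators listed in $A$, and many of the relations in $I$, including those inherited from the base and from the divided--power structure, appear already at $E_2$. The remaining relations, namely
$$2(y_1\gamma_1 + y_2\gamma_2) = 0 \quad\text{and}\quad jy_1(x_2)_a\gamma_1^3 = 0 \text{ with } j\in\{2,4\},$$
encode multiplicative extensions and torsion that are invisible to the filtration alone. I would resolve them using multiplicativity of the spectral sequence, the Leibniz rule for $d_r$ on divided powers, and naturality along the map of free loop fibrations induced by the inclusion $T^2 \hookrightarrow Sp(2)$, in order to pin down the order of torsion of each surviving class.

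The main obstacle is precisely the case distinction $j\in\{2,4\}$ in the last relation, which reflects a divisibility phenomenon in the divided--power coefficients of $(x_2)_a$ and therefore depends on arithmetic properties of $a$. A careful case analysis, tracking the precise integer coefficient introduced by each successive differential acting on products involving $(x_2)_a$, will be required. A Poincar\'e series check, comparing the dimensions of $A/I$ degree by degree with the total rank of $E_\infty$, would then verify that no generators or relations have been overlooked and complete the proof.
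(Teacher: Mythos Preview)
Your overall plan---compute the $E_2$-page, apply the general $Sp(n)/T^n$ differential formula at $n=2$, and read off generators and relations from $E_\infty$---is the paper's approach too. But two points in your proposal are genuinely wrong and would derail the argument.

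First, you describe the relations $2(y_1\gamma_1+y_2\gamma_2)=0$ and $jy_1(x_2)_a\gamma_1^3=0$ as ``multiplicative extensions and torsion that are invisible to the filtration alone'' and propose to extract them via naturality along $T^2\hookrightarrow Sp(2)$. This is not what happens. These relations are visible already on $E_\infty$: the differential formula gives $d^2(x_2)=2(y_1\gamma_1+y_2\gamma_2)$ directly, so $2(y_1\gamma_1+y_2\gamma_2)$ is killed on the $E_3$-page with no extension argument needed. The class $4y_1\gamma_1^3$ arises similarly, from the observation that $d^6(x_6)+\gamma_1^2\, d^2(x_2)=4y_1\gamma_1^3$ modulo the base relations, so $d^6$ on $E_6$ kills exactly $4y_1\gamma_1^3$. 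Both relations are ordinary differential images, not hidden extensions.

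Second, you misread the $j\in\{2,4\}$ ambiguity. It does \emph{not} depend on arithmetic of $a$ or on divided-power coefficients, and no case analysis on $a$ will resolve it. What happens is that on $E_\infty$ the class $(x_2)_a y_1\gamma_1^3$ has order $4$; the question is whether, in the actual cohomology $H^*(\Lambda(Sp(2)/T^2);\mathbb{Z})$, the corresponding element has order $2$ or order $4$. This is a single additive extension problem (the same one for every $a$), and the paper leaves it unresolved: comparison with the mod-$2$ spectral sequence settles every other extension but cannot distinguish $\mathbb{Z}_4$ from $\mathbb{Z}_2\oplus\mathbb{Z}_2$ here. Your proposed Poincar\'e series check will not help either, since both possibilities have the same ranks. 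The honest conclusion is that the theorem holds with $j\in\{2,4\}$ undetermined, exactly as stated.
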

			\addtocounter{thm}{-1}
			}
			
			In Chapters \ref{sec:topology}, we discuss the relevant algebraic topology that we will use in latter chapters.
			In particular basic techniques for finding homotopy splitting of spaces,
			applying the universal coefficient theorems to deduce the relationship of the module structure between cohomology with integral coefficients
			and cohomology with coefficients over field of zero or prime characteristic
			and set out the essential properties of the cohomology Leray-Serre spectral sequence.
			This is our main tool for investigating the cohomology of free loop space of homogeneous spaces.
			
			In Chapter \ref{sec:SymPoly}, we review the basic theory of symmetric polynomials.
			This is particularly relevant for our work in Chapter \ref{sec:CombiSQ}. 
			In particular we discuss elementary symmetric, complete homogeneous symmetric polynomials and the fundamental theorem of symmetric polynomials.
			
			Chapter \ref{sec:TopLie} is the final background chapter, in which we give an overview of the homology and cohomology of
			of Lie groups, their complete manifolds and based loop spaces.
			We will present the results we intend to use in later chapters but also try to give the picture for all simple Lie groups.
			
			Chapter \ref{sec:CombiSQ} contains our main combinatorial work and is primarily concerned with understanding the structure of the quotient of
			the polynomial algebra by the ideal generated by symmetric polynomials.
			The chapter contains some known and some new results.
			We present a method for finding a simple additive basis of the polynomial symmetric quotient.
			Investigate the degree-wise size of this bases,
			before making a few remarks about the multiplication of basis element. 
			Which we put to use at the end of Chapter \ref{sec:FreeLoopSU(n+1)/Tn}. 
			
			In Chapter \ref{sec:FreeLoopSU(n+1)/Tn}, we investigate the cohomology Leray-Serre spectral sequence associated with the free loop space fibration
			of the complete flag manifold of $SU(n)$.
			First we derive a formula for the differentials in the spectral sequence, then investigating in detail the structure of the the third page.
			Lastly we calculate the module structure of $H^*(\Lambda(SU(3)/T^2);\mathbb{Z})$ by describing the algebra structure of
		  of the $E_\infty$-page of Leray-Serre spectral sequence of the free loop fibration of $\Lambda(SU(3)/T^2)$ in terms of generators and relations.
			
			We start Chapter \ref{sec:FreeLoopSp(n)/Tn} by describing the differentials of
			the Leray-Serre spectral sequence associated with the free loop fibration of $Sp(n)/T^n$.
			As a final result we calculate the module structure of $H^*(\Lambda(Sp(2)/T^2);\mathbb{Z})$ by describing the algebra structure of
		  of the $E_\infty$-page of Leray-Serre spectral sequence of the free loop fibration of $\Lambda(Sp(2)/T^2)$ in terms of generators and relations.

\newpage
\section{Methods in algebraic topology}\label{sec:topology}

	In this chapter we introduce the notions from algebraic topology necessary for obtaining our main results.

	\subsection{Homotopy theory}\label{sec:homotopy}
		
		In this section we give the notions from homotopy theory that are used in the work of this thesis.
		
		\begin{defn}\label{def:fibration}
			A map of spaces $p \colon E \to B$ is called a fibrations if for any other space $W$, homotopy $G \colon I\times W \to B$ and map $h \colon W \to E$
			there exists a homotopy $H \colon I\times W \to E$ such that $H_0=h$. 
			In this case we call the pre-image $F=p^{-1}(*)$, the fiber and usually write the fibration as
			\begin{equation*}
				F\to E \xrightarrow{p} B.
			\end{equation*}
			The map $f \colon X \to Y$ is homotopy fibration if there is a homotopy equivalents to a fibration.
			That is there is a fibration  $p \colon E \to B$ and homotopy equivalences $a$ and $b$ such that the diagram
			\begin{equation*}
						\xymatrix{
						{X}\ar[r]^(0.45){f}\ar[d]^(0.45){b} & {Y}\ar[d]^(0.45){a} \\
						{E}\ar[r]^(0.45){p} & {B} 
						}
			\end{equation*}
			commutes.
			The homotopy fiber of $f$ is defined to be $F=p^{-1}{*}$, where $*$ is the base-point.
		\end{defn}
		
		For the remainder of this section assume all spaces are path connected and have the homotopy type of a CW-complex.
		
		\begin{defn}
			Given a pointed space $X$, define the path space $PX$ to be $Map_*(I,X)$ the space of all paths  in $X$ ending at the base point.
		\end{defn}
		
		The space $PX$ is contractible and is the total space in the path space fibration
		\begin{equation}\label{eq:PathFibe}
			\Omega X \to PX \xrightarrow{p} X,
		\end{equation}
		where $p$ maps each path to its starting point. 
		
		The next two Lemmas give some of the properties of the homotopy fiber, for proofs see \cite[\S 3]{Arkowitz} Propositions $3.3.12$ and $3.5.10$.
		
		\begin{lem}
			Given a homotopy fibration $f\colon X\to Y$, the pullback $I_f$ of $f$ in the pullback diagram below
			has the homotopy type of the homotopy fiber $F$ of $f$.
			That is there is a homotopy equivalence $\alpha$ such that diagram 
			\begin{equation}\label{eq:HomotopyFiber}
						\xymatrix{
						{} 																			& {I_f} \ar[r]^{} \ar[d]^(.5){q}  & {PY} \ar[d]^(.45){p'} \\
							{F} \ar[r] \ar[ru]^(.5){\alpha} & {X} 	 \ar[r]^(.5){f}  					& {Y} }
			\end{equation}
			commutes and where $p\colon PY \to Y$ is the path space fibration. 
		\end{lem}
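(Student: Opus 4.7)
The plan is to exploit the contractibility of $PY$ together with standard pullback properties of fibrations to identify $I_f$ with the fiber $F$. The starting observation is that the path space fibration $p' \colon PY \to Y$ is indeed a fibration, and $PY$ is contractible (via the straight-line deformation onto the constant path at the basepoint). These two facts together will do essentially all the work.

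First I would reduce to the case where $f$ itself is a fibration. By hypothesis there exist a fibration $p \colon E \to B$ and homotopy equivalences $a, b$ fitting into the square of Definition \ref{def:fibration}. Pulling back along $p'$ and along the analogous path fibration $PB \to B$ gives two pullback spaces, and since pullback along a fibration preserves homotopy equivalences (this is the standard fact that pullbacks along fibrations compute homotopy pullbacks), the two resulting $I_f$'s are homotopy equivalent. So without loss of generality $f$ is already a fibration.

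Next, in the pullback square
\begin{equation*}
\xymatrix{
I_f \ar[r] \ar[d]_{q} & PY \ar[d]^{p'} \\
X \ar[r]^{f} & Y
}
\end{equation*}
the top horizontal map $I_f \to PY$ is a fibration, because $f$ is a fibration and pullbacks of fibrations are fibrations. Its fiber over the constant path $c_* \in PY$ is exactly $\{x \in X : f(x) = *\} = F$, which defines the map $\alpha \colon F \to I_f$ by $\alpha(x) = (x, c_*)$. It is immediate that the outer triangle with $\alpha$ and $q$ commutes (since $q(\alpha(x)) = x$ is the given inclusion $F \hookrightarrow X$).

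Finally, the contractibility of $PY$ implies that every fiber of the fibration $I_f \to PY$ is homotopy equivalent to the total space; for CW targets this follows directly from the long exact sequence of homotopy groups plus Whitehead's theorem. Hence $\alpha \colon F \to I_f$ is a homotopy equivalence. The main technical point to watch is the initial reduction to $f$ being a fibration: one needs the pullback along $p'$ to be homotopy invariant in $f$, which does require $p'$ being a fibration. Everything else is bookkeeping once this is in place.
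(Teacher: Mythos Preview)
The paper does not prove this lemma itself; it refers the reader to \cite[\S 3]{Arkowitz}, Propositions 3.3.12 and 3.5.10. Your argument is sound and follows the expected line: after reducing to the case where $f$ is an honest fibration, the projection $I_f \to PY$ is a fibration (as the pullback of $f$) with fiber $F = f^{-1}(*)$ over the constant path $c_*$, and contractibility of $PY$ together with the long exact sequence of homotopy groups and Whitehead's theorem (valid under the standing CW hypothesis) shows that $\alpha$ is a homotopy equivalence.

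Your closing caveat about the reduction step is well placed and worth making precise. What you are invoking is that a map of cospans
\[
(X \xrightarrow{f} Y \xleftarrow{p'} PY)\;\longrightarrow\;(E \xrightarrow{p} B \xleftarrow{} PB),
\]
given by $b$, $a$, and $\gamma \mapsto a\circ\gamma$, consists of homotopy equivalences and has a fibration as one leg in each cospan; hence the induced map on pullbacks $I_f \to I_p$ is a homotopy equivalence. One small point you pass over: after the reduction the triangle with $\alpha$ commutes strictly for $I_p \to E$, but transporting back along $I_p \simeq I_f$ and $b\colon X \simeq E$ only gives commutativity of the original triangle up to homotopy. That is all the lemma needs (and all one can ask for in this generality), but it is worth saying explicitly.
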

		
		\begin{lem}\label{lem:If}
			Let $f\colon X \to Y$ be a homotopy fibration and let $F\to E \xrightarrow{p'} B$ be a fibration.
			The sequence of maps
			\begin{equation*}
				\Omega Y \to I_f \xrightarrow{q} X,
			\end{equation*}
			where $q$ is induced by the pullback in (\ref{eq:HomotopyFiber}), is a fibration sequence. 
		\end{lem}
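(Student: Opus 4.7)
The plan is to observe that $q\colon I_f \to X$ is itself an honest fibration and then to identify its fiber over the basepoint with $\Omega Y$. Concretely, by the pullback diagram (\ref{eq:HomotopyFiber}) we have
\begin{equation*}
    I_f = \{(x,\gamma)\in X\times PY : f(x) = p'(\gamma)\},
\end{equation*}
with $q(x,\gamma)=x$. Since $p'\colon PY\to Y$ is the path space fibration (\ref{eq:PathFibe}), it is a fibration in the sense of Definition \ref{def:fibration}. I would then invoke the standard fact that the pullback of a fibration along an arbitrary map is again a fibration; applied to $p'$ pulled back along $f$, this yields that $q$ is a fibration.

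Next I would compute the strict fiber of $q$ over the basepoint $\ast\in X$. Using the convention that $PY=\mathrm{Map}_\ast(I,Y)$ consists of paths ending at $\ast$ and that $p'$ sends a path to its starting point, we have
\begin{equation*}
    q^{-1}(\ast) \;=\; \{(\ast,\gamma) : \gamma(0)=f(\ast)=\ast,\ \gamma(1)=\ast\},
\end{equation*}
and the second factor projects homeomorphically onto the subspace of $PY$ of loops based at $\ast$, i.e.\ onto $\Omega Y$. The map $\Omega Y\to I_f$ appearing in the statement is therefore the fiber inclusion $\gamma\mapsto(\ast,\gamma)$, and this identification puts the sequence $\Omega Y \to I_f \xrightarrow{q} X$ into the form $F'\to E'\to B'$ of a genuine fibration.

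The only genuine content is the pullback-of-a-fibration statement, which is a standard exercise from the homotopy lifting property: given a lifting problem $G\colon I\times W\to X$ and $h\colon W\to I_f$ with $G_0=q\circ h$, one lifts the composite $f\circ G$ against $p'$ using the second coordinate of $h$ as initial data, and then pairs the resulting path-lift with $G$ itself to obtain the required $H\colon I\times W\to I_f$. I would not grind through this check in detail, as it is the canonical argument and is already recorded in \cite{Arkowitz}; the only care needed is bookkeeping of basepoint conventions (paths ending at $\ast$ versus starting at $\ast$) to make sure the identification of $q^{-1}(\ast)$ with $\Omega Y$ is correct, which is where I expect the only real opportunity for a sign-style error.
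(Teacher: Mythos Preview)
Your argument is correct and is the standard one: $I_f$ is the pullback of the path fibration $p'\colon PY\to Y$ along $f$, pullbacks of fibrations are fibrations, and the strict fiber $q^{-1}(\ast)$ is $\Omega Y$. The paper does not actually supply its own proof of this lemma; it simply cites \cite[\S 3]{Arkowitz}, Propositions $3.3.12$ and $3.5.10$, so there is nothing to compare against beyond noting that your sketch is exactly the argument one finds in that reference. (As an aside, the clause ``let $F\to E \xrightarrow{p'} B$ be a fibration'' in the statement appears to be a leftover and plays no role; the $p'$ that matters is the path space fibration from diagram~(\ref{eq:HomotopyFiber}), which you correctly used.)
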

		
		As a consequence of Lemma \ref{lem:If}, we many extend any homotopy fibration sequence $F\to X \to Y$ to a sequence of homotopy fibrations
		\begin{equation*}
			\cdots \to \Omega^2Y \to \Omega F \to \Omega X \to \Omega Y\to F\to X \to Y.
		\end{equation*}
		
		The next two propositions are a common tool used to obtain a splitting of topological spaces.
		
		\begin{prop}\label{prop:FibeTrival}
			Let $F\xrightarrow{i}E\xrightarrow{p}B$ be a fibration sequence such that $p\colon E\to B$ is null-homotopic.
			Then there exists a homotopy section $s\colon E \to F$.
		\end{prop}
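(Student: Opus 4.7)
The plan is to construct the homotopy section $s$ by a single application of the homotopy lifting property of $p$, using $\mathrm{id}_E$ as the initial lift and a null-homotopy of $p$ as the base homotopy to be lifted.

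First, I fix a null-homotopy $G \colon E \times I \to B$ of $p$, chosen so that $G_0 = p$ and $G_1$ is the constant map at the basepoint $* \in B$. Setting $W = E$ and $h = \mathrm{id}_E$ in Definition \ref{def:fibration}, the consistency condition $G_0 = p = p \circ \mathrm{id}_E$ holds on the nose, so the homotopy lifting property of $p$ furnishes a lifted homotopy $H \colon E \times I \to E$ with $H_0 = \mathrm{id}_E$ and $p \circ H = G$.

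At $t = 1$ we then have $p \circ H_1 = G_1 \equiv *$, so $H_1$ takes values in $F = p^{-1}(*)$. Defining $s \colon E \to F$ to be the resulting corestriction of $H_1$ gives $i \circ s = H_1$, while the lifted homotopy $H$ itself exhibits $i \circ s = H_1 \simeq H_0 = \mathrm{id}_E$, which is the desired homotopy section.

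There is no serious obstacle here, as the proof is essentially a direct invocation of the lifting property. The only points to be careful about are bookkeeping ones: one must apply the homotopy lifting property with parameter space equal to $E$ itself rather than some auxiliary test space, and one must recognise that the endpoint of a lift of a null-homotopy to $B$ automatically corestricts through the inclusion $i \colon F \hookrightarrow E$. No further machinery, such as long fibration sequences, path-space models, or Puppe-type constructions, is required.
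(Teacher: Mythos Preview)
Your proof is correct, but it takes a different route from the paper's. The paper argues via the homotopy-fiber construction: setting $f=p$ in diagram (\ref{eq:HomotopyFiber}), a null-homotopy of $p$ is precisely the data of a lift $\bar{s}\colon E \to I_p$ of $\mathrm{id}_E$ along $q\colon I_p \to E$, and then one composes with the homotopy inverse of $\alpha\colon F \to I_p$ from Lemma preceding Lemma \ref{lem:If} to obtain $s = \alpha^{-1}\circ\bar{s}$. Your argument bypasses $I_p$ entirely and uses only the raw homotopy lifting property of $p$, lifting the null-homotopy against $\mathrm{id}_E$ so that the time-$1$ map lands in $F$ on the nose. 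Your approach is more elementary and self-contained, needing none of the path-space or pullback machinery; the paper's approach has the advantage of slotting directly into the framework already set up for the subsequent splitting results, where the identification $F\simeq I_p$ is used again.
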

		
		\begin{proof}
			Consider diagram (\ref{eq:HomotopyFiber}) in Lemma \ref{lem:If} with $f=p$.
			Since $p\simeq *$, there exists a section $\bar{s}\colon E \to I_p$ and $s=\alpha^{-1}\circ \bar{s}\colon E \to F$ is the required section. 
		\end{proof}

		\begin{prop}\label{prop:FibeSection}
			If $\Omega{B}\to F \xrightarrow{p} E$ is a principle fibration arising from fibration $F\to E\xrightarrow{q} B$ with $B$ simply connected
			and homotopy section $s \colon E \to F$, then 
			\begin{equation*}
				F\simeq \Omega B \times E.
			\end{equation*}
		\end{prop}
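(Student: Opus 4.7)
The plan is to turn the section $s$ and the principal $\Omega B$-action on $F$ into an explicit fibrewise comparison map $\phi\colon \Omega B \times E \to F$, and then verify it is a fibre homotopy equivalence over $E$.

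First, since $\Omega B \to F \xrightarrow{p} E$ is a principal fibration, it carries a fibrewise action $\mu\colon \Omega B \times F \to F$ covering the identity on $E$ whose orbit maps $\omega \mapsto \mu(\omega, f_{0})$ give a homotopy equivalence $\Omega B \simeq p^{-1}(e)$ for any choice of basepoint $f_{0}\in p^{-1}(e)$. In our situation the canonical choice is $f_{0}=s(e)$. Define
\[
\phi\colon \Omega B \times E \to F, \qquad (\omega,e)\mapsto \mu(\omega, s(e)).
\]
Because $\mu$ covers the identity on $E$ and $p\circ s\simeq \mathrm{id}_{E}$, the composite $p\circ\phi$ is (homotopic to) the projection onto $E$, making $\phi$ a fibrewise map between the two fibrations $\Omega B \to \Omega B\times E \to E$ and $\Omega B \to F \to E$.

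Next, on the fibre over each $e\in E$ the map $\phi$ restricts to the orbit map $\omega\mapsto \mu(\omega, s(e))$, which is a homotopy equivalence by the principal structure. I would then invoke the fibre homotopy equivalence theorem: a fibrewise map between fibrations over a path-connected base which is an equivalence on some (equivalently, every) fibre is itself a homotopy equivalence. Equivalently, one runs the five lemma on the ladder of long exact sequences of homotopy groups of the two fibrations; here the hypothesis that $B$ is simply connected guarantees $\Omega B$ is path-connected, so there are no $\pi_{0}$ or basepoint subtleties and the comparison at every degree is automatic. This gives $F\simeq \Omega B \times E$.

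The main obstacle is pinning down the principal action $\mu$: depending on which definition of \emph{principal fibration} one adopts, it may be necessary first to replace $F$ (up to homotopy equivalence, using the CW-hypothesis imposed earlier in the section) by a weakly equivalent total space carrying a strict $\Omega B$-action, so that the orbit map $\omega\mapsto\mu(\omega,s(e))$ is literally defined and fibrewise. Once this technical replacement is in place, the rest of the argument is purely formal and the simply-connectedness of $B$ is used only to guarantee the five lemma (or fibre homotopy equivalence theorem) applies without basepoint hypotheses.
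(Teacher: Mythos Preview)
Your proof is correct and essentially identical to the paper's: both construct the comparison map $\Omega B \times E \to F$ as $(\omega,e)\mapsto \mu(\omega,s(e))$ using the principal action and the section, then apply the five lemma to the ladder of long exact homotopy sequences and finish with Whitehead's theorem. The paper factors this map through an intermediate fibration $\Omega B\times\Omega B \to \Omega B\times F \to E$ (via $1\times s$ followed by the action $\phi$), but the composite is exactly your map, so the arguments coincide.
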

		
		\begin{proof}
			Consider the maps of homotopy fibrations
			\begin{equation*}
						\xymatrix{
						{\Omega B}\ar[r]\ar[d]^(0.45){i} & {\Omega B \times E}\ar[r]\ar[d]^(0.45){1\times s} & {E}\ar[d]^(0.45){id} \\
						{\Omega B\times \Omega B}\ar[r]\ar[d]^(0.45){m} & {\Omega B\times F}\ar[r]\ar[d]^(0.45){\phi} & {E}\ar[d]^(0.45){id} \\
						{\Omega B}\ar[r] & {F}\ar[r] & {E,} 
						}
			\end{equation*}
			where $i$ is the inclusion into the first component, $m$ is the loop multiplication map and
			$\phi$ the action of $\Omega B$ on $F$.
			More precisely since $p\colon F \to E$ is a fibration, for any $x\in F$ and $\gamma\colon S^1\to B$ there is a lift $\bar{\gamma} \colon I \to E$
			such that $\bar{\gamma}(0)=x$.
			In which case we may define $\phi \colon \Omega B\times F \to F$ by $\phi(\gamma,x))=\bar{\gamma}(1)$.
			The induced maps in the associated long exact sequences of homotopy groups give us a commutative diagram
			\begin{changemargin}{-15mm}{40mm}
				\begin{equation*}
							\xymatrix{
								{\cdots}\ar[r] & {\pi_{n+1}E}\ar[d]\ar[r]  & {\pi_n\Omega B}\ar[d]\ar[r] & {\pi_n\Omega B\times\pi_n E}\ar[d]\ar[r] & {\pi_nE}\ar[d]\ar[r] & {\pi_{n-1}\Omega B}\ar[d]\ar[r] & {\cdots} \\
								{\cdots}\ar[r] & {\pi_{n+1}E}\ar[d]\ar[r] & {\pi_n\Omega B\times\pi_n\Omega B}\ar[d]\ar[r] & {\pi_n\Omega B\times\pi_nF}\ar[d]\ar[r] & {\pi_nE}\ar[d]\ar[r] & {\pi_{n-1}\Omega B\times\pi_{n-1}\Omega B}\ar[d]\ar[r] & {\cdots} \\
								{\cdots}\ar[r] & {\pi_{n+1}E}\ar[r] & {\pi_n\Omega B}\ar[r] & {\pi_nF}\ar[r] & {\pi_nE}\ar[r] & {\pi_{n-1}\Omega B}\ar[r] & {\cdots}. }
				\end{equation*}
			\end{changemargin}
			Using the five lemma and Whiteheads theorem we obtain the desired result.
		\end{proof}
		
		\begin{defn}
			For a space $X$, define the free loop space $\Lambda X$ to be be the space $\Map(S^1,X)$ of non-pointed maps from the unit circle to $X$.
		\end{defn}
		
		It can be show directly using Definition \ref{def:fibration} that 
		\begin{equation}\label{eq:FreeLoopFibration}
			\Omega X \to \Lambda X \xrightarrow{eval} X 
		\end{equation}
		where $eval$ is the maps sending a loop to the image of its base-point, is a fibration sequence called the free loop fibration of $X$.
		
		There is a canonical section $s \colon X \to \Lambda$ of fibration (\ref{eq:FreeLoopFibration}), given by sending a point to the constant loop at that point.
		However we cannot apply Proposition \ref{prop:FibeSection}
		to obtain a splitting since fibration (\ref{eq:FreeLoopFibration}) need not be a principle fibration.
		
	\subsection{The universal coefficients theorems}\label{UniversalCoefficients}
	
		In this section we discusses the universal coefficient theorems,
		which give the exact relationship between the module structure of the homology and cohomology of a space with respect to different coefficient rings.
		In particular we look at the relationship between cohomology with integral coefficients and cohomology over a finite field of prime characteristic.
		The relationships in the Universal coefficients theorems are given in terms of functors $\Ext$ and $\Tor$,
		for more informational and definition see for example \cite[Chapter 7]{rotman}.
		The next two theorems are known as the universal coefficients theorems,
		for proofs see for example \cite[\S $3.1$ and $3.A$]{hatcher} Theorems $3.2$ and $3A.3$.
		
		\begin{thm}[{\bf Universal coefficients theorem}]\label{thm:uct}
			Given any topological space $X$, an abelian group $G$ and an integer $n \geq 1$,
			there is a split exact sequence of abelian groups
			\begin{center}
					$0 \to \Ext(H_{n-1}(X;\mathbb{Z}),G) \to H^{n}(X;G) \to \Hom(H_{n}(X;\mathbb{Z}),G) \to 0$,
			\end{center}
			which is natural with respect to continuous maps between spaces.
		\end{thm}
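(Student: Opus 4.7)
The plan is to realize the universal coefficients theorem as a purely algebraic statement about the cochain complex $C^*(X;G) = \Hom(C_*(X;\mathbb{Z}), G)$, exploiting the fact that each $C_n(X;\mathbb{Z})$ is a free abelian group. Since subgroups of free abelian groups are free, both the cycle subgroup $Z_n$ and the boundary subgroup $B_n$ of $C_n(X;\mathbb{Z})$ are free abelian. I will make use of two associated short exact sequences: the quotient sequence $0 \to B_n \to Z_n \to H_n(X;\mathbb{Z}) \to 0$, which is a length-one free resolution of $H_n(X;\mathbb{Z})$, and the sequence $0 \to Z_n \to C_n \xrightarrow{\partial} B_{n-1} \to 0$, which splits because $B_{n-1}$ is free.

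The core computation then proceeds as follows. Applying $\Hom(-,G)$ to the second, splitting, sequence yields a decomposition $\Hom(C_n, G) \cong \Hom(Z_n, G) \oplus \Hom(B_{n-1}, G)$. Under this decomposition a direct check shows that the coboundary $\delta^n$ sends $(\alpha, \beta)$ to $(0, \iota_n^*\alpha)$, where $\iota_n\colon B_n \hookrightarrow Z_n$ is inclusion. Consequently the cocycles modulo coboundaries split as $H^n(X;G) \cong \ker(\iota_n^*) \oplus \mathrm{coker}(\iota_{n-1}^*)$. The kernel of $\iota_n^*$ consists of those homomorphisms $Z_n \to G$ that vanish on $B_n$, which is exactly $\Hom(Z_n/B_n, G) = \Hom(H_n(X;\mathbb{Z}), G)$. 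The cokernel of $\iota_{n-1}^*$ is, by applying $\Hom(-,G)$ to the free resolution of $H_{n-1}(X;\mathbb{Z})$ above, precisely $\Ext(H_{n-1}(X;\mathbb{Z}), G)$.

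This immediately produces the desired short exact sequence, with the natural projection $H^n(X;G) \to \Hom(H_n(X;\mathbb{Z}), G)$ given by the evident procedure of restricting a cocycle to cycles and passing to the quotient by boundaries. Naturality in $X$ follows because every step of the construction is functorial in the chain complex $C_*(X;\mathbb{Z})$, and a continuous map $X \to Y$ induces a chain map compatible with all of the sequences above. The main subtlety, which I would address last, is the splitting: the decomposition of $H^n(X;G)$ produced above depends on a chosen splitting of $0 \to Z_n \to C_n \to B_{n-1} \to 0$, so while the exact sequence itself is natural, the splitting obtained this way is not natural in $X$. This is the principal delicate point in the argument and explains why the statement only asserts that the sequence splits, rather than that the splitting is natural.
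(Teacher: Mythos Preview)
Your argument is correct and is essentially the standard proof found in Hatcher. Note, however, that the paper does not supply its own proof of this theorem: it is stated as background and the reader is referred to \cite[\S 3.1]{hatcher}, Theorem~3.2. So there is nothing to compare against beyond observing that your outline matches the cited reference.
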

		
		\begin{thm}[{\bf Universal coefficients theorem for homology}]\label{thm:ucth}
			Given a topological space $X$, an abelian group $G$ and an integer $n \geq 1$,
			there is an exact sequence of abelian groups
			\begin{center}
					$0 \to H_{n}(X;\mathbb{Z}) \otimes G \xrightarrow{\alpha_{n}} H_{n}(X;G) \to \Tor(H_{n-1}(X;\mathbb{Z}),G) \to 0$,
			\end{center}
			which is natural with respect to continuous maps between spaces.
		\end{thm}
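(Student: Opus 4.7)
The plan is to reduce the topological statement to a purely algebraic statement about chain complexes of free abelian groups, then prove the algebraic version by hand. Recall that $H_n(X;G)$ is defined as the homology of the complex $C_*(X;\mathbb{Z})\otimes G$, where $C_*(X;\mathbb{Z})$ is the singular chain complex. Since singular chains are formal $\mathbb{Z}$-linear combinations of singular simplices, each $C_n(X;\mathbb{Z})$ is a free abelian group. Hence it suffices to prove the following algebraic result: if $C_*$ is a chain complex of free abelian groups with boundary $\partial$, then for every abelian group $G$ and every $n$ there is a natural short exact sequence
\begin{equation*}
0 \to H_n(C_*)\otimes G \to H_n(C_*\otimes G) \to \Tor(H_{n-1}(C_*),G) \to 0.
\end{equation*}

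To prove the algebraic version, I would proceed as follows. Let $Z_n=\ker\partial_n$ and $B_n=\operatorname{im}\partial_{n+1}$. Both are subgroups of the free abelian group $C_n$, hence themselves free abelian. First I would observe that the short exact sequence
\begin{equation*}
0 \to Z_n \to C_n \xrightarrow{\partial_n} B_{n-1} \to 0
\end{equation*}
splits, since $B_{n-1}$ is free, so tensoring with $G$ preserves exactness and yields a short exact sequence of chain complexes (with zero differential on $Z_\bullet\otimes G$ and $B_{\bullet-1}\otimes G$). The associated long exact sequence in homology degenerates, since those complexes have zero differential, into a four-term sequence relating $Z_n\otimes G$, $B_n\otimes G$ and $H_n(C_*\otimes G)$; from this I can extract a short exact sequence
\begin{equation*}
0 \to \operatorname{coker}(B_n\otimes G \to Z_n\otimes G) \to H_n(C_*\otimes G) \to \ker(B_{n-1}\otimes G \to Z_{n-1}\otimes G) \to 0.
\end{equation*}

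Next, I would identify the outer terms. The sequence $0\to B_n \to Z_n \to H_n(C_*)\to 0$ is a free resolution of $H_n(C_*)$ (of length one, so $\Tor_{\geq 2}$ vanishes and this is why no higher $\Tor$ appears). Tensoring with $G$ and applying the definition of $\Tor$ identifies the cokernel with $H_n(C_*)\otimes G$ and the kernel with $\Tor(H_{n-1}(C_*),G)$, giving exactly the desired sequence. The middle map $\alpha_n$ is induced by $Z_n\otimes G \hookrightarrow C_n\otimes G$, so a cycle $z\otimes g$ maps to $[z]\otimes g$; naturality in $X$ follows because chain maps respect cycles, boundaries, and the tensor decomposition above, so all constructions are functorial. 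The main obstacle is purely bookkeeping: carefully identifying cokernel and kernel with the claimed $\otimes$ and $\Tor$ terms using the free resolution, and checking that the sequence one extracts from the homology long exact sequence is genuinely short exact (as opposed to merely a five-term sequence). Splitting, if needed, would come from the splitting of $0\to Z_n \to C_n \to B_{n-1}\to 0$, though the statement as given only asserts exactness.
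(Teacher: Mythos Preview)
Your argument is the standard and correct proof of the universal coefficients theorem for homology; the identification of the connecting map in the long exact sequence with the inclusion $B_n\otimes G\hookrightarrow Z_n\otimes G$, and the subsequent use of the free resolution $0\to B_n\to Z_n\to H_n(C_*)\to 0$, are exactly right. The paper itself does not give a proof of this theorem at all: it merely states the result and refers the reader to Hatcher, Theorems~3.2 and~3A.3. Your sketch is essentially the argument found there, so there is nothing to compare.
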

	
		In particular the module structure of the  homology and cohomology with respect to any coefficient ring
		is completely determined by the homology or cohomology over the integers.
		In the case of coefficients over a finite field of prime order or the rationals, we have the following explicit relationship.
		
		\begin{cor}\label{cor:UniversalModp}
			For any topological space $X$ and for any $i \geq 0$, if 
			\begin{equation*}
				H^i(X;\mathbb{Z})\cong \mathbb{Z}^a \oplus \mathbb{Z}_{p_1}^{a_1} \oplus \cdots \oplus \mathbb{Z}_{p_j}^{a_j}
			\end{equation*}
			where $j\geq 0$ , $p_1,\dots,p_j$ are distinct primes and $a,a_1,\dots,a_j$ non-negative integers,
			then for each $1\leq k \leq j$ the cohomology of $X$ with coefficients in $\mathbb{Z}_{p_k}$ is given by
			\begin{align*}
												&H^i(X;\mathbb{Z}_{p_k})\cong \mathbb{Z}_{p_k}^{a+a_k} \\
				\text{and} \;\; &H^{i-1}(X;\mathbb{Z}_{p_k})\cong (H^{i-1}(X;\mathbb{Z})\otimes \mathbb{Z}_{p_k}) \oplus \mathbb{Z}_{p_k}^{a_k}.
			\end{align*}
			For prime $p\neq p_k$ for any $1\leq k \leq j$
			\begin{equation*}
				H^i(X;\mathbb{Z}_p)\cong \mathbb{Z}_p^a
			\end{equation*}
			and
			\begin{equation*}
				H^i(X;\mathbb{Q})\cong \mathbb{Q}^a.
			\end{equation*}
		\end{cor}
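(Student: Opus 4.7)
The plan is to apply the universal coefficients theorem (Theorem \ref{thm:uct}) twice. First I would use it with $G=\mathbb{Z}$ to unpack what the hypothesised form of $H^i(X;\mathbb{Z})$ tells us about integral homology. The split short exact sequence gives $H^i(X;\mathbb{Z})\cong \Hom(H_i(X;\mathbb{Z}),\mathbb{Z})\oplus \Ext(H_{i-1}(X;\mathbb{Z}),\mathbb{Z})$; since $\Hom(-,\mathbb{Z})$ kills torsion while $\Ext(-,\mathbb{Z})$ kills the free part and sends $\mathbb{Z}_n$ to $\mathbb{Z}_n$, one reads off that the free rank of $H_i(X;\mathbb{Z})$ is $a$ and that the torsion subgroup of $H_{i-1}(X;\mathbb{Z})$ is $\mathbb{Z}_{p_1}^{a_1}\oplus\cdots\oplus\mathbb{Z}_{p_j}^{a_j}$.

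Next I would apply Theorem \ref{thm:uct} again with $G=\mathbb{Z}_{p_k}$ in degree $i$. The key elementary computations are $\Hom(\mathbb{Z},\mathbb{Z}_{p_k})=\mathbb{Z}_{p_k}$ and $\Hom(\mathbb{Z}_{p_l},\mathbb{Z}_{p_k})=\Ext(\mathbb{Z}_{p_l},\mathbb{Z}_{p_k})=\mathbb{Z}_{\gcd(p_l,p_k)}$, which vanish when $p_l\neq p_k$ and equal $\mathbb{Z}_{p_k}$ when $p_l=p_k$. Assembling the contributions, the $\Hom$-summand yields $\mathbb{Z}_{p_k}^a$ from the free part of $H_i$ and the $\Ext$-summand yields $\mathbb{Z}_{p_k}^{a_k}$ from the $p_k$-part of the torsion of $H_{i-1}$, delivering $H^i(X;\mathbb{Z}_{p_k})\cong\mathbb{Z}_{p_k}^{a+a_k}$.

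For the degree $i-1$ formula I would run the same argument one degree lower: the $\Ext$-summand $\Ext(H_{i-2}(X;\mathbb{Z}),\mathbb{Z}_{p_k})$ extracts the $p_k$-torsion of $H_{i-2}(X;\mathbb{Z})$, which by the first step equals the $p_k$-torsion of $H^{i-1}(X;\mathbb{Z})$. Combined with the $\Hom$-summand, this reorganises into $H^{i-1}(X;\mathbb{Z})\otimes \mathbb{Z}_{p_k}$ (absorbing both the free rank and the $p_k$-torsion contributions of $H^{i-1}(X;\mathbb{Z})$) plus the extra $\mathbb{Z}_{p_k}^{a_k}$ coming from the $p_k$-torsion of $H_{i-1}(X;\mathbb{Z})$ itself, matching the stated formula.

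The remaining cases $p\neq p_k$ and $G=\mathbb{Q}$ are strictly easier instances of the same device: when $p\neq p_k$, $\Hom(\mathbb{Z}_{p_l},\mathbb{Z}_p)$ and $\Ext(\mathbb{Z}_{p_l},\mathbb{Z}_p)$ vanish for every $l$ since the primes involved are coprime, leaving only the free-rank contribution $\mathbb{Z}_p^a$; for rational coefficients both functors annihilate all torsion, so only $\Hom(\mathbb{Z}^a,\mathbb{Q})=\mathbb{Q}^a$ survives. No conceptual obstacle beyond Theorem \ref{thm:uct} and the elementary abelian-group calculations arises; the only real care required is bookkeeping the correspondence between the torsion of $H_{i-1}(X;\mathbb{Z})$ and that of $H^i(X;\mathbb{Z})$ so that the prime components line up correctly in each piece.
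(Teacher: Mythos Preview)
Your overall strategy---apply Theorem~\ref{thm:uct} with $G=\mathbb{Z}$ to read off the free rank of $H_i(X;\mathbb{Z})$ and the torsion of $H_{i-1}(X;\mathbb{Z})$, then apply it again with $G=\mathbb{Z}_{p_k}$---is exactly the intended one (the paper states the corollary without proof, as an immediate consequence of Theorems~\ref{thm:uct} and~\ref{thm:ucth}). However there is a genuine gap in your computation of $H^i(X;\mathbb{Z}_{p_k})$. You say ``the $\Hom$-summand yields $\mathbb{Z}_{p_k}^a$ from the free part of $H_i$'' and stop there, silently dropping any contribution from the torsion subgroup of $H_i(X;\mathbb{Z})$. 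But the hypothesis on $H^i(X;\mathbb{Z})$ pins down only the torsion of $H_{i-1}(X;\mathbb{Z})$; the torsion of $H_i(X;\mathbb{Z})$ shows up in $H^{i+1}(X;\mathbb{Z})$, about which nothing has been assumed. If $H_i(X;\mathbb{Z})$ carries $p_k$-torsion then $\Hom(H_i(X;\mathbb{Z}),\mathbb{Z}_{p_k})$ is strictly larger than $\mathbb{Z}_{p_k}^a$ and the conclusion $H^i(X;\mathbb{Z}_{p_k})\cong\mathbb{Z}_{p_k}^{a+a_k}$ fails. The same omission affects your ``$p\neq p_k$'' argument.

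A concrete witness: take $X=\mathbb{RP}^2$ and $i=1$. Then $H^1(X;\mathbb{Z})=0$, so $a=0$ and $j=0$, and the formula you are proving predicts $H^1(X;\mathbb{Z}_2)=0$; but $H^1(\mathbb{RP}^2;\mathbb{Z}_2)\cong\mathbb{Z}_2$, produced precisely by $\Hom(H_1,\mathbb{Z}_2)=\Hom(\mathbb{Z}_2,\mathbb{Z}_2)$. By contrast your treatment of the $H^{i-1}$ formula is sound, since there the relevant torsion (of $H_{i-1}$ and of $H_{i-2}$) is determined by $H^i(X;\mathbb{Z})$ and $H^{i-1}(X;\mathbb{Z})$ respectively. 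In short, the corollary as written is slightly over-stated and your argument inherits the same oversight; the clean fix is to add the hypothesis that $H^{i+1}(X;\mathbb{Z})$ (equivalently $H_i(X;\mathbb{Z})$) has no $p_k$- or $p$-torsion, after which your proof goes through verbatim.
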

	
	\subsection{The Leray-Serre spectral sequence}\label{sec:SpecSeq}
	
		In this section we give the structure of the Leray-Serre spectral sequence for cohomology,
		a powerful tool for studying the cohomology algebra of spaces that sit in a fibrations sequence $F\to E \xrightarrow{p} B$.
	
		Given a commutative ring $R$, a bigraded module $M$ is an $R$-module with an index structure of the form
		\begin{center}
			$M= {\bigoplus}_{i,j \in \mathbb{Z}} M^{i,j}$
		\end{center}
		where each $M^{i,j}$ is an $R$-module. 
		A {bigraded algebra} is a bigraded module with an additional multiplicative structure
		such that if $a \in M^{i,j}$ and $b\in M^{k,l}$ then $ab \in M^{i+k,j+l}$.
		A differential $d$ of {bidegree} $(a,b)$ on a bigraded module $E$ is a collection of maps
		$d=d_{i,j} \colon E^{i,j} \to E^{i+a,j+b}$ such that $dd=0$.
		A {differential bigraded module} is a bigraded module with a differential, often denoted by $(E^{*,*},d)$.

		\begin{defn}\label{def:SS}
			A {spectral sequence} is a sequence of differential bigraded modules $(E_{r}^{*,*},d^{r})_{r\geq 1}$, where
			for each $r\geq 2$, $E_{r+1}^{*,*}$ is obtained from $(E_{r}^{*,*},d^{r})$ by $E_{r+1}^{*,*}=H(E_{r}^{*,*},d^{r})$,
			that is, the homology of the previous differential bigraded modules.
			We shall often refer to $(E_{r}^{*,*},d^{r})$ as the { $r^{th}$ page} of the spectral sequence.
		\end{defn}
		
		There is a standard construction which for each fibration $F \to E \xrightarrow{p} B$ produces a spectral sequence.
		Which are proven in \cite[\S 5]{UGSS} Theorem $5.2$ and Proposition $5.6$ or \cite[\S 1.2]{HSS}.
		A spectral sequence exits for any arbitrary fibration however only under certain conditions are they useful;
		these conditions are specified by the next two theorems on the convergence of a spectral sequence.

		\begin{thm}\label{thm:SS}
			Given a fibration $F \to E \xrightarrow{p} B$ such that $B$ is simply connected,
			there is a  spectral sequence $(E_r^{*,*},d^{r})$ satisfying the following:
			\begin{enumerate}
					\item $E_{r}^{i,j}=0$ for all $r \geq 2$ and $i < 0$ or $ j < 0$, that is, the spectral sequence is only non-zero in the first quadrant.
					\item Each differential $d^{r}$ has bidegree $(r,1-r)$. 
					\item There is an integer $1 \leq e<\infty$ for each $i,j\in \mathbb{Z}$, such that for each $r \geq e$, $d^{r}=0$ and so $E_{r+1}^{i,j}=E_{r}^{i,j}$.
								If $H^*(B)$ or $H^*(F)$ is bounded then such an $e$ exists for all $i,j$ simultaneously, in which case we denote $E_{e}^{*,*}$ by $E_{\infty}^{*,*}$.
					\item There is a filtration by subgroups of $H_{n}(E;R)$,
					$0 \subseteq F^{0}_{n} \subseteq \cdots \subseteq F^{n}_{n} =H_{n}(E;R)$
					such that $E_{\infty}^{p,n-p} \cong F^{p}_{n}/F^{p-1}_{n}$.
			\end{enumerate}
		\end{thm}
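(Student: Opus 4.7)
The plan is to realize the spectral sequence as the one associated to a filtered cochain complex coming from the skeletal filtration of the base. Since $B$ has the homotopy type of a CW complex we may replace $B$ by a CW complex, and then pull back the projection $p$ to form the filtration $E^{(s)} = p^{-1}(B^{(s)})$ of $E$ by preimages of skeleta. This induces the decreasing filtration $F^s C^n(E;R) = \ker(C^n(E;R) \to C^n(E^{(s-1)};R))$ on the singular cochain complex of $E$, and the standard spectral sequence machinery for filtered cochain complexes (as cited in the paragraph preceding the theorem) then produces a spectral sequence with $E_0^{s,q} = F^s C^{s+q}/F^{s+1} C^{s+q}$ and $E_1^{s,q} = H^{s+q}(E^{(s)}, E^{(s-1)}; R)$, converging to a filtration on the cohomology of $E$.

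Next I would verify the four listed properties directly from this construction. For (1), vanishing when $s<0$ follows from the observation that $F^s = C^*(E)$ for every $s\leq 0$, so the associated graded vanishes there; vanishing when $q<0$ follows from the fact that the relative CW pair $(E^{(s)}, E^{(s-1)})$ is built by attaching cells of dimension exactly $s$ with fibres parametrising them, so the relative cohomology $H^{s+q}(E^{(s)}, E^{(s-1)};R)$ is concentrated in total degrees $\geq s$. Property (2), that $d^r$ has bidegree $(r,1-r)$, is the standard bigrading output of the filtered cochain complex formalism. Property (3) then follows formally from (1) and (2): at each fixed position $(i,j)$ the outgoing differential $d^r\colon E_r^{i,j}\to E_r^{i+r,j-r+1}$ has target outside the first quadrant once $r > j+1$, while the incoming differential into $(i,j)$ has source outside the first quadrant once $r>i$, so the module stabilises pointwise; when either $H^*(B)$ or $H^*(F)$ is bounded, one obtains a uniform bound on $r$ since the former bounds the columns of $E_2$ and the latter bounds the rows, and subsequent pages are only subquotients of $E_2$. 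Property (4) is then the abutment statement, immediate from the boundedness of the filtration on $C^*(E;R)$ together with the standard convergence theorem for filtered complexes.

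The main obstacle is not any single step but the rigorous execution of the filtered complex apparatus in a way that is independent of the chosen CW replacement, together with the identification $E_1^{s,q} \cong H^{s+q}(E^{(s)}, E^{(s-1)}; R)$, which requires some care with relative cochains and cofibrancy properties of the pair. The hypothesis that $B$ is simply connected is not strictly needed for properties (1)--(4) as stated, but it guarantees that the local coefficient system that naturally appears on the $E_2$-page is untwisted, and is therefore essential for the explicit identification $E_2^{s,q}\cong H^s(B;H^q(F;R))$ that will be needed for any concrete computation in the later chapters of the thesis.
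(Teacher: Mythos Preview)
The paper does not actually prove this theorem: it is stated as a standard result and the reader is referred to \cite[\S 5]{UGSS} Theorem~5.2 and Proposition~5.6, or \cite[\S 1.2]{HSS}, for the construction. Your sketch via the skeletal filtration of the base, the induced filtration on singular cochains, and the general machinery of filtered complexes is exactly the standard construction carried out in those references, so in that sense your approach agrees with what the paper invokes.

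A couple of small remarks. Your deduction of pointwise stabilisation in (3) from the first-quadrant property and the bidegree is clean and is the usual argument; your observation that simple connectivity of $B$ is only needed for the untwisted identification of $E_2$ (rather than for properties (1)--(4) themselves) is also correct and worth keeping. The one place where more care is genuinely needed in a full proof, as you note, is the identification of the $E_1$-page with the relative cohomology of the skeletal pairs and its subsequent rewriting as cellular cochains on $B$ with coefficients in $H^*(F)$; this is where the bulk of the work in the cited references lies, and your sketch correctly flags it as the nontrivial step rather than attempting to hand-wave through it.
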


		A spectral sequence is said to {\it converge} if it satisfies $3.$ and $4.$ above.
		From now on, we will assume that $R=\mathbb{Z}$ unless otherwise stated.
		The next theorem gives us the Leray-Serre spectral sequence for cohomology.

		\begin{thm}\label{thm:SSS}
		The cohomology spectral sequence $(E_{r}^{*,*},d_{r})$ associated to the fibration $F \to E \xrightarrow{p} B$ where $B$ is simply connected,
		converges to $H^{*}(E)$ as an algebra.
		In addition it satisfies the following properties:
		\begin{enumerate}
			\item $E_{2}^{p,q} \cong H^{p}(B;H^{q}(F))$ for each $p,q \in \mathbb{Z}$.
			\item The product in $E_{2}^{*,*}$ is the maps $H^{p}(B;H^{q}(F)) \times H^{s}(B;H^{t}(F)) \to H^{p+s}(B;H^{q+t}(F))$ for each $p,q,s,t \in \mathbb{Z}$,
			given by $([\sum_{i}{a_i u_i}],[\sum_{j}{b_i v_i}]) \mapsto [\sum_{i,j}{(-1)^{qs}(a_{i}\smallsmile b_{j})(u_{i}\smallsmile v_{j})}]$,
			for cocycles $u_i$,$v_i$ and coefficients $a_i \in H^{q}(F)$, $b_j \in H^{t}(F)$, where $\smallsmile$ is the cup product in cohomology.
			\item All differentials satisfy the Leibniz rule.
		\end{enumerate}
		\end{thm}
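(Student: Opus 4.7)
The plan is to build the spectral sequence from a filtration on the singular cochain complex of $E$, then identify its second page, its multiplicative structure, and verify the Leibniz rule. Concretely, choose a CW-structure on $B$ with skeleta $B^{(p)}$ and set $E_p = p^{-1}(B^{(p)})$; this yields an increasing filtration $\emptyset = E_{-1}\subseteq E_0 \subseteq E_1 \subseteq \cdots \subseteq E$ of the total space. Taking the decreasing filtration $F^p C^*(E) = \ker\bigl(C^*(E)\to C^*(E_{p-1})\bigr)$ of cochains vanishing on $E_{p-1}$ produces a filtered differential graded module, and the standard machinery of an exact couple (as set up in the references already cited) yields a spectral sequence whose convergence properties match items $3$ and $4$ of Theorem~\ref{thm:SS}. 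The boundedness below and finite filtration in each total degree give the strong convergence to $H^*(E)$.

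Next, I would identify the $E_2$-page. The $E_1$-term is, by definition, $E_1^{p,q} = H^{p+q}(E_p, E_{p-1})$, and a relative form of the Leray--Hirsch argument identifies this with the cellular cochains of $B$ in degree $p$ with coefficients in the local system $\{H^q(F_b)\}_{b\in B}$. Since $B$ is simply connected this local system is constant at $H^q(F)$, so $E_2^{p,q} = H^p(B; H^q(F))$, giving item $1$. The main subtlety here is verifying that the attaching maps of the cells of $B$ induce on fibers the holonomy action of $\pi_1(B)$, which is where simple-connectedness is used; this is done by choosing, over each cell $D^p \subseteq B$, a trivialisation of the fibration and comparing via the characteristic maps.

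For the multiplicative structure, the key observation is that the cup product on $C^*(E)$ respects the filtration in the sense $F^p C^*(E)\smallsmile F^s C^*(E)\subseteq F^{p+s} C^*(E)$. This induces a product on each $E_r^{*,*}$ with respect to which the differentials $d_r$ are derivations, giving the Leibniz rule of item $3$ automatically. To pin down the product on $E_2$ as the claimed cup product on $H^*(B; H^*(F))$, one chooses an Alexander--Whitney--type cochain model: over a product of cells in $B$, the cup product of a class in $H^p(B;H^q(F))$ with one in $H^s(B;H^t(F))$ is represented by taking cup products of representative cochains in fiber and base directions, and the Koszul sign $(-1)^{qs}$ appears when one permutes the fiber cochain of degree $q$ past the base cochain of degree $s$. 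This is the step I expect to be the main obstacle, since it requires a careful cochain-level model and a careful accounting of Koszul signs, but conceptually it is the standard verification done in the references to McCleary and Hatcher that are already cited.
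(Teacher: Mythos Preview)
The paper does not prove this theorem at all: it is stated as background and its proof is deferred to the references \cite[\S 5]{UGSS} Theorem~$5.2$ and Proposition~$5.6$, or \cite[\S 1.2]{HSS}, with no argument given in the paper itself. Your sketch is essentially the standard skeletal-filtration construction one finds in those sources (filter $E$ by preimages of skeleta of $B$, identify $E_1$ with cellular cochains with coefficients in the fiber cohomology, use simple-connectedness to trivialise the local system, and deduce the multiplicative structure and Leibniz rule from compatibility of cup product with the filtration), so there is nothing to compare: your outline \emph{is} the approach of the cited references, modulo the technical details you flag.
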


		In a cohomology Leray-Serre spectral sequence on page $E^{*,*}_{2}$ the vertical axis is $E_{2}^{0,*}\cong H^{0}(B;H^{*}(F)) \cong H^{*}(F)$,
		so we will identify it with $H^{*}(F)$.
		Similarly the horizontal axis is $E_{2}^{*,0}\cong H^{*}(B;H^{0}(F)) \cong H^{*}(B)$, so is identified with $H^{*}(B)$.
		In particular by the formula given in the second part of Theorem \ref{thm:SSS}, the cup product structure in these axis agrees with multiplication on $E_{2}^{*,*}$.

\newpage
\section{Symmetric polynomials}\label{sec:SymPoly}

	A polynomial in $\mathbb{Z}[\gamma_1,\dots,\gamma_n]$ is called symmetric if it is invariant under permutations of the indices of variables $\gamma_1,\dots,\gamma_n$.
	The study of symmetric polynomials goes back more than three hundred years, originally used in the study of roots of single variable polynomials.
	Today symmetric polynomials have applications in a diverse range of areas of mathematics.
	In the thesis the relevance of the symmetric polynomials is brought by their presence in the cohomology rings of complete flag manifolds,
	in Section \ref{sec:CohomCompFlag}.
	In this chapter we summarise some basic concepts from the theory of symmetric polynomially that will be essential for our later work.
	A compete introduction to the topic can be see in \cite[\S $7$]{ECstanly} or \cite[\S $I$]{Macdonald}.
	
	\subsection{Elementary symmetric polynomials}\label{sec:elementary}
	
		Much of the language used to described symmetric polynomials is the language of partitions.
		So before describing the symmetric polynomials it is first necessary to introduce partitions.
			
		\begin{defn}
			An $n$ partition $\lambda$ is a sequence of non-negative integers $(\lambda_1,\dots,\lambda_k)$, for some integer $k\geq 1$, such that
			\begin{equation*}
				\lambda_1\geq\cdots\geq\lambda_k \;\; \text{and} \;\; \lambda_1+\cdots+\lambda_k=n.
			\end{equation*}
			By convention we consider partition $(\lambda_1,\dots,\lambda_k)$ and $(\lambda_1,\dots,\lambda_k,0,\dots,0)$ to be equal
			and abbreviate an $n$ partition $\lambda$ by $\lambda \vdash n$.
		\end{defn}
		
		The elementary symmetric polynomials are for any given $n$, a given collection of $n$ symmetric polynomials in $n$ variables.
		In the next theorem, we see that the elementary symmetric polynomials form a basis of the symmetric polynomials.
		That is any symmetric polynomials can be expressed as a unique polynomial in elementary symmetric polynomials.	
				
		\begin{defn}\label{defn:ElementarySymmetric}
			For each $n\geq 1$ and $1\leq l \leq n$, define the elementary symmetric polynomials $\sigma_l\in\mathbb{Z}[\gamma_1,\dots,\gamma_n]$ in $n$ variables by
			\begin{equation*}
				\sigma_l=\sum_{1\leq i_1<\cdots<i_l\leq n}{\gamma_{i_1}\cdots\gamma_{i_l}}.
			\end{equation*}
			For an partition $\lambda=(\lambda_1,\dots,\lambda_k)$ denote by $\sigma_\lambda$ the symmetric polynomial $\sigma_{\lambda_1}\cdots\sigma_{\lambda_k}$.
		\end{defn}
		
		\begin{exam}
			When $n=3$
			\begin{align*}
											\sigma_1&=\gamma_1+\gamma_2+\gamma_3,\\
											\sigma_2&=\gamma_1\gamma_2+\gamma_1\gamma_3+\gamma_2\gamma_3\\
				\text{and}\;\;\sigma_3&=\gamma_1\gamma_2\gamma_3.
			\end{align*}
		\end{exam}
		
		The following theorem is sometimes known as the fundamental theorem of symmetric polynomials.
		For a proof see for example \cite[\S $7.4$]{ECstanly}.
		
		\begin{thm}\label{thm:FunThmSym}
			For each $n\geq 1$, the set of $\sigma_\lambda$ where $\lambda$ ranges over all $n$ partitions forms an additive basis of all symmetric functions.
			That is for $1\leq i \leq n$,  the set of $\sigma_i$ form a multiplicative basis of  all symmetric functions.
		\end{thm}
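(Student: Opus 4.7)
The plan is to prove the theorem by the classical leading-term induction argument, which simultaneously gives the spanning statement and hints at linear independence. First I would introduce the lexicographic order on monomials of $\mathbb{Z}[\gamma_1,\dots,\gamma_n]$, declaring $\gamma_1^{a_1}\cdots\gamma_n^{a_n} > \gamma_1^{b_1}\cdots\gamma_n^{b_n}$ if the first index $i$ with $a_i\neq b_i$ satisfies $a_i>b_i$. The leading monomial of a nonzero symmetric polynomial $f$ has an exponent vector $(a_1,\dots,a_n)$, and since swapping $\gamma_i$ with $\gamma_{i+1}$ leaves $f$ unchanged while producing a potentially larger monomial unless $a_i \geq a_{i+1}$, symmetry forces $a_1 \geq a_2 \geq \cdots \geq a_n$. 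Hence the leading exponent is a partition.

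Next I would compute the leading monomial of a product of elementary symmetric polynomials. A direct expansion shows that $\sigma_1^{c_1}\sigma_2^{c_2}\cdots\sigma_n^{c_n}$ has leading monomial $\gamma_1^{c_1+c_2+\cdots+c_n}\gamma_2^{c_2+\cdots+c_n}\cdots\gamma_n^{c_n}$ with coefficient $1$. In particular, given any partition $(a_1,\dots,a_n)$ with $a_1\geq\cdots\geq a_n\geq 0$, the choice $c_i = a_i - a_{i+1}$ (with $a_{n+1}=0$) produces a product of elementary symmetric polynomials whose leading monomial is exactly $\gamma_1^{a_1}\cdots\gamma_n^{a_n}$. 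Rewriting this product in the form $\sigma_\lambda$ for the appropriate partition $\lambda$, one sees that every partition shape of the leading term can be matched.

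With these two facts in hand, the induction is the standard one: given a symmetric polynomial $f$, let $c\cdot\gamma_1^{a_1}\cdots\gamma_n^{a_n}$ be its leading term, subtract $c\cdot\sigma_\lambda$ for the corresponding $\lambda$, and observe that the difference is symmetric with strictly smaller leading monomial. Since the lexicographic order is a well-ordering on monomials of bounded total degree, iterating yields an expression of $f$ as a $\mathbb{Z}$-linear combination of $\sigma_\lambda$'s. For linear independence, the crucial observation is that distinct partitions $\lambda$ yield distinct leading monomials in the $\sigma_\lambda$; hence any nontrivial relation $\sum_\lambda n_\lambda \sigma_\lambda = 0$ would give a highest surviving leading monomial with a nonzero coefficient, a contradiction.

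The main obstacle I foresee is the bookkeeping in the leading-monomial-of-a-product step: writing down the multinomial expansion cleanly enough to extract the stated leading exponent and confirm its coefficient is $1$. Everything else (the monotonicity of the leading exponent, the subtraction-and-recurse step, the independence via distinct leading terms) is routine once that computation is in place. I would therefore devote most of the write-up to stating and verifying the identity for the leading monomial of $\sigma_1^{c_1}\cdots\sigma_n^{c_n}$, and then collect the two conclusions of the theorem as corollaries of the induction.
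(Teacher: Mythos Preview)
Your proposal is the classical and correct argument via lexicographic leading terms; there is no gap. Note, however, that the paper does not supply its own proof of this theorem at all: it simply cites Stanley's \emph{Enumerative Combinatorics} \cite[\S 7.4]{ECstanly}, so there is nothing to compare your argument against beyond confirming that it is indeed one of the standard proofs found in such references.
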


	\subsection{Complete homogeneous symmetric polynomials}\label{sec:homogeneous}
	
		The complete homologous symmetric functions are another collection of $n$ symmetric polynomials in $n$ variables for each $n\geq 1$.
		In a sense which is made explicit in \cite[\S $7.6$]{ECstanly},
		the complete homogeneous symmetric polynomials can be thought of as dual to the elementary symmetric polynomials.
		
		\begin{defn}\label{defn:CompleteHomogeneous}
			For each $n\geq 1$ and $1\leq l \leq n$, define the complete homogeneous symmetric polynomials $h_l\in\mathbb{Z}[\gamma_1,\dots,\gamma_n]$ in $n$ variables by
			\begin{equation*}
				h_l=\sum_{1\leq i_1\leq\cdots\leq i_l\leq n}{\gamma_{i_1}\cdots\gamma_{i_l}}.
			\end{equation*}
			For a partition $\lambda=(\lambda_1,\dots,\lambda_k)$, denote by $h_\lambda$ the symmetric polynomial $h_{\lambda_1}\cdots h_{\lambda_k}$.
		\end{defn}
		
		\begin{exam}
			When $n=3$
			\begin{align*}
											h_1&=\gamma_1+\gamma_2+\gamma_3,\\
											h_2&=\gamma_1^2+\gamma_2^2+\gamma_3^2+\gamma_1\gamma_2+\gamma_1\gamma_3+\gamma_2\gamma_3\\
				\text{and}\;\;h_3&=\gamma_1^3+\gamma_2^3+\gamma_3^3+
																\gamma_1^2\gamma_2+\gamma_1^2\gamma_3+\gamma_2^2\gamma_1+\gamma_2^2\gamma_3+\gamma_3^2\gamma_1+\gamma_3^2\gamma_2+\gamma_1\gamma_2\gamma_3.
			\end{align*}
		\end{exam}
		
		Given an $n\times n$ matrix $M$ with entries in the non-negative integers, denote the row and column sums by
		\begin{align*}
											&row(M)=(r_1,\dots,r_n) \\
			\text{and} \;\; &col(M)=(c_1,\dots,c_n).
		\end{align*}
		For $n$ partitions $\lambda$ and $\mu$ denote by $M_{\lambda\mu}$, the number of $n\times n$ matrices $M$ with 
		\begin{align*}
											&row(M)=\lambda \\
			\text{and} \;\; &col(M)=\mu.
		\end{align*}
		
		The next theorem gives the relationship between the elementary symmetric and complete homogeneous symmetric polynomials.
		For a proof see for example\cite[\S $7.5$]{ECstanly}.
		
		\begin{thm}\label{thm:ElmComp}
			Let $\lambda$ be an $m$ partition. Then for each $n\geq 1$, the elementary symmetric
			and complete homogeneous polynomials in $n$ variables satisfy the following relationship
			\begin{equation*}
				h_\lambda=\sum_{\mu\vdash m}{M_{\lambda\mu}\sigma_{\mu}}.
			\end{equation*}
		\end{thm}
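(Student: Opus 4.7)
The plan is to expand the product $h_\lambda=h_{\lambda_1}\cdots h_{\lambda_k}$ directly from Definition \ref{defn:CompleteHomogeneous}, encode each resulting monomial by a non-negative integer matrix, and then regroup the terms according to the column-sum profile.

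First I would substitute each factor to write
\begin{equation*}
h_\lambda=\prod_{i=1}^{k}\Biggl(\sum_{1\leq j_1\leq\cdots\leq j_{\lambda_i}\leq n}\gamma_{j_1}\cdots\gamma_{j_{\lambda_i}}\Biggr),
\end{equation*}
and I would parameterise the cross-terms in the expansion by $n\times n$ matrices $M$ whose $(i,c)$-entry $M_{ic}$ records how many times $\gamma_c$ is chosen in the $i$-th factor. The weakly increasing convention makes this encoding a bijection between cross-terms and matrices, and the degree condition in the $i$-th factor forces $\mathrm{row}(M)=\lambda$ (padded with zeros). The monomial contributed by $M$ is $\gamma_1^{c_1}\cdots\gamma_n^{c_n}$ with $(c_1,\dots,c_n)=\mathrm{col}(M)$.

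Next I would regroup the sum by passing each composition $(c_1,\dots,c_n)$ to its underlying partition $\mu\vdash m$. For a fixed $\mu$ the number of matrices whose column sum rearranges to $\mu$ is by definition $M_{\lambda\mu}$, and their combined monomial contribution is a symmetric polynomial determined by $\mu$. This produces an expansion of the form $h_\lambda=\sum_{\mu\vdash m}M_{\lambda\mu}\, s_\mu$ for symmetric polynomials $s_\mu$ fixed by the exponent profile $\mu$.

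The main obstacle is the final identification of the $s_\mu$ with the products $\sigma_\mu$ of elementary symmetric polynomials demanded by the statement. The regrouping naturally produces the monomial symmetric combinations of pattern $\mu$, not the elementary products $\sigma_\mu$, so a further change-of-basis argument is required: Theorem \ref{thm:FunThmSym} certifies that the $\sigma_\mu$ are an additive basis, and one must invoke the combinatorial transition between the monomial and elementary bases discussed in \cite[\S 7.5]{ECstanly}, then verify that the transfer carries the count $M_{\lambda\mu}$ across unchanged. Pinning down this transfer, together with the conventions for zero-padding partitions and for sorting column sums, is where I expect the real work to lie.
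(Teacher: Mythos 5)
Your expansion is correct up to and including the conclusion
\begin{equation*}
h_\lambda=\sum_{\mu\vdash m} M_{\lambda\mu}\,m_\mu,
\end{equation*}
where $m_\mu$ denotes the monomial symmetric polynomial indexed by $\mu$; this is the standard argument, and (modulo notation) it is exactly the proof in the section of \cite{ECstanly} the paper cites. The obstacle you flag in your final paragraph, however, is not something a further change of basis can repair, because the identity as printed in the theorem is false. Take $\lambda=(2)$ with $n=2$. Then $h_2=\gamma_1^2+\gamma_2^2+\gamma_1\gamma_2$, while $M_{(2),(2)}=M_{(2),(1,1)}=1$, so the claimed right-hand side is $\sigma_{(2)}+\sigma_{(1,1)}=\gamma_1\gamma_2+(\gamma_1+\gamma_2)^2=\gamma_1^2+\gamma_2^2+3\gamma_1\gamma_2$, which is not $h_2$. (With $m_{(2)}=\gamma_1^2+\gamma_2^2$ and $m_{(1,1)}=\gamma_1\gamma_2$ the same coefficients do give $h_2$.)

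The statement should read $m_\mu$ in place of $\sigma_\mu$: the matrix $(M_{\lambda\mu})$ is the transition matrix from the monomial basis to the complete homogeneous basis, not from the elementary basis. There is no way to ``carry the count $M_{\lambda\mu}$ across unchanged'' under the $m_\mu\leftrightarrow\sigma_\mu$ change of basis, which is why you could not pin that step down. Since the paper supplies no proof of its own --- it only cites Stanley --- there is nothing to compare your route against, but the route you chose is correct and proves the corrected version of the theorem; the error is in the theorem statement, not in your argument.
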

	
		As as consequence of Theorem \ref{thm:ElmComp},
		any polynomial in elementary symmetric polynomials can be replaced with a unique polynomial in complete homogeneous symmetric polynomials.
		Hence Theorem \ref{thm:FunThmSym} could equally well be stated in terms of $h_\lambda$
		rather than $\sigma_\lambda$.
		That is the complete homogeneous symmetric polynomials
		also form a basis of the symmetric polynomials.
		
\newpage	
\section{Topology of Lie groups and homogeneous space}\label{sec:TopLie}

	In this chapter we discuss the cohomology of simple Lie groups and some homogeneous space relevant to our later work.
	In addition we present the homology and cohomology of the based loop spaces of some such spaces. 
	
	\subsection{Lie groups}\label{sec:LieGroups}
	
		A Lie groups is a manifold with a group structure such that the operations of multiplication and inversion are smooth maps of the manifold.
		A compact connected Lie group is called simple if it is non-abelian, simply connected and has no non-trivial connected normal subgroups.
		The classification of simple Lie groups is equivalent to the classification of simple Lie algebras and was first attempted by Killing \cite{Killing1888},
		later improved by Cartan \cite{Cartan1914}, with the modern classification by Dynkin diagrams being completed by Dynkin in $1947$.
		\begin{defn}
			Given a field $K$, a Lie algebra over $K$ is a $K$-vectors space $V$ with a Lie bracket $[,]\colon V\times V \to V$ such that
			\begin{enumerate}
				\item
					$[aX+bY,Z]=a[X,Z]+b[Y,Z] \; \text{and} \; [Z,aX+bY]=a[Z,X]+b[Z,Y]$,
				\item
					$[X,X]=0$,
				\item
					$[X,[Y,Z]]+[Z,[X,Y]]+[Y,[Z,X]]=0$
			\end{enumerate}
			for all $a,b\in K$ and $X,Y,Z\in V$.
		\end{defn}
		For each $n\geq 1$, the classical Lie groups $SO(n), SU(n)$ and $Sp(n)$ are defined by the following sets of matrices, group operation matrix multiplication and
		subspace topology in $\mathbb{R}^{n^2},\mathbb{R}^{2n^2}$ and $\mathbb{R}^{4n^2}$ respectively.
		\begin{center}
		$SO(n)=\{ A \in M_{n}(\mathbb{R}) \;|\; {A}^{\intercal} A = I_{n},\; det(A)=1 \}$,
		\end{center}
		\begin{center}
		$SU(n)=\{ A \in M_{n}(\mathbb{C}) \;|\; {\bar{A}}^{\intercal} A = I_{n},\; det(A)=1 \}$,
		\end{center}
		\begin{center}
		$Sp(n)=\{ A \in M_{n}(\mathbb{H}) \;|\; {\bar{A}}^{\intercal} A = I_{n}, \}$,
		\end{center}
		where $M_{n}(R)$ denotes the set of $n \times n$ matrices over real division algebra $R$.
		The Lie group $\mathrm{Spin}(n)$ is defined to be the universal cover of $SO(n)$

		Let $\mathbb{O}$ denote the octonion real division algebra, the $8$-dimensional vector space with basis $1,e_1,e_2,e_3,e_4,e_5,e_6,e_7$
		and multiplication given in Table \ref{table:oct}.
		Conjugation on $\mathbb{O}$ is defined in the same way as the complex numbers and the quaternions.
		  
		\begin{table}[ht]
			\centering
			\caption{Multiplication in the octonion division algebra}
			\label{table:oct}
			\begin{tabular}{c|llllllll}
				 &$1$  &$e_1$&$e_2$&$e_3$&$e_4$&$e_5$&$e_6$&$e_7$ \\
			\cline{1-9}
			$1$&$-1$ &$e_1$&$e_2$&$e_3$&$e_4$&$e_5$&$e_6$&$e_7$  \\
			$e_1$&$e_1$&$-1$ &$e_3$&$-e_2$&$e_5$&$-e_4$&$-e_7$&$e_6$  \\
			$e_2$&$e_2$&$e_3$&$-1$ &$e_1$&$e_6$&$e_7$&$-e_4$&$-e_5$  \\
			$e_3$&$e_3$&$e_2$&$-e_1$&$-1$ &$e_7$&$-e_6$&$e_5$&$-e_4$  \\
			$e_4$&$e_4$&$-e_5$&$-e_6$&$-e_7$&$-1$ &$e_1$&$e_2$&$e_3$  \\
			$e_5$&$e_5$&$e_4$&$-e_7$&$e_6$&$-e_1$&$-1$ &$-e_3$&$e_2$  \\
			$e_6$&$e_6$&$e_7$&$e_4$&$-e_5$&$-e_2$&$e_3$&$-1$ &$-e_1$  \\
			$e_7$&$e_7$&$-e_6$&$e_5$&$e_4$&$-e_3$&$-e_2$&$e_1$&$-1$ 
			\end{tabular}
		\end{table}
		
		We can define the exceptional Lie group $G_2$ to be the set of automorphism of the octonion $\mathbb{R}$-algebra $\mathbb{O}$.
		That is treating elements of $\mathbb{O}$, as $8$-dimensional column vectors over $\mathbb{R}$,
		\begin{center}
			$G_2=\{ g \in GL(n,\mathbb{R}) \;|\; g(oo')=g(o)g(o') $ for all $ o,o'\in\mathbb{O} \}$.
		\end{center}
		Given an $\mathbb{R}$-algebra $A$ its complexification $A^C$ is defined to be $\{ a+ib \; | \; a,b\in A \}$, such that $i^2=-1$.
		Conjugation is given by $\tau(a+ib)=a-ib$ for each $a+ib\in A^C$. 
		Let $J=\{ X\in M(3,\mathbb{O}) \;|\; \bar{X}^{\intercal}=X \}$ with multiplication $X\circ Y = \frac{1}{2}(XY+YX)$, be the Jordan $\mathbb{R}$-algebra.
		We also define $X\times Y=\frac{1}{2}(2X\circ Y)-tr(X)Y-tr(Y)X+(tr(X)tr(Y)-(X,Y))I_3$, inner product $(X,Y)=tr(X\circ Y)$ and
		Hermitian inner product $\langle X,Y \rangle=(\tau X,Y)$ for all $X,Y\in J$.
		The operations $\circ$, $\times$, $(,)$ and $\langle,\rangle$ are defined in the same way in the complementation $J^C$.
		We define the exceptions Lie groups $F_4$ and $E_6$ by
		\begin{equation*}
			F_4=\{ \alpha \in Iso_{\mathbb{R}}(J) \; | \; \alpha(X\circ Y)=\alpha X \circ \alpha Y \; \text{for all} \; X,Y\in J \},
		\end{equation*}
		\begin{equation*}
			E_6=\{ \alpha \in Iso_{\mathbb{R}}(J^C) \; | \; \alpha(X\times Y)\alpha^{-1}=\alpha X \times \alpha Y,
					\; \langle\alpha X,\alpha Y\rangle =\langle X,Y \rangle \; \text{for all} \; X,Y\in {J^C} \}.
		\end{equation*}
		
		For $A,B\in J^C$, let $\tilde{A}$ in the dual space ${J^C}^*$ be given by $\tilde{A}X=A\circ X$ 
		and let $[,]\colon {J^C}^*\times {J^C}^*\to {J^C}^*$ be $[\tilde{A},\tilde{B}]X=\tilde{A}(\tilde{B}X)-\tilde{B}(\tilde{A}X)$ for all $X\in J^C$.
		Define $\vee \colon J^C\times J^C \to {J^C}^*$ by
		\begin{equation*}
			X\vee Y=[\tilde{X}, \tilde{Y}]+(X\circ Y-\frac{1}{3}(X,Y)I_3)^{\sim}.
		\end{equation*}
		We define a $\mathbb{C}$-algebra $B=J^C\oplus J^C \oplus \mathbb{C} \oplus \mathbb{C}$.
		Given $\phi\in {J^C}^*$, $L,F\in J^C$ and $v\in \mathbb{C}$, let $\Phi(\phi,L,F,v)\colon B \to B$ be give by
		\begin{align*}
			\Phi(\phi,L,F,v)&(X,Y,\xi,\eta)=\\
			&(\phi X-\frac{1}{3}vX+2F\times Y+\eta A,\;2L\times X-\phi Y+\frac{1}{3}vY+\xi F,\;(L,Y)-v\xi,\;(F,X)-v\eta).
		\end{align*}
		Multiplication in $B$ will be given by
		\begin{equation*}
			P\times Q=\Phi(\phi,A,B,v)
		\end{equation*}	
		for
		\begin{align*}
			\phi&=-\frac{1}{2}(X\vee W+Z \vee Y),\\
			A&=-\frac{1}{4}(2Y\times W-\xi Z- \zeta X),\\
			B&=\frac{1}{4}(2X\times Z-\eta W-\omega Y),\\
			C&=\frac{1}{8}((X,Y)-(Z,Y)+\xi\omega-\zeta\eta)\\
		\end{align*}
		for all $P=(X,Y,\xi,\eta),Q=(W,Z,\zeta,\omega)\in B$.
		We define the exceptional Lie group $E_7$
		\begin{equation*}
			E_7=\{ \alpha \in Iso_{\mathbb{C}}(B) \; | \; \det(\alpha P)=\det P, \; \langle\alpha P,\alpha Q\rangle =\langle P,Q \rangle \; \text{for all} \; P,Q\in B \},
		\end{equation*}
		where the Hermitian inner product is defined $\langle P,Q \rangle=\langle X,Z \rangle-\langle Y,W \rangle+\bar{\xi}\zeta-\bar{\eta}\omega$
		for all for all $P=(X,Y,\xi,\eta),Q=(W,Z,\zeta,\omega)\in B$.
		
		Define $\{ , \}\colon B \times B \to B$ by
		\begin{equation*}
			\{ P,Q \} = (X,W)-(Z,Y)+\xi \omega-\zeta \eta 
		\end{equation*}
		for all $P=(X,Y,\xi,\eta),Q=(W,Z,\zeta,\omega)\in B$.
		We will define $\mathbb{C}$-Lie algebra $D = B^*\oplus B \oplus B \oplus \mathbb{C} \oplus \mathbb{C} \oplus \mathbb{C}$.
		With Lie bracket
		\begin{equation*}
			[(\phi_1,P_1,Q_1,r_1,s_1,t_1),(\phi_2,P_2,Q_2,r_2,s_2,t_2)]=(\phi,P,Q,r,s,t)
		\end{equation*}
		where
		\begin{align*}
			\phi&=[\phi_1,\phi_2]+P_1\times Q_2-P_2\times Q_1\\
			P&=\phi_1P_2-\phi_2P_1+r_2P_2-r_2P_1+S_1Q_2-S_2Q_1\\
			Q&=\phi_1Q_2-\phi_2Q_1-r_1Q_2-r_2Q_1+t_1P_2-t_2P_1\\
			r&=\frac{1}{8}(-\{ P_1,Q_2 \}+ \{ P_2 Q_1 \} )+s_1t_2-s_2t_1\\
			s&=\frac{1}{4}\{ P_1,P_2 \}+2r_1s_2-2r_2s_1\\
			t&=-\frac{1}{4}\{ Q_1,Q_2 \}-2r_1t_2+2r_2t_1.
		\end{align*}
		Define involutions $\lambda$, $\lambda'$ and $\tau$ on $D$ by
		\begin{align*}
			\lambda(\phi,P,Q,r,s,t)=(\lambda\phi\lambda,\lambda P,\lambda Q,r,s,t)&,\\
			\lambda'(\phi,P,Q,r,s,t)=(\phi,Q,-P,-r,-t,-s)&,\\
			\tau(\phi,P,Q,r,s,t)=(\tau\phi\tau,\tau P,\tau Q,\tau r,\tau s,\tau t)&
		\end{align*}
		for each $(\phi,P,Q,r,s,t)\in D$ and involution $\lambda$ in $B$ is defined $\lambda(X,Y,\xi,\eta)=(Y,-X,\eta,-\xi)$ for each $(X,Y,\xi,\eta)\in B$.
		Let $\langle , \rangle\colon D\times D \to \mathbb{C}$ be given by
		\begin{equation*}
			(R_1,R_2)=(\phi_1,\phi_2)-\{ Q_1,P_2 \}+ \{ P_1,Q_2 \}-8r_1r_2-4t_1s_2-s_1t_2,
		\end{equation*}
		then
		\begin{equation*}
			\langle R_1,R_2 \rangle=(\tau\lambda'\lambda R_1,R_2)
		\end{equation*}
		for each $D_1=(\phi_1,P_1,Q_1,r_1,s_1,t_1),D_2(\phi_2,P_2,Q_2,r_2,s_2,t_2)=\in D$.
		Define the exceptional complex Lie group $E_8^C$ by
		\begin{equation*}
			E_8^C=\{ \alpha \in Iso_{\mathbb{C}}(D) \; |[\alpha D_1, \alpha D_2 \rangle] = \alpha [D_1,D_2] \; \text{for all} \; D_1,D_2\in D \}.
		\end{equation*}
		We define the exceptional Lie group $E_8$ as a subgroups of $E_8^C$ by
		\begin{equation*}
			E_8=\{ \alpha \in E_8^C \; | \; \langle \alpha X, \alpha Y \rangle = \langle X,Y \rangle \; \text{for all} \; X,Y\in E_8^C \}
		\end{equation*}
		The classifications of Lie groups states that the Lie groups defined above are the only simple Lie groups, see for example \cite[\S 5]{TLI&II} Theorem 6.27.
		
		\begin{thm}
				The only compact connected simple Lie groups are
				\begin{equation*}
					\mathrm{Spin}(m),\; SU(n),\; Sp(n),\; G_2,\; F_4,\; E_6,\; E_7,\; E_8
				\end{equation*}
				for $n\geq 1$ and $m \geq 2$.
		\end{thm}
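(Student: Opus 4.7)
The plan is to reduce the classification to the classification of complex simple Lie algebras via their Dynkin diagrams, which is the standard route cited in the background. First I would pass from a compact connected simple Lie group $G$ to its Lie algebra $\mathfrak{g}$, and then to the complexification $\mathfrak{g}^{\mathbb{C}}$. Because $G$ is compact, $\mathfrak{g}$ admits an invariant inner product (Weyl's averaging trick), so $\mathfrak{g}^{\mathbb{C}}$ is semisimple, and simplicity of $G$ in the group-theoretic sense translates into simplicity of $\mathfrak{g}^{\mathbb{C}}$.

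Next I would choose a maximal torus $T \subset G$, take $\mathfrak{t}^{\mathbb{C}} \subset \mathfrak{g}^{\mathbb{C}}$ as a Cartan subalgebra, and extract the root system $\Phi$ from the adjoint action. The key structural step is then to verify that $\Phi$ is a reduced, irreducible, crystallographic root system in a Euclidean space, so that its Dynkin diagram is one of the connected diagrams in the classical list $A_n$, $B_n$, $C_n$, $D_n$, $E_6$, $E_7$, $E_8$, $F_4$, $G_2$. I would quote the combinatorial classification of such diagrams; this is really the heart of the argument and where the work would lie, but since the statement is presented as a consequence of the cited Knapp reference, I would treat it as input.

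Having identified the diagram of $\mathfrak{g}^{\mathbb{C}}$, I would exhibit for each diagram a compact simply connected Lie group from the list with that diagram: $\mathrm{Spin}(2n+1)$ realises $B_n$, $SU(n+1)$ realises $A_n$, $Sp(n)$ realises $C_n$, $\mathrm{Spin}(2n)$ realises $D_n$, and the five exceptional groups $G_2$, $F_4$, $E_6$, $E_7$, $E_8$ constructed in this section realise the exceptional diagrams. The construction of each exceptional group via automorphisms of $\mathbb{O}$, the Jordan algebra $J$, its complexification $J^{\mathbb{C}}$, and the algebras $B$ and $D$, has been set up explicitly above, so one verifies case by case that the root system is of the claimed exceptional type.

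Finally, to close the loop, I would invoke the standard fact that a compact connected simply connected Lie group is determined up to isomorphism by its Lie algebra. Since any compact connected simple $G$ has a simply connected cover $\widetilde{G}$ that is again simple with the same Lie algebra, $\widetilde{G}$ must be one of the eight families listed, and $G$ itself is a quotient of $\widetilde{G}$ by a finite central subgroup (but in the statement we are classifying only the simply connected representatives). The main obstacle, as expected, is the combinatorial Dynkin classification, which I would not reproduce but cite from \cite{TLI&II}.
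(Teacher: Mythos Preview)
The paper does not actually prove this theorem; it is stated as background and simply attributed to \cite[\S 5]{TLI&II}, Theorem~6.27, with the preceding paragraphs only serving to define the groups that appear in the list. Your proposal therefore goes well beyond what the paper does: you sketch the standard reduction (compact simple $G$ $\rightsquigarrow$ compact real form $\rightsquigarrow$ complex simple $\mathfrak{g}^{\mathbb{C}}$ $\rightsquigarrow$ root system $\rightsquigarrow$ Dynkin diagram), invoke the Dynkin classification, and then match each diagram to one of the constructed groups. That outline is correct and is exactly the route the cited reference takes, so there is no conflict; you are simply filling in what the paper leaves as a citation.

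One small point worth tightening: in your final paragraph you say the statement classifies ``only the simply connected representatives,'' but as written the theorem asserts these are \emph{all} compact connected simple Lie groups, not just the simply connected ones. Strictly speaking that is not quite right (e.g.\ $SO(m)$ and $PSU(n)$ are compact connected simple but not on the list), and the paper is a little loose here too. If you wanted to make the statement literally true you would either restrict to simply connected groups or phrase it as ``up to finite central quotient.'' This is a wording issue with the theorem rather than a flaw in your argument.
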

		
		The next theorem gives a consequence of the classification of Lie groups which is a phrasing that better describes the importance in our situation,
		see \cite{Onishchik}.
		
		\begin{thm}\label{LieClassifiaction}
			Any compact connect Lie group is covered by a product of simple Lie groups and circles.
		\end{thm}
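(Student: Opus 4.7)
The plan is to reduce the statement to the structure theory of compact Lie algebras and then integrate the resulting decomposition back to the group level. First I would pass to the Lie algebra $\mathfrak{g}$ of $G$ and exploit the basic splitting $\mathfrak{g} = \mathfrak{z}(\mathfrak{g}) \oplus [\mathfrak{g},\mathfrak{g}]$, where the derived subalgebra further decomposes as a direct sum of compact simple ideals $[\mathfrak{g},\mathfrak{g}] = \mathfrak{g}_1 \oplus \cdots \oplus \mathfrak{g}_k$. This step uses that $G$ being compact equips $\mathfrak{g}$ with an $\mathrm{Ad}$-invariant inner product, so the orthogonal complement of any ideal is again an ideal, enabling the inductive decomposition into simple summands.

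Next I would realise each summand at the group level. For each simple ideal $\mathfrak{g}_i$, let $G_i$ denote the simply connected Lie group with Lie algebra $\mathfrak{g}_i$; since $\mathfrak{g}_i$ is a compact simple Lie algebra, a theorem of Weyl ensures the fundamental group of the associated adjoint group is finite and hence that $G_i$ is itself compact simple. Meanwhile, the identity component of the centre $Z_0(G)$ is a compact connected abelian Lie group, hence a torus $T^m = (S^1)^m$ for some $m\geq 0$, with Lie algebra precisely $\mathfrak{z}(\mathfrak{g})$.

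I would then assemble the covering map. Set $H = G_1 \times \cdots \times G_k \times T^m$; its Lie algebra is canonically isomorphic to $\mathfrak{g}$ via the decomposition above. The inclusions $\mathfrak{g}_i \hookrightarrow \mathfrak{g}$ integrate, using that each $G_i$ is simply connected, to Lie group homomorphisms $\rho_i \colon G_i \to G$, while $T^m \hookrightarrow G$ sits inside $G$ directly. Because distinct ideals in $\mathfrak{g}$ commute and $\mathfrak{z}(\mathfrak{g})$ is central, all of these subgroup images commute in $G$, so the pointwise product $\pi(g_1,\dots,g_k,t) = \rho_1(g_1)\cdots\rho_k(g_k)\cdot t$ defines a group homomorphism $\pi \colon H \to G$ whose differential at the identity is the inverse of the algebra decomposition, hence an isomorphism.

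The main obstacle is verifying that $\pi$ is a covering, i.e.\ surjective with finite discrete kernel. Surjectivity follows because $\pi(H)$ is a closed connected subgroup of $G$ (closed since $H$ is compact) whose Lie algebra is all of $\mathfrak{g}$, forcing $\pi(H) = G$. Since $d\pi_e$ is an isomorphism, the kernel of $\pi$ is discrete, and discreteness inside the compact group $H$ forces it to be finite. Thus $\pi$ is a finite covering homomorphism from the product of simple Lie groups $G_1,\dots,G_k$ and the circles comprising $T^m$ onto $G$, as required.
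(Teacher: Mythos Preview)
The paper does not actually prove this theorem; it simply states the result and refers the reader to \cite{Onishchik}. Your argument is the standard structure-theoretic proof one would find in such a reference: pass to the Lie algebra, use the $\mathrm{Ad}$-invariant inner product to split $\mathfrak{g}$ as centre plus a sum of simple ideals, integrate each simple ideal to its simply connected compact form via Weyl's theorem, realise the centre as a torus, and multiply the resulting maps into $G$ to obtain a finite covering. The steps are correct and match the paper's convention that ``simple'' already includes simply connected, so your $G_i$ are simple in the paper's sense. There is nothing to compare against beyond noting that your proof supplies what the paper outsources.
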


		As a consequence of Theorem \ref{LieClassifiaction}, when studying the topology of Lie groups and other related structures it is important to understand
		the topology of those associated with the classical and exceptional simple Lie groups.

	\subsection{Cohomology of simple Lie groups}\label{sec:SimpleLie}
		
		While the simple Lie groups are some of the most important spaces in topology, their cohomology rings in many cases are far from easily described.
		With coefficients in a field of characteristic $0$, the problem can be approached using methods utilizing de Rham cohomology, see for example \cite{RealCoHomogeneous}
		and these algebras were the first to be found.
		Integrally or over an arbitrary field the problem is more subtle and much work has been done by many mathematician including 
		Borel, Araki, Toda, Kono, Mimura and Shimada so today much is known.

		\begin{defn}
			Given a ring $R$, define the tensor algebra $TV$ over $R$-module $V$ to have module structure
			\begin{equation*}
				TV={\oplus}^{\infty}_{i=0}T^{i}V
			\end{equation*}
			where 
			\begin{equation*}
				T^{i}=\underbrace{V \otimes \dots \otimes V}_\text{$i$}.
			\end{equation*}
			Graded structure on $TV$ is given by $\deg{(v_{1}\otimes \cdots \otimes v_{k})=\sum_{i}{\deg v_{i}}}$ for $v_{i} \in V$ and
			multiplication is given by $v\cdot w=v \otimes w$ for each $v,w \in TV$.
			Define $\Lambda V=TV/I$ where $I$ is the ideal generated by elements of the form 
			\begin{center}
				$v\otimes w-(-1)^{\deg{v}\deg{w}} w \otimes v$ 
			\end{center}
			with $v,w \in TV$.
			Given a set of elements $\{ a_{1},\dots,a_{m} \}$ with given degrees, let $V$ be the free graded $R$-module generated by this set.
			In this case we may denote $TV$ by $T(a_{1},\dots,a_{m})$ and $\Lambda V$ by $\Lambda (a_{1},\dots,a_{m})$.
			In particular if all generators are of odd degree this algebra coincides with that of the exterior algebra.
			If all generators have even degree then $\Lambda V$ is a polynomial algebra.
		\end{defn}

		The integral cohomology of $SU(n)$ and $Sp(n)$ can be determined inductively using the Leray-Serre spectral sequence associated to the fibrations
		\begin{equation}\label{eq:FibeSU}
			SU(n)\to SU(n+1) \to S^{2n+1}
		\end{equation}
		and
		\begin{equation}\label{eq:FibeSp}
			Sp(n)\to Sp(n+1) \to S^{4n+3}.
		\end{equation}
		For the construction of these fibrations see for example \cite[\S 3.4]{Arkowitz}.
		
		\begin{thm}\label{thm:H*(SU(n))}
			For each $n\geq 1$, the cohomology of $SU(n)$ is given by
			\begin{equation*}
				H^*(SU(n);\mathbb{Z})=\Lambda(x_3,x_5,\dots,x_{2n-1}),
			\end{equation*}
			where $|x_i|=i$ for $i=3,5,\dots,2n-1$.
		\end{thm}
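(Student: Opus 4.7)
The plan is to argue by induction on $n$, using the Leray–Serre spectral sequence of the fibration (\ref{eq:FibeSU}) and Theorem \ref{thm:SSS}. The base case is $SU(1)=\{*\}$ with cohomology $\mathbb{Z}=\Lambda_\mathbb{Z}()$, or equivalently $SU(2)\cong S^3$ with cohomology $\Lambda_\mathbb{Z}(x_3)$. So I assume $H^*(SU(n);\mathbb{Z})\cong \Lambda(x_3,x_5,\dots,x_{2n-1})$ and aim to deduce the statement for $SU(n+1)$.

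The fibre $SU(n)$ has free cohomology by the inductive hypothesis, and the base $S^{2n+1}$ is simply connected (since $n\geq 1$) with cohomology concentrated in degrees $0$ and $2n+1$. By Theorem \ref{thm:SSS}(1) together with the universal coefficients theorem (Theorem \ref{thm:uct}, noting $\Ext$ vanishes against free groups), the $E_2$-page is
\begin{equation*}
    E_2^{p,q}\cong H^p(S^{2n+1};\mathbb{Z})\otimes H^q(SU(n);\mathbb{Z}),
\end{equation*}
which is nonzero only in the two columns $p=0$ and $p=2n+1$. Each differential $d_r$ has bidegree $(r,1-r)$ by Theorem \ref{thm:SSS}, so the only potentially nonzero differential is $d_{2n+1}$, sending column $0$ to column $2n+1$.

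Next I would argue that $d_{2n+1}$ vanishes on all algebra generators. For a generator $x_i\in H^i(SU(n);\mathbb{Z})$ of the fibre with $i\in\{3,5,\dots,2n-1\}$, the target $E_{2n+1}^{2n+1,\,i-2n}$ is zero because $i-2n<0$. The generator $u\in H^{2n+1}(S^{2n+1};\mathbb{Z})$ sits in the base axis, on which differentials automatically vanish. By the Leibniz rule (Theorem \ref{thm:SSS}(3)) the differential $d_{2n+1}$ is identically zero, hence $E_\infty=E_2$, and as algebras
\begin{equation*}
    E_\infty^{*,*}\cong \Lambda_\mathbb{Z}(x_3,x_5,\dots,x_{2n-1})\otimes \Lambda_\mathbb{Z}(u).
\end{equation*}

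Finally I would resolve the passage from $E_\infty$ to $H^*(SU(n+1);\mathbb{Z})$ via the filtration of Theorem \ref{thm:SS}(4). Since each column consists of free abelian groups the extensions split additively, so each $x_i$ lifts to a class of degree $i$ in $H^*(SU(n+1);\mathbb{Z})$, and $u$ lifts to some $x_{2n+1}\in H^{2n+1}(SU(n+1);\mathbb{Z})$. The relations $x_ix_j+x_jx_i=0$ follow from graded commutativity, and $x_i^2=0$ from $|x_i|$ odd. The only genuine subtlety is that in principle a lift $x_{2n+1}$ could satisfy $x_{2n+1}^2\neq 0$ on the nose; but $x_{2n+1}^2$ lies in $H^{4n+2}(SU(n+1);\mathbb{Z})$, and forced oddness of degree gives $2x_{2n+1}^2=0$, while the associated graded term of $x_{2n+1}^2$ in $E_\infty^{*, *}$ vanishes, so $x_{2n+1}^2=0$. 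This identifies $H^*(SU(n+1);\mathbb{Z})\cong \Lambda_\mathbb{Z}(x_3,x_5,\dots,x_{2n+1})$, closing the induction. The main (minor) obstacle is this last multiplicative extension step; everything else is forced by degree considerations.
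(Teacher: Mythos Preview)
Your proposal is correct and follows essentially the same route as the paper: induction on $n$ via the Leray--Serre spectral sequence of fibration~(\ref{eq:FibeSU}), with all differentials vanishing for degree reasons so that $E_2=E_\infty$, and then resolving extensions using freeness of the $E_\infty$-page. Your handling of the multiplicative extension $x_{2n+1}^2=0$ is in fact more explicit than the paper's; just note that the decisive point is not that the associated graded of $x_{2n+1}^2$ vanishes, but that $H^{4n+2}(SU(n+1);\mathbb{Z})$ is torsion-free (which you already established), so $2x_{2n+1}^2=0$ forces $x_{2n+1}^2=0$.
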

		
		\begin{proof}
			We know $SU(2)$ is diffeomorphic to $S^{3}$, hence
			\begin{equation*}
			H^{*}(SU(3)) = \Lambda(x_3)
			\end{equation*}
			where $|x_3|=3$.
			For each $m \geq 2$, $S^{m}$ is simply connected,
			hence $n\geq 1$ the Leray-Serre spectral sequence associated to fibration (\ref{eq:FibeSU}) converges.
			We proceed by induction on $n$.
			
			As shown in Figure \ref{pic:SS1}, on the $E_{2}^{*,*}$ page of the spectral sequence, due to the module structure of $H^{*}(S^{2n-1})$,
			the only non-zero columns are at $0$ and $2n-1$.
			Since differentials have bidegree $(r,1-r)$ all differential on pages other than $E_{2n-1}^{*,*}$ are zero, so
			\begin{equation*}
				E_{2n-1}^{*,*}=E_2^{*,*} \;\; \text{and} \;\; E_{2n}^{*,*}=E_{\infty}^{*,*}.
			\end{equation*}
			Assuming inductively that $H^{*}(SU(n-1)) \cong \Lambda (x_3,x_5, \dots ,x_{2n-3})$ with $deg(x_{i})=i$.
			The only non-zero entries of $E_{2n-1}^{*,*}$ are in
			$E_{2n-1}^{0,*} = {\Lambda}(x_3,x_5, \dots ,x_{2n-3})$ or $E_{2n-1}^{2n-1,*} = H^{*}(SU(n-1))$ as a module.
			The highest degree non-zero $E_{2n-1}^{0,q}$, is when $q=3 \cdot 5 \cdots 2n-1$.
			However as the bidegree of $d_{2n-1}$ is $(2n-1,2-2n)$ and the highest degree generator of $H^{*}(SU(n-1)$ is in dimension $2(n-1)-1$.
			differential $d_{2n-1}$ sends all generator  in column $E_{2n-1}^{0,*}$ to $0$.
			Therefore $d_{2n-1}$ is zero and $E_2^{*,*}=E_{\infty}^{*,*}$.
			
			Each negatively sloped diagonal of $E_{\infty}^{*,*}$ contains only one non-zero element. 
			Such non-zero elements occur only in odd entries of $E_{\infty}^{2(n-1),*}$ except for $E_{\infty}^{2n-1,0}$ lying in the negatively sloped diagonal containing 
			$E_{\infty}^{0,2(n-1)}$, which is zero since the first generator of $H^{*}(SU(n-1))$ occurs in degree $3$.
			Therefore there are no extension problems and the module structure of $H^{*}(SU(n))$ is clear.
			The multiplication in $H^{*}(SU(n))$ is freely generated
			with one additional algebra generator then $H^{*}(SU(n-1))$, which comes from $E_{\infty}^{2n-1,0}$ and hence this has degree $2n-1$ as required.
			
			\begin{center}
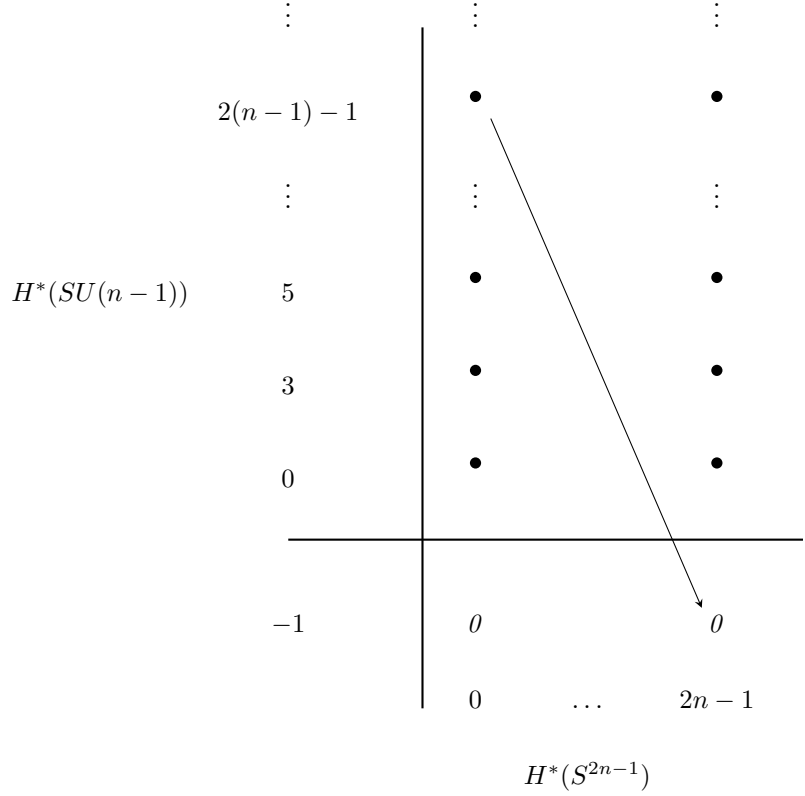

			\begin{tikzpicture}
				\matrix (m) [matrix of math nodes,
					nodes in empty cells,nodes={minimum width=5ex,
					minimum height=5ex,outer sep=-5pt},
					column sep=1ex,row sep=1ex]{
												 &	 \vdots		&&	\vdots	&									&	 \vdots	 & \\
												 &	2(n-1)-1  &&  \lcdot  &    					    &  \lcdot  & \\
												 &	 \vdots   &&  \vdots	&   				  		&  \vdots  & \\
					H^{*}(SU(n-1)) &			5     &&  \lcdot  &    					    &  \lcdot  & \\
												 &			3     &&  \lcdot  &   					    &  \lcdot  & \\
												 &			0     &&  \lcdot  & 								&  \lcdot  & \\
												 &						&&					&									&					 & \\	
												 &		 -1     &&  {\it0}  &       					&  {\it0}  & \\						
												 &\quad\strut &&     0    & 		 \dots 			&    2n-1  & \\
												 &						&&					& H^{*}(S^{2n-1})	&					 & \strut \\};
				\draw[-stealth] (m-2-4.south east) -- (m-8-6.north west);
			\draw[thick] (m-1-3.east) -- (m-9-3.east) ;
			\draw[thick] (m-7-2.north) -- (m-7-7.north) ;
			\end{tikzpicture}
			\captionof{figure}{Leray-Serre spectral sequence associated to fibration (\ref{eq:FibeSU})}
			\label{pic:SS1}
			\end{center}
		\end{proof}
		
		\begin{thm}\label{thm:H*(Sp(n))}
			For each $n\geq 1$, the cohomology of $Sp(n)$ is given by
			\begin{equation*}
				H^*(Sp(n);\mathbb{Z})=\Lambda(x_3,x_7,\dots,x_{4n-1}),
			\end{equation*}
			where $|x_i|=i$ for $i=3,7,\dots,4n-1$.
		\end{thm}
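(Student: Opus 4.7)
The plan is to follow the same strategy as the proof of Theorem \ref{thm:H*(SU(n))}, running the Leray-Serre spectral sequence associated with the fibration (\ref{eq:FibeSp}) inductively. The key numerical difference is that the base sphere has dimension $4n+3$ rather than $2n+1$, so one needs to check that the resulting page and differential degrees still leave no room for non-trivial differentials.

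First I would establish the base case $n=1$. Since $Sp(1)$ is diffeomorphic to $S^3$, we have $H^*(Sp(1);\mathbb{Z})=\Lambda(x_3)$ with $|x_3|=3$, matching the claimed form. For the inductive step, assume $H^*(Sp(n);\mathbb{Z})\cong\Lambda(x_3,x_7,\dots,x_{4n-1})$ and consider the Leray-Serre spectral sequence of $Sp(n)\to Sp(n+1)\to S^{4n+3}$, which converges by Theorem \ref{thm:SSS} since $S^{4n+3}$ is simply connected. The $E_2$-page
\begin{equation*}
E_2^{p,q}\cong H^p(S^{4n+3};H^q(Sp(n);\mathbb{Z}))
\end{equation*}
is concentrated in the two columns $p=0$ and $p=4n+3$, exactly as in Figure \ref{pic:SS1} but with the right-hand column shifted out to $4n+3$.

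Next I would show that all differentials vanish. Since differentials on page $r$ have bidegree $(r,1-r)$, the only page that can support a non-trivial differential is $E_{4n+3}^{*,*}$, and there $d_{4n+3}\colon E_{4n+3}^{0,q}\to E_{4n+3}^{4n+3,q-4n-2}$. The algebra generators $x_3,x_7,\dots,x_{4n-1}$ of the fiber cohomology all lie in degrees $\le 4n-1<4n+2$, so their images under $d_{4n+3}$ land in rows of negative index and are forced to be zero. By the Leibniz rule (Theorem \ref{thm:SSS}(3)) this propagates to the whole page, giving $E_{4n+3}^{*,*}=E_\infty^{*,*}=E_2^{*,*}$.

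Finally I would read off $H^*(Sp(n+1);\mathbb{Z})$ from $E_\infty$. Each non-zero negatively-sloped diagonal contains at most one non-zero entry, since the right-hand column sits at $p=4n+3$ while the non-zero rows of the left-hand column are the products of $x_3,x_7,\dots,x_{4n-1}$, all of odd degree, whereas $4n+3$ paired with any such entry of degree $\le 4(n-1)+3=4n-1$ lies on a diagonal disjoint from those entries in the left column. Hence there are no extension problems, and $H^*(Sp(n+1);\mathbb{Z})$ is multiplicatively generated by the inherited generators $x_3,\dots,x_{4n-1}$ together with one new generator in bidegree $(4n+3,0)$, giving $x_{4n+3}$ of the required degree. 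The main obstacle is simply verifying the bidegree bookkeeping to confirm no room for non-trivial differentials or extensions; once that is done the result follows formally from the induction, exactly paralleling the $SU(n)$ case.
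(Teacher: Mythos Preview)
Your proposal is correct and follows exactly the approach the paper takes: the paper's own proof of this theorem is a single sentence stating that, using fibration (\ref{eq:FibeSp}) and $Sp(1)\cong S^3$, the argument is identical to that of Theorem \ref{thm:H*(SU(n))} with the sphere degrees increased, and you have simply written out that argument explicitly.
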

		
		\begin{proof}
			Using Fibration (\ref{eq:FibeSp}) and the fact that $Sp(1)$ is diffeomorphic to $S^3$,
			the proof is the same as that of Theorem \ref{thm:H*(SU(n))} with the exception that degree of the spheres increased. 
		\end{proof}
		
		With coefficients over a field of characteristic $0$ or $\mathbb{Z}_2$
		the cohomology has an easily presented form.
		The integral cohomology of $SO(n)$ contains $2$-torsion
		and cannot be straightforwardly deduced from the characteristic $0$ and $\mathbb{Z}_2$ cases in anything but low dimensions.
		However the integral cohomology for any $n\geq 1$ has been described in \cite{PITTIE}. 
		The rational and $\mathbb{Z}_2$ cohomology are as follows and can be found in \cite[\S 3.6,7.5]{TLI&II} Corollary $1.14$ and Theorem $1.18$
		or in \cite{CohomologyLieGroup}.
		
		\begin{thm}\label{thm:H*(SO(2n+1);QZ2)}
			For each $n\geq 1$, the cohomology of $SO(2n+1)$ with rational coefficients is given by
			\begin{equation*}
				H^*(SO(2n+1);\mathbb{Q})=\Lambda[x_3,x_7,\dots,x_{4n-1}],
			\end{equation*}
			where $|x_i|=i$ for $i=3,7,\dots,4n-1$.
			With $\mathbb{Z}_2$ coefficients the cohomology of $SO(2n+1)$ is given by
			\begin{equation*}
				H^*(SO(2n+1);\mathbb{Z}_2)=\frac{\mathbb{Z}_2[x_1,x_3,\dots,x_{2n-1}]}{[x_i^{a_i}]},
			\end{equation*}
			where $|x_i|=i$ and $a_i$ is the smallest power of two such that $ia_i\geq 2n-1$ for $i=3,5,\dots,4n-1$.
		\end{thm}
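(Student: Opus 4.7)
The plan is to prove both parts by induction on $n$, mirroring the strategy used for Theorems \ref{thm:H*(SU(n))} and \ref{thm:H*(Sp(n))}. The only new wrinkle is that the obvious analogue of fibrations (\ref{eq:FibeSU}) and (\ref{eq:FibeSp}), namely $SO(m)\to SO(m+1)\to S^m$, alternates between even- and odd-rank rotation groups, so I would work with its iterated version
\begin{equation*}
SO(2n-1) \to SO(2n+1) \to V_2(\mathbb{R}^{2n+1}),
\end{equation*}
whose base is the Stiefel manifold of orthonormal $2$-frames (the unit tangent bundle of $S^{2n}$).

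For the rational case, a Gysin sequence argument applied to the sphere bundle $S^{2n-1}\to V_2(\mathbb{R}^{2n+1})\to S^{2n}$ with Euler class $\chi(S^{2n})=2$, shows that $V_2(\mathbb{R}^{2n+1})$ is a rational homology $(4n-1)$-sphere. With the inductive hypothesis $H^*(SO(2n-1);\mathbb{Q}) \cong \Lambda(x_3,x_7,\ldots,x_{4n-5})$, the $E_2$-page of the Leray-Serre spectral sequence is $\Lambda(x_3,\ldots,x_{4n-5})\otimes\Lambda(x_{4n-1})$ and is concentrated in the two columns $p=0$ and $p=4n-1$. Since all inductive fibre generators have odd degree at most $4n-5$, and the only non-trivial transgression $d_r\colon E_r^{0,j}\to E_r^{r,j-r+1}$ capable of hitting the base class occurs at $r=4n-1$ and needs $j=4n-2$ (even), every such transgression vanishes for dimension parity reasons. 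Hence $E_\infty=E_2$, there are no extension problems over $\mathbb{Q}$, and the resulting algebra is $\Lambda_{\mathbb{Q}}(x_3,x_7,\ldots,x_{4n-1})$ as claimed.

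For the $\mathbb{Z}_2$ case the cohomology is truncated polynomial rather than exterior, so a direct spectral-sequence induction is more delicate. I would instead use the classical CW decomposition of $SO(n)$ by iterated Schubert cells coming from the filtration $SO(1)\subset SO(2)\subset\cdots\subset SO(2n+1)$, each quotient $SO(k+1)/SO(k)\cong S^k$ contributing a mod-$2$ generator $x_k$ of degree $k$. The relations $x_i^{a_i}=0$ arise from the fact that higher Schubert cells collapse once their total dimension exceeds that permitted by the filtration, and they are uniquely pinned down by the Hopf algebra structure together with the Steenrod squares $\mathrm{Sq}^i(x_i)=x_i^2$. The main obstacle in the $\mathbb{Z}_2$ argument will be a careful bookkeeping of the exponents $a_i$ and matching them to the stated condition (the least power of $2$ with $ia_i\geq 2n-1$); alternatively, since the paper explicitly cites \cite{CohomologyLieGroup} and \cite[\S 3.6,7.5]{TLI&II}, one could assemble the result by direct appeal to the Borel transgression theorem in those references rather than redoing the combinatorics here.
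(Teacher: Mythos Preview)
The paper does not actually prove this theorem: it is stated with a citation to \cite[\S 3.6, 7.5]{TLI&II} and \cite{CohomologyLieGroup}, and no argument is given. So there is nothing in the paper to compare your proposal against at the level of proof strategy.

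That said, your rational argument is essentially correct and goes beyond what the paper does. One small quibble: the phrase ``dimension parity reasons'' is misleading. Products of the odd-degree fibre generators can certainly land in the even degree $4n-2$ (for instance $x_3x_7$ when $n=3$). The real reason the transgression $d_{4n-1}$ vanishes is that each individual generator $x_i$ has $i\le 4n-5$, so $d_{4n-1}(x_i)$ lands in negative fibre degree and is zero; the Leibniz rule then forces $d_{4n-1}$ to vanish on all products. You should also note that the base $V_2(\mathbb{R}^{2n+1})$ is simply connected only for $n\ge 2$, so the case $n=1$ (where $SO(3)$ is rationally $S^3$) must be the base of your induction. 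For the $\mathbb{Z}_2$ statement your sketch is honest about its gaps, and your fallback of citing the references is exactly what the paper itself does.
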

		
		\begin{thm}\label{thm:H*(SO(2n);QZ2)}
			For each $n\geq 1$, the cohomology of $SO(2n)$ with rational coefficients is given by
			\begin{equation*}
				H^*(SO(2n);\mathbb{Q})=\Lambda[x_3,x_7,\dots,x_{4n-3},x_{2n-1}],
			\end{equation*}
			where $|x_i|=i$ for $i=3,7,\dots,4n-1,2n-1$.
			With $\mathbb{Z}_2$ coefficients the cohomology of $SO(2n)$ is given by
			\begin{equation*}
				H^*(SO(2n);\mathbb{Z}_2)=\frac{\mathbb{Z}_2[x_1,x_3,\dots,x_{2n-1}]}{[x_i^{a_i}]},
			\end{equation*}
			where $|x_i|=i$ and $a_i$ is the smallest power of two such that $ia_i\geq 2n-1$ for $i=3,5,\dots,2n-3$.
		\end{thm}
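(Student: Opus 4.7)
The plan is to imitate the inductive strategy used for $SU(n)$ in the proof of Theorem \ref{thm:H*(SU(n))}, applying the cohomology Leray--Serre spectral sequence to the fibration
\begin{equation*}
SO(2n-1) \to SO(2n) \to S^{2n-1},
\end{equation*}
which arises from the transitive action of $SO(2n)$ on $S^{2n-1}$ with stabiliser $SO(2n-1)$. The base $S^{2n-1}$ is simply connected for $n\geq 2$ (the case $n=1$ being settled by $SO(2)=S^1$), so Theorem \ref{thm:SSS} gives
\begin{equation*}
E_2^{p,q} \cong H^p(S^{2n-1};k) \otimes H^q(SO(2n-1);k)
\end{equation*}
for $k\in\{\mathbb{Q},\mathbb{Z}_2\}$, with the fibre cohomology supplied by Theorem \ref{thm:H*(SO(2n+1);QZ2)} applied with $n$ replaced by $n-1$. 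Since $H^*(S^{2n-1};k)$ is supported only in columns $p=0$ and $p=2n-1$, the only potentially non-trivial differential is $d_{2n-1}$, of bidegree $(2n-1,2-2n)$, and the task reduces to showing that $d_{2n-1}$ vanishes and then resolving any multiplicative extensions.

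For the rational case, the fibre generators $x_{4k-1}$ live in bidegree $(0,4k-1)$ for $1\le k\le n-1$, and $d_{2n-1}(x_{4k-1})\in E_{2n-1}^{2n-1,\,4k-2n+1}$. Rather than ruling out each potential target by degree, I would appeal to the fact that the rational cohomology of a compact connected Lie group is a Hopf exterior algebra on odd-degree primitives of total dimension $2^{\mathrm{rank}}$; since $\mathrm{rank}(SO(2n))=n$ we have $\dim_{\mathbb{Q}} H^*(SO(2n);\mathbb{Q})=2^n$, which matches $\dim E_2 = 2\cdot 2^{n-1}$. The spectral sequence must therefore collapse, and exteriority in odd degrees rules out any extension problem, yielding the generators $x_3,x_7,\ldots,x_{4n-5}$ inherited from the fibre together with a new class $x_{2n-1}$ coming from $e_{2n-1}\in E_\infty^{2n-1,0}$.

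For the mod-$2$ case, the fibre generators $x_i$ run over odd $1\le i\le 2n-3$, and $d_{2n-1}(x_i)\in E_{2n-1}^{2n-1,\,i-2n+2}$ has strictly negative second coordinate, so $d_{2n-1}$ vanishes on every multiplicative generator of the fibre cohomology and hence throughout $E_2$ by Leibniz. Thus $E_\infty=E_2$ and the additive structure reads off immediately. Moreover, any lift $x_{2n-1}$ of $e_{2n-1}\in E_\infty^{2n-1,0}$ lies in $F^{2n-1}H^{2n-1}$, so its square lies in $F^{4n-2}H^{4n-2}\cong E_\infty^{4n-2,0}=0$, and we obtain $x_{2n-1}^2=0$ directly from the filtration.

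The main obstacle is resolving the remaining multiplicative extensions in the mod-$2$ case: a fibre relation $x_i^{a_i}=0$ only forces $x_i^{a_i}\in F^{2n-1}H^{ia_i}\cong E_\infty^{2n-1,\,ia_i-2n+1}$ in the total cohomology, so one must show that the potential correction terms of the form $x_{2n-1}\cdot y$ vanish. I would tackle this either by invoking naturality of the Steenrod squares together with the identification of the $x_i$ as Stiefel--Whitney-type classes pulled back from $BSO$ (so that the relations in $H^*(BSO;\mathbb{Z}_2)$ transgress correctly), or by choosing the lifts of the $x_i$ carefully so that the fibre relations are preserved, backed up by a Poincaré-series comparison against the predicted ring. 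Collapse of the spectral sequence is routine in both cases; pinning down the precise mod-$2$ ring structure on $x_1,\ldots,x_{2n-3}$ once the new generator $x_{2n-1}$ is adjoined is the delicate step.
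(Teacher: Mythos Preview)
The paper does not prove this theorem at all: it is stated as a known result, with the surrounding text pointing to \cite[\S 3.6, 7.5]{TLI&II} and \cite{CohomologyLieGroup} for the proofs. So there is no argument in the paper to compare against; your proposal is already far more than what the paper supplies.

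As a standalone sketch your approach is sound in outline. The rational collapse argument via the total-dimension count $\dim E_2 = 2\cdot 2^{n-1} = 2^{\mathrm{rank}\,SO(2n)}$ is clean, provided you are happy to import Hopf's structure theorem plus the rank identification as an external input (otherwise the step ``$\dim_{\mathbb{Q}} H^*(SO(2n);\mathbb{Q})=2^n$'' is what you are trying to establish). The mod-$2$ collapse is immediate for the reason you give, and your filtration argument for $x_{2n-1}^2=0$ is correct since $F^{p}H^*=0$ for $p>2n-1$.

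You have correctly isolated the only genuine difficulty: showing that the fibre relations $x_i^{a_i}=0$ lift without correction terms of the form $x_{2n-1}\cdot(\text{fibre class})$. Of your two suggested fixes, the Steenrod-square route is the one that actually works cleanly in the literature: one identifies the $x_i$ with the images of the Stiefel--Whitney classes $w_i$ under $H^*(BSO(2n);\mathbb{Z}_2)\to H^*(SO(2n);\mathbb{Z}_2)$ and uses $\mathrm{Sq}^i x_i = x_i^2$ to propagate the relations. A bare Poincar\'e-series comparison will not by itself pin down the ring structure, so if you pursue that line you would still need some multiplicative input.
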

		
		The integral cohomology of $G_2$ also  contains $2$-torsion, a proof of the following theorem can be found in \cite{CohomologyLieGroup} Theorem $2.14$.
		
		\begin{thm}\label{thm:H*(G2)}
			The cohomology of $G_2$ is given by
			\begin{equation*}
				H^*(G_2;\mathbb{Z})=\frac{\mathbb{Z}[x_3,x_{11}]}{[x_3^4,x_{11}^2,x_3^2x_{11},2x_3^2]},
			\end{equation*}
			where $|x_3|=3$ and $|x_{11}|=11$ .
		\end{thm}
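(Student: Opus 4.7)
The plan is to compute $H^*(G_2;\mathbb{Z})$ using the Leray-Serre spectral sequence associated to the fibration
\begin{equation*}
SU(3) \to G_2 \to S^6,
\end{equation*}
which arises from the transitive action of $G_2$ on $S^6$ with stabilizer $SU(3)$. Since $S^6$ is simply connected, Theorem \ref{thm:SSS} applies. By Theorem \ref{thm:H*(SU(n))} with $n=3$ we have $H^*(SU(3))=\Lambda(x_3,x_5)$, and $H^*(S^6)=\mathbb{Z}[y_6]/(y_6^2)$. Hence the $E_2$-page is concentrated in columns $p=0$ and $p=6$, with total-degree generators
\begin{equation*}
1,\; x_3,\; x_5,\; x_3x_5,\; y_6,\; x_3y_6,\; x_5y_6,\; x_3x_5y_6
\end{equation*}
in degrees $0,3,5,8,6,9,11,14$ respectively.

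Since differentials have bidegree $(r,1-r)$ and the only non-zero columns are $p=0,6$, the only page on which a non-trivial differential can occur is $E_6$, and this differential is determined by its value $d_6(x_5)\in E_6^{6,0}=\mathbb{Z}\langle y_6\rangle$ (together with the Leibniz rule, which forces $d_6(x_3)=0$ for degree reasons and $d_6(y_6)=0$). The main obstacle is to pin down the integer $d_6(x_5)=k\,y_6$. I would determine $k=\pm 2$ by a two-sided squeeze. If $k=0$ the spectral sequence would collapse and produce free classes in degrees $5$ and $8$, contradicting the known rational Poincar\'e polynomial $(1+t^3)(1+t^{11})$ of $G_2$. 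On the other hand if $k=\pm 1$, the differential would be an isomorphism $E_6^{0,5}\to E_6^{6,0}$ integrally, yielding no torsion at all; this contradicts the known mod $2$ cohomology $H^*(G_2;\mathbb{Z}_2)\cong\mathbb{Z}_2[x_3]/(x_3^4)\otimes\Lambda(x_5)$ (whose total $\mathbb{Z}_2$-dimension is $8$, not $4$), since collapse mod $2$ would leave classes only in degrees $0,3,11,14$. Thus $k=\pm 2$, and after a sign choice we may take $d_6(x_5)=2y_6$.

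Applying the Leibniz rule gives $d_6(x_3x_5)=2x_3y_6$ while $d_6(x_5y_6)=d_6(x_3x_5y_6)=0$ by $y_6^2=0$. Taking kernels and cokernels yields
\begin{align*}
E_\infty^{0,0}&=\mathbb{Z},\quad E_\infty^{0,3}=\mathbb{Z},\quad E_\infty^{0,5}=E_\infty^{0,8}=0,\\
E_\infty^{6,0}&=\mathbb{Z}/2,\quad E_\infty^{6,3}=\mathbb{Z}/2,\quad E_\infty^{6,5}=\mathbb{Z},\quad E_\infty^{6,8}=\mathbb{Z}.
\end{align*}
Each total degree contains a single non-zero $E_\infty$ entry, so the associated graded determines $H^*(G_2;\mathbb{Z})$ additively with no extension problems. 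It remains to recover the multiplicative structure. Let $x_3\in H^3(G_2)$ be the lift of the fiber class, and let $x_{11}\in H^{11}(G_2)$ be any lift of $x_5y_6\in E_\infty^{6,5}$. Since $x_3^2$ has degree $6$ and $E_\infty^{0,6}=0$ while $E_\infty^{6,0}=\mathbb{Z}/2$, the class $x_3^2$ lies in filtration $6$ and represents the non-zero element of $E_\infty^{6,0}$, giving the relation $2x_3^2=0$; similarly $x_3^3$ represents the generator of $E_\infty^{6,3}$. The relations $x_3^4=0$, $x_{11}^2=0$ and $x_3^2x_{11}=0$ follow because degrees $12$, $22$ and $17$ contain no non-zero $E_\infty$-term. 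Finally $x_3x_{11}$ represents $x_3x_5y_6\in E_\infty^{6,8}$ and gives the top class. These relations assemble into the stated presentation.
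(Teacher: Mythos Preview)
The paper does not give its own proof of this result; it is quoted from \cite{CohomologyLieGroup}, Theorem~2.14. Your approach via the fibration $SU(3)\to G_2\to S^6$ is the standard one and the outline is sound, but there are two genuine gaps.

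First, your ``two-sided squeeze'' for the differential $d_6(x_5)=ky_6$ is incomplete. You exclude $k=0$ (rationally) and $k=\pm 1$ (mod~$2$) and then jump to $k=\pm 2$, but nothing you wrote rules out $k=\pm 4,\pm 6,\dots$. For any nonzero even $k$ the resulting groups are $H^6\cong H^9\cong\mathbb{Z}/|k|$, and by the universal coefficient theorem this already reproduces the correct mod~$2$ Poincar\'e polynomial. To force $|k|=2$ you must also use that $H^*(G_2;\mathbb{Z}_p)\cong\Lambda(x_3,x_{11})$ for every odd prime $p$ (hence $p\nmid k$), or else identify $k$ directly as the third Chern number of the bundle classified by the map $S^6\to BSU(3)$.

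Second, the assertion that $x_3^2$ ``represents the non-zero element of $E_\infty^{6,0}$'' is a multiplicative extension problem, not a consequence of the spectral sequence. Since $H^6(SU(3))=0$, the square of the fibre class $x_3\in E_\infty^{0,3}$ is zero in $E_\infty^{0,6}$; all you learn is that $x_3^2\in F^6H^6(G_2;\mathbb{Z})\cong\mathbb{Z}/2$, and it could a priori be either element. The resolution again comes from the mod~$2$ ring you quoted: in $H^*(G_2;\mathbb{Z}_2)\cong\mathbb{Z}_2[x_3]/(x_3^4)\otimes\Lambda(x_5)$ one has $x_3^2\neq 0$, so the integral $x_3^2$ cannot vanish and must generate $H^6(G_2;\mathbb{Z})$. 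You should make this step explicit.
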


		Much is also known about the cohomology of the other exceptional Lie groups.
		In \cite[\S 7]{TLI&II} it is shown the groups $F_4$, $E_6$ and $E_7$ have $2$ and $3$-torsion, while $E_8$ has $2$, $3$ and $5$-torsion. 
		The cohomology algebras over field of these theses characteristics can also be found in \cite[\S 7]{TLI&II}.

	\subsection{Cohomology of complete flag manifolds}\label{sec:CohomCompFlag}

		A manifold $M$ is called a homogeneous space if it can be equipped with a transitive $G$ action for some Lie groups $G$.
		In this case we have $M \cong G/H$ for some Lie subgroup $H$ of $G$ isomorphic to the orbit of a point in $M$.
		A Lie subgroup $T$ of Lie group $G$ isomorphic to a torus is called maximal
		if any Lie subgroup also isomorphic to a torus containing $T$ coincidences with $T$.
		The next proposition is straightforward to show, see for example \cite[\S $5.3$]{TLI&II} Theorem $3.15$.
		
		\begin{prop}\label{prop:tori}
				All maximal tori in $G$ are conjugate and the conjugate of a torus is a torus.
				In addition given a maximal torus $T$, for all $x\in G$ there exists an element $g\in G$ such that $g^{-1}xg\in T$.
				Hence the union of all maximal tori is $G$.
		\end{prop}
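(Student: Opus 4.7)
The plan is to establish the three assertions in order: stability under conjugation, the fact that every element is conjugate into a fixed maximal torus, and finally conjugacy of maximal tori (from which the union statement is immediate).

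First, for any $g\in G$, the conjugation map $c_g\colon G\to G$, $h\mapsto ghg^{-1}$ is a continuous Lie group automorphism. It therefore carries compact, connected, abelian subgroups to compact, connected, abelian subgroups of the same dimension, so if $T$ is a torus then $gTg^{-1}$ is a torus of the same dimension. This handles the first sentence.

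Next, I would prove the element-wise statement before the conjugacy statement, since the latter will follow easily from it. Fix a maximal torus $T$ and let $x\in G$ act on the compact manifold $G/T$ by left translation $L_x\colon gT\mapsto xgT$. A coset $gT$ is fixed by $L_x$ iff $g^{-1}xg\in T$, so finding a fixed point of $L_x$ is exactly what is needed. Because $G$ is connected, $L_x$ is homotopic through $L_{\gamma(t)}$ (for a path $\gamma$ from $e$ to $x$) to $L_e=\mathrm{id}_{G/T}$, so its Lefschetz number equals $\chi(G/T)$. I would then invoke the standard fact that $G/T$ admits a cell decomposition with cells only in even dimensions indexed by the Weyl group $W$ (via the Bruhat decomposition, or equivalently a Bott–Morse function on $G/T$), so that
\begin{equation*}
	\chi(G/T)=|W|>0.
\end{equation*}
The Lefschetz fixed point theorem then forces $L_x$ to have a fixed point, giving $g\in G$ with $g^{-1}xg\in T$.

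From this, the conjugacy of any two maximal tori $T_1,T_2$ follows quickly. Pick a topological generator $t$ of $T_1$ (which exists because a compact abelian Lie group is of the form $\mathbb{R}^k/\mathbb{Z}^k$, where the Kronecker approximation theorem supplies a dense cyclic subgroup). Applied to $T_2$ and the element $t$, the previous step gives $g\in G$ with $g^{-1}tg\in T_2$, hence $g^{-1}T_1 g=\overline{\langle g^{-1}tg\rangle}\subseteq T_2$. Since $g^{-1}T_1g$ is a torus by the first step and $T_2$ is maximal, $g^{-1}T_1 g=T_2$. The final assertion that $\bigcup_{g\in G}gTg^{-1}=G$ is simply a restatement of the element-wise conjugacy.

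The main obstacle is the Euler characteristic computation $\chi(G/T)=|W|>0$, which is not a routine manipulation and relies on genuinely non-trivial input (Bruhat decomposition, or equivalently Bott's Morse-theoretic analysis of $G/T$); everything else in the argument is either a density/automorphism remark or a direct application of the Lefschetz fixed point theorem. The topological-generator step for tori is minor but also worth stating carefully so the reduction in the last paragraph is airtight.
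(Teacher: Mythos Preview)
The paper does not actually prove this proposition: it is stated with a pointer to \cite[\S 5.3]{TLI&II}, Theorem~3.15, and no argument is given in the text. So there is no in-paper proof to compare against.

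Your argument is the standard Weil proof and is correct. The Lefschetz step is exactly right: a fixed point of $L_x$ on $G/T$ is the same as a conjugate of $x$ lying in $T$, and connectedness of $G$ makes $L_x$ homotopic to the identity. The reduction from element-wise conjugacy to conjugacy of maximal tori via a topological generator is also fine. The only point worth flagging, which you already acknowledge, is the input $\chi(G/T)>0$. If you want to avoid any appearance of circularity, note that you only need positivity, not the exact value $|W|$: one clean route is Hopf--Samelson's original argument that $G/T$ has cohomology concentrated in even degrees (from the Serre spectral sequence of $T\to G\to G/T$ together with $H^*(G;\mathbb{Q})$ being an exterior algebra on odd generators), which immediately gives $\chi(G/T)>0$ without invoking the Bruhat decomposition. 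This keeps the argument self-contained and independent of any structure theory that might presuppose the maximal torus theorem.
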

		
		It is therefore unambiguous to refer to maximal torus $T$ of $G$ and consider the quotient $G/T$, which is isomorphic regardless of the choice of $T$.
		The homogeneous space $G/T$ is called the complete flag manifold of $G$.
		The rank of Lie group $G$ is the dimension of a maximal torus $T$.
		The ranks of classical simple Lie groups can be deduced by considering the standard maximal tori of $SU(n),SO(n)$ and $Sp(n)$
		see for example \cite[Chapter 7]{MatrixGroups}.
		For the ranks of the exception simple Lie groups see \cite{Exceptional}.
		
		\begin{prop}\label{prop:rank}
				For $n\geq 1$, the ranks of $SU(n+1),Sp(n),SO(2n)$ and $SO(2n+1)$ are $n$.
				The ranks of $G_2,F_4,E_6,E_7$ and $E_8$ are $2,4,6,7$ and $8$ respectively.
		\end{prop}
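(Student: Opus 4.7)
The plan is to exhibit, in each classical case, an explicit torus of the claimed dimension inside the Lie group, verify that it is maximal, and then invoke Proposition \ref{prop:tori} to conclude that any maximal torus has the same dimension. For the exceptional groups, the statement is pulled from the classification and the standard root system data cited in \cite{Exceptional}.

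For the classical cases I would proceed as follows. In $SU(n+1)$, consider the subgroup $T$ of diagonal matrices $\mathrm{diag}(e^{i\theta_1},\ldots,e^{i\theta_{n+1}})$ subject to $\theta_1+\cdots+\theta_{n+1}\equiv 0 \pmod{2\pi}$; this is manifestly a torus of dimension $n$. By the spectral theorem every element of $SU(n+1)$ is unitarily diagonalisable, so each conjugacy class meets $T$. If $T$ were contained in a strictly larger torus $T'$, then $T'$ would commute with $T$; but the centraliser of $T$ in $SU(n+1)$ is already $T$ itself (since a matrix commuting with a generic diagonal matrix with distinct eigenvalues must be diagonal, and the intersection with $SU(n+1)$ is $T$), contradicting $T'\supsetneq T$. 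The same style of argument applies to $Sp(n)$ using the diagonal torus of matrices $\mathrm{diag}(e^{i\theta_1},\ldots,e^{i\theta_n})$ viewed over $\mathbb H$ (dimension $n$), and to $SO(2n)$ and $SO(2n+1)$ using block-diagonal rotation matrices built from $n$ standard $2\times 2$ rotation blocks (together with a trailing $1$ in the odd case), of dimensions $n$ and $n$ respectively. In each case, diagonalisability/block-diagonalisability of unitary or orthogonal matrices ensures every conjugacy class meets the candidate torus, and the centraliser computation forces maximality. The references \cite[Chapter 7]{MatrixGroups} and \cite[\S 5.3]{TLI&II} cover all of these verifications.

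For the exceptional groups $G_2$, $F_4$, $E_6$, $E_7$, $E_8$, I would not try to reproduce the construction of a maximal torus from the explicit algebraic definitions given in Section \ref{sec:LieGroups}, since this is genuinely the hard computational part. Instead, I would simply invoke the classification: each compact simple Lie group corresponds to an irreducible root system, the rank equals the number of nodes in the associated Dynkin diagram, and the Dynkin diagrams of $G_2$, $F_4$, $E_6$, $E_7$, $E_8$ have $2$, $4$, $6$, $7$, $8$ nodes respectively. This is cited from \cite{Exceptional}.

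The main obstacle, if one insisted on a self-contained argument, would be proving maximality of the candidate torus rather than just exhibiting one of the correct dimension; this requires the centraliser-of-a-generic-element computation mentioned above, or equivalently, invoking the structure theorem that all maximal tori of a compact connected Lie group are conjugate (Proposition \ref{prop:tori}) together with one concrete dimension count. Given that the reader already has Proposition \ref{prop:tori} at hand, this reduces the whole proof to: exhibit one torus of the claimed dimension, and argue via centraliser analysis (or spectral normal forms) that it is not contained in a larger torus. The exceptional cases are simply quoted.
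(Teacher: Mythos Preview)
Your proposal is correct and follows exactly the approach the paper indicates: the paper does not give a proof of this proposition at all, but merely prefaces it by saying the classical ranks can be deduced from the standard maximal tori (citing \cite[Chapter 7]{MatrixGroups}) and the exceptional ranks are taken from \cite{Exceptional}. Your write-up supplies the details the paper omits, and in that sense is more complete than the paper itself.
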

		
		Define the Weyl group of Lie group $G$ with maximal torus $T$ to be
		$W_G=N_G(T)/Z(T)$ the normalizer of $T$ in $G$ quotient the centraliser of $T$ in $G$.
		The cohomology of homogeneous spaces was studied in detail by Borel in \cite{Borel}.
		In particular, from Borel's work it was possible to deduce the rational cohomology of $G/T$.
		\begin{thm}\label{thm:Borel}
			For compact connected Lie group $G$ with maximal torus $T$
			\begin{equation*}
				H^*(G/T;\mathbb{Q})\cong \frac{H^*(BT;\mathbb{Q})}{\tilde{H}^*(BT;\mathbb{Q})^{W_G}}
			\end{equation*}
			where $BT$ is the classifying space of $T$.
		\end{thm}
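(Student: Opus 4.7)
The plan is to exploit the Borel fibration $G/T \to BT \to BG$ together with the Leray--Serre spectral sequence from Section \ref{sec:SpecSeq}. The fibration itself arises by observing that a contractible total space $EG$ carries a free action of the maximal torus $T$ (via the inclusion $T\hookrightarrow G$), so $EG/T$ is a model for $BT$, and the quotient map $EG/T \to EG/G = BG$ has fiber $G/T$. Since $G$ is connected, $BG$ is simply connected, so Theorem \ref{thm:SSS} applies and gives a convergent spectral sequence
\begin{equation*}
E_2^{p,q} = H^p(BG;\mathbb{Q})\otimes H^q(G/T;\mathbb{Q}) \;\Longrightarrow\; H^{p+q}(BT;\mathbb{Q}).
\end{equation*}

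First I would quote the two classical facts about rational cohomology of classifying spaces that drive the argument: (a) $H^*(BT;\mathbb{Q})$ is a polynomial algebra on generators of degree $2$, one for each circle factor of $T$, so it is concentrated in even degrees; (b) for a compact connected Lie group $G$ with maximal torus $T$, the restriction map induces an isomorphism $H^*(BG;\mathbb{Q})\cong H^*(BT;\mathbb{Q})^{W_G}$ onto the ring of Weyl-group invariants (Chevalley's theorem), which is itself a polynomial algebra concentrated in even degrees. Using a Schubert-cell decomposition of $G/T$, or alternatively a direct Poincar\'e-series argument based on the fact that $|W_G|$ equals the Euler characteristic of $G/T$, one sees that $H^*(G/T;\mathbb{Q})$ is also concentrated in even degrees. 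All three algebras live in even degrees, so every differential $d_r$ in the Serre spectral sequence (which has bidegree $(r,1-r)$) must vanish, and the sequence collapses at $E_2$.

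Collapse implies $H^*(BT;\mathbb{Q})$ is a free module over $H^*(BG;\mathbb{Q})$ with basis a lift of any $\mathbb{Q}$-basis of $H^*(G/T;\mathbb{Q})$; in fact it yields an isomorphism
\begin{equation*}
H^*(BT;\mathbb{Q}) \;\cong\; H^*(BG;\mathbb{Q})\otimes_{\mathbb{Q}} H^*(G/T;\mathbb{Q})
\end{equation*}
as $H^*(BG;\mathbb{Q})$-modules (no extension problems arise over a field). Tensoring this isomorphism with $\mathbb{Q}=H^*(BG;\mathbb{Q})/\tilde H^*(BG;\mathbb{Q})$ over $H^*(BG;\mathbb{Q})$ and substituting $H^*(BG;\mathbb{Q})\cong H^*(BT;\mathbb{Q})^{W_G}$ gives
\begin{equation*}
H^*(G/T;\mathbb{Q}) \;\cong\; H^*(BT;\mathbb{Q})\otimes_{H^*(BT;\mathbb{Q})^{W_G}}\mathbb{Q} \;\cong\; H^*(BT;\mathbb{Q})\big/\bigl(\tilde H^*(BT;\mathbb{Q})^{W_G}\bigr),
\end{equation*}
which is the statement of the theorem (with the denominator understood as the ideal generated by positive-degree invariants).

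The main obstacle, and the only genuinely non-formal ingredient, is Chevalley's identification $H^*(BG;\mathbb{Q})\cong H^*(BT;\mathbb{Q})^{W_G}$: in a self-contained write-up one would either appeal to Borel's paper \cite{Borel} directly, or sketch a transfer argument using that $G/T$ has non-zero rational Euler characteristic $|W_G|$, so the composite $H^*(BG;\mathbb{Q})\to H^*(BT;\mathbb{Q})\to H^*(BT;\mathbb{Q})^{W_G}$ is injective with image of the correct Poincar\'e series. A secondary (but easy) point to verify carefully is the even-degree concentration of $H^*(G/T;\mathbb{Q})$; once that is in hand, the collapse of the Leray--Serre spectral sequence and the algebraic manipulation above are routine.
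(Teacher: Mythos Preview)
The paper does not actually prove this theorem; it is stated as a classical result of Borel and simply cited from \cite{Borel}, with no argument given. Your proposed proof via the Borel fibration $G/T \to BT \to BG$, collapse of the rational Leray--Serre spectral sequence by parity, and Chevalley's identification $H^*(BG;\mathbb{Q})\cong H^*(BT;\mathbb{Q})^{W_G}$ is the standard approach and is correct as outlined.

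One minor comment on the logic: you appeal to the even-degree concentration of $H^*(G/T;\mathbb{Q})$ (via Schubert cells or an Euler-characteristic argument) in order to force collapse, which is fine but imports a separate structural fact about $G/T$. An alternative that stays closer to Borel's original line is to argue in the reverse direction: since $H^*(BT;\mathbb{Q})$ is a polynomial ring and $H^*(BG;\mathbb{Q})\cong H^*(BT;\mathbb{Q})^{W_G}$ is a polynomial subring over which $H^*(BT;\mathbb{Q})$ is free (Chevalley), a Poincar\'e-series comparison shows the edge homomorphism $H^*(BT;\mathbb{Q})\to H^*(G/T;\mathbb{Q})$ is surjective and the spectral sequence must collapse, yielding the even-degree concentration of $H^*(G/T;\mathbb{Q})$ as a consequence rather than an input. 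Either route is acceptable; yours is perhaps the more transparent one given the tools set up in Section~\ref{sec:SpecSeq}.
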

		In \cite{torG/T} Bott and Samelson, using Morse theory, extended Borel's work by showing that there is no torsion in $H^*(G/T;\mathbb{Z})$.
		This made it easier to deduced the integral structure of the cohomology of complete flag manifolds in the cases of $SU(n),Sp(n)$ and $G_2$.
		Toda later in \cite{toda1975} studied again the cohomology of homogeneous spaces, looking at the mod $p$ cohomology for prime $p$.
		In particular Toda was able to deduce in a nice form the integral cohomology algebras of complete flag manifolds in the case of $SO(n)$.
		Then in \cite{toda1974}, Toda and Watanabe computed the cohomology in the cases of $F_4$ and $E_6$.
		Finally the cohomology of complete flag manifolds of simple Lie groups was completed by Nakagawa in \cite{NakagawaE7} and \cite{nakagawaE8},
		finishing the cases $E_7$ and $E_8$. 
	
		\begin{thm}[\cite{Borel}, \cite{AplicationsOfMorse}]\label{thm:H*SU/T}
			For each $n\geq 0$, the cohomology of the complete flag manifold of the simple Lie group $SU(n+1)$ is given by
			\begin{equation*}
				H^*(SU(n+1)/T^n;\mathbb{Z})=\frac{\mathbb{Z}[\gamma_1,\dots,\gamma_{n+1}]}{[\sigma_1,\dots,\sigma_{n+1}]},
			\end{equation*}
			where $|\gamma_i|=2$.
		\end{thm}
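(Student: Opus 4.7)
The plan is to combine Borel's Theorem \ref{thm:Borel} (over $\mathbb{Q}$) with the torsion-free result of Bott--Samelson to obtain the integral statement, with the Weyl-invariant calculation handled by the fundamental theorem of symmetric polynomials (Theorem \ref{thm:FunThmSym}).

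First, I would identify the maximal torus $T^n \subset SU(n+1)$ as the diagonal unit-determinant matrices. This sits in a short exact sequence $T^n \to (S^1)^{n+1} \xrightarrow{\det} S^1$, which induces a principal $S^1$-bundle $BT^n \to B(S^1)^{n+1} \to BS^1$. Since $H^*(B(S^1)^{n+1};\mathbb{Z}) = \mathbb{Z}[\gamma_1,\dots,\gamma_{n+1}]$ with each $|\gamma_i|=2$, running the cohomology Leray--Serre spectral sequence (Theorem \ref{thm:SSS}) for this fibration identifies $H^*(BT^n;\mathbb{Z})$ with $\mathbb{Z}[\gamma_1,\dots,\gamma_{n+1}]/[\sigma_1]$ where $\sigma_1=\gamma_1+\cdots+\gamma_{n+1}$ transgresses to the polynomial generator of $H^*(BS^1;\mathbb{Z})$.

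Next, I would recall that the Weyl group of $SU(n+1)$ is the symmetric group $S_{n+1}$ acting on $H^*(BT^n;\mathbb{Z})$ by permuting the $\gamma_i$'s. By the fundamental theorem of symmetric polynomials (Theorem \ref{thm:FunThmSym}), the invariant subring of $\mathbb{Z}[\gamma_1,\dots,\gamma_{n+1}]$ is $\mathbb{Z}[\sigma_1,\dots,\sigma_{n+1}]$, so the positive-degree Weyl invariants inside $H^*(BT^n;\mathbb{Z})$ are generated (as an ideal) by the images of $\sigma_1,\dots,\sigma_{n+1}$. Applying Borel's theorem then yields
\begin{equation*}
H^*(SU(n+1)/T^n;\mathbb{Q}) \cong \mathbb{Q}[\gamma_1,\dots,\gamma_{n+1}]/[\sigma_1,\dots,\sigma_{n+1}].
\end{equation*}

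The main obstacle is upgrading this rational isomorphism to an integral one, since Borel's theorem as stated applies only over $\mathbb{Q}$. For this I would invoke the Bott--Samelson result that $H^*(G/T;\mathbb{Z})$ is torsion-free for any compact connected Lie group. Then the universal coefficients theorem (Theorem \ref{thm:uct}) implies that the natural map $H^*(G/T;\mathbb{Z}) \to H^*(G/T;\mathbb{Q})$ is injective and identifies $H^*(G/T;\mathbb{Z})$ with a full lattice in the rational cohomology. Working with the Borel fibration $G/T \to BT \to BG$ and the corresponding integral Leray--Serre spectral sequence converging to $H^*(BT;\mathbb{Z})$, together with the known structure of $H^*(BSU(n+1);\mathbb{Z})$ as the polynomial algebra on Chern classes (which pull back to $\sigma_2,\dots,\sigma_{n+1}$ modulo $\sigma_1$), one checks that the $\gamma_i$ admit integral representatives and that the presented ring has the correct Poincar\'e series (which can be computed on either side using symmetric polynomial bases, as in Chapter \ref{sec:CombiSQ}). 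A dimension count in each degree, matched against the rational answer, then forces the integral cohomology ring to be exactly $\mathbb{Z}[\gamma_1,\dots,\gamma_{n+1}]/[\sigma_1,\dots,\sigma_{n+1}]$ as claimed.
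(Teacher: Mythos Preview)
Your proposal is correct and follows essentially the same approach as the paper's sketch proof: Borel's theorem for the rational computation, identification of the Weyl group $S_{n+1}$ acting by permuting the $\gamma_i$, the Bott--Samelson torsion-free result, and then the universal coefficients map $H^*(G/T;\mathbb{Z})\to H^*(G/T;\mathbb{Q})$ to upgrade to integers. The paper leaves the last step vague (``easily resolved in the cases of $SU(n)$ and $Sp(n)$''), whereas you flesh it out via the Borel fibration $G/T\to BT\to BG$ and the integral Chern classes of $BSU(n+1)$ together with a Poincar\'e-series count; this is a standard and appropriate way to complete the argument.
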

		
		\begin{proof}[Sketch proof]
			By Theorem \ref{thm:Borel}
			\begin{equation*}
				H^*(SU(n+1)/T^n;\mathbb{Q})\cong \frac{H^*(BT^n;\mathbb{Q})}{\tilde{H}^*(BT^n;\mathbb{Q})^{W_{SU*n+1)}}}.
			\end{equation*}
			The cohomology of the classifying space of the $n$-torus $BT^n$ is $\mathbb{Q}[x_1,\dots,x_n]$ where $|x_i|=2$.
			The Weyl group $W_{SU(n+1)}$ is the symmetric group $S_{n+1}$.
			$W_{SU(n+1)}$ acts on the indices $x_1,\dots,x_n,x_{n+1}$, where $x_{n+1}=-x_1-\dots-x_n$.
			Hence the rational version of the theorem is proved.
			In \cite{AplicationsOfMorse} Bott and Samelson showed that the integral cohomology of complete flag manifolds is concentrated in even degrees and is torsion free.
			Therefore the problem of finding the integral cohomology to considering the map $H^*(SU(n+1)/T^n;\mathbb{Z})\to H^*(SU(n+1)/T^n;\mathbb{Q})$,
			induced by the universal coefficients theorem.
			This problem is easily resolved in the cases of $SU(n)$ and $Sp(n)$ but not for other simple Lie groups.
		\end{proof}

		\begin{thm}[\cite{toda1974}, Theorem $2.1$]\label{thm:H*SUodd}
			For each $n\geq 1$, the cohomology of the complete flag manifold of the simple Lie group $SO(2n+1)$ is given by
			\begin{equation*}
				H^*(SO(2n+1)/T^n;\mathbb{Z})=\frac{\mathbb{Z}[\gamma_1,\dots,\gamma_n,t_1,\dots,t_n]}{[\sigma_i-2t_i, \;\; t_{2i}+\sum_{j=1}^{2i-1}(-1)^jt_jt_{2i-j}]},
			\end{equation*}
			where $1\leq i \leq n-1$, $|\gamma_i|=2$ and $|t_i|=2i$.
		\end{thm}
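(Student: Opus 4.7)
The plan is to mirror the sketch proof of Theorem \ref{thm:H*SU/T}, using Borel's theorem (Theorem \ref{thm:Borel}) and Bott-Samelson's torsion-freeness result, but adapted to the Weyl group of $SO(2n+1)$. The main new feature is that this Weyl group acts not only by permutations but also by independent sign changes, which forces the rational presentation to involve half-integer coefficients and hence demands auxiliary integral generators $t_i$.

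First I would apply Borel's theorem to reduce the rational computation to invariant theory. Here $H^*(BT^n;\mathbb{Q}) = \mathbb{Q}[\gamma_1,\dots,\gamma_n]$ with $|\gamma_i|=2$, and the Weyl group $W_{SO(2n+1)}$ is the hyperoctahedral group $S_n \ltimes (\mathbb{Z}/2)^n$, whose ring of invariants is generated by $p_i = e_i(\gamma_1^2,\dots,\gamma_n^2)$ for $i=1,\dots,n$. Thus
\begin{equation*}
H^*(SO(2n+1)/T^n;\mathbb{Q}) \cong \mathbb{Q}[\gamma_1,\dots,\gamma_n]/(p_1,\dots,p_n).
\end{equation*}
The generating-function identity $\prod_k(1+\gamma_k x)(1-\gamma_k x) = \prod_k(1-\gamma_k^2 x^2)$, obtained by pairing $E(x)$ and $E(-x)$ in the notation of Section \ref{sec:elementary}, yields
\begin{equation*}
(-1)^i p_i \;=\; 2\sigma_{2i} + \sum_{j=1}^{2i-1}(-1)^j\sigma_j\sigma_{2i-j},
\end{equation*}
so each relation $p_i=0$ expresses $\sigma_{2i}$ as half of a polynomial in the lower $\sigma_k$.

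Next, by the theorem of Bott-Samelson \cite{AplicationsOfMorse} the integral cohomology $H^*(SO(2n+1)/T^n;\mathbb{Z})$ is torsion-free and concentrated in even degrees, so by Theorem \ref{thm:uct} it injects into the rational cohomology. I would then introduce integral classes $t_i \in H^{2i}(SO(2n+1)/T^n;\mathbb{Z})$ whose rational images coincide with $\sigma_i/2$; such classes exist because the relevant Pontryagin-type characteristic classes of the canonical bundles associated to the fibration $T^n\to SO(2n+1)\to SO(2n+1)/T^n$ provide natural integral lifts. Substituting $\sigma_i = 2t_i$ into the identity above for $p_i$ converts the single rational relation $p_i=0$ into the pair $\sigma_i - 2t_i = 0$ and $t_{2i}+\sum_{j=1}^{2i-1}(-1)^j t_j t_{2i-j}=0$, matching the claimed presentation.

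Finally one must verify that the presented ring is additively isomorphic to $H^*(SO(2n+1)/T^n;\mathbb{Z})$, not merely a quotient of it, by comparing Poincaré series. The hard part is precisely this integrality step: one must show that the classes $\sigma_i$ really are divisible by $2$ in the integral cohomology, construct explicit integral lifts $t_i$, and rule out any further integral relations. Unlike the $SU(n+1)$ case, where the rational-to-integral passage was immediate, here $2$-torsion considerations are delicate; Toda \cite{toda1974} completes the argument by a prime-by-prime mod-$p$ analysis, exploiting the Leray-Serre spectral sequence of $T^n\to SO(2n+1)\to SO(2n+1)/T^n$ together with a Chern-Weil description of the generators to pin down the correct integral structure.
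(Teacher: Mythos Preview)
The paper does not actually prove this theorem: it is stated with a citation to Toda \cite{toda1974} and no proof environment follows. So there is no ``paper's own proof'' to compare against; the result is simply quoted from the literature, in the same way that the subsequent theorems for $SO(2n)$, $G_2$, $F_4$, $E_6$, $E_7$, $E_8$ are quoted without proof.

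Your proposal is a reasonable sketch in the spirit of the sketch proof the paper does give for $SU(n+1)/T^n$ (Theorem \ref{thm:H*SU/T}): use Borel's theorem for the rational structure, invoke Bott--Samelson for torsion-freeness, then address the integral lift. You correctly identify the Weyl group as the hyperoctahedral group and the rational invariants as the $e_i(\gamma_1^2,\dots,\gamma_n^2)$, and you are right that the genuine content lies in the $2$-divisibility of the $\sigma_i$ and the construction of integral lifts $t_i$. Your final paragraph honestly flags that this step is where the real work is and defers to Toda for it, which is appropriate given that the paper itself does exactly the same thing by citation. One small caution: your claim that ``the relevant Pontryagin-type characteristic classes \dots provide natural integral lifts'' is more of a gesture than an argument, and the Poincar\'e-series comparison you mention at the end would also need to be carried out to confirm there are no further relations; but since you explicitly acknowledge these as the hard parts left to Toda, the sketch is coherent as a sketch.
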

	
		\begin{thm}[\cite{toda1974}, Corollary $2.2$]\label{thm:H*SOeven}
			For each $n\geq 1$, the cohomology of the complete flag manifold of the simple Lie group $SO(2n)$ is given by
			\begin{equation*}
				H^*(SO(2n)/T^n;\mathbb{Z})=
				\frac{\mathbb{Z}[\gamma_1,\dots,\gamma_n,t_1,\dots,t_{n-1}]}{[\sigma_i-2t_i, \;\; \sigma_n, \;\; t_{2i}+\sum_{j=1}^{2i-1}(-1^j)t_jt_{2i-j}]},
			\end{equation*}
			where $1\leq i \leq n$, $|\gamma_i|=2, |\gamma_n|=2$ and $|t_i|=2i$.
		\end{thm}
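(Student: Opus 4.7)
The plan is to mirror the sketch proof of Theorem \ref{thm:H*SU/T}, adapted to the Weyl group of $SO(2n)$ and to an extra integral phenomenon that produces the auxiliary classes $t_i$. First I would apply Theorem \ref{thm:Borel} to obtain
\begin{equation*}
H^*(SO(2n)/T^n;\mathbb{Q}) \cong \mathbb{Q}[\gamma_1,\dots,\gamma_n] \big/ \tilde{H}^*(BT^n;\mathbb{Q})^{W_{SO(2n)}},
\end{equation*}
where the Weyl group of $SO(2n)$ is the group of signed permutations of $\{\gamma_1,\dots,\gamma_n\}$ using an even number of sign changes. Its invariant subring in $\mathbb{Q}[\gamma_1,\dots,\gamma_n]$ is classically generated by the elementary symmetric polynomials $e_i(\gamma_1^2,\dots,\gamma_n^2)$ for $1 \leq i \leq n-1$ together with the Pfaffian $\gamma_1\cdots\gamma_n$. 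Expanding $e_i(\gamma^2)$ in terms of the $\sigma_j$'s using the identity coming from $\prod(1-\gamma_j^2 t^2)=\prod(1-\gamma_j t)\prod(1+\gamma_j t)$, and identifying the Pfaffian relation with $\sigma_n$, yields the rational version of the presentation.

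Second, I would invoke the Bott--Samelson torsion-free result to conclude that the map $H^*(SO(2n)/T^n;\mathbb{Z}) \to H^*(SO(2n)/T^n;\mathbb{Q})$ is injective, so the integral cohomology is pinned down by locating the integral lattice inside the rational ring. In contrast to the $SU(n+1)$ case, each class $\sigma_i$ with $1 \leq i \leq n-1$ turns out to be divisible by $2$ integrally: one obtains refinements $t_i$ satisfying $\sigma_i = 2t_i$ coming from the integral Pontryagin classes of the tautological $SO(2n)$-bundle on $BSO(2n)$, which pull back to the flag manifold. Substituting $\sigma_j = 2t_j$ into the Weyl-invariant relations $e_i(\gamma^2) = 0$ and dividing out the resulting common factor of $4$ then produces exactly the stated quadratic relations $t_{2i} + \sum_{j=1}^{2i-1}(-1)^j t_j t_{2i-j} = 0$ on the $t_i$.

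The main obstacle will be verifying integrality and completeness of this presentation. One must check that each $t_i$ genuinely lifts to an integral cohomology class, that the listed relations exhaust the kernel of the obvious surjection from $\mathbb{Z}[\gamma_1,\dots,\gamma_n,t_1,\dots,t_{n-1}]$, and that no further integral refinements appear in higher degrees. The cleanest route is a Poincaré series comparison: the Hilbert series of the candidate quotient can be computed directly from the presentation, and must be matched degree-by-degree against the rank of $H^*(SO(2n)/T^n;\mathbb{Z})$ determined by the rational Borel computation together with torsion-freeness. This completeness check is essentially the content of the argument in \cite{toda1974}.
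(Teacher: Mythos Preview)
The paper does not give a proof of this theorem at all: it is stated with the citation \cite{toda1974}, Corollary~2.2, and immediately followed by the next theorem with no intervening proof environment. So there is no ``paper's own proof'' to compare against; the result is simply quoted from Toda.

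Your outline is a reasonable sketch of how such a result is established, and it correctly mirrors the sketch proof given in the paper for the $SU(n+1)$ case: Borel's theorem for the rational structure, identification of the $D_n$ Weyl invariants as the $e_i(\gamma^2)$ for $i\le n-1$ together with the Pfaffian, and Bott--Samelson torsion-freeness to reduce the integral problem to a lattice question. You are also right that the new feature relative to $SU(n+1)$ is the appearance of the integral refinements $t_i$ with $2t_i=\sigma_i$, and that the nontrivial work lies in checking that these classes exist integrally and that the listed relations are complete. One small caution: the $t_i$ are not literally pullbacks of Pontryagin classes (those live in degrees $4i$ and correspond to $e_i(\gamma^2)$, not to $\sigma_i$); Toda's argument produces the $t_i$ by a more direct analysis of the integral Serre spectral sequence for $SO(2n)/T^n \to BT^n \to BSO(2n)$, and the completeness is established by the Poincar\'e series comparison you describe. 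But as a plan this is sound, and it is essentially the strategy of \cite{toda1974}.
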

		
		\begin{thm}[\cite{Borel}, \cite{AplicationsOfMorse}]\label{thm:H*Sp/T}
			For each $n\geq 1$, the cohomology of the complete flag manifold of the simple Lie group $Sp(n)$ is given by
			\begin{equation*}
				H^*(Sp(n)/T^n;\mathbb{Z})=\frac{\mathbb{Z}[\gamma_1,\dots,\gamma_n]}{[\sigma_1^2,\dots,\sigma_{n}^2]},
			\end{equation*}
			where $|\gamma_i|=2$ and $\sigma_i^2$ denotes elementary symmetric polynomial $\sigma_i$ in variables $\gamma_1^2,\dots,\gamma_n^2$.
		\end{thm}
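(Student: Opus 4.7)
The plan is to mirror the sketch proof given for Theorem \ref{thm:H*SU/T}. By Borel's Theorem \ref{thm:Borel},
\begin{equation*}
H^*(Sp(n)/T^n;\mathbb{Q}) \cong \frac{H^*(BT^n;\mathbb{Q})}{\tilde{H}^*(BT^n;\mathbb{Q})^{W_{Sp(n)}}}.
\end{equation*}
Since $H^*(BT^n;\mathbb{Q}) = \mathbb{Q}[\gamma_1,\dots,\gamma_n]$ with $|\gamma_i|=2$, the rational statement reduces to identifying the invariant subalgebra under the Weyl group action.

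First I would identify $W_{Sp(n)}$ as the hyperoctahedral group $(\mathbb{Z}/2)^n \rtimes S_n$, acting on the generators $\gamma_1,\dots,\gamma_n$ by signed permutations: the $S_n$ factor permutes the indices, while each $\mathbb{Z}/2$ factor sends some $\gamma_i$ to $-\gamma_i$. This description comes directly from considering the standard maximal torus of $Sp(n)$ and the action of its normaliser as in \cite[Chapter 7]{MatrixGroups}. The next step is to apply the classical invariant theory result that the subring of $\mathbb{Q}[\gamma_1,\dots,\gamma_n]$ fixed by the hyperoctahedral group is precisely $\mathbb{Q}[\sigma_1^2, \dots, \sigma_n^2]$, where $\sigma_i^2 = \sigma_i(\gamma_1^2,\dots,\gamma_n^2)$. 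This is because the sign changes force any invariant to be a polynomial in the squares $\gamma_i^2$, and then the $S_n$-invariance together with Theorem \ref{thm:FunThmSym} applied in the variables $\gamma_i^2$ yields the stated generators. Thus $\tilde{H}^*(BT^n;\mathbb{Q})^{W_{Sp(n)}}$ is the ideal generated by $\sigma_1^2,\dots,\sigma_n^2$, giving the rational cohomology as claimed.

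To upgrade the statement from $\mathbb{Q}$ to $\mathbb{Z}$, I would invoke the Bott--Samelson theorem from \cite{AplicationsOfMorse} (already used in the sketch for $SU(n+1)/T^n$): the integral cohomology $H^*(Sp(n)/T^n;\mathbb{Z})$ is torsion free and concentrated in even degrees. In particular, the universal coefficients map $H^*(Sp(n)/T^n;\mathbb{Z}) \to H^*(Sp(n)/T^n;\mathbb{Q})$ is injective with image a full rank lattice. The classes $\gamma_i \in H^2(BT^n;\mathbb{Z})$ pull back along the classifying map $Sp(n)/T^n \to BT^n$ to integral classes, and the relations $\sigma_i^2$ hold integrally because they hold rationally and the target is torsion free. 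A standard Poincaré series comparison between $\mathbb{Z}[\gamma_1,\dots,\gamma_n]/[\sigma_1^2,\dots,\sigma_n^2]$ and $H^*(Sp(n)/T^n;\mathbb{Z})$ (both free of the same total rank $2^n n!$, the order of $W_{Sp(n)}$) then forces the natural surjection to be an isomorphism.

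The main obstacle, as for the $SU$ case, is the invariant-theory computation identifying the hyperoctahedral invariants as $\mathbb{Q}[\sigma_1^2, \dots, \sigma_n^2]$; the jump from $\mathbb{Q}$ to $\mathbb{Z}$ is essentially free once Bott--Samelson is in hand. Everything else is a direct application of results already collected in the excerpt.
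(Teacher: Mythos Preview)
Your proposal is correct and follows exactly the approach the paper indicates: the paper states this theorem with citation only and gives no separate proof, but the sketch proof provided for Theorem~\ref{thm:H*SU/T} explicitly remarks that ``this problem is easily resolved in the cases of $SU(n)$ and $Sp(n)$,'' and your argument is precisely that resolution---Borel's theorem with the hyperoctahedral Weyl group, invariants identified as symmetric polynomials in the $\gamma_i^2$, and the Bott--Samelson torsion-freeness to pass from $\mathbb{Q}$ to $\mathbb{Z}$.
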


		\begin{thm}[\cite{AplicationsOfMorse}, Theorem $III'$]\label{thm:H*G2/T}
			The cohomology of the complete flag manifold of the exceptional simple Lie group $G_2$ is given by
			\begin{equation*}
				H^*(G_2/T^2;\mathbb{Z})=\frac{\mathbb{Z}[\gamma_1,\gamma_2,\gamma_3,t_3]}{[\sigma_1,\sigma_2,\sigma_3-2t_3,t_3^2]},
			\end{equation*}
			where $|\gamma_i|=2$ for $i=1,2,3$, $|t_3|=6$
			and $\sigma_3^2$ denotes elementary symmetric polynomial $\sigma_3$ in variables $\gamma_1^2,\gamma_2^2,\gamma_3^2$.
		\end{thm}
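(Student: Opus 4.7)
The plan is to follow the approach outlined in the sketch proof of Theorem \ref{thm:H*SU/T}: first compute rational cohomology via Borel's theorem, then use torsion freeness from \cite{AplicationsOfMorse} to control the integral lift, with an additional step needed to exhibit the distinguished integral class $t_3$.

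First I would apply Theorem \ref{thm:Borel} to obtain
\begin{equation*}
H^*(G_2/T^2;\mathbb{Q}) \cong H^*(BT^2;\mathbb{Q})/\tilde{H}^*(BT^2;\mathbb{Q})^{W_{G_2}}.
\end{equation*}
By Proposition \ref{prop:rank} the rank of $G_2$ is two, but following the convention used for $SU(n+1)$ I would present $H^*(BT^2;\mathbb{Q})$ with three generators subject to $\sigma_1=0$, writing it as $\mathbb{Q}[\gamma_1,\gamma_2,\gamma_3]/[\sigma_1]$ with $|\gamma_i|=2$. The Weyl group $W_{G_2}$ is dihedral of order twelve, and is isomorphic to $S_3\times\mathbb{Z}/2$; acting on the three variables, the $S_3$ factor permutes them while the $\mathbb{Z}/2$ factor negates all of them simultaneously (this can be verified by working out the reflection associated with the long simple root of $G_2$ acting on $(x_1,x_2,x_3)$ with $\sum x_i=0$). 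Theorem \ref{thm:FunThmSym} gives the $S_3$-invariants as the algebra generated by $\sigma_2$ and $\sigma_3$, and the additional $\mathbb{Z}/2$ action fixes $\sigma_2$ while negating $\sigma_3$, so the full invariant subring is generated by $\sigma_2$ and $\sigma_3^2$, yielding
\begin{equation*}
H^*(G_2/T^2;\mathbb{Q}) = \mathbb{Q}[\gamma_1,\gamma_2,\gamma_3]/[\sigma_1,\sigma_2,\sigma_3^2].
\end{equation*}

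Next I would invoke the result of \cite{AplicationsOfMorse} that $H^*(G/T;\mathbb{Z})$ is torsion free and concentrated in even degrees, making the change-of-coefficient map into rational cohomology injective. The Poincar\'e polynomial, computable from the Schubert cell decomposition indexed by the length function on the dihedral group $W_{G_2}$, is $1+2t^2+2t^4+2t^6+2t^8+2t^{10}+t^{12}$, summing to the Weyl group order twelve. A direct dimension count in the proposed integral presentation (using for example that modulo $\sigma_1$ and $\sigma_2$ one has $\sigma_3=\gamma_1^3$, so in cohomological degree six one gets the two-dimensional $\mathbb{Z}$-basis $\{\gamma_1^2\gamma_2,t_3\}$ with $\gamma_1^3=2t_3$) matches this polynomial in every degree, so once correctness of the presentation is established the ring has the right size.

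The genuinely hard step, and the main obstacle, is producing the integral class $t_3$ of degree six with $2t_3=\sigma_3$. Rationally one can simply set $t_3=\sigma_3/2$, but proving that this rational element actually lifts to an integer class requires geometric input beyond the Borel-type computation; the resolution in \cite{AplicationsOfMorse} identifies $t_3$ with a specific Schubert class obtained from the Bott-Samelson resolution of the flag variety, and the divisibility $\sigma_3=2t_3$ is then verified by a direct intersection-theoretic calculation on that resolution. Once $t_3$ is in place, the relation $t_3^2=0$ follows from the rational identity $4t_3^2=\sigma_3^2=0$ together with torsion freeness, and matching the resulting presentation against the Poincar\'e polynomial degree by degree completes the argument.
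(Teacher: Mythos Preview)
The paper does not give its own proof of this statement: Theorem~\ref{thm:H*G2/T} is simply quoted from \cite{AplicationsOfMorse} (Theorem~$III'$), in the same way that the subsequent flag-manifold cohomology theorems are quoted without proof from \cite{toda1974}, \cite{NakagawaE7}, and \cite{nakagawaE8}. So there is no argument in the paper to compare against.

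That said, your outline is a faithful reconstruction along the lines of the sketch proof the paper does give for Theorem~\ref{thm:H*SU/T}. The identification of $W_{G_2}$ with $S_3\times\mathbb{Z}/2$ acting on $(\gamma_1,\gamma_2,\gamma_3)$ with $\sigma_1=0$ by permutation and simultaneous sign change is correct (the sign change is the longest element $w_0=-1$), and the resulting rational invariant ring generated by $\sigma_2$ and $\sigma_3^2$ matches the rational shadow of the stated presentation. You also correctly isolate the one point where the argument for $G_2$ genuinely differs from that for $SU(n+1)$: in the $SU(n+1)$ case the rational presentation already has integer coefficients and gives the right integral answer directly, whereas here one must supply an integral class $t_3$ with $2t_3=\sigma_3$, and this requires the Schubert-cell input from \cite{AplicationsOfMorse} rather than just torsion-freeness. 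Your remark that $t_3^2=0$ then follows from torsion-freeness plus the rational relation $4t_3^2=\sigma_3^2=0$ is the right way to close the argument.

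One small caution: the parenthetical remark in the theorem about ``$\sigma_3^2$'' referring to $\sigma_3$ in squared variables appears to be a stray clause carried over from the $Sp(n)$ statement and plays no role in the presentation as written; do not let it mislead you into confusing $(\sigma_3)^2$ with $\sigma_3(\gamma_1^2,\gamma_2^2,\gamma_3^2)$ in your invariant computation.
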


		\begin{thm}[\cite{toda1974}]\label{thm:H*F4/T}
			The cohomology of the complete flag manifold of the exceptional simple Lie group $F_4$ is given by
			\begin{equation*}
				H^*(F_4/T^4;\mathbb{Z})=\frac{\mathbb{Z}[\gamma_1,\gamma_2,\gamma_3,\gamma_4,\gamma,t_3,t_4]}
				{I},
			\end{equation*}
			where $|\gamma_i|=2$ for $i=1,2,3,4$, $|\gamma|=2$, $|t_3|=6$, $|t_4|=8$ and
			\begin{align*}
				I=[\sigma_1-2\gamma, \sigma_2-2\gamma^2,\sigma_3-2\gamma_3, \; \sigma_4-4\gamma t_3+8\gamma^4-3t_4, \; t_3^2-3\gamma^2t_4-4\gamma^3t_3+8\gamma^6, \\
				3t_4^2-6\gamma t_3t_4-3\gamma^4t_4-13\gamma^8, \; t_4^3-6\gamma^4t_4^2+12\gamma^8t_4-8\gamma^{12}].
			\end{align*}
		\end{thm}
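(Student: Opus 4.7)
The plan is to follow the template already used in Theorems \ref{thm:H*SU/T} and \ref{thm:H*Sp/T}, suitably enriched to handle the more intricate $F_4$ root system. First I would apply Borel's theorem (Theorem \ref{thm:Borel}) to describe the rational cohomology as $\mathbb{Q}[x_1,\ldots,x_4]/(\tilde H^*(BT^4;\mathbb{Q})^{W_{F_4}})$. The Weyl group $W_{F_4}$ has order $1152$, and by the Chevalley--Shephard--Todd theorem the invariant ring is a polynomial algebra on four generators of degrees $2, 6, 8, 12$. Choosing these invariants explicitly with respect to a simple-root basis gives the rational isomorphism type of the ring and fixes the Poincaré polynomial $\prod_{d\in\{2,6,8,12\}}(1-q^d)/(1-q^2)$.

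The passage to integral cohomology rests on the Bott--Samelson result (used already in the proof of Theorem \ref{thm:H*SU/T}) that $H^*(G/T;\mathbb{Z})$ is torsion-free and concentrated in even degrees, so the rational presentation determines the integral one up to matching a suitable set of integral generators. The auxiliary degree-two class $\gamma$ has to be introduced because one of the fundamental weights of $F_4$ is a half-integral combination of the $\epsilon_i$; as a result the first several elementary symmetric polynomials in the $\gamma_i$ are divisible by $2$ inside the integral Weyl-invariant ring. This forces the relations $\sigma_1-2\gamma$, $\sigma_2-2\gamma^2$ and $\sigma_3-2\gamma_3$. The generators $t_3$ and $t_4$ are then the degree-$6$ and degree-$8$ fundamental invariants expressed in the enlarged basis $\{\gamma_1,\ldots,\gamma_4,\gamma\}$, and the relation $\sigma_4-4\gamma t_3+8\gamma^4-3t_4$ encodes the degree-$8$ fundamental invariant.

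The final three relations in $I$ are the minimal polynomials of $t_3$ and $t_4$ over $\mathbb{Z}[\gamma_1,\ldots,\gamma_4,\gamma]$; they are forced because the enlarged generating set is redundant, and extra algebraic dependences are needed for the Krull dimension of $A/I$ to match that of the invariant ring. The main obstacle is pinning down the precise coefficients in these high-degree relations: this is a concrete but delicate invariant-theoretic calculation in $W_{F_4}$, requiring an explicit realization of the fundamental invariants of degrees $6$, $8$ and $12$ in terms of a chosen simple-root basis, and then eliminating the extra variables to express $t_3^2$, $3t_4^2$ and $t_4^3$ in the stated form.

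Once the relations are in place, one verifies that the presentation has the right graded size by checking that the Poincaré polynomial of $A/I$ equals $\prod_{d\in\{2,6,8,12\}}(1-q^d)/(1-q^2)$ and that the total rank equals $|W_{F_4}|=1152$. Since both the target $H^*(F_4/T^4;\mathbb{Z})$ and the quotient $A/I$ are concentrated in even degrees and torsion-free, an equality of graded ranks combined with a surjective comparison map promotes the rational presentation to an integral isomorphism, completing the argument.
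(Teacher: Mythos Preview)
The paper does not prove this theorem at all: it is stated with a citation to Toda--Watanabe \cite{toda1974} and no argument is given. So there is no proof in the paper to compare your proposal against.

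Your outline is the correct overall strategy and is indeed the one Toda and Watanabe follow, but as written it is a sketch rather than a proof. The genuine content of the theorem lies precisely in the step you flag as ``a concrete but delicate invariant-theoretic calculation'': identifying integral classes $\gamma, t_3, t_4$ in $H^*(BT^4;\mathbb{Z})$ (not merely in the rational cohomology) and deriving the exact integer coefficients in the seven relations. Borel's theorem and Bott--Samelson torsion-freeness reduce the problem to this, but they do not do it for you, and the coefficients such as $8\gamma^6$, $-13\gamma^8$, $-8\gamma^{12}$ are not determined by the rational invariant degrees $2,6,8,12$ alone. In Toda--Watanabe this is carried out by an explicit analysis of the $F_4$ weight lattice and a careful choice of generators compatible with the inclusion $\mathrm{Spin}(9)\subset F_4$; without that or an equivalent computation, the argument is incomplete. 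Your final paragraph also needs a surjection $A/I \to H^*(F_4/T^4;\mathbb{Z})$ before the rank comparison can be invoked, and producing that map requires knowing that $\gamma, t_3, t_4$ are genuine integral cohomology classes, which is again part of the work you have deferred.
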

		
		\begin{thm}[\cite{toda1974}]\label{thm:H*E6/T}
			The cohomology of the complete flag manifold of the exceptional simple Lie group $E_6$ is given by
			\begin{equation*}
				H^*(E_6/T^6;\mathbb{Z})=\frac{\mathbb{Z}[\gamma_1,\gamma_2,\gamma_3,\gamma_4,\gamma_5,\gamma_6,t_1,t_3,t_4]}{I},
			\end{equation*}
			where $|\gamma_i|=2$, $1\leq i \leq 6$, $|t_1|=2$, $|t_3|=6$, $|t_4|=8$ and
			\begin{align*}
				I=[
				\sigma_1-3t_1, \; \sigma_2-4t_1^2, \; \sigma_3-2t_3, \sigma_4+2t_1^4-3t_4, \; \sigma_5-\sigma_4t_1+\sigma_3t_1^2-2t_1^5, \\
				2\sigma_6-\sigma_4t_1^2-t_1^6+t_3^2, \; 9\sigma_6t_1^2+3\sigma_5t_1^3-t_1^8+3t_4(t_4-\sigma_3t_1+2t_1^4), \\
				t^9-3w^2t, \; w^3+15w^2t^4-9wt^8
				],
			\end{align*}
			where $t=t_1-\gamma_1$ and $w=t_1-\sigma_3t_1+2t_1^4+t(t_3-2t_1^3+t_1^2t-t_1t^2+t^3)$.
		\end{thm}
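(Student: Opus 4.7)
The plan is to follow the same general strategy as the sketched proof of Theorem \ref{thm:H*SU/T} but with the additional complications coming from the more intricate root system of $E_6$. First I would apply Theorem \ref{thm:Borel} of Borel to obtain
\begin{equation*}
H^*(E_6/T^6;\mathbb{Q}) \cong \frac{H^*(BT^6;\mathbb{Q})}{\tilde{H}^*(BT^6;\mathbb{Q})^{W_{E_6}}},
\end{equation*}
where $H^*(BT^6;\mathbb{Q}) = \mathbb{Q}[x_1,\dots,x_6]$ with $|x_i| = 2$. By Bott and Samelson's torsion-free result for complete flag manifolds (as cited in the $SU(n)$ proof), the integral cohomology is concentrated in even degrees and injects into the rational cohomology under the map induced by universal coefficients. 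So the whole problem reduces to identifying the subring of $W_{E_6}$-invariants inside $\mathbb{Z}[x_1,\dots,x_6]$ and choosing generators and relations that give a presentation with integral coefficients.

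Next I would describe the Weyl group action. Using the standard realisation of the $E_6$ root system, $W_{E_6}$ acts on $\mathbb{Z}[x_1,\dots,x_6]$ via its reflection representation, and the invariant subring is known by Chevalley to be a polynomial algebra on six generators of degrees $2,5,6,8,9,12$ (in polynomial degree). The generators $\gamma_i$ in the theorem correspond to the characters $x_i$ of the maximal torus, while $t_1, t_3, t_4$ of degrees $2,6,8$ are introduced as auxiliary classes to absorb the non-elementary fundamental invariants that do not arise as clean symmetric functions of the $\gamma_i$. The relations $\sigma_1 - 3t_1$, $\sigma_2 - 4t_1^2$, $\sigma_3 - 2t_3$, $\sigma_4 + 2t_1^4 - 3t_4$, etc.\ then express the elementary symmetric polynomials $\sigma_k$ in the $\gamma_i$ in terms of $t_1,t_3,t_4$ and lower symmetric functions, matching identities that hold among the fundamental $W_{E_6}$-invariants.

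Concretely I would proceed by first verifying the low-degree identities directly: one checks that each expression $\sigma_k - (\text{polynomial in }t_1,t_3,t_4)$ is indeed a $W_{E_6}$-invariant that generates, together with the preceding ones, the invariant ring up through degree $2k$. The final three relations (the degree-$12$, degree-$16$ and degree-$18$ relations involving $t$ and $w$) record the dependencies that appear at the top degrees where the invariant ring ceases to be freely generated by the obvious candidates, and these are extracted by computing the $W_{E_6}$-action on the two auxiliary polynomials $t = t_1 - \gamma_1$ and $w = t_1 - \sigma_3 t_1 + 2t_1^4 + t(t_3 - 2t_1^3 + t_1^2 t - t_1 t^2 + t^3)$.

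The main obstacle is the invariant theory computation for $W_{E_6}$: unlike $SU(n)$, where the Weyl group is $S_{n+1}$ acting by permutation and the invariants are literally the elementary symmetric polynomials $\sigma_i$, the $E_6$ Weyl group has order $51840$ and its invariant ring is much harder to pin down by hand. The trickiest part is showing that the proposed ideal $I$ in the statement is exactly the ideal generated by the positive-degree $W_{E_6}$-invariants, rather than a proper sub-ideal, and doing this integrally rather than merely rationally. Here I would rely on Toda's explicit description from \cite{toda1974} of the fundamental invariants in the Chevalley basis, together with the torsion-freeness from Bott-Samelson to rule out hidden $\mathrm{Tor}$ classes, reducing the integral statement to a check that the generators produced are a $\mathbb{Z}$-basis of the invariant ring and not merely a $\mathbb{Q}$-basis.
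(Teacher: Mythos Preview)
The paper does not supply its own proof of this theorem: it is stated with an attribution to \cite{toda1974} and no argument is given in the text (only Theorem~\ref{thm:H*SU/T} for $SU(n+1)/T^n$ carries a sketch proof). So there is no in-paper proof to compare against; the result is simply quoted from Toda--Watanabe.

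That said, your outline follows the same conceptual scaffolding as the sketch given for $SU(n+1)/T^n$: Borel's isomorphism for the rational cohomology plus Bott--Samelson's torsion-freeness to pass to $\mathbb{Z}$. This is the right framework, but you should be aware that the genuine difficulty for $E_6$ (and the reason Toda--Watanabe's paper is cited rather than reproduced) lies precisely in the step you flag as ``trickiest'': showing that your candidate generators for the ideal of invariants are an integral basis and not merely a rational one. For $SU(n+1)$ and $Sp(n)$ this is easy because the Weyl invariants are elementary symmetric polynomials with unit leading coefficients, but for $E_6$ the invariants have nontrivial integer content and the auxiliary classes $t_1, t_3, t_4$ must be introduced exactly to make the relations primitive over $\mathbb{Z}$. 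Your proposal acknowledges this obstacle but does not actually address it beyond saying you would ``rely on Toda's explicit description''; that is essentially deferring the whole proof back to the cited reference. A genuine proof would need either an explicit computation of the Smith normal form of the relation matrix or a spectral-sequence argument (as Toda in fact uses) that controls the integral structure degree by degree.
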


		\begin{thm}[\cite{NakagawaE7}]\label{thm:H*E7/T}
			The cohomology of the complete flag manifold of the exceptional simple Lie group $E_7$ is given by
			\begin{equation*}
				H^*(E_7/T^7;\mathbb{Z})=\frac{\mathbb{Z}[\gamma_1,\gamma_2,\gamma_3,\gamma_4,\gamma_5,\gamma_6,\gamma_7,\gamma,t_3,t_4,t_5,t_9]}{I},
			\end{equation*}
			where $|\gamma_i|=2=|\gamma|$, $1\leq i \leq 7$, $|t_3|=6$, $|t_4|=8$, $|t_5|=10$, $|t_9|=18$ and
			\begin{align*}
				I=[
				&\sigma_1-3\gamma, \\
				&\sigma_2-4\gamma^2, \\
				&\sigma_3-2t_3, \\
				&\sigma_4+2\gamma^4-3t_4, \\
				&\sigma_5-3\gamma t_4+2\gamma^2t_3-2t_5, \\
				&t_3^2+2\sigma_6-2\gamma t_5-3\gamma^3t_4+\gamma^6, \\
				&3t_4^2-2t_3t_5+2\gamma \sigma_7-6\gamma t_3t_4-9\gamma^2\sigma_6+12\gamma^3t_5+15\gamma^4t_4-6\gamma^5t_3-\gamma^8, \\
				&2\sigma_6t_3+\gamma^2\sigma_7-3\gamma^3\sigma_6-2t_9, \\
				&t_5^2-2\sigma_7t_3+3\gamma^3\sigma_7 \\
				&-6\gamma_0^8u+9\gamma_0^4u^2+2\gamma^6_0u^2-12\gamma_0^2uv+u^3+3v^2, \\
				&\gamma_0^14-6\gamma_0^10u-3\gamma_0^6u^2+4\gamma^8_0uv-3u^2v+3\gamma_0^2v^2, \\
				&-2\gamma_0^14u+6t^6_0u^3+9w^2-2\gamma_0^8uv-12\gamma_0^4u^2v-3u^3v-\gamma_0^6v^2+6\gamma^2_)uv^2-2v^3
				],
			\end{align*}
			where $\gamma_0=\gamma-\gamma_1$, $u=t_4-(2\gamma_1+\gamma_0)t_3+2\gamma_1^4+6\gamma_1^3\gamma_0^2+7\gamma_1^2\gamma_0^2+3\gamma_1\gamma_0^3$, \\
			$v=\sigma_6-(2\gamma_1+\gamma_0)t_5-3\gamma_1\gamma_0t_4+(4\gamma_1^2\gamma_0+2\gamma_1\gamma_0^2)t_3-3\gamma_1^5\gamma_0-8\gamma_1^4\gamma_0^2-8\gamma_1^3\gamma_0^3$
			and $w=\frac{1}{2}\gamma_0u^2$.
		\end{thm}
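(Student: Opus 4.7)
The plan is to follow the general scheme used for the other exceptional Lie groups in this section, combining Borel's theorem (Theorem \ref{thm:Borel}) with the Bott--Samelson torsion-freeness result. First I would compute the rational cohomology via
\begin{equation*}
    H^*(E_7/T^7;\mathbb{Q}) \cong \frac{H^*(BT^7;\mathbb{Q})}{\tilde{H}^*(BT^7;\mathbb{Q})^{W_{E_7}}},
\end{equation*}
which is governed by the fundamental degrees of $W_{E_7}$, namely $2, 6, 8, 10, 12, 14, 18$. These degrees match exactly the claimed degree sequence $|t_3|=6, |t_4|=8, |t_5|=10, |\sigma_6|=12, |\sigma_7|=14, |t_9|=18$ together with one degree-$2$ invariant (playing the role of $\gamma$), so a rational count of generators and relations is consistent with the statement.

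Next I would pass from $\mathbb{Q}$ to $\mathbb{Z}$ by appealing to the Bott--Samelson result that $H^*(G/T;\mathbb{Z})$ is torsion-free and concentrated in even degrees. This reduces the problem to locating the correct integral lifts of the rational invariants: in particular, because $H^*(BE_7;\mathbb{Z})$ has both $2$- and $3$-torsion, one cannot simply take $\mathbb{Z}$-invariants of $W_{E_7}$ acting on $H^*(BT^7;\mathbb{Z})$. Instead, I would introduce the auxiliary generators $\gamma, t_3, t_4, t_5, t_9$ as transgressed (or Chern-type) classes arising from specific representations of $E_7$, verifying that each one really does lie in the integral image. The shift $\gamma_0 = \gamma - \gamma_1$ and the auxiliary polynomials $u, v, w$ in the statement reflect a convenient change of basis that exposes the correct integral relations.

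The main computational step is then verifying the full list of relations in $I$. I would organise this by degree: the relations of the form $\sigma_i - (\text{polynomial in }\gamma, t_j)$ for $i = 1, \dots, 5$ come directly from matching the two generating sets in low degrees, while the higher relations (those involving $t_3^2$, $t_4^2$, $t_5^2$, $t_9^2$ and the degree $28$ and $36$ relations written in terms of $u, v, w$) encode the genuine exceptional invariants of $W_{E_7}$. These can be checked by restriction to the standard Cartan, where the Weyl-group action is explicit, and by counting the total Betti numbers of the quotient ring against the known Poincar\'e polynomial of $E_7/T^7$, namely $\prod_{i=1}^{7}\frac{1-t^{2d_i}}{1-t^2}$ with $d_i$ the fundamental degrees.

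The hard part is the final integrality check. The relations modulo $\mathbb{Q}$ essentially fall out from knowing the Weyl-invariant polynomial ring, but showing that the stated integer coefficients produce a torsion-free quotient whose Poincar\'e series matches $E_7/T^7$ requires a careful Gr\"obner-style analysis of the ideal $I$, and in practice this is where the bulk of Nakagawa's work in \cite{NakagawaE7} lies. I would expect to need to verify that the relations in $u, v, w$ actually kill exactly the invariants $p_6, p_8, p_{12}, p_{14}, p_{18}$ of $W_{E_7}$ integrally, rather than only rationally, and this is the genuinely non-routine obstacle of the argument.
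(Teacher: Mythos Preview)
The paper does not actually prove this theorem: it is stated with the citation \cite{NakagawaE7} and no argument is given. In this section the paper simply collects results from the literature on $H^*(G/T;\mathbb{Z})$ for the simple Lie groups; only the $SU(n+1)$ case (Theorem \ref{thm:H*SU/T}) is accompanied by a sketch, and the $E_7$ case is just quoted from Nakagawa.

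Your outline is a reasonable description of the strategy that Nakagawa (and before him Toda--Watanabe for $F_4$, $E_6$) actually uses: Borel's theorem gives the rational answer, Bott--Samelson gives torsion-freeness, and the real work is finding integral lifts of the Weyl invariants and checking that the resulting ideal has the correct Poincar\'e series. But be aware that what you have written is a plan, not a proof: the steps ``verify the relations by restriction to the Cartan'' and ``check integrality via a Gr\"obner-style analysis'' are exactly the long computations that fill Nakagawa's paper, and nothing in your sketch indicates how to carry them out or why the specific polynomials $u$, $v$, $w$ are the right ones. Since the thesis itself does not attempt this and only cites the result, the honest thing is simply to refer to \cite{NakagawaE7} rather than to supply a proof.
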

		
		\begin{thm}[\cite{nakagawaE8}]\label{thm:H*E8/T}
			The cohomology of the complete flag manifold of the exceptional simple Lie group $E_8$ is given by
			\begin{equation*}
				H^*(E_8/T^8;\mathbb{Z})=\frac{\mathbb{Z}[\gamma_1,\gamma_2,\gamma_3,\gamma_4,\gamma_5,\gamma_6,\gamma_7,\gamma_8,\gamma,t_3,t_4,t_5,t_6,t_9,t_{10},t_{15}]}{I},
			\end{equation*}
			where $|\gamma_i|=2=|\gamma|$ for $1\leq i \leq 8$, $|t_j|=2j$ for $j=3,4,5,6,9,10,15$ and
			\begin{align*}
				I=[
				&\sigma_1-3\gamma, \;
				\sigma_2-4\gamma, \;
				\sigma_3-2t_3, \;
				\sigma_4+2\gamma^4-3t_3, \;
				\sigma_5-3\gamma t_4+2\gamma^2t_3-2t_5, \\
				&\sigma_6-2t_3^2-\gamma t_5+\gamma^2t_4-\gamma^6-5t_6, \; 
				-3\sigma_8+3t_4^2-2t_3t_5+\gamma(2\sigma_7-6t_3t_4), \\
				&2\sigma_6t_3+\gamma\sigma_8+\gamma^2\sigma_7-3\gamma^3\sigma_6-2t_9, \;
				t_5^2-2\sigma_7t_3-\gamma^2\sigma_8+3\gamma^3\sigma_7-3t_{10}, \\
				&15t_6^2+2t_3t_4t_5-2\sigma_7t_5+2t_3^4+10t_3^2t_6-3\sigma_8t_4-2t_4^3
					+\gamma(\sigma_8t_3-2t_3^2t_5+4\sigma_7t_4+6t_3t_4^2) \\
					&+\gamma^2(3t_10-25t_4t_6-\sigma_7t_4+6t_3t_4^2) 
					+\gamma^3(25t_3t_6-3t_4t_5+10t_3^3)
					+\gamma^4(3\sigma_8+3t_3t_5+5t_4^2) \\
					&+\gamma^5(-3\sigma_7-5t_3t_4)+4\gamma^6t_3^2-7\gamma^8t_4+4\gamma^9t_3, \\
				&\sigma_7^2-3\sigma_8t_6+6t_4t_{10}-4\sigma_8t_3^2+6\sigma_7t_3t_4-6t_3^2t_4^2-12t_4^2t_6-2t_3t_5t_6 \\
					&+\gamma(24t_3t_4t_6-8\sigma_7t_3^2-8\sigma_7t_6+4\sigma_8t_5-6t_3t_{10}+12t_3^3t_4) \\
					&+\gamma^2(-2t_3t_4t_5+6t_4^3+2t_3^2t_6+20t_6^2-4t_3^4-\sigma_7t_5)
					+\gamma^3(-12t_3t_4^2+8\sigma_8t_3-5\sigma_7t_4+3t_5t_6) \\
					&+\gamma^4(3t_{10}-26t_4t_6+6\sigma_7t_3-4t_3^2t_4)
					+\gamma^5(24t_3t_6+3t_4t_5+12t_3^3)
					+\gamma^6(-6\sigma_8+2t_4^2)\\
					&-2(t_3^2+\sigma_6)(t_9-\sigma_6t_3)-2t_{15}, \\
				&t_9^2-9\sigma_8t_{10}-6t_4^2t_{10}-4t_3^3t_9-10t_3t_6t_9+2t_3t_5t_{10}-2t_3t_4t_5t_6-6\sigma_7t_4^2+3\sigma_8t_4t_6 \\
					&+\sigma_8t_3^2t_4+6t_3^2t_4^3+12t_4^3t_6+2\sigma_7^2t_4+2\sigma_7t_3^2t_5-2t_3^3t_4t_5+2\sigma-7t_5t_6+4t_3^6-10t_6^3 \\
					&+18t_3^4t_6+15t_3^2t_6^2-9\sigma_7\sigma_8t_3 
					 +\gamma(-2t_3t_5t_9-24\sigma_7t_4t_6+8\sigma_8t_4t_5+4\sigma_7t_3^2t_4+4\sigma_7t_{10} \\
					&-\sigma_8t_9+2\sigma_7^2t_3+4\sigma_8t_3t_6 
					 +12t_3t_4t_{10}-36t_3t_4^2t_6+12t_3^2t_5t_6+\sigma_8t_3^3+6t_3^4t_5-18t_3^3t_4^2) \\
					&+\gamma^2(24t_3^4t_4-2\sigma_8^2-\sigma_7t_9-11t_3^2t_10+2t_3t_4t_9-2\sigma_8t_3t_5+16\sigma_7t_3t_6-3\sigma_7t_4t_5 \\
					&+75t_4t_6^2-6t_4^4-9\sigma_8t_4^2+81t_3^2t_4t_6-13t_6t_{10}+4t_3t_4^2+t_5-\sigma_7t_3^3) \\
					&+\gamma^3(-3t_5t_{10}-150t_3t_6^2-135t_3^3t_6+6t_3^2t_9-2\sigma_7t_3t_5+21\sigma_7t_4^2+15\sigma_7\sigma_8+3t_4t_5t_6 \\
					&-3t_3^2t_4t_5+18t_3t_4^3+15t_6t_9+14\sigma_8t_3t_4-30t_3^5) \\
					&+\gamma^4(-13\sigma_8t_6+2t_4t_{10}-5\sigma_7^2-33t_3^2t_4^2+3t_5t_9-28t_3t_5t_6-45t_4^2t_6-41\sigma_7t_3t_4 \\
					&-13t_3^3t_5-9\sigma_8\sigma_8^2)
					 +\gamma^5(3\sigma_7t_6-6t_4^2t_5+23\sigma_7t_3^2+105t_3t_4t_6-6\sigma_8t_5-3t_4t_9+45t_3^3t_4) \\ 
					&+\gamma^6(11t_4^3-4t_3t_9+4\sigma_7t_5+9t_3t_4t_5+12t_3^4+66t_3^2+75t_6^2+2\sigma_8t_4) \\ 
					&+\gamma^7(-33t_3t_4^2+12t_3^2+15t_5t_6)
					+\gamma^8(-4t_{10}+21t_3^2t_4-5\sigma_7t_3-3t_4t_6) \\
					&+\gamma^9(6t_9-42t_3^3-99t_3t_6) 
					+\gamma^{10}(-4\sigma_8-6t_4^2-13t_3t_5)
					+\gamma^{11}(3\sigma_7+27t_3t_4) \\
					&+\gamma^{12}(60t_6+18t_3^2) 
					+6\gamma^{13}t_5-9\gamma^{14}t_4-12\gamma^{15}t_3+10\gamma_3^2, \\
				&9\gamma_8^20+45\gamma_8^{14}v+12\gamma_8^{10}w+60\gamma_8^8v^2+30\gamma_8^4vw+10\gamma_8^2v^3+3w^2, \\
				&11\gamma_8^{24}+60\gamma_8^{18}v+21\gamma_8^{14}w+105\gamma_8^12v^2+60\gamma_8^8vw+60\gamma_8^6v^3+9\gamma_8^4w^2+30\gamma_8^2v^2w+5v^4, \\
				&-9x^2-12\gamma_8^9vx-6\gamma_8^5wx+9\gamma_8^{14}vw-10\gamma_8^{12}v^3-3\gamma_8^{10}w^2+30\gamma_8^8v^2w-35\gamma_8^6v^4 \\
					&+6\gamma_8^4vw^2-10\gamma_8^2v^3w-4v^5-2w^3
				].
			\end{align*}
			where
			\begin{align*}
				v=&2t_6+t_3^2-\gamma_8t_5+t_4(-\gamma^4+\gamma_8^2)-\gamma_8^3t_3+\gamma^6-\gamma^4\gamma_8^2+\gamma^3\gamma_8^3+\gamma^2\gamma_8^4-\gamma \gamma_8^5 , \\
				w=&t_{10}+\gamma_8t_9-\gamma_8^3\sigma_7\gamma_8t_4t_5+2\gamma_8^2t_4^2-2\gamma_8^2t_3t_5 \\
					&+t_3t_4(-6\gamma\gamma_8^2+2\gamma_8^3)+t_3^2(2\gamma^2\gamma_8^2+2\gamma\gamma_8^3-2\gamma_8^4)+t_6(-5\gamma^2\gamma_8^2+5\gamma\gamma_8^3) \\
					&+t_5(\gamma^4\gamma_8+3\gamma^3\gamma_8^2+\gamma^2\gamma_8^3)+t_4(6\gamma^4\gamma_8^2-3\gamma^3\gamma_8^3-2\gamma^2\gamma_8^4-\gamma\gamma_8^5
					 -\gamma\gamma_8^5+\gamma_8^6) \\
					&+t_3(-6\gamma^5\gamma_8^2-2\gamma^4\gamma_8^3+4\gamma^3\gamma_8^4+6\gamma^2\gamma_8^5-4\gamma\gamma_8^6+\gamma_8^7) \\
					&+4\gamma^7\gamma_8^3-6\gamma^6\gamma_8^5+2\gamma^4\gamma_8^6+\gamma^3\gamma_8^7-\gamma^2\gamma_8^8, \\
				x=&t_{15}-20t_3t_6^2+3t_3^2t_9-23t_3^3t_6-6t_3^5+4t_6t_9+3\gamma_8t_4t_{10}-\gamma_8t_5t_9-3\gamma_8t_3^2t_4^2+3\gamma_8\sigma_7t_3t_4 \\
					&-6\gamma_8t_4^2t_6+t_3^3t_5(-3\gamma+2\gamma_8)+t_3t_5t_6(-4\gamma+4\gamma_8)+t_4t_9(-\gamma^2-\gamma_8^2)+\sigma_7t_3^2(\gamma^2+\gamma\gamma_8-\gamma_8^2) \\
					&+t_3t_4t_6(9\gamma^2+12\gamma\gamma_8+5\gamma_8^2)+t_3^3t_4(5\gamma^2+6\gamma\gamma_8+2\gamma_8^2)+\sigma_7t_6(3\gamma^2+4\gamma\gamma_8+\gamma_8^2)
					-\gamma_8^3t_3t_9 \\
					&+t_3^4(-6\gamma^3-2\gamma^\gamma_8-6\gamma\gamma_8^2+5\gamma_8^3)+t_4^3(3\gamma^2\gamma_8+\gamma_8^3)+\sigma_7t_5(2\gamma^2\gamma_8+3\gamma\gamma_8^2) \\
					&+t_6^2(-45\gamma^3+10\gamma^2\gamma_8-40\gamma\gamma_8^2)+t_3t_4t_5(\gamma^3-2\gamma^2\gamma_8+\gamma\gamma_8^2-\gamma_8^3) \\
					&+t_3^2t_6(-33\gamma^3+\gamma^2\gamma_8-31\gamma\gamma_8^2+13\gamma_8^3)+\sigma_7t_4(-2\gamma^4-4\gamma^3\gamma_8-3\gamma\gamma_8^3+3\gamma_8^4) \\
					&+t_5t_6(-9\gamma^4-6\gamma^3\gamma_8-18\gamma^2\gamma_8^2+5\gamma\gamma_8^3-3\gamma_8^4)+t_3^2t_5(-3\gamma^4-3\gamma^3\gamma_8-7\gamma^2\gamma_8^2
					 +5\gamma\gamma_8^3-4\gamma_8^4) \\
					&+t_3t_4^2(-\gamma^4-6\gamma^3\gamma_8-\gamma^2\gamma_8^2-3\gamma\gamma_8^3)
					 +t_{10}(-3\gamma^4\gamma_8-6\gamma^3\gamma_8^2+3\gamma^2\gamma_8^3+15\gamma\gamma_8^4) \\
					&+\sigma_7t_3(-3\gamma^4\gamma_8+\gamma^3\gamma_8^2+5\gamma^2\gamma_8^3+10\gamma\gamma_8^4-\gamma_8^5) \\
					&+t_3^2t_4(15\gamma^5-2\gamma^4\gamma_8+3\gamma^3\gamma_8^2+14\gamma^2\gamma_8^3-16\gamma\gamma_8^4+3\gamma_8^5) \\
					&+t_4t_6(39\gamma^5-13\gamma^4\gamma_8+8\gamma^3\gamma_8^2+35\gamma^2\gamma_8^3-31\gamma\gamma_8^4-3\gamma_8^5) \\
					&+t_9(\gamma^6-\gamma^4\gamma_8^2-\gamma^3\gamma_8^3-\gamma^2\gamma_8^4-\gamma\gamma_8^5-\gamma_8^6) \\
					&+t_3t_6(-13\gamma^6+12\gamma^5\gamma_8+5\gamma^4\gamma_8^2-56\gamma^3\gamma_8^3+8\gamma^2\gamma_8^4+21\gamma\gamma_8^5+2\gamma_8^6) \\
					&+t_4t_5(6\gamma^6+3\gamma^5\gamma_8+2\gamma^4\gamma_8^2+7\gamma^3\gamma_8^3+\gamma^2\gamma_8^4-8\gamma\gamma_8^5+3\gamma_8^6) \\
					&+t_3^3(-8\gamma^6+6\gamma^5\gamma_8+2\gamma^4\gamma_8^2-22\gamma^3\gamma_8^3+6\gamma^2\gamma_8^4+8\gamma\gamma_8^5-2\gamma_8^6) \\
					&+t_4^2(-6\gamma^7+\gamma^6\gamma_8-7\gamma^4\gamma_8^3+5\gamma^3\gamma_8^4+3\gamma^2\gamma_8^5+3\gamma\gamma_8^6-63\gamma_8^7) \\
					&+t_3t_5(-\gamma^7+2\gamma^6\gamma_8+\gamma^5\gamma_8^2-11\gamma^4\gamma_8^3+6\gamma^3\gamma_8^4+5\gamma^2\gamma_8^5+6\gamma\gamma_8^6+39\gamma_8^7) \\
					&+\sigma_7(2\gamma^8+6\gamma^7\gamma_8+3\gamma^6\gamma_8^2-4\gamma^5\gamma_8^3-15\gamma^4\gamma_8^4+6\gamma^3\gamma_8^5+3\gamma^2\gamma_8^6
					 -40\gamma\gamma_8^7+59\gamma_8^8) \\
					&+t_3t_4(3\gamma^8+\gamma^6\gamma_8^2+11\gamma^5\gamma_8^3+14\gamma^4\gamma_8^4-20\gamma^3\gamma_8^5-4\gamma^2\gamma_8^6+118\gamma\gamma_8^7+3\gamma_8^8) \\
					&+t_6(-48\gamma^9+3\gamma^8\gamma_8-41\gamma^7\gamma_8^2+18\gamma^6\gamma_8^3+16\gamma^5\gamma_8^4-13\gamma^4\gamma_8^5-67\gamma^3\gamma_8^6
					 +125\gamma^2\gamma_8^7 \\
					&-15\gamma\gamma_8^8-291\gamma_8^9)
					+t_3^2(-18\gamma^9-3\gamma^8\gamma_8-16\gamma^7\gamma_8^2+10\gamma^6\gamma_8^3-4\gamma^5\gamma_8^4-8\gamma^4\gamma_8^5-16\gamma^3\gamma_8^6 \\
					&-23\gamma^2\gamma_8^7-10\gamma\gamma_8^8-115\gamma_8^9)
					+t_5(-6\gamma^{10}-3\gamma^9\gamma_8-9\gamma^8\gamma_8^2+5\gamma^7\gamma_8^3-5\gamma^6\gamma_8^4-14\gamma^4\gamma_8^6 \\
					&-52\gamma^3\gamma_8^7+6\gamma^2\gamma_8^8-60\gamma\gamma_8^9+117\gamma_8^10) 
					+t_4(18\gamma^{11}-3\gamma^{10}\gamma+5\gamma^9\gamma_8^2+11\gamma^8\gamma_8^3-28\gamma^7\gamma_8^4 \\
					&+8\gamma^6\gamma_8^5+20\gamma^5\gamma_8^6-64\gamma^4\gamma_8^7-15\gamma^3\gamma_8^8 
					 +54\gamma^2\gamma_8^9+178\gamma\gamma_8^{10}-177\gamma_8^{11}) \\
					&+t_3(-2\gamma^{12}+6\gamma^{11}\gamma_8+2\gamma^{10}\gamma_8^2-20\gamma^9\gamma_8^3+11\gamma^8\gamma_8^4+22\gamma^7\gamma_8^5-8\gamma^6\gamma_8^6 \\
					&+83\gamma^5\gamma_8^7+15\gamma^4\gamma_8^8+5\gamma^3\gamma_8^9-116\gamma^2\gamma_8^{10}+\gamma\gamma_8^{11}+117\gamma_8^{12}) \\
					&-12\gamma^{15}-\gamma^{14}\gamma_8-10\gamma^{13}\gamma_8^2+6\gamma^{12}\gamma_8^3+7\gamma^{11}\gamma_8^4-13\gamma^{10}\gamma_8^5-31\gamma^9\gamma_8^6
					 +9\gamma^8\gamma_8^7-\gamma^7\gamma_8^8 \\
					&-118\gamma^6\gamma_8^9-18\gamma^5\gamma_8^{10}+131\gamma^4\gamma_8^{11}-6\gamma^3\gamma_8^{12}-233\gamma^2\gamma_8^{13}+175\gamma\gamma_8^{14}
					 -58\gamma_8^{15}.
			\end{align*}
		\end{thm}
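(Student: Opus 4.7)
The plan is to mirror the strategy sketched for Theorem \ref{thm:H*SU/T}, combining Borel's theorem with the Bott--Samelson torsion-freeness result, but incorporating enough extra generators to capture the integral structure in the presence of $2$-, $3$-, and $5$-torsion in $H^*(E_8)$. First I would invoke Theorem \ref{thm:Borel} to identify
\begin{equation*}
H^*(E_8/T^8;\mathbb{Q}) \cong H^*(BT^8;\mathbb{Q})/\bigl(\tilde{H}^*(BT^8;\mathbb{Q})^{W_{E_8}}\bigr),
\end{equation*}
where $H^*(BT^8;\mathbb{Q})\cong \mathbb{Q}[x_1,\dots,x_8]$ in degree $2$ generators, and where $W_{E_8}$ (order $696729600$) acts via its standard reflection representation. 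Chevalley's theorem tells us that the invariant ring $\mathbb{Q}[x_1,\dots,x_8]^{W_{E_8}}$ is a polynomial algebra on fundamental invariants of degrees $2,8,12,14,18,20,24,30$ (the exponents of $E_8$ raised by one). This pins down the rational cohomology.

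Next I would appeal to the Bott--Samelson result cited in the sketch proof of Theorem \ref{thm:H*SU/T}: $H^*(E_8/T^8;\mathbb{Z})$ is torsion-free and concentrated in even degrees. Hence the map $H^*(E_8/T^8;\mathbb{Z})\to H^*(E_8/T^8;\mathbb{Q})$ induced by the universal coefficients theorem (Theorem \ref{thm:uct}) is injective, and the task reduces to identifying the image integrally. For $SU(n+1)$ and $Sp(n)$ the rational fundamental invariants already lift to integral generators, but for $E_8$ some of the degree $2k$ rational generators are only $p$-integral for $p=2,3,5$. Following the Toda-style approach used successfully for $F_4$ and $E_6$ (see Theorems \ref{thm:H*F4/T} and \ref{thm:H*E6/T}) and extended by Nakagawa for $E_7$ (Theorem \ref{thm:H*E7/T}), I would introduce auxiliary integral classes $t_3,t_4,t_5,t_6,t_9,t_{10},t_{15}$ in degrees $6,8,10,12,18,20,30$ playing the role of integral lifts of Weyl-invariant polynomials (the subscript $k$ indicating total degree $2k$), together with a single additional degree $2$ generator $\gamma$ arising from a specific one-dimensional weight. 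The defining relations among the $\sigma_i$ and the $t_j$ correspond to writing the integral lifts of the fundamental Weyl invariants as polynomials in the $\gamma_i$, adjusted by multiples of the $t_j$ so that divisibility holds integrally rather than only rationally.

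To derive the full list of relations I would exploit a subgroup filtration, say $T^8 \subset E_7\cdot S^1 \subset E_8$, and run the Leray--Serre spectral sequences of the induced fibrations
\begin{equation*}
(E_7\cdot S^1)/T^8 \to E_8/T^8 \to E_8/(E_7\cdot S^1),
\end{equation*}
combined with the fibration $E_8/E_7 \to B(E_7\cdot S^1) \to BE_8$. Starting from the known integral cohomology of $E_7/T^7$ (Theorem \ref{thm:H*E7/T}) and computing transgressions of the new generators via Chern classes of the relevant representations would, in principle, bootstrap the differentials to the $E_\infty$-page. The high-degree cohomology classes on $E_8/E_7$ would supply the relations involving $v$, $w$, and $x$ in the theorem statement.

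The main obstacle — and the reason the result requires its own dedicated paper — is the explicit determination of the polynomial relations. The final three relations (in $\gamma_8$, $v$, $w$, $x$) encode sustained and delicate transgression data for $t_{10}$ and $t_{15}$, and deriving them by hand involves Schubert-polynomial calculus for $E_8$ of extraordinary combinatorial complexity. Without the explicit Schubert basis, or a computer algebra implementation of it, writing out the several hundred monomials appearing in the defining ideal is not realistic; this is precisely the computational heart of \cite{nakagawaE8}, and in the present exposition I would simply record the presentation and refer to Nakagawa's paper for the verification of each relation.
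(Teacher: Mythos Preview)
The paper does not prove this theorem at all: it is stated with the citation \cite{nakagawaE8} and no proof environment follows. Your proposal therefore goes well beyond what the paper does, giving a reasonable outline of the Borel/Bott--Samelson/Toda--Nakagawa strategy before ultimately deferring to \cite{nakagawaE8} for the explicit relations---which is precisely where the paper starts and ends.
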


	\subsection{Based loop space cohomology of Lie groups}\label{sec:LoopLie}
		
		The Hopf algebra of the based loop space of Lie groups were studied by Bott in \cite{bott1958}.
		Here we give just the more straight forwardly produced results which we intend to use latter in this thesis.
		
		\begin{defn}
			Define the integral divided polynomial algebra on variables $x_1,\dots,x_n$ by
			\begin{equation*}
				\Gamma_{\mathbb{Z}}[x_1,\dots,x_n]=\frac{\mathbb{Z}[(x_i)_1,(x_i)_2,\dots]}{[(x_i)_k-k!x_i^k]},
			\end{equation*}
			for $1\leq i \leq n$ and $k\geq 1$ and where $x_i=(x_i)_1$.
		\end{defn}
	
		The following two theorems follow from Theorem \ref{thm:H*(SU(n))} and \ref{thm:H*(Sp(n))},
		using a Leray-Serre spectral sequence argument with the path space fibrations $\Omega SU(n) \to PSU(n) \to SU(n)$ and $\Omega Sp(n) \to PSp(n) \to Sp(n)$.
		
		\begin{thm}\label{thm:LoopSU(n)}
			For each $n\geq 1$, the cohomology of the based loop space of the classical simple Lie group $SU(n)$ is given by
			\begin{equation*}
				H^*(\Omega(SU(n));\mathbb{Z})=\Gamma_{\mathbb{Z}}[x_2,x_4,\dots,x_{2n-2}],
			\end{equation*}
			where $|x_i|=i$ for $i=2,4,\dots,2n-2$.
		\end{thm}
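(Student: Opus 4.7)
The plan is to run the cohomology Leray--Serre spectral sequence of the path space fibration
\begin{equation*}
\Omega SU(n) \to PSU(n) \to SU(n),
\end{equation*}
as suggested by the remark preceding the theorem. Since $SU(n)$ is simply connected (Theorem \ref{thm:H*(SU(n))} shows it is even $2$-connected), the spectral sequence converges. By Theorem \ref{thm:SSS} the $E_2$-page is
$E_2^{p,q} \cong H^p(SU(n);\mathbb{Z}) \otimes H^q(\Omega SU(n);\mathbb{Z})$
and converges to $H^*(PSU(n);\mathbb{Z})$. As $PSU(n)$ is contractible the abutment vanishes above degree $0$, so every class in positive total degree on any page must eventually be killed either by being hit by a differential or by being sent to a nonzero class by one.

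I would then proceed by induction on the generators $x_3, x_5, \ldots, x_{2n-1}$ of $H^*(SU(n);\mathbb{Z})$, ordered by degree. Suppose inductively that on suitable pages I have already constructed fiber classes $x_2, x_4, \ldots, x_{2i-2}$ of degrees $2, 4, \ldots, 2i-2$ together with divided power companions $(x_{2j})_k$ satisfying $k!(x_{2j})_k = x_{2j}^k$, and that each $x_{2j}$ transgresses via $d_{2j+1}$ to $x_{2j+1}$. To construct $x_{2i}$, note that the lowest degree class in column $0$ that has not been killed so far lies in degree $2i$. The bidegree constraints and the inductive hypothesis force this class to live on the $E_{2i+1}$-page and to be sent by $d_{2i+1}$ to a nonzero multiple of $x_{2i+1}$, since otherwise $x_{2i+1}$ could not be killed. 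After a choice of sign, this gives the required $x_{2i}$ with $d_{2i+1}(x_{2i}) = x_{2i+1}$.

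Next I would use the multiplicative structure (Theorem \ref{thm:SSS}(3)) to pin down the divided power relations. The Leibniz rule gives
\begin{equation*}
d_{2i+1}(x_{2i}^k) = k\, x_{2i}^{k-1}\, x_{2i+1},
\end{equation*}
so the classes $x_{2i}^k$ are not cycles, but the corresponding classes $x_{2i}^{k-1} x_{2i+1}$ must also be killed because the abutment is trivial. To accomplish this, for each $k \geq 1$ I would introduce a fiber generator $(x_{2i})_k$ of degree $2ik$ whose transgression is forced to be $d_{2i+1}((x_{2i})_k) = (x_{2i})_{k-1}\, x_{2i+1}$. Comparing with the Leibniz computation gives $k!(x_{2i})_k = x_{2i}^k$, which is precisely the divided polynomial relation, and the inductive step is complete once one checks that no other column in the relevant degree survives. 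Passing to $E_\infty$ and noting that the filtration by subgroups on $H^*(PSU(n)) = 0$ is trivial so there are no extension problems, the identification $H^*(\Omega SU(n);\mathbb{Z}) \cong \Gamma_{\mathbb{Z}}[x_2, x_4, \ldots, x_{2n-2}]$ follows.

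The main obstacle will be the bookkeeping in the induction: one has to simultaneously keep track of all products $(x_{2j})_{k_j}$ and show that on each page the only way to kill the classes supported on column zero is via the transgressions described, so that no spurious generators of the fiber are needed. This amounts to checking degree-wise that the $E_2$-page has exactly the right total rank in each bidegree, which follows from a Poincar\'{e} series comparison between $\Lambda(x_3, \ldots, x_{2n-1}) \otimes \Gamma_{\mathbb{Z}}[x_2, \ldots, x_{2n-2}]$ and a contractible abutment; the transgressions described above saturate the possibilities, leaving no room for further generators.
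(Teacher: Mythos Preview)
Your proposal is correct and follows essentially the same approach as the paper: both arguments run the Leray--Serre spectral sequence of the path space fibration, use the contractibility of $PSU(n)$ to force transgressions $d_{2i+1}(x_{2i})=x_{2i+1}$, and then apply the Leibniz rule to extract the divided power relations $k!(x_{2i})_k=x_{2i}^k$. The only cosmetic difference is that the paper phrases the induction on $n$ (adding one new generator $x_{2n-1}$ at each stage) whereas you induct directly on the generators within a fixed $n$, and the paper tracks surviving classes explicitly rather than invoking a Poincar\'e series comparison at the end.
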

		
		\begin{proof}
			We proceed by induction on $n$.
			We have that $SU(1) = \{ pt \}$ hence by definition $\Omega SU(1) = \{ pt \}$, so has trivial cohomology ring.
			
			Now assume that $n \geq 2$. 
			We will apply the Leray-Serre spectral sequence to the path space fibration (\ref{eq:PathFibe}) for $X=SU(n)$,
			\begin{equation*}
				\Omega SU(n) \to PSU(n) \to SU(n).
			\end{equation*}
			Denote this spectral sequence by $\{ E_r,d^r \}$.
			Since $PSU(n)$ is contractible the spectral sequence will converge to the trivial algebra,
			which is $0$ in all entities except for $E_{\infty}^{0,0}$.
			Hence all non-zero entries are in the image of some differential $d^r$.
			
			In Figure \ref{pic:SS2} below, we identify the horizontal axis with $H^{*}(SU(n))$ and the vertical axis with $H^{*}(\Omega SU(n))$.
			Throughout the induction argument we obtain additional algebra generators of ${\Gamma}_{\mathbb{Z}}(x_{2n-2})$ in $H^{*}(\Omega SU(n);\mathbb{Z})$ not in 
			$H^{*}(\Omega SU(n-1);\mathbb{Z})$ using only the differential of degree $n$. 
			Hence we can assume all elements associated to generators of lower degree have all been annihilated before the $E^{*,*}_{n}$ page.
			
			When $n=2$ there are no non-zero differentials before page $E^{*,*}_{3}$ as the first non-trivial generator of $H^{*}(SU(2))$ has degree $3$.
			The only new generator of $H^{*}(SU(n))$ not in $H^{*}SU(n-1)$ is $x_{n}$.
			Since differentials have bidegree $(r,1-r)$, the only differential with domain in column $E^{0,*}_{r}$ to have image in column $E^{2n-1,*}_{r}$ is $d_{2n-1}$.
			The differential with image $E^{2n-1,0}_{2n-1}$ therefore must be an isomorphism and so we get a new generator of $H^{*}(\Omega SU(n))$
			in dimension $2n-2=2(n-1)$, which we will denote by $b_{1}$ with $d_{2n-1}(b_{1})=x_{1}$.
			
			Note that all products of $x_{2n-1}$ with the other generators $x_{3}, \dots x_{2n-3}$ are annihilated by differentials of degree less than $n$,
			with codomain $b_{1}$ multiplied byother elements in the multiplicative structure of $E_{2}^{*,*}$, which we will denote by $\cdot$.
			Annihilated by this differential due to the Leibnitz rule on differentials.
			Hence the only other potently non-zero entries on page $E_{2n-1}^{*,*}$ are in entries in $E_{2n-1}^{0,*}$
			and $E_{2n-1}^{2n-1,q}$ where $q= 2(n-1), 4(n-1), 6(n-1), \dots$.
			As all other entries are zero, the differentials with image $E_{2n-1}^{2n-1,q}$ on $E_{2n-1}^{*,*}$ are all isomorphisms.
			This gives new elements $b_{i}$ with $d_{2n-1}(b_{i})=x_{2n-1} \cdot b_{i-1}$ for each $i \geq 2$.
			We know that $b_{1}^{i}$ and $b_{i}$ have the same degree.
			
			From multiplication in $E^{*,*}_{2}$ and graded commutativity of the cup product,
			we deduce that
			\begin{equation*}
				\begin{aligned}
						d_{2n-1}(b_{1}^{2}) &=d_{2n-1}(b_{1})\cdot b_{1} +(-1)^{0 \cdot 2(n-1)}b_{1}d_{2n-1}(b_{1}) \\
						&= x_{2n-1} \cdot b_{1} + b_{1} \cdot x_{2n-1}                          \\
						&= x_{2n-1} \cdot b_{1} + (-1)^{2(n-1)(2n-1)} x_{2n-1} \cdot b_{1}            \\
						&= 2x_{2n-1} \cdot b_{1}                                    
				\end{aligned}
			\end{equation*}
			so $d_{2n-1}(b_{1}^{2})=2x_{2n-1} \cdot b_{1}$.
			Next we show by induction on $i$ that for each $i \geq 2$, $b_{1}^{i}=i!b_{i}$.
			Note that by definition of generators and applying isomorphisms $d_{2n-1}$,
			we have $b_{1}^{i}=i!x_{i}$ is equivalent to $d_{2n-1}(b_{1}^{i})= i! x_{2n-1} \cdot b_{i-1}$
			and $b_{i}= i b_{i-1} \cdot b_{1}$.
			Hence the following calculation is the induction step.
			\begin{equation*}
				\begin{aligned}
						d_{2n-1}(b_{1}^{i}) &=d_{2n-1}(b_{1}^{i-1})\cdot b_{1} +(-1)^{0 \cdot 2(n-1)}b_{1}^{i-1}d_{2n-1}(b_{1}) \\
						&=(i-1)! x_{2n-1} \cdot b_{i-2} \cdot b_{1} + b_{i-1} \cdot (i-1)!x_{2n-1} \\
						&=(i-1)! x_{2n-1} \cdot (i-1) b_{i-1} + (i-1)! b_{i-1} \cdot x_{2n-1} \\
						&=i! x_{2n-1} \cdot b_{i-1}
				\end{aligned}
			\end{equation*}
			This means that $\langle b_{1},b_{2},b_{3} \dots \rangle = {\Gamma}_{\mathbb{Z}}(b_{1})$.
			In addition these generators interact freely with all previous generators, as they are annihilated by differential of different degrees.
			Therefore $b_1$ is the additional element $x_{2n-2}$ in ${\Gamma}_{\mathbb{Z}}(x_2,x_4, \dots ,x_{2n-2})$ 
			not in ${\Gamma}_{\mathbb{Z}}(x_2,x_4, \dots ,x_{2n-4})$ for $H^*(SU(n-1))$, as in the statement of the theorem.
			
				\begin{center}
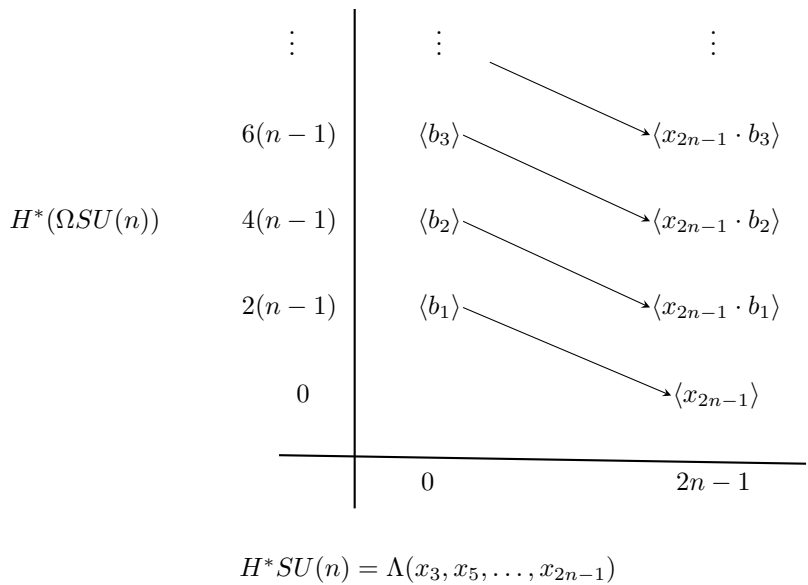

			\begin{tikzpicture}
				\matrix (m) [matrix of math nodes,
					nodes in empty cells,nodes={minimum width=0ex,
					minimum height=7ex,outer sep=-5pt},
					column sep=1ex,row sep=0ex]{
																&		\;\;\;\;\;\;\;\;\;\;  \vdots	\;\;\; \;\;\;\;\;\;\;\;\;\;\;\;\;\;\;\;\vdots\;\;\;\;\;\;\;			&	 \vdots	 						 & \\
																&		6(n-1)		\;\;\;\;\;\;\;\;\;\;\;  \langle b_{3} \rangle	\:								   &  \:\langle x_{2n-1} \cdot b_{3} \rangle  & \\
					H^{*}(\Omega SU(n))\!\!\!\!\!\!\!\!\!\!\!\!\!\!\!\!\!\!\!\!\!\!&		4(n-1)   \;\;\;\;\;\;\;\;\;\;\;   \langle b_{2} \rangle \:								 &  \:\langle x_{2n-1} \cdot b_{2} \rangle  & \\													
																&		2(n-1)	\;\;\;\;\;\;\;\;\;\;\;   \langle b_{1} \rangle \:	 								 &  \:\langle x_{2n-1} \cdot b_{1} \rangle  & \\
																&			 0\;\;\;\;\;\;\;\;\;\;\;\;\;\;																	 &  \:\langle x_{2n-1} \rangle  & \\
																&						  	            \;\;\;\;\;\;\;\;\;\;\;\;\;\;\;\;\;\;\;\;0				 &		2n-1	& \\
																&	 \;\;\;\;\;\;\;\;\;\;\;\;\;\;\;\;\;\;\;\; H^{*}SU(n) = \Lambda (x_{3},x_{5},\dots,x_{2n-1})&		& \strut \\};
				\draw[-stealth] (m-1-2.south east) -- (m-2-3.west);
				\draw[-stealth] (m-2-2.east) -- (m-3-3.west);
				\draw[-stealth] (m-3-2.east) -- (m-4-3.west);
				\draw[-stealth] (m-4-2.east) -- (m-5-3.west);
			\draw[thick] (m-1-2.north) -- (m-6-2.south) ;
			\draw[thick] (m-6-2.north west) -- (m-6-4.north east) ;
			\end{tikzpicture}
			\captionof{figure}{Serre spectral sequence for $\Omega SU(n) \to PSU(n) \to SU(n)$, $E_{2n-1}$-page.}
			\label{pic:SS2}
			\end{center}
		\end{proof}
		
		\begin{thm}\label{thm:LoopSp(n)}
			For each $n\geq 1$, the cohomology of the based loop space of the classical simple Lie group $Sp(n)$ is given by
			\begin{equation*}
				H^*(\Omega(Sp(n));\mathbb{Z})=\Gamma_{\mathbb{Z}}[x_2,x_6,\dots,x_{4n-2}],
			\end{equation*}
			where $|x_i|=i$ for $i=2,6,\dots,4n-2$.
		\end{thm}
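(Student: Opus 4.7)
The plan is to proceed exactly as in the proof of Theorem \ref{thm:LoopSU(n)}, replacing the fibration (\ref{eq:FibeSU}) with (\ref{eq:FibeSp}) and adjusting the degrees accordingly. I will induct on $n$, taking as base case $Sp(1)\cong S^3$, for which $\Omega Sp(1)\simeq \Omega S^3$ has cohomology $\Gamma_{\mathbb{Z}}[x_2]$ by a direct Leray–Serre spectral sequence calculation on the path-loop fibration. Inductively I assume the result for $Sp(n-1)$ and apply the Leray–Serre spectral sequence to the path space fibration
\begin{equation*}
	\Omega Sp(n) \to P Sp(n) \to Sp(n).
\end{equation*}
Since $P Sp(n)$ is contractible, the spectral sequence must collapse to the trivial algebra concentrated in bidegree $(0,0)$, so every non-zero class in the $E_2$-page must be killed by some differential.

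By Theorem \ref{thm:H*(Sp(n))}, $H^*(Sp(n);\mathbb{Z})=\Lambda(x_3,x_7,\dots,x_{4n-1})$, so on the base only the generator $x_{4n-1}$ is new compared with $H^*(Sp(n-1))$. Because the horizontal generators now live in odd degrees $4k-1$, differentials of bidegree $(r,1-r)$ for $r<4n-1$ cannot detect $x_{4n-1}$, and (by the inductive hypothesis) have already annihilated everything involving the lower generators by the $E_{4n-1}$-page. The only differential that can produce the new classes in $H^*(\Omega Sp(n))$ is therefore $d_{4n-1}$, of bidegree $(4n-1,2-4n)$. Its transgression forces a new generator $b_1$ in $H^{4n-2}(\Omega Sp(n))$ with $d_{4n-1}(b_1)=x_{4n-1}$, and iteratively classes $b_i$ with $d_{4n-1}(b_i)=x_{4n-1}\cdot b_{i-1}$, exactly as in Figure \ref{pic:SS2}.

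The Leibniz-rule argument carries over verbatim: using graded commutativity and $|b_1|=4n-2$ which is even, one computes $d_{4n-1}(b_1^i)=i!\,x_{4n-1}\cdot b_{i-1}$ by induction on $i$, which gives the divided-power relation $b_1^i=i!\,b_i$. Combined with the freeness statements from the inductive step, this identifies $b_1$ with the new generator $x_{4n-2}$ and shows
\begin{equation*}
	H^*(\Omega Sp(n);\mathbb{Z}) \cong \Gamma_{\mathbb{Z}}[x_2,x_6,\dots,x_{4n-6}]\otimes \Gamma_{\mathbb{Z}}[x_{4n-2}] = \Gamma_{\mathbb{Z}}[x_2,x_6,\dots,x_{4n-2}],
\end{equation*}
completing the induction.

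The only real point of care—and the place where one might hesitate—is checking that no mixed classes survive to interfere with $d_{4n-1}$, i.e.\ that the products $x_{4n-1}\cdot(\text{class from }\Omega Sp(n-1))$ are correctly annihilated by $d_{4n-1}$ via the Leibniz rule, and that the new generator degree $4n-2$ is disjoint from the degrees of the previously-constructed divided-power generators $x_2,x_6,\dots,x_{4n-6}$ so that no extension problems arise. Both are straightforward: the degrees $4k-2$ for $k=1,\dots,n-1$ are all strictly less than $4n-2$, and the Leibniz computation is identical to the one already executed in the proof of Theorem \ref{thm:LoopSU(n)}, so no genuinely new obstacle appears.
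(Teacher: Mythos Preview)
Your proposal is correct and follows exactly the approach the paper intends: the paper's own proof simply states that the argument is the same as that of Theorem~\ref{thm:LoopSU(n)} with the degrees of the $x_i$ shifted. You have faithfully carried out that shift, and your added remarks about degree disjointness and the Leibniz computation are accurate elaborations of what the paper leaves implicit.
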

		
		\begin{proof}
			The proof is the same as that of Theorem \ref{thm:LoopSU(n)} with the degrees of the $x_i$ shifted. 
		\end{proof}

	\subsection{Based loop space homology of complete flag manifolds}\label{sec:LoopFlag}

	In \cite{homology_Lflags}, Grbi{\'c} and Terzi{\'c} showed that the integral homology of the based loop space of a complete flag manifold is torsion free
	and found the integral Pontrjagin homology algebras the complete flag manifolds of compact connected simple Lie groups
	$SU(n)$, $Sp(n)$, $SO(n)$, $G_2$, $F_4$ and $E_6$.
	They achieved this by first using Sullivan minimal model theory to produce the rational homology algebras
	then used homotopy theory to extend these results to the integral case.
	The integral homology algebras are as follows.

	\begin{thm}[\cite{homology_Lflags}, Theorem $4.1$]
		The integral Pontrjagin homology ring of the based loop space on $SU(n+1)/T^n$ is given by
		\begin{equation*}
			H_*(\Omega(SU(n+1)/T^n);\mathbb{Z})=
			\frac{T(x_1,\dots,x_n)\otimes\mathbb{Z}[y_1,\dots,y_n]}
			{[x_k^2-x_px_q-x_qx_p, \;x_k^2-2y_1]}
		\end{equation*}
		for $1\leq k,p,q\leq n$ and $p\neq q$ where $|x_i|=1$ and $|y_i|=2i$ for each $1\leq i \leq n$.
	\end{thm}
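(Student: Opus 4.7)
The plan is to follow the strategy of Grbi\'c and Terzi\'c in \cite{homology_Lflags}. First, I consider the principal $T^n$-bundle
\begin{equation*}
T^n \to SU(n+1) \to SU(n+1)/T^n
\end{equation*}
and extend its Puppe sequence backwards to obtain the homotopy fibration
\begin{equation*}
\Omega SU(n+1) \xrightarrow{j} \Omega(SU(n+1)/T^n) \xrightarrow{\partial} T^n.
\end{equation*}
Both $j$ and $\partial$ are $H$-maps. Combined with Theorem \ref{thm:LoopSU(n)} and the universal coefficients theorem, which identify $H_*(\Omega SU(n+1);\mathbb{Z})$ with the polynomial algebra $\mathbb{Z}[y_1,\dots,y_n]$ on generators $|y_i|=2i$, and the standard computation $H_*(T^n;\mathbb{Z}) = \Lambda(x_1,\dots,x_n)$ on generators $|x_i|=1$, these maps single out precisely the generators appearing in the statement.

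Second, I would compute the Pontrjagin algebra rationally using Sullivan minimal model theory. By Theorem \ref{thm:H*SU/T} the flag manifold $SU(n+1)/T^n$ is a formal pure space whose rational cohomology is a polynomial ring modulo the regular sequence $\sigma_1, \dots, \sigma_{n+1}$ of elementary symmetric polynomials; consequently its minimal Sullivan model is the Koszul-type CDGA with the elementary symmetric polynomials as the differentials of the odd-degree generators. Running the standard construction recovering $H^*(\Omega X;\mathbb{Q})$ from the minimal Sullivan model of $X$ (for example via the cobar construction) then yields a rational algebra with exactly the generators and relations of the statement.

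Third, I would upgrade to integral coefficients. The essential point is torsion-freeness of $H_*(\Omega(SU(n+1)/T^n);\mathbb{Z})$, which one establishes by analysing the Serre spectral sequence of the fibration above. The $E^2$-page is the torsion-free module $\Lambda(x_1,\dots,x_n)\otimes\mathbb{Z}[y_1,\dots,y_n]$ (with simple coefficients, since the fibration arises by extension of a Puppe sequence, making $\Omega SU(n+1)$-action well-behaved), and matching the rationally computed Poincar\'e series against the total dimensions available on $E^2$ forces all differentials to vanish. Torsion-freeness then promotes the rational multiplicative relations to integral ones, via the universal coefficients theorem applied to the Pontrjagin product.

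\textbf{The main obstacle} I anticipate is determining the precise integer coefficients in the multiplicative relations, in particular the factor of $2$ in the relation $x_k^2 = 2y_1$. This factor is invisible at the level of additive structure and instead reflects the Pontrjagin square $x_k \cdot x_k$, which for degree-one generators satisfies $2 x_k^2 = [x_k,x_k]$ under graded commutativity. Computing this integer requires an explicit Pontrjagin product calculation, most cleanly realised by relating $\partial_*(x_k)$ back to a generator of $T^n$ and using the transgression of a degree-$2$ cohomology generator of $SU(n+1)/T^n$, which hits the degree-$2$ loop-space class $y_1$ with multiplicity two.
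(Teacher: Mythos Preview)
The paper does not prove this theorem; it is quoted verbatim from \cite{homology_Lflags} with no proof given. The paper only summarises the method of that reference in one sentence: ``They achieved this by first using Sullivan minimal model theory to produce the rational homology algebras then used homotopy theory to extend these results to the integral case.'' Your proposal follows exactly this outline, so there is nothing substantive to compare against.

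One remark on your torsion-freeness step: the paper (in Section~\ref{sec:evalSS}, not as a proof of this theorem but for its own purposes) points out that the inclusion $T^n \hookrightarrow SU(n+1)$ is null-homotopic, so the fibration $\Omega SU(n+1) \to \Omega(SU(n+1)/T^n) \to T^n$ admits a section and, being principal, splits as a product $\Omega(SU(n+1)/T^n) \simeq \Omega SU(n+1) \times T^n$. This gives torsion-freeness and the additive structure immediately via K\"unneth, which is cleaner than your proposed Poincar\'e-series argument for spectral-sequence collapse. It also clarifies where the factor of $2$ you flag as the main obstacle must ultimately be pinned down: the product splitting is not multiplicative, and the relation $x_k^2 = 2y_1$ records precisely the twisting of the Pontrjagin product relative to the product decomposition.
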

	
	\begin{thm}[\cite{homology_Lflags}, Theorem $4.2$]
		For each $n\geq 1$ the integral Pontrjagin homology ring of the based loop space on $Sp(n)/T^n$ is given by
		\begin{equation*}
			H_*(\Omega(Sp(n)/T^n);\mathbb{Z})=
			\frac{T(x_1,\dots,x_n)\otimes\mathbb{Z}[y_2,\dots,y_n]}
			{[x_k^2-x_l^2,\; x_kx_l+x_lx_k]}
		\end{equation*}
		for $1\leq k < l \leq n$ where $|x_i|=1$ and $|y_j|=4j-2$ for each $1\leq i \leq n$ and $2\leq j \leq n$.
	\end{thm}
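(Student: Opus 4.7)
The plan is to mimic the strategy indicated for the companion result on $SU(n+1)/T^n$: compute the rational Pontrjagin algebra via Sullivan models and then bootstrap to the integers using a homotopy-theoretic torsion-freeness argument. The central homotopy-theoretic input is the fibration sequence obtained by looping the principal bundle $T^n\to Sp(n)\to Sp(n)/T^n$, which after one rotation yields the principal fibration
\begin{equation*}
  \Omega Sp(n)\longrightarrow \Omega(Sp(n)/T^n)\xrightarrow{\;\partial\;} T^n.
\end{equation*}
All three spaces have torsion-free (co)homology by Theorems~\ref{thm:H*(Sp(n))}, \ref{thm:H*Sp/T} and \ref{thm:LoopSp(n)}, and the fiber homology $H_*(\Omega Sp(n);\mathbb{Z})\cong \mathbb{Z}[y_1,\dots,y_n]$ (polynomial dual to the divided power cohomology) with $|y_j|=4j-2$ sits alongside $H_*(T^n;\mathbb{Z})=\Lambda(x_1,\dots,x_n)$, $|x_i|=1$, giving the expected set of generators, with $y_1$ destined to be absorbed as the common square of the $x_i$.

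First I would run the rational calculation using Sullivan minimal models. The model of $Sp(n)/T^n$ is the pure Koszul--Sullivan algebra $(\Lambda(\gamma_1,\dots,\gamma_n)\otimes\Lambda(z_3,z_7,\dots,z_{4n-1}),d)$ with $|\gamma_i|=2$, $|z_{4k-1}|=4k-1$ and $dz_{4k-1}=\sigma_k^2(\gamma_1^2,\dots,\gamma_n^2)$; the relations $\sigma_k^2$ form a regular sequence, so $Sp(n)/T^n$ is a rationally elliptic, formal complete intersection. Standard bar/cobar duality (or the Halperin--F\'elix--Thomas computation of the loop space model) then produces the rational Pontrjagin algebra $H_*(\Omega(Sp(n)/T^n);\mathbb{Q})$, in which the $\gamma_i$ desuspend to degree~$1$ anticommuting generators $x_i$, the $z_{4k-1}$ desuspend to polynomial generators $y_k$ of degree $4k-2$, and the regular sequence $\sigma_k^2$ forces precisely the relations $x_k^2=x_l^2$ (identifying $y_1$ with the common square) together with $x_kx_l+x_lx_k=0$.

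Next I would lift from $\mathbb{Q}$ to $\mathbb{Z}$. The Serre spectral sequence of the fibration above has $E^2_{p,q}=H_p(T^n)\otimes H_q(\Omega Sp(n))$, and one needs to read off the integral differentials: the obvious candidates are the transgressions sending each $y_1$-class on the fiber to a quadratic expression in the $x_i$ on the base (reflecting the fact that the $\sigma_k^2$ hit integrally, not just rationally). By comparing rank-by-rank with the rational answer via the universal coefficient theorem (Corollary~\ref{cor:UniversalModp}), and checking one prime at a time that the mod-$p$ Betti numbers match the rational ones, I can conclude torsion-freeness of $H_*(\Omega(Sp(n)/T^n);\mathbb{Z})$, and therefore that the integral algebra agrees with the rational one on the nose.

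Finally, with torsion-freeness in hand I would pick explicit integral lifts: the $x_i$ as the images of the standard $H_1(T^n;\mathbb{Z})$-generators under the section/splitting induced by $\partial$, and the $y_j$ for $j\geq 2$ as the images of the polynomial generators of $H_*(\Omega Sp(n);\mathbb{Z})$ coming from the fiber inclusion; the integral relations $x_k^2=x_l^2$ and $x_kx_l+x_lx_k=0$ are then verified directly. The hardest step will be the integral bootstrapping, specifically organising the transgressions in the Serre spectral sequence well enough to rule out $p$-torsion for small primes $p=2,3$: the rational minimal-model calculation is formal once one knows $Sp(n)/T^n$ is a complete intersection, but integrally one must be careful that the Weyl-group symmetrisation underlying $\sigma_k^2$ does not introduce factorials that could create $p$-torsion at the $E_\infty$-page, and this is where a direct homotopy-theoretic comparison with the known torsion-free algebras $H_*(\Omega Sp(n);\mathbb{Z})$ and $H_*(T^n;\mathbb{Z})$ is essential.
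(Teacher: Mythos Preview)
The paper does not give its own proof of this theorem; it is quoted verbatim from \cite{homology_Lflags} (Grbi\'c--Terzi\'c), and the surrounding text only summarises their method: ``first using Sullivan minimal model theory to produce the rational homology algebras then used homotopy theory to extend these results to the integral case.'' Your proposal follows exactly this two-step strategy --- rational computation via the pure Sullivan model of $Sp(n)/T^n$, then an integral lift via the Serre spectral sequence of $\Omega Sp(n)\to\Omega(Sp(n)/T^n)\to T^n$ combined with a torsion-freeness check --- so it matches the cited approach.

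One small caution on the integral step: rather than analysing transgressions prime-by-prime as you suggest, the cleaner route (and the one taken in \cite{homology_Lflags}) is to observe that the inclusion $T^n\hookrightarrow Sp(n)$ is null-homotopic (the paper records this in Section~\ref{sec:evalSS} via \cite{CohomologyOmega(G/U)}), giving a homotopy splitting $\Omega(Sp(n)/T^n)\simeq \Omega Sp(n)\times T^n$ as spaces. This immediately yields torsion-freeness of $H_*(\Omega(Sp(n)/T^n);\mathbb{Z})$ without any spectral-sequence bookkeeping, and reduces the problem to identifying the multiplicative extension, which is then read off from the rational answer. Your spectral-sequence argument would work, but the splitting makes the ``hardest step'' you flag essentially automatic.
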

	
	\begin{thm}[\cite{homology_Lflags}, Theorem $4.3$]
		For each $n\geq 1$ the integral Pontrjagin homology ring of the based loop space on $SO(2n+1)/T^n$ is given by
		\begin{equation*}
			H_*(\Omega(SO(2n+1)/T^n);\mathbb{Z})=
			\frac{T(x_1,\dots,x_n)\otimes\mathbb{Z}[_1,\dots,y_{n-1},2y_n,\dots,2y_{2n-1}]}
			{[x_1^2-y_1, \; x_i^2-x_{i+1}^2, \; x_kx_l+x_lx_k, \; y_i^2-2y_{i-1}y_{i+1}+\cdots\pm2y_{2i}]}
		\end{equation*}
		for $1\leq i\leq n-1$ and $1\leq k<l \leq n$ where $y_0=1$, $|x_a|=1$, $|y_b|=2b$ and $|2y_c|=2c$
		for each $1\leq a \leq n$, $1\leq b \leq 2n-1$ and $n \leq c \leq 2n-1$.
	\end{thm}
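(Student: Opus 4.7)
The plan is to follow the strategy of Grbi{\'c} and Terzi{\'c}: extract the rational Pontrjagin algebra from a Sullivan minimal model of $SO(2n+1)/T^n$ and then upgrade to integer coefficients by prime-local comparisons. The natural geometric object to work with is the principal fibration
\[
\Omega SO(2n+1) \to \Omega(SO(2n+1)/T^n) \to T^n,
\]
obtained by looping the torus bundle $T^n \to SO(2n+1) \to SO(2n+1)/T^n$ and extending the Puppe sequence. This exhibits $\Omega(SO(2n+1)/T^n)$ as the homotopy fiber of the maximal torus inclusion $T^n \hookrightarrow SO(2n+1)$, and the $H$-space structure on $T^n$ lifts to the Pontrjagin product on $\Omega(SO(2n+1)/T^n)$.

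For the rational step, I would build the relative minimal Sullivan model of $SO(2n+1)/T^n$ from its integral cohomology presentation in Theorem \ref{thm:H*SUodd}, whose Chern-type relations $\sigma_i - 2t_i$ and Newton-type identities $t_{2i} + \sum_j (-1)^j t_j t_{2i-j}$ supply the model's differential. Sullivan's dictionary then identifies the rational homotopy Lie algebra of $SO(2n+1)/T^n$; by the Milnor--Moore theorem its universal enveloping algebra is $H_*(\Omega(SO(2n+1)/T^n);\mathbb{Q})$. Reading off this structure should produce odd-degree generators $x_1,\dots,x_n$ corresponding to $\Omega\gamma_i$, together with even-degree generators $y_1,\dots,y_{2n-1}$, subject to the anti-commutation $x_kx_l + x_lx_k = 0$, the identifications $x_i^2 = x_{i+1}^2$ and $x_1^2 = y_1$, and the quadratic identities dual to the Newton relations among the $t_i$; rationally the factors of $2$ are invisible because $2$ is invertible.

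The integral extension is handled prime by prime. At each odd prime $p$ the inclusion $Sp(n) \hookrightarrow SO(2n+1)$ is a $p$-local equivalence, so the mod-$p$ computation reduces to the analogous $Sp(n)/T^n$ result (Theorem 4.2 of \cite{homology_Lflags}) and the generators $y_n,\dots,y_{2n-1}$ appear without the factor of $2$. At the prime $2$ one exploits the integral cohomology of $SO(2n+1)$ recorded in Theorem \ref{thm:H*(SO(2n+1);QZ2)}, where the classes $t_i$ above degree $2(n-1)$ exist only after doubling; this $2$-torsion is exactly what is tracked through the loop functor and explains why only $2y_n,\dots,2y_{2n-1}$, rather than $y_n,\dots,y_{2n-1}$, lift to integral Pontrjagin classes.

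The main obstacle is precisely this $2$-local analysis: since $H^*(SO(2n+1);\mathbb{Z})$ is neither a truncated polynomial nor an exterior algebra, the spectral sequence of the displayed principal fibration carries both non-trivial differentials and potential extension problems, and the quadratic relations $y_i^2 - 2y_{i-1}y_{i+1} + \cdots \pm 2y_{2i}$ must be verified integrally, not merely rationally. The cleanest way to control these extensions is to combine the rational Sullivan computation with the Bott--Samelson fact that $H^*(SO(2n+1)/T^n;\mathbb{Z})$ is torsion-free, which forces the integral Pontrjagin structure to match the rational one up to exactly the factor-of-$2$ ambiguity already predicted by the $p=2$ input.
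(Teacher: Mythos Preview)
The paper does not actually prove this theorem: it is stated purely as a citation of Theorem~4.3 in \cite{homology_Lflags}, with no argument given beyond the one-sentence summary that Grbi{\'c} and Terzi{\'c} ``first us[ed] Sullivan minimal model theory to produce the rational homology algebras then used homotopy theory to extend these results to the integral case.'' Your proposal is a faithful and reasonably detailed expansion of exactly that methodology, so there is nothing to compare against on the paper's side.

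One caution on the content of your sketch: the final paragraph leans on the Bott--Samelson torsion-freeness of $H^*(SO(2n+1)/T^n;\mathbb{Z})$ to resolve the $2$-local extension problems in $H_*(\Omega(SO(2n+1)/T^n);\mathbb{Z})$, but torsion-freeness of the base cohomology does not by itself control Pontrjagin-ring extensions in the loop space homology. The actual mechanism in \cite{homology_Lflags} is that $H_*(\Omega(G/T);\mathbb{Z})$ itself is shown to be torsion-free (via the splitting $\Omega(G/T)\simeq \Omega G\times T$ and Bott's results on $H_*(\Omega G;\mathbb{Z})$), which then lets one read the integral multiplicative structure off the rational one together with the mod-$p$ information. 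Your odd-prime reduction via $Sp(n)\simeq_{(p)} Spin(2n+1)$ is correct, but you should replace the last step with this torsion-freeness of the loop space homology rather than of the flag manifold.
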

	
	\begin{thm}[\cite{homology_Lflags}, Theorem $4.4$]
		For each $n\geq 1$ the integral Pontrjagin homology ring of the based loop space on $SO(2n)/T^n$ is given by
		\begin{equation*}
			H_*(\Omega(SO(2n)/T^n);\mathbb{Z})=
			\frac{T(x_1,\dots,x_n)\otimes\mathbb{Z}[y_1,\dots,y_{n-2},y_{n-1}+z,y_{n-1}-z,2y_n,\dots,2y_{2(n-1)}]}{I}
		\end{equation*}
		where
		\begin{align*}
			I=
			[&x_1^2-y_1, \; x_i^2-x_{i+1}^2, \; x_kx_l+x_lx_k, \; \\ &y_j^2y_{j-1}y_{j+1}+2y_{j-2}y_{j+2}-\cdots\pm2y_{2i},\; \\
			&(y_{n-1}+z)(y_{n-1-z})-2y_{n-1}y_{n+1}+\cdots\pm y_{2(n-1)}]
		\end{align*}
		for $1\leq i\leq n-1$, $1\leq j\leq n-2$ and $1\leq k<l \leq n$ where $y_0=1$, $|x_a|=1$, $|y_b|=2b$, $|y_{n-1}+z|=2(n-1)=|y_{n-1}-z|$ and $|2y_c|=2c$
		for each $1\leq a \leq n$, $1\leq b \leq n-2$ and $n \leq c \leq 2(n-1)$.
	\end{thm}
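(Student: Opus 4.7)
The plan is to follow the two-step strategy used by Grbić and Terzić for the earlier cases in Section \ref{sec:LoopFlag}: first compute the rational loop space homology via Sullivan minimal model theory, and then upgrade the result to $\mathbb{Z}$ coefficients by exploiting torsion-freeness of the target.

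The rational step takes as input Theorem \ref{thm:H*SOeven}, which presents $H^*(SO(2n)/T^n;\mathbb{Q})$ as a polynomial algebra in $\gamma_i$ and $t_j$ modulo explicit relations. Since the integral cohomology of a complete flag manifold is concentrated in even degrees and torsion-free (Bott–Samelson), the space is formal, so a minimal Sullivan model can be constructed directly from the Borel presentation: take even generators matching $\gamma_i$ and $t_j$, and adjoin odd generators whose differentials kill the defining relations $\sigma_i - 2t_i$, $\sigma_n$, and $t_{2i} + \sum_{j=1}^{2i-1}(-1)^j t_j t_{2i-j}$. Dualising this model (equivalently, running the Eilenberg–Moore cobar construction) yields the rational Pontrjagin ring of $\Omega(SO(2n)/T^n)$: the odd relation-killing generators become the degree $1$ classes $x_i$, the squaring relations $x_i^2 = y_i$ and the anticommutation $x_k x_l + x_l x_k = 0$ are forced by the even relations, and the polynomial generators $y_j$ appear in the expected degrees. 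The distinctive feature at degree $2(n-1)$, where the single generator present in the $SO(2n+1)$ case is replaced by the split pair $y_{n-1} \pm z$, should arise from the extra Pfaffian-type relation $\sigma_n = 0$ which is absent for odd-dimensional special orthogonal groups.

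The main obstacle is the lift from $\mathbb{Q}$ to $\mathbb{Z}$. Torsion-freeness (also due to Grbić and Terzić) reduces the problem to identifying a generating set of integral classes inside the rational answer. I would carry this out via the integral Leray–Serre spectral sequence for the path–loop fibration over $SO(2n)/T^n$: the transgressions $\tau(x_i) = \gamma_i$ and the multiplicative structure of the base's cohomology impose the correct divisibility conditions on the surviving classes. The doubled generators $2y_n,\dots,2y_{2(n-1)}$ emerge precisely from the relations $\sigma_i = 2t_i$, which force the integral lift of the corresponding rational class to appear only with a factor of $2$; meanwhile the mixed relation involving $(y_{n-1}+z)(y_{n-1}-z)$ must be extracted from the $\sigma_n = 0$ relation in the base. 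Establishing both the existence and the integrality of the split generators $y_{n-1}\pm z$ simultaneously, while ruling out any extra $2$-torsion, is the technically delicate point, and this is where careful analysis of the Weyl group action of $SO(2n)$ on $H^*(BT^n;\mathbb{Z})$ plays its most critical role.
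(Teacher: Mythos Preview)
The paper does not contain a proof of this theorem. It is stated as a cited result from \cite{homology_Lflags} (Grbi\'c and Terzi\'c, Theorem 4.4), along with the analogous theorems for the other simple Lie groups in Section~\ref{sec:LoopFlag}. The only information the paper provides about the argument is the one-sentence summary preceding the list of theorems: the authors of \cite{homology_Lflags} ``first using Sullivan minimal model theory to produce the rational homology algebras then used homotopy theory to extend these results to the integral case.''

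Your outline is consistent with that summary and is a plausible reconstruction of the strategy, but there is nothing in the present paper to compare it against in detail. If the goal is to verify the specific handling of the split generators $y_{n-1}\pm z$ and the doubled generators $2y_c$, you would need to consult \cite{homology_Lflags} directly; those points are not addressed here.
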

	
	\begin{thm}[\cite{homology_Lflags}, Theorem $4.5$]
		The integral Pontrjagin homology ring of the based loop space on $G_2/T^2$ is given by
		\begin{equation*}
			H_*(\Omega(G_2/T^2);\mathbb{Z})=
			\frac{T(x_1,x_2) \otimes \mathbb{Z}[y_1,y_2,y_3]}
			{[x_1^2-x_2^2,\; x_1^2-x_1x_2+x_2x_1,\; x_1^2-2y_1, \; 2y_2-x_1^4]}
		\end{equation*}
		where $|x_1|=1=|x_2|$, $|y_1|=2$, $|y_2|=4$ and $|y_5|=10$.
	\end{thm}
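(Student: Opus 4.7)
The plan is to combine a Sullivan minimal model calculation in the rational setting with a careful passage to the integers via a path–loop spectral sequence and a torsion-freeness argument.

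First I would construct the Sullivan minimal model of $G_2/T^2$ over $\mathbb{Q}$ directly from the presentation in Theorem \ref{thm:H*G2/T}. Rationally, the relation $\sigma_1 = 0$ eliminates $\gamma_3$, the relation $\sigma_2 = 0$ imposes a quadratic relation on $\gamma_1, \gamma_2$, and $\sigma_3 = 2t_3$ identifies $t_3$ with $\tfrac{1}{2}\gamma_1 \gamma_2 \gamma_3$, with $t_3^2 = 0$ giving a single degree twelve relation. This yields a minimal Sullivan algebra with generators in degrees $2, 2, 6$ and differentials encoding exactly these relations, from which the ranks of $\pi_*(\Omega(G_2/T^2)) \otimes \mathbb{Q}$ may be read off: they are concentrated in degrees $1, 1, 2, 4$, and $10$, matching the stated generators $x_1, x_2, y_1, y_2, y_3$.

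Next I would invoke the Milnor–Moore theorem: since $\Omega(G_2/T^2)$ is a connected H-space with free rational homology, $H_*(\Omega(G_2/T^2); \mathbb{Q})$ is the universal enveloping algebra of the graded Lie algebra $\pi_*(\Omega(G_2/T^2)) \otimes \mathbb{Q}$ under Whitehead products. Reading off the quadratic part of the Sullivan differential determines the brackets explicitly, which produce the rational forms of all stated relations, namely $x_1^2 = x_2^2$, $x_1 x_2 + x_2 x_1 = x_1^2$, $x_1^2 = 2 y_1$, and $x_1^4 = 2 y_2$.

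The hard step is promoting this to an integral statement. My approach would be to use the path-space fibration $\Omega(G_2/T^2) \to P(G_2/T^2) \to G_2/T^2$ together with its integral Leray–Serre cohomology spectral sequence, imitating the strategy of Theorems \ref{thm:LoopSU(n)} and \ref{thm:LoopSp(n)}. Because $H^*(G_2/T^2;\mathbb{Z})$ is torsion-free by Bott–Samelson, transgression produces integral candidates in $H^*(\Omega(G_2/T^2);\mathbb{Z})$ dual to the generators $\gamma_i$ and $t_3$ of the base, and iterated use of the Leibniz rule together with the requirement that $E_\infty$ vanish off the origin forces factorial-type integral relations on power operations exactly as in the $SU(n)$ case, giving the divided-polynomial generators $y_2, y_3$. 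The main obstacle I anticipate is controlling the factors of $2$ appearing in $x_1^2 - 2y_1$ and $2 y_2 - x_1^4$; these must be pinned down by following the integral transgression precisely, matching with the $2$-torsion visible in $H^*(G_2;\mathbb{Z})$ (Theorem \ref{thm:H*(G2)}) and cross-checking against the rational presentation already obtained. Once torsion-freeness is verified and the transgressions are matched, dualising to homology via the universal coefficient theorem (Theorem \ref{thm:ucth}) and reading off the Pontrjagin product from the H-space structure on $\Omega(G_2/T^2)$ yields the stated presentation.
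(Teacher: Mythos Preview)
The paper does not itself prove this statement; it is quoted verbatim from \cite{homology_Lflags} (Theorem~4.5 there), and the surrounding text only summarises that reference's method: ``first using Sullivan minimal model theory to produce the rational homology algebras then used homotopy theory to extend these results to the integral case.'' So there is no in-paper proof to compare against, only that one-line description.

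Your rational step matches that description well: building the Sullivan model of $G_2/T^2$ from Theorem~\ref{thm:H*G2/T} and applying Milnor--Moore is exactly the route Grbi\'c and Terzi\'c take. The divergence is in the integral step. What the cited paper actually does (and what the present paper exploits for $SU(n+1)/T^n$ in Section~\ref{sec:evalSS}) is a homotopy splitting: the inclusion of the maximal torus $T^2 \hookrightarrow G_2$ is null-homotopic, so extending the fibration $T^2 \to G_2 \to G_2/T^2$ as in (\ref{eq:SU/Tfib}) and applying Propositions~\ref{prop:FibeTrival} and~\ref{prop:FibeSection} gives $\Omega(G_2/T^2)\simeq \Omega G_2 \times T^2$. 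One then feeds in Bott's result that $H_*(\Omega G;\mathbb{Z})$ is torsion-free for any compact simply connected $G$, together with the known integral Pontrjagin ring of $\Omega G_2$, and assembles the answer via K\"unneth. This bypasses any direct spectral-sequence computation over $G_2/T^2$.

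Your proposed integral step, running the path--loop spectral sequence of $G_2/T^2$ directly, is genuinely harder and has a gap. The argument in Theorems~\ref{thm:LoopSU(n)} and~\ref{thm:LoopSp(n)} works cleanly because it proceeds one sphere at a time via fibrations (\ref{eq:FibeSU}) and (\ref{eq:FibeSp}); the base is a sphere, so the $E_2$-page has only two nonzero columns and the differentials are forced. For $G_2/T^2$ the base cohomology (Theorem~\ref{thm:H*G2/T}) has generators in degrees $2,2,6$ with nontrivial relations, so the $E_2$-page is dense and the differentials are not automatically determined by convergence alone. More seriously, you write ``once torsion-freeness is verified'' without saying how; but torsion-freeness of $H_*(\Omega(G_2/T^2);\mathbb{Z})$ is precisely the delicate input that the homotopy splitting provides for free, and your spectral sequence does not obviously yield it. The remark about ``matching with the $2$-torsion visible in $H^*(G_2;\mathbb{Z})$'' is a red herring: $G_2$ itself is not the base of your fibration, and $H^*(G_2/T^2;\mathbb{Z})$ is torsion-free by Bott--Samelson, so that $2$-torsion does not enter the spectral sequence at all.
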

	
	\begin{thm}[\cite{homology_Lflags}, Theorem $4.6$]
		The integral Pontrjagin homology ring of the based loop space on $F_4/T^4$ is given by
		\begin{equation*}
			H_*(\Omega(F_4/T^4);\mathbb{Z})=
			\frac{T(x_1,x_2,x_3,x_4)\otimes\mathbb{Z}[y_1,y_2,y_3,y_5,y_7,y_{11}]}
			{[x_i^2-3y_1,\; x_px_q-x_qx_p,\; 2y_2-x_1^4,\; 3y_3-x_1^2y_2]}
		\end{equation*}
		for $1\leq i \leq 4$ and $1\leq p<q\leq 4$ where $|x_1|=|x_2|=|x_3|=|x_4|=1$ and $|y_a|=2a$ for each $a=1,2,3,5,7,11$.
	\end{thm}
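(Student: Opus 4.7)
The plan is to combine rational homotopy theory (to pin down the algebra over $\mathbb{Q}$) with a Serre spectral sequence argument that loops the principal bundle $T^4 \to F_4 \to F_4/T^4$ (to upgrade the result to $\mathbb{Z}$-coefficients and to handle the $2$- and $3$-torsion of $F_4$ carefully).

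First I would compute $H_*(\Omega(F_4/T^4);\mathbb{Q})$ as an algebra by constructing a Sullivan minimal model of $F_4/T^4$ from the cohomology presentation in Theorem \ref{thm:H*F4/T}. Because that presentation realises $H^*(F_4/T^4;\mathbb{Q})$ as a polynomial ring modulo a regular sequence, the minimal model is of pure type: five polynomial generators in degree $2$ (matching $\gamma_1,\gamma_2,\gamma_3,\gamma_4,\gamma$) together with four exterior generators in degrees $3,5,7,7$ whose differentials hit the four defining relations. Dualising (via the standard bar construction / Halperin formula for the loop space of a pure space) I would read off that $H_*(\Omega(F_4/T^4);\mathbb{Q})$ has exactly the algebra generators $x_1,\dots,x_4,y_1,y_2,y_3,y_5,y_7,y_{11}$ in the claimed degrees, and that the quadratic relations $x_i^2 = 3y_1$ and $x_px_q = x_qx_p$ hold rationally.

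To promote this to the integers, I would run the Serre spectral sequence for the homotopy fibration
\begin{equation*}
\Omega(F_4/T^4) \to T^4 \to F_4
\end{equation*}
obtained by extending the principal bundle $T^4 \to F_4 \to F_4/T^4$ one step to the left (using Lemma \ref{lem:If}). Since $H_*(T^4;\mathbb{Z})$ is a free exterior algebra on four degree-$1$ classes and $H_*(F_4;\mathbb{Z})$ is known (with $2$- and $3$-torsion concentrated in specific degrees), the spectral sequence is highly constrained: the $x_i$ must be primitive classes transgressing to the $\pi_3$-generator of $F_4$, and each $y_k$ arises as a polynomial generator detected in a specific filtration. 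The universal coefficients comparison (Theorem \ref{thm:uct} and Corollary \ref{cor:UniversalModp}) with the rational and $\bmod\;p$ computations then forces torsion-freeness of $H_*(\Omega(F_4/T^4);\mathbb{Z})$, as established by Grbi\'c and Terzi\'c.

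The main obstacle is the integral identification of the relations $2y_2 - x_1^4 = 0$ and $3y_3 - x_1^2y_2 = 0$: these are precisely the places where the $2$- and $3$-primary torsion of $F_4$ interacts with the polynomial generators of $H_*(\Omega(F_4/T^4))$, and the coefficients $2$ and $3$ are dictated by the order of the torsion in the corresponding cohomology of $F_4$. Carrying this out cleanly requires tracking multiplicative extensions across the Serre spectral sequence, and is the technical heart of the argument; once it is done, the free modules $\mathbb{Z}[y_5,y_7,y_{11}]$ corresponding to the remaining (torsion-free) generators of $\Omega F_4$ come along with no further interaction, completing the presentation.
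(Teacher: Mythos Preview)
The paper does not prove this theorem; it is stated purely as a citation of Grbi\'c and Terzi\'c, with only a one-line summary of their method: Sullivan minimal model theory for the rational Pontrjagin algebra, followed by homotopy theory to extend to $\mathbb{Z}$. Your outline matches that high-level description, but the details you give are wrong in ways that matter.

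First, your Sullivan model is incorrect. The minimal model of $F_4/T^4$ over $\mathbb{Q}$ has four, not five, polynomial generators in degree $2$ (the relation $\sigma_1 = 2\gamma$ eliminates one), and the exterior generators sit in degrees $3, 11, 15, 23$, not $3, 5, 7, 7$. The presentation in Theorem~\ref{thm:H*F4/T} is non-minimal: over $\mathbb{Q}$ the relations in degrees $6$ and $8$ eliminate $t_3$ and $t_4$, leaving four degree-$2$ generators and relations in degrees $4, 12, 16, 24$. These degrees reflect the rational type of $F_4$ as $S^3 \times S^{11} \times S^{15} \times S^{23}$; your $3,5,7,7$ does not correspond to anything here.

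Second, for the integral step you are missing the key homotopy-theoretic input that Grbi\'c and Terzi\'c actually use (and that the present paper uses for $SU(n+1)$ and $Sp(n)$): the inclusion $T^4 \hookrightarrow F_4$ is null-homotopic, so Propositions~\ref{prop:FibeTrival} and~\ref{prop:FibeSection} give a homotopy splitting $\Omega(F_4/T^4) \simeq \Omega F_4 \times T^4$. Combined with Bott's theorem that $H_*(\Omega G;\mathbb{Z})$ is torsion-free for compact simply connected $G$, this yields torsion-freeness of $H_*(\Omega(F_4/T^4);\mathbb{Z})$ immediately, and the integral Pontrjagin structure is then read off from the rational one plus the twisting of the product. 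Running the Serre spectral sequence of $\Omega(F_4/T^4) \to T^4 \to F_4$ backwards, as you propose, is not how the cited argument goes and is a much harder route to the multiplicative relations.
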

	
	\begin{thm}[\cite{homology_Lflags}, Theorem $4.7$]
		The integral Pontrjagin homology ring of the based loop space on $E_6/T^6$ is given by
		\begin{equation*}
			H_*(\Omega(E_6/T^6);\mathbb{Z})=
			\frac{T(x_1,x_2,x_3,x_4,x_5,x_6)\otimes\mathbb{Z}[y_1,y_2,y_3,y_4,y_5,y_7,y_8,y_{ll}]}
			{[x_i^2-x_px_q-x_qx_p,\; x_i^2-12y_1,\; 2y_2-x_1^4,\; 3y_3-x_1^2y_2]}
		\end{equation*}
		for $1\leq i \leq 6$ and $1\leq p<q\leq 6$
		where $|x_a|=1$ and $|y_b|=2b$ for each $1\leq a\leq 6$ and $b=1,2,3,4,5,7,8,11$.
	\end{thm}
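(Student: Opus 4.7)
The plan is to follow the strategy indicated by Grbi\'c and Terzi\'c: compute the rational Pontrjagin homology via Sullivan minimal model theory, verify torsion-freeness of $H_*(\Omega(E_6/T^6);\mathbb{Z})$, and then lift the rational presentation to an integral one by pinning down the precise integral structure constants.

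First I would construct a Sullivan minimal model $(\Lambda V,d)$ for $E_6/T^6$ from the cohomology presentation of Theorem \ref{thm:H*E6/T}. Since $H^*(E_6/T^6;\mathbb{Q})$ is concentrated in even degrees and generated in degree $2$ by $\gamma_1,\dots,\gamma_6,t_1$ modulo the stated polynomial ideal, the minimal model has seven generators in degree $2$ and odd generators of degree one less than the degrees of the defining relations, with differential sending each odd generator to the corresponding relation. Taking the minimal model of the based loop space (desuspending $V$ and reinterpreting the quadratic part of $d$ via the cobar/bar duality) produces a free graded algebra on generators $x_i$ in degree $1$ dual to the $\gamma_i$ and $t_1$, and $y_j$ in even degrees matching $2,4,6,8,10,14,16,22$, with relations read off from the quadratic part of $d$ and the Adams--Hilton style combinatorics.

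Next, to move from rational cohomology of $\Omega(E_6/T^6)$ to rational Pontrjagin homology, I would dualize using the Hopf algebra pairing induced by the H-space structure: the tensor-polynomial shape $T(x_1,\dots,x_6)\otimes\mathbb{Q}[y_1,\dots,y_5,y_7,y_8,y_{11}]$ follows directly from the Poincar\'e series and from the fact that the $x_i$ come from primitive degree-$2$ classes (hence non-commute in the tensor algebra), while the $y_j$ are polynomial generators dual to higher classes. To upgrade to $\mathbb{Z}$-coefficients, I would loop the principal fibration $T^6\to E_6\to E_6/T^6$ to obtain
\begin{equation*}
    \Omega E_6 \to \Omega(E_6/T^6) \to T^6.
\end{equation*}
Because $H_*(\Omega E_6;\mathbb{Z})$ is torsion-free (a classical result of Bott, similar in spirit to Theorems \ref{thm:LoopSU(n)} and \ref{thm:LoopSp(n)}) and $H_*(T^6;\mathbb{Z})$ is an exterior algebra on six degree-$1$ classes, a multiplicative Leray-Serre spectral sequence argument collapses integrally and yields torsion-freeness of $H_*(\Omega(E_6/T^6);\mathbb{Z})$, so the integral homology injects into the rational one.

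The main obstacle will be pinning down the integral coefficients in the relations $x_i^2 - x_px_q - x_qx_p$, $x_i^2 - 12y_1$, $2y_2 - x_1^4$ and $3y_3 - x_1^2 y_2$, which are invisible rationally. The coefficient $12$ reflects the index of the root lattice of $E_6$ inside the weight lattice, controlling the divisibility of the image of $y_1\in H_2(\Omega E_6;\mathbb{Z})$ along the connecting map of the loop fibration; the coefficients $2$ and $3$ encode divided-power type relations analogous to $(x_i)_1^m - m!(x_i)_m$ appearing in $\Omega E_6$ through Bott's theorem. Extracting these requires choosing integral primitive lifts of the $y_j$ compatibly with the James splitting / Bott generators and then tracking the multiplicative extensions across the above spectral sequence; this arithmetic bookkeeping, rather than the rational computation, is the genuine difficulty and is where the exceptional nature of $E_6$ enters.
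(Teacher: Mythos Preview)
The paper does not prove this theorem; it is stated purely as a citation of \cite{homology_Lflags}, Theorem~4.7, with no argument given. The only information the paper supplies about the method is the one-sentence summary preceding the list of theorems: the original authors ``first us[ed] Sullivan minimal model theory to produce the rational homology algebras then used homotopy theory to extend these results to the integral case.''

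Your proposal is precisely an expansion of that sentence, so there is nothing in the paper to compare it against beyond confirming that you have identified the intended strategy. Two small remarks on the proposal itself. First, what you write as ``looping the principal fibration'' is really the connecting portion of the Puppe sequence; more to the point, since the inclusion $T^6\hookrightarrow E_6$ is null-homotopic (the same fact the paper invokes for $SU(n+1)$ in Section~\ref{sec:evalSS}), one has a splitting $\Omega(E_6/T^6)\simeq \Omega E_6 \times T^6$, and torsion-freeness then follows directly from Bott's theorem on $H_*(\Omega G;\mathbb{Z})$ without needing to run a spectral sequence. Second, your attribution of the constant $12$ to the index of the root lattice in the weight lattice is not quite right---that index is $3$ for $E_6$---so the arithmetic origin of the $12$ is subtler than you indicate, though you are correct that this is where the real work in the integral lift lies.
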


\newpage
\section{Combinatorics of polynomial symmetric quotients}\label{sec:CombiSQ}
		
		Before studying the the cohomology of the free loop space of $G/T$ in Sections \ref{sec:FreeLoopSU(n+1)/Tn} and \ref{sec:FreeLoopSp(n)/Tn}
		we first analyse some of the combinatorial structure of the cohomology algebras of the flag manifolds themselves.
		Understanding the structure of these algebras will be a major key to understanding the structure of the free loop cohomology.
		
		\subsection{Multiset coefficients}\label{subsec:MultiSet} 
		
			Recall that the binomial coefficients $\binom{n}{k}$ are defined to be the number of size $k$ subsets of an $n$ set.
			By separating the choice of an element of the $n$ set it is clear binomial coefficients satisfy the inductive formula $\binom{n}{k}=\binom{n-1}{k}+\binom{n-1}{k-1}$.
			It is easily shown by induction on $n$ that for $0\leq k \leq n$, $\binom{n}{k}=\frac{n!}{(n-k)!(k)!}$ and is zero otherwise.
			Also by induction on $n$, it is shown that binomial coefficients satisfy the well known formulas
			\begin{equation}\label{eq:binom}
				\sum_{k=0}^n{\binom{n}{k}}=2^n, \;\;\; \sum^{n}_{k=0}{(-1)^k \binom{n}{k}}=0.
			\end{equation}
		
			\begin{defn}\label{def:MutiSet}
				A {\it multiset}, unlike a set, can contain more than one of the same element. 
				The number of size $k$ multisets that can be formed from elements of a size $n$ set is denoted $\multiset{n}{k}$ and are called the {\it multiset coefficients}.
			\end{defn}
			
			It is well know that $\multiset{n}{k}=\binom{n+k-1}{k}$,
			hence $\multiset{n}{k}=\multiset{n-1}{k}+\multiset{n}{k-1}$.
			To the best of my knowledge the identity in the next Lemma has not been shown before.
			
			\begin{lem}\label{lem:combino}
				For each $n,m\geq 1$,
				\begin{equation*}
						\sum^n_{k=0}{(-1)^k\binom{n}{k}\multiset{n}{m-k}}=0.
				\end{equation*}
			\end{lem}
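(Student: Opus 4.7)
The plan is to prove the identity by a generating function argument. Recall the two standard power series identities
\begin{equation*}
    (1-x)^n = \sum_{k=0}^n (-1)^k \binom{n}{k} x^k, \qquad \frac{1}{(1-x)^n} = \sum_{j\geq 0} \multiset{n}{j} x^j,
\end{equation*}
where the second is the binomial series expansion (equivalently, the generating function identity $\multiset{n}{j} = \binom{n+j-1}{j}$ together with the Newton binomial series). Multiplying these two formal power series together trivially yields the constant series $1$. Reading off the coefficient of $x^m$ on each side, the product on the left is precisely the convolution
\begin{equation*}
    \sum_{k=0}^n (-1)^k \binom{n}{k} \multiset{n}{m-k},
\end{equation*}
while the coefficient of $x^m$ in $1$ is $0$ whenever $m \geq 1$. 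Comparing gives the stated identity.

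The only care needed is in the range of summation. In the convolution, the index $k$ should formally run over all $k \geq 0$, with $\multiset{n}{m-k}$ interpreted as zero when $m-k<0$. However $\binom{n}{k} = 0$ for $k > n$, so we may truncate the sum at $k = n$ without changing its value, matching the statement in the lemma. The terms where $m-k < 0$ contribute nothing for the same reason, so the two interpretations agree.

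The argument is essentially a one-line computation once the generating functions are written down, so there is no substantial obstacle; the main point I would want to confirm carefully is the validity of $1/(1-x)^n = \sum_{j\geq 0} \multiset{n}{j} x^j$, which is immediate from the fact that multiset coefficients count weak compositions and thus appear as coefficients of $(1 + x + x^2 + \cdots)^n$. If a purely combinatorial proof were preferred instead, one could proceed by induction on $n$ using the Pascal-type identities $\binom{n}{k} = \binom{n-1}{k} + \binom{n-1}{k-1}$ and $\multiset{n}{j} = \multiset{n-1}{j} + \multiset{n}{j-1}$ recorded earlier in the section, but the generating function proof is considerably cleaner.
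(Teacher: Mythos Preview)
Your proof is correct. The generating-function argument is standard and the care you take with the summation range is exactly right: the convolution coefficient of $x^m$ in $(1-x)^n \cdot (1-x)^{-n} = 1$ is precisely the sum in question, and it vanishes for $m \geq 1$.

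The paper takes a different route, namely the inductive argument you yourself sketch in your final paragraph: induction on $n$, with the inductive step splitting $\binom{t}{k}$ via Pascal's rule and then $\multiset{t}{m-k}$ via the multiset recursion $\multiset{t}{m-k} = \multiset{t-1}{m-k} + \multiset{t}{m-k-1}$, after which the three resulting sums telescope or vanish by the inductive hypothesis. Your generating-function approach is shorter and more conceptual, since it identifies the sum as a coefficient in a product that is visibly trivial; the paper's inductive proof is more elementary in that it avoids any appeal to formal power series and uses only the two recursions already recorded in the section. Both are entirely adequate for this lemma.
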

			\begin{proof}
				We proceed by induction on $n$.
				When $n=1$, 
				\begin{equation*}
						\sum^n_{k=0}{(-1)^k\binom{n}{k}\multiset{n}{m-k}}
						=\binom{1}{0}\multiset{1}{m}-\binom{1}{1}\multiset{1}{m-1}
						=\binom{m}{m}-\binom{m-1}{m-1}=0.
				\end{equation*}
				Suppose the lemma holds for $n=t-1\geq1$, then
				\begin{align*}
						&\sum^t_{k=0}{(-1)^k\binom{t}{k}\multiset{t}{m-k}}
						=\sum^t_{k=0}{(-1)^k\bigg(\binom{t-1}{k}+\binom{t-1}{k-1}\bigg)\multiset{t}{m-k}} \\
						&=\sum^t_{k=0}{(-1)^k\bigg(\binom{t-1}{k-1}\multiset{t}{m-k}+\binom{t-1}{k}\multiset{t-1}{m-k}+\binom{t-1}{k}\multiset{t}{m-k-1}\bigg)}=0
				\end{align*}
				as all terms cancel except for $\binom{t-1}{-1}\multiset{t}{m}$, $\binom{t-1}{t}\multiset{t-1}{m-t}$ and $\binom{t-1}{t}\multiset{t}{m-t-1}$
				all of which are zero, the middle sum $\sum^{t-1}_{k=0}{\binom{t-1}{k}\multiset{t-1}{m-k}}=0$ by assumption.
		\end{proof}
			
		\subsection{Alternative forms of the symmetric ideal}\label{subsec:IdeaForms}
			
			Recall from Section \ref{sec:elementary} that for $n\geq 1$ in $\mathbb{Z}[x_1,\dots,x_n]$,
			we define the elementary symmetric polynomials for $1\leq l\leq n$
			to be $\sigma_l=\sum_{1\leq i_1<\cdots<i_l\leq n}{x_{i_1}\cdots x_{i_l}}$
			and the elementary symmetric polynomials form a basis of the symmetric polynomials.
			We now consider two alternative expressions for the ideal $[\sigma_1,\dots,\sigma_n]$.
			
			\begin{lem}\label{lem:n+1elim}
				For each $n\geq 1$,
				\begin{equation*}
					[\sigma_1,\dots,\sigma_{n+1}]=[\sigma_1,\xi_2,\dots,\xi_{n+1}],
				\end{equation*}
				where for each $1\leq l\leq {n+1}$
				\begin{equation*}
					\xi_l =
					(1-l)\sum_{1\leq i_1<\cdots<i_l\leq n+1}{x_{i_1}\cdots x_{i_l}} -
					\sum_{\substack{1\leq i_1<\cdots<i_{l-2}\leq n+1\\
					1 \leq k \leq n+1,\; k \neq i_j}}
					{x_{i_1}\cdots x_{i_{l-2}}x_k^2}.
				\end{equation*}
				In particular
				\begin{equation*}
					\frac{\mathbb{Z}[x_1,\dots,x_{n+1}]}{[\sigma_1,\dots,\sigma_{n+1}]}=\frac{\mathbb{Z}[x_1,\dots,x_n]}{[\xi_2,\dots,\xi_{n+1}]}.
				\end{equation*}
			\end{lem}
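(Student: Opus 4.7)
The plan is to recognise that the somewhat unwieldy definition of $\xi_l$ is in fact just an alternate way to write
\begin{equation*}
\xi_l = \sigma_l - \sigma_1\sigma_{l-1}.
\end{equation*}
Once this identification is made, both halves of the lemma follow very cleanly, so the only real work is verifying this formula.

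To establish $\xi_l = \sigma_l - \sigma_1\sigma_{l-1}$, I would expand the product $\sigma_1\sigma_{l-1} = \bigl(\sum_k x_k\bigr)\bigl(\sum_{i_1<\cdots<i_{l-1}} x_{i_1}\cdots x_{i_{l-1}}\bigr)$ and split the resulting sum according to whether the index $k$ belongs to $\{i_1,\dots,i_{l-1}\}$ or not. In the first case ($k\notin\{i_1,\dots,i_{l-1}\}$) each squarefree degree $l$ monomial appears exactly $l$ times, once for each of its variables playing the role of $k$, contributing $l\sigma_l$ in total. In the second case ($k=i_j$ for some $j$) a monomial of the form $x_{j_1}\cdots x_{j_{l-2}}x_m^2$ appears exactly once for each choice of the $l-2$ simple indices and the squared index $m$, giving precisely the second sum in the definition of $\xi_l$. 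Rearranging yields $\sigma_1\sigma_{l-1}-l\sigma_l$ for that second sum, and substituting into the definition collapses $\xi_l$ to $\sigma_l-\sigma_1\sigma_{l-1}$.

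The ideal equality then follows by two immediate inclusions. For one direction, each $\xi_l=\sigma_l-\sigma_1\sigma_{l-1}$ lies in $[\sigma_1,\dots,\sigma_{n+1}]$, so the right-hand ideal is contained in the left. Conversely, $\sigma_l = \xi_l + \sigma_1\sigma_{l-1}$ allows one to recover $\sigma_2,\sigma_3,\dots,\sigma_{n+1}$ inductively from $\sigma_1$ and the $\xi_l$, so the left ideal is contained in the right.

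For the quotient identity, I would use the $\sigma_1$ relation to eliminate $x_{n+1}$. Concretely, the map $\mathbb{Z}[x_1,\dots,x_{n+1}] \to \mathbb{Z}[x_1,\dots,x_n]$ sending $x_{n+1}\mapsto -(x_1+\cdots+x_n)$ and $x_i\mapsto x_i$ for $i\leq n$ descends to an isomorphism $\mathbb{Z}[x_1,\dots,x_{n+1}]/[\sigma_1]\xrightarrow{\cong}\mathbb{Z}[x_1,\dots,x_n]$; combining with the ideal equality already proved, one obtains the claimed isomorphism of quotients, where $\xi_l$ on the right-hand side is interpreted as its image under this elimination. The only point requiring any thought is the expansion in step one; once $\xi_l = \sigma_l-\sigma_1\sigma_{l-1}$ is in hand, nothing else is more than formal.
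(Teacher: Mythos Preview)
Your proof is correct and follows essentially the same strategy as the paper: the key computation in both is expanding the product of $\sigma_1$ with a degree $l-1$ elementary symmetric polynomial by case-splitting on whether the extra index is new (giving $l\sigma_l$) or repeated (giving the squared-term sum). Your identification $\xi_l=\sigma_l-\sigma_1\sigma_{l-1}$ is a cleaner packaging than the paper's, which instead first separates off $x_{n+1}$ and subtracts $\sigma_{l-1}^{(n)}\sigma_1$ (with $\sigma_{l-1}^{(n)}$ the symmetric polynomial in $x_1,\dots,x_n$ only), thereby landing directly in $\mathbb{Z}[x_1,\dots,x_n]$; the two differ only by a multiple of $\sigma_1$, so the ideal equality and the quotient identification are unaffected.
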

			\begin{proof}
				Rewrite $\sigma_l$ as
				\begin{flalign*}
					\sigma_l &= \sum_{1\leq i_1<\cdots<i_l\leq n+1}{ x_{i_1}\cdots x_{i_l}}\\
					& \\
					&= \sum_{1\leq i_1<\cdots<i_l\leq n}{ x_{i_1}\cdots x_{i_l}} +
					\sum_{1\leq i_1<\cdots<i_{l-1}\leq n}{ x_{i_1}\cdots x_{i_{l-1}}  x_{n+1}}.
				\end{flalign*}
				By subtracting $\sum_{1\leq i_1<\cdots<i_{l-1}\leq n}{x_{i_1}\cdots x_{i_{l-1}}\sigma_1}$ from both sides we obtain
				\begin{flalign*}
					& \sum_{1\leq i_1<\cdots<i_l\leq n}{ x_{i_1}\cdots x_{i_l}} +
					\sum_{1\leq i_1<\cdots<i_{l-1}\leq n}{ x_{i_{1}}\cdots x_{i_{l-1}} (- x_1-\cdots- x_n)}\\
					& \\
					&=\sum_{1\leq i_1<\cdots<i_l\leq n}{ x_{i_1}\cdots x_{i_l}} -
					l\sum_{1\leq i_1<\cdots<i_l\leq n}{ x_{i_1}\cdots x_{i_l}} -
					\sum_{\substack{1\leq i_1<\cdots<i_{l-2}\leq n\\
					1 \leq k \leq n,\; k \neq i_j}}
					{ x_{i_1}\cdots x_{i_{l-2}} x_k^2}\\
					& \\
					&=(1-l)\sum_{1\leq i_1<\cdots<i_l\leq n}{ x_{i_1}\cdots x_{i_l}} -
					\sum_{\substack{1\leq i_1<\cdots<i_{l-2}\leq n\\
					1 \leq k \leq n,\; k \neq i_j}}
					{ x_{i_1}\cdots x_{i_{l-2}} x_k^2}=\xi_l.
				\end{flalign*}
				This proves that $[\sigma_1,\dots,\sigma_{n+1}]=[\sigma_1,\xi_2,\dots,\xi_{n+1}]$.
				The final statement of the lemma is obtained by rearranging the ideal as above and then removing the generator $x_{n+1}$ and ideal generator $\sigma_1$,
				which can be done since $x_{n+1}=\sigma_1-x_1-\cdots-x_n$ after quotienting out by $\sigma_1$.
			\end{proof}
			
			In addition to the elementary symmetric polynomials, recall from Section \ref{sec:homogeneous}
			another basis of the symmetric polynomials on $\mathbb{Z}[x_1,\dots,x_n]$ is given by the
			compete homogeneous symmetric polynomials, $h_l=\sum_{1\leq i_1,\dots,i_l\leq n}{x_{i_1}\cdots x_{i_l}}$ for each $1\leq l\leq n$.
			Starting with $h_l$ as generators of the of the symmetric ideal, leads to another simplification of the expression of the symmetric quotient,
			the usefulness of which will be demonstrated in the next section.
			
			For each integer $n\geq 1$ and all integers $1 \leq k' \leq k \leq n$,
			define $\Phi(k,k')$ to be the sum of all monomials in $\mathbb{Z}[x_1,\dots,x_n]$ of degree $k$ in variables $x_1,\dots,x_{n-k'+1}$.
			
			\begin{thm}\label{thm:monomial sum}
				In the ring $\frac{\mathbb{Z}[x_1,\dots,x_n]}{[h_1,\dots,h_n]}$,
				for each $1 \leq k' \leq k \leq n$, $\Phi(k,k')=0$.
				In addition 
				\begin{equation}\label{eq:OrigIdeal}
					[h_1,\dots,h_n]=[\Phi(1,1),\dots,\Phi(n,n)].
				\end{equation}
			\end{thm}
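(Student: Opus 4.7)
The plan is to first establish a single recursion for $\Phi(k, k')$ by splitting each monomial according to whether the variable of largest index appears. Since $\Phi(k, k') = h_k(x_1, \dots, x_{n-k'+1})$, the standard identity $h_k(x_1,\dots,x_{m+1}) = h_k(x_1,\dots,x_m) + x_{m+1} h_{k-1}(x_1,\dots,x_{m+1})$ applied with $m = n - k'$ gives
\begin{equation*}
\Phi(k, k'-1) = \Phi(k, k') + x_{n-k'+2}\,\Phi(k-1, k'-1),
\end{equation*}
equivalently $\Phi(k, k') = \Phi(k, k'-1) - x_{n-k'+2}\,\Phi(k-1, k'-1)$. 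This single identity drives both parts of the argument.

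For the vanishing statement $\Phi(k, k') = 0$ in $\mathbb{Z}[x_1,\dots,x_n]/[h_1,\dots,h_n]$ I would induct on $k'$. The base case $k' = 1$ is immediate because $\Phi(k, 1) = h_k$ lies in the ideal for every $1 \leq k \leq n$. For the inductive step, fix $k \geq k' \geq 2$ and apply the recursion; both $\Phi(k, k'-1)$ and $\Phi(k-1, k'-1)$ vanish in the quotient by the inductive hypothesis, since $k \geq k' - 1$ and $k - 1 \geq k' - 1$ respectively.

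For the ideal identity, the inclusion $[\Phi(1,1),\dots,\Phi(n,n)] \subseteq [h_1,\dots,h_n]$ is immediate from the first part, because each $\Phi(k,k)$ already vanishes in the quotient by $[h_1,\dots,h_n]$. For the reverse inclusion, let $I = [\Phi(1,1),\dots,\Phi(n,n)]$ and prove the stronger statement that $\Phi(k, l) \in I$ for all $1 \leq l \leq k \leq n$; the case $l = 1$ then yields $h_k = \Phi(k, 1) \in I$. I would run a double induction: an outer induction on $k$ (base $k = 1$: $\Phi(1,1) \in I$ by definition), and for fixed $k$ an inner downward induction on $l$ from $l = k$ (base: $\Phi(k, k) \in I$) down to $l = 1$. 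For $l < k$, the rewritten recursion
\begin{equation*}
\Phi(k, l) = \Phi(k, l+1) + x_{n-l+1}\,\Phi(k-1, l)
\end{equation*}
places $\Phi(k, l+1) \in I$ by the inner hypothesis and $\Phi(k-1, l) \in I$ by the outer hypothesis applied at $k - 1$, which is legitimate since $l \leq k - 1$.

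The main obstacle is really just bookkeeping: tracking the index shift $k' \leftrightarrow n - k' + 1$ and verifying the ranges at each induction step. There is no content beyond the splitting identity for $h_k$, but the inner downward induction has to be run in the right order, and one must check that the outer hypothesis applied to $\Phi(k-1, l)$ genuinely covers the required range $l \leq k - 1$. Once the indices are lined up correctly, both conclusions fall out of the same recursion.
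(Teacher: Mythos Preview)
Your proposal is correct and takes essentially the same approach as the paper: both arguments rest entirely on the recursion $\Phi(k,k') = \Phi(k,k'-1) - x_{n-k'+2}\,\Phi(k-1,k'-1)$ coming from the splitting identity for $h_k$. The paper organizes the first part as an induction on $k$ (writing the telescoped formula $\Phi(m,m') = h_m - \sum_{j=1}^{m'-1} x_{n-j+1}\,\Phi(m-1,j)$ in one line), whereas you induct on $k'$; and for the ideal equality the paper simply notes that the transition from the $h_k$ to the $\Phi(k,k)$ is unitriangular, while you spell out the reverse inclusion via a double induction. These are cosmetic differences in packaging the same recursion, and your treatment of the reverse inclusion is in fact more explicit than the paper's.
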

			\begin{proof}
				We replace the basis $\sigma_1,\dots,\sigma_n$ of symmetric polynomials by the complete homogeneous symmetric polynomials,
				where $h_k=\Phi(k,1)$.
				We will prove by induction on $k$ that, for each $1 \leq k' \leq k \leq n$, $\Phi(k,k')\in [h_1,\dots,h_n]$.
				When $k=1$, by definition
				\begin{equation*}
						h_1=\Phi(1,1).
				\end{equation*}
				Assume the theorem is true for all $k<m\leq n$.
				By induction $\Phi(m-1,m')\in [h_1,\dots,h_n]$ for all $1 \leq m'\leq m-1$.
				Note that $\Phi(m-1,m')x_{n-m'+1}$ is the sum of all monomials of degree $m$  in variables $x_1,\dots,x_{n-m'+1}$ divisible by $x_{n-m'+1}$.
				Hence, for each $1\leq m'\leq m-1$
				\begin{equation*}
						h_m-\Phi(m-1,1)x_n-\cdots-\Phi(m-1,m'-1)x_{n-m'+2}=\Phi(m,m').
				\end{equation*}
				At each stage of the proof the next $\Phi(k,k)$ is obtained as a sum of $h_k$ and polynomials obtained from $h_1,\dots,h_{k-1}$.
				Hence $[\Phi(1,1),\dots,\Phi(n,n)]$ and $[h_1,\dots,h_n]$ are equal.
			\end{proof}
			
			For integers $0\leq a\leq b$, denote by $h_a^b$ the complete homogeneous polynomial in variables $x_1,\dots,x_b$ of degree $a$.
			Then equation (\ref{eq:OrigIdeal}) can be written as
			\begin{equation}\label{eq:SipleRedusingHomogenious}
				[h_1^n,\dots,h_n^n]=[h_1^n,\dots,h_n^1].
			\end{equation}
			A useful intermediate form of Proposition \ref{thm:monomial sum} is given next.
			
			\begin{prop}\label{prop:Homogeneous-1}
				For each $n\geq 1$,
				\begin{equation*}
					[h_1^n,\dots,h_n^n]=[h_1^n,h_2^{n-1}\dots,h_n^{n-1}].
				\end{equation*}
			\end{prop}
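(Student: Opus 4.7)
The plan is to prove the ideal equality via a double containment argument, both sides of which will follow from the single key recursion
\begin{equation*}
h_k^n = h_k^{n-1} + x_n h_{k-1}^n.
\end{equation*}
This recursion is immediate by partitioning the monomials making up $h_k^n$ according to whether or not they contain the variable $x_n$: those that do not contain $x_n$ make up $h_k^{n-1}$, and those that do factor as $x_n$ times a degree $(k-1)$ monomial in $x_1,\dots,x_n$, giving $x_n h_{k-1}^n$. I would state this as a quick preliminary observation before proceeding.

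For the containment $[h_1^n, h_2^{n-1}, \dots, h_n^{n-1}] \subseteq [h_1^n, \dots, h_n^n]$, I would rearrange the recursion to read $h_k^{n-1} = h_k^n - x_n h_{k-1}^n$ and observe that, for each $2 \leq k \leq n$, this exhibits $h_k^{n-1}$ as an element of $[h_{k-1}^n, h_k^n] \subseteq [h_1^n,\dots,h_n^n]$. Since $h_1^n$ is obviously in the right-hand ideal, this direction is complete.

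For the reverse containment $[h_1^n, \dots, h_n^n] \subseteq [h_1^n, h_2^{n-1}, \dots, h_n^{n-1}]$, I would induct on $k$ to show that $h_k^n \in [h_1^n, h_2^{n-1}, \dots, h_k^{n-1}]$. The base case $k=1$ is trivial, and for the inductive step the recursion gives
\begin{equation*}
h_k^n = h_k^{n-1} + x_n h_{k-1}^n,
\end{equation*}
where the first term is a generator and the second lies in $[h_1^n, h_2^{n-1}, \dots, h_{k-1}^{n-1}]$ by the inductive hypothesis. Taking $k$ up to $n$ completes the containment.

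Conceptually there is no real obstacle here; the only thing to be careful about is the base of the induction in the second direction, since the generator set contains $h_1^n$ rather than $h_1^{n-1}$, so one cannot symmetrically apply the recursion at the bottom. Writing the recursion in the form $h_k^n = h_k^{n-1} + x_n h_{k-1}^n$ (rather than expanding $h_{k-1}^n$ in terms of $h_{k-1}^{n-1}, h_{k-2}^{n-1},\dots$) keeps the argument clean and avoids the need to rewrite any $h_1^{n-1}$ in terms of $h_1^n$ and $x_n$.
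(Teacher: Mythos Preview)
Your proof is correct and uses exactly the same key recursion $h_k^n = h_k^{n-1} + x_n h_{k-1}^n$ as the paper. The only cosmetic difference is that the paper phrases the argument as a sequence of invertible generator replacements (working from $i=n-1$ down to $i=1$, replacing each $h_{i+1}^n$ by $h_{i+1}^n - x_n h_i^n = h_{i+1}^{n-1}$), whereas you unpack this into an explicit double containment with an induction; the content is identical.
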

			
			\begin{proof}
				For each $1\leq i\leq n-1$
				\begin{equation*}
					h^n_{i+1}-x_n h^n_i=h^{n-1}_{i+1}.
				\end{equation*}
				We can rearrange the ideal to achieve the desired result by performing the above elimination in sequence on the ideal for $i={n-1}$ to $i=1$.
			\end{proof}
			
			\begin{rmk}\label{remk:SymQotForms}
				By Theorem \ref{thm:monomial sum} and Proposition \ref{prop:Homogeneous-1} eliminating the last variable in $\mathbb{Z}[x_1,\dots,x_n]$,
				by rewriting $h_1$ as $x_n=-x_1-\cdots-x_{n-1}$ gives us
				\begin{equation*}
					\frac{\mathbb{Z}[x_1,\dots,x_n]}{[h_1^n,\dots,h_n^n]}
					\cong\frac{\mathbb{Z}[x_1,\dots,x_{n-1}]}{[h_2^{n-1},\dots,h_n^{n-1}]}
					\cong\frac{\mathbb{Z}[x_1,\dots,x_{n-1}]}{[h_2^{n-1},\dots,h_n^{1}]}.
				\end{equation*}
			\end{rmk}

		\subsection{Basis of representatives and degree-wise number of elements}\label{subsec:BettiNum}
		
			Using Remark \ref{remk:SymQotForms} following from Theorem \ref{thm:monomial sum}, we can deduce an additive basis of the symmetric quotient
			$\frac{\mathbb{Z}[x_1,\dots,x_n]}{[h_1^n,\dots,h_n^n]}$.
			
			\begin{thm}\label{thm:AddBasis}
				The elements $x_1^{a_1}\cdots x_{n-1}^{a_{n-1}}$ such that $0\leq a_i \leq n-i$, form an additive basis of
				$\frac{\mathbb{Z}[x_1,\dots,x_n]}{[h_1^n,\dots,h_n^n]}$.
			\end{thm}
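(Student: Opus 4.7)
The plan is first to pass, via Remark~\ref{remk:SymQotForms}, to the simpler form
\[
R \;=\; \mathbb{Z}[x_1,\dots,x_{n-1}]\big/[h_2^{n-1},\,h_3^{n-2},\,\dots,\,h_n^{1}],
\]
and then to prove the theorem by combining a lexicographic reduction argument (for spanning) with a rank count (for independence).

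For spanning, the key observation is that for each $1 \leq i \leq n-1$ the generator $h_{n-i+1}^i$ lives entirely in the variables $x_1,\dots,x_i$; among its monomial terms, $x_i^{n-i+1}$ is the unique one in which $x_i$ carries the full exponent $n-i+1$, and every other term involves at least one of $x_1,\dots,x_{i-1}$ and hence has $x_i$-exponent at most $n-i$. I would order monomials lexicographically with $x_{n-1} > x_{n-2} > \cdots > x_1$ and repeatedly reduce any monomial $m = x_1^{a_1}\cdots x_{n-1}^{a_{n-1}}$ that violates $a_i \leq n-i$ by applying $h_{n-i+1}^i = 0$ at the largest offending $i$. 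Writing $m = m' \cdot x_i^{n-i+1}$ and substituting for $x_i^{n-i+1}$ produces replacements in which the exponents of $x_{i+1},\dots,x_{n-1}$ agree with those of $m$ (since $h_{n-i+1}^i$ involves none of these higher variables) but the $x_i$-exponent is strictly smaller. Each replacement is thus strictly lex-below $m$, and since lex is well-founded on the finite set of monomials of any fixed total degree, iteration terminates with a $\mathbb{Z}$-linear combination of monomials satisfying $a_i \leq n-i$ for all $i$.

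For independence and rank, identify $R$ with $H^*(SU(n)/T^{n-1};\mathbb{Z})$ via Theorem~\ref{thm:H*SU/T} (re-indexing $n \mapsto n-1$) together with the fact that $\sigma_1,\dots,\sigma_n$ and $h_1^n,\dots,h_n^n$ generate the same ideal of $\mathbb{Z}[x_1,\dots,x_n]$, which is an immediate consequence of Theorem~\ref{thm:ElmComp} and the inverse change of basis between elementary and complete homogeneous symmetric polynomials. The right-hand side is torsion-free by Bott--Samelson, and the Bruhat/Schubert decomposition of $SU(n)/T^{n-1}$ into $|S_n| = n!$ even-dimensional cells forces the $\mathbb{Z}$-rank to be $n!$. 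Meanwhile the proposed basis has size $\prod_{i=1}^{n-1}(n-i+1) = n!$. The spanning step therefore produces a surjection $\mathbb{Z}^{n!} \twoheadrightarrow R \cong \mathbb{Z}^{n!}$, and a surjection between free $\mathbb{Z}$-modules of equal finite rank is automatically an isomorphism.

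The main obstacle I anticipate is termination of the reduction, because substituting into $x_i^{n-i+1}$ generically raises exponents of $x_1,\dots,x_{i-1}$ and those may well violate $a_j \leq n-j$ for some $j < i$. The lex choice with $x_{n-1}$ highest is what rescues the argument: reducing at the \emph{largest} offending index $i$ keeps $a_{i+1},\dots,a_{n-1}$ intact, so the replacements drop strictly in lex, and any later reduction at some $j < i$ uses only a relation in $x_1,\dots,x_j$ and so never disturbs the already-controlled top exponents. Once this termination is nailed down, the remainder of the proof is a comparison of cardinalities against the cohomology of the flag variety.
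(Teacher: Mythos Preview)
Your spanning argument is essentially identical to the paper's: both pass to the generating set $h_1^n, h_2^{n-1}, \dots, h_n^1$ for the ideal (via Theorem~\ref{thm:monomial sum}/Remark~\ref{remk:SymQotForms}) and exploit that $x_i^{n-i+1}$ is the unique monomial in $h_{n-i+1}^i$ carrying that power of $x_i$, so one can successively eliminate $x_n$, then $x_{n-1}^2$, then $x_{n-2}^3$, and so on. Your lex-order formulation is just a cleaner way to package the termination that the paper leaves informal.

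Where you diverge is in supplying an independence argument at all: the paper's proof establishes only spanning and says nothing about linear independence. Your rank comparison against $H^*(SU(n)/T^{n-1};\mathbb{Z})\cong\mathbb{Z}^{n!}$ via Bott--Samelson and the Schubert cell count fills this gap and makes the argument complete. So your approach is the same in substance but strictly more thorough.
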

			
			\begin{proof}
				By Theorem \ref{thm:monomial sum}, $\frac{\mathbb{Z}[x_1,\dots,x_n]}{[h_1^n,\dots,h_n^n]}\cong\frac{\mathbb{Z}[x_1,\dots,x_n]}{[h_1^n,\dots,h_n^1]}$.
				$h_1^n$ is the only generator of the ideal in which a summand is divisible by $x_n$ and $x_n$ is the unique summand in $h_1^n$ divisible by $x_n$.
				Hence any elements of $\frac{\mathbb{Z}[x_1,\dots,x_n]}{[h_1^n,\dots,h_n^n]}$ can be expressed with a representative not containing $x_n$
				by replacing $x_n$ with $-h_1^n+x_n$.
				Similarly apart from a multiple of $h_1^n$, $h_2^{n-1}$ is the only generator of the ideal containing a summand divisible $x_{n-1}^2$
				and $h_2^{n-1}$ contains the unique summand $x_{n-1}^2$ divisible by $x^2_{n-1}$.
				Hence any elements of $\frac{\mathbb{Z}[x_1,\dots,x_n]}{[h_1^n,\dots,h_n^n]}$ can be expressed by a representative not containing $x_{n}$ or $x_{n-1}^2$.
				The process can be continued with $h_3^{n-1}$ and $x_{n-3}^3$ through to $h_n^1$ and $x_1^n$ to give the desired result. 
			\end{proof}
			
			\begin{rmk}\label{rmk:AddBasis}
				The symmetry of the variables $x_1,\dots,x_n$ in $h_1^n,\dots,h_n^n$ implies that the basis of Theorem \ref{thm:AddBasis}
				can be chosen using any permutations of $\{ 1,\dots,n \}$.
				That is the elements $x_{\sigma(1)}^{a_1}\cdots x_{\sigma(n-1)}^{a_{n-1}}$ such that $0\leq a_i \leq n-i$ form an additive basis of
				$\frac{\mathbb{Z}[x_1,\dots,x_n]}{[h_1^n,\dots,h_n^n]}$ for any $\sigma\in S_n$.
			\end{rmk}
			
			We now address the problem of counting the number of elements in each degree of $\frac{\mathbb{Z}[x_1,\dots,x_{n+1}]}{[\sigma_1,\dots,\sigma_{n+1}]}$.
			These numbers are the Betti numbers of $H^*(SU(n+1)/T^n)\cong \frac{\mathbb{Z}[x_1,\dots,x_{n+1}]}{[\sigma_1,\dots,\sigma_{n+1}]}$.
			and have been well studied.  
			In particular as a consequence or work of Kostant, Macdonald and Steinberg in 
			\cite{Kostant2009}, \cite{Macdonald1972} and \cite{FiniteReflectionGroups} respectively,
			for simple Lie group $G$ with maximal torus $T$ the following are forms of the Poincar{\'e} series for $G/T$
			\begin{equation*}
				\sum_{w\in W}{t^{2l(w)}}=\prod_{\alpha \in \Phi^{+}}{\frac{1-t^{2ht(\alpha)+2}}{1-t^{2ht(\alpha)}}}=\prod_{i=1}^{l}(1+t^2+\cdots+t^{2m_i}),
			\end{equation*}
			where $W=N_G(T)/T$ The Weyl group of $G$,
			$l(w)$ the length of $w\in W$, $\Phi^+$ is the set of positive roots of $G$,
			$ht(\alpha)$ the hight of $\alpha \in \Phi^{+}$
			and $m_1,\dots,m_l$ the exponents of $G$.
			
			\begin{defn}\label{def:tri}
				Denote by $\triset{n}{k}$ the number of degree $k$ monomials of the form $x_1^{a_1}\cdots x_{n}^{a_{n}}$ such that $0\leq a_i \leq n-i$.
			\end{defn}
			
			\begin{rmk}\label{rmk:tribox}
				Alternatively $\triset{n}{k}$ can be described as the number of ways to construct a $k$ multiset $X$ from elements of $\{1,\dots,n\}$
				such that the element $i$ appear no more than $i$ times in $X$.
			\end{rmk}
			
			It is clear that if $k<0$ or $k>\frac{n(n+1)}{2}$ then $\triset{n}{k}=0$, since in either case such a multiset $X$ cannot exist.
			$\triset{n}{k}$ are known as the Mahonian numbers and were originally defined in terms of the inversion numbers of permutations,
			see for example \cite[page 239]{AdvanceCombintorics}.
			The next two propositions are well known properties of $\triset{n}{k}$, the second gives an inductive rule for computing $\triset{n}{k}$.
			In Theorem \ref{thm:trisetformula} we give an explicit formula for $\triset{n}{k}$, which is similar to the one given in \cite{ArtOfProgramming}.
			Through here in all cases I have given my own proofs.
			
			\begin{prop}\label{prop:trireflect}
			For each $n\geq 0$ and $0\leq k \leq \frac{n(n+1)}{2}$,	
				\begin{equation*}
					\triset{n}{k}=\triset{n}{\frac{n(n+1)}{2}-k}, \;\;\; \sum_{i=0}^{\frac{n(n+1)}{2}}{\triset{n}{i}=(n+1)!}.
				\end{equation*}
			\end{prop}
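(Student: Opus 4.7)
The plan is to prove both identities by straightforward combinatorial arguments using the multiset interpretation of $\triset{n}{k}$ given in Remark \ref{rmk:tribox}, where $\triset{n}{k}$ counts multisets $X$ of size $k$ drawn from $\{1,\ldots,n\}$ such that element $i$ appears at most $i$ times. Equivalently, I will work with tuples $(a_1,\ldots,a_n)$ satisfying $0\leq a_i\leq i$ and $a_1+\cdots+a_n=k$, where $a_i$ records the multiplicity of $i$.

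For the reflection identity, I would construct a degree-reversing involution on the set of valid tuples. Specifically, define $\phi(a_1,\ldots,a_n)=(i-a_i)_{i=1}^n$. Since $0\leq a_i\leq i$ forces $0\leq i-a_i\leq i$, the image is again a valid tuple, and clearly $\phi\circ\phi$ is the identity. The total degree transforms as
\begin{equation*}
\sum_{i=1}^n (i-a_i) \;=\; \sum_{i=1}^n i \;-\; k \;=\; \tfrac{n(n+1)}{2}-k,
\end{equation*}
so $\phi$ restricts to a bijection between tuples of degree $k$ and tuples of degree $\tfrac{n(n+1)}{2}-k$. This yields the first claim.

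For the total sum, I would observe that the constraints on the $a_i$ are independent: there is no coupling between coordinates, so the number of valid tuples is just $\prod_{i=1}^n (i+1)=2\cdot 3\cdots (n+1) = (n+1)!$ by the multiplication principle. Since $\sum_{i=0}^{n(n+1)/2}\triset{n}{i}$ partitions this total count by the degree $k$, it must equal $(n+1)!$.

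Neither step should present a real obstacle; the only subtlety is confirming that the upper bounds used in Remark \ref{rmk:tribox} (which give a max degree of $\tfrac{n(n+1)}{2}$ and $(n+1)!$ total elements) are the ones matching the Proposition, rather than the slightly different bounds written in Definition \ref{def:tri}. Once this bookkeeping is fixed, both identities reduce to a one-line involution and a one-line product formula respectively.
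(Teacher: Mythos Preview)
Your proposal is correct and matches the paper's proof essentially line for line: the paper's bijection ``replaces the number of occurrences of $i$ in the multiset by $i$ minus this number,'' which is exactly your involution $a_i\mapsto i-a_i$, and the paper's count of $(n+1)!$ valid multisets is your product $\prod_{i=1}^n(i+1)$. Your flag about the discrepancy between Definition~\ref{def:tri} and Remark~\ref{rmk:tribox} is well spotted---the paper's own proof silently uses the Remark~\ref{rmk:tribox} convention, just as you do.
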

			\begin{proof}
				Both statements follow from Remark \ref{rmk:tribox}.
				The first is given by the clear bijection between the two multiset descriptions that 
				replaces the number of occurrences of $i$ in the multiset by $i$ minus this number. 
				The second statement follows from the fact that there are $(n+1)!$ ways to form any multiset
				from elements of $\{1,\dots,n\}$ such that the element $i$ appear no more than $i$ times.
			\end{proof}

			\begin{prop}\label{triinduct}
				The numbers $\triset{n}{k}$ for $n\geq 0$ and $0\leq k \leq \frac{n(n+1)}{2}$ are completely determined by the following inductive rule.
				\begin{equation*}
					\triset{0}{k}=
					\begin{cases}
						1, & k=0\\
						0, & k\neq 0
					\end{cases}
				\end{equation*}
				For each $n\geq 1$ and $0\leq k \leq \frac{n(n+1)}{2}$,
				\begin{equation*}
					\triset{n}{k}=\sum_{i=0}^{n}\triset{n-1}{k-i}.
				\end{equation*}
			\end{prop}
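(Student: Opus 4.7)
The plan is to prove the proposition directly from the multiset interpretation given in Remark \ref{rmk:tribox}, rather than from the monomial description in Definition \ref{def:tri}. Under that interpretation, $\triset{n}{k}$ counts the $k$-multisets drawn from $\{1,\dots,n\}$ in which the element $j$ appears at most $j$ times. This framing makes the recursion essentially a ``partition by the last letter'' argument, which I expect to go through very cleanly.

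For the base case $n = 0$, the underlying set is empty, so the only multiset is the empty one, giving $\triset{0}{0} = 1$ and $\triset{0}{k} = 0$ for $k \neq 0$. For the inductive step, I would fix $n \geq 1$ and condition on the multiplicity $i$ of the element $n$ in a counted multiset $X$. The constraint forces $0 \leq i \leq n$, so there are exactly $n+1$ cases. Removing the $i$ copies of $n$ from $X$ produces a multiset $X'$ of size $k-i$ drawn from $\{1,\dots,n-1\}$ in which element $j$ still appears at most $j$ times for each $j \leq n-1$, because these constraints are inherited unchanged. Conversely any such $X'$ extends uniquely to a valid $X$ by re-adding $i$ copies of $n$. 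Hence the number of $X$ with multiplicity $i$ of $n$ equals $\triset{n-1}{k-i}$, and summing over $i$ yields the stated recurrence.

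Two minor bookkeeping points to settle: first, when $k-i$ is negative or exceeds $\frac{(n-1)n}{2}$, the term $\triset{n-1}{k-i}$ is zero by the multiset interpretation, which is consistent with the convention used throughout the section; second, the base case together with the recurrence determines $\triset{n}{k}$ uniquely by induction on $n$, so no further verification of well-definedness is needed. I do not anticipate any real obstacle here — the only thing to be careful about is ensuring the bijective correspondence (removing and then re-adding the copies of $n$) is stated cleanly so the reader sees that the constraint ``$j$ appears at most $j$ times'' truly restricts to the same condition on $\{1,\dots,n-1\}$.
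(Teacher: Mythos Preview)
Your proposal is correct and follows essentially the same argument as the paper: both use the multiset description of Remark~\ref{rmk:tribox} and partition the admissible $k$-multisets on $\{1,\dots,n\}$ according to the multiplicity $i\in\{0,\dots,n\}$ of the element $n$, then observe that deleting the copies of $n$ gives a bijection with admissible $(k-i)$-multisets on $\{1,\dots,n-1\}$.
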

			\begin{proof}
				The case when $n=0$ is clear from the definition.
				Using the description from Remark \ref{rmk:tribox}, any $k$ multiset on $1,\dots,n$ satisfying the conditions
				can be obtained from a $(k-i)$-multiset on $1,\dots,n-1$ satisfying the conditions, by adding $i$, $n$'s to the multiset
				for some $0\leq i \leq n$.
			\end{proof}

			\begin{thm}\label{thm:trisetformula}
				For each $n\geq 1$ and $0\leq k \leq \frac{n(n+1)}{2}$,
				\begin{equation*}
					\triset{n}{k}=\multiset{n}{k}+\sum_{a=1}^{n}
					{(-1)^a\sum_{\substack{2\leq i_1<\cdots<i_a\leq n+1 \\ i_1+\cdots+i_a\leq k}}{\multiset{n}{k-i_1-\cdots-i_a}}}.
				\end{equation*}
			\end{thm}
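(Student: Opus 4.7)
The plan is to prove this by inclusion-exclusion on the defining constraint from Remark \ref{rmk:tribox}, namely that element $i$ appears at most $i$ times in the multiset.

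First I would set up the framework. Let $U$ be the set of all size $k$ multisets on $\{1,\dots,n\}$, so $|U|=\multiset{n}{k}$, and for each $1\leq i \leq n$ let
\begin{equation*}
    A_i = \{X \in U \mid \text{the element } i \text{ appears strictly more than } i \text{ times in } X\}.
\end{equation*}
By Remark \ref{rmk:tribox}, $\triset{n}{k}=|U\setminus(A_1\cup\cdots\cup A_n)|$, so by inclusion-exclusion
\begin{equation*}
    \triset{n}{k} = \sum_{S\subseteq\{1,\dots,n\}}(-1)^{|S|}\Bigl|\bigcap_{j\in S}A_j\Bigr|.
\end{equation*}

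The key combinatorial step is the identity
\begin{equation*}
    \Bigl|\bigcap_{j\in S}A_j\Bigr| = \multiset{n}{k-\sum_{j\in S}(j+1)},
\end{equation*}
where by convention $\multiset{n}{m}=0$ for $m<0$. This follows from the standard ``subtract the forced occurrences'' bijection: a multiset $X$ lies in $\bigcap_{j\in S}A_j$ iff it contains the element $j$ at least $j+1$ times for each $j\in S$. Removing this fixed ``core'' of $\sum_{j\in S}(j+1)$ elements leaves an unconstrained size $k-\sum_{j\in S}(j+1)$ multiset on $\{1,\dots,n\}$, and the map is clearly bijective. The nonzero contributions therefore come exactly from subsets $S$ with $\sum_{j\in S}(j+1)\leq k$.

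Finally I would reindex to match the stated formula. For $S=\{j_1<\cdots<j_a\}\subseteq\{1,\dots,n\}$, set $i_s=j_s+1$, so the $i_s$ range over strictly increasing sequences with $2\leq i_1<\cdots<i_a\leq n+1$, and $\sum_{j\in S}(j+1)=i_1+\cdots+i_a$. Separating the empty subset (the $\multiset{n}{k}$ term) from those with $a\geq 1$ gives exactly the claimed formula. I do not expect any serious obstacle: the only thing to be careful about is the boundary convention $\multiset{n}{m}=0$ for $m<0$, which lets us freely include or exclude sets $S$ with $\sum_{j\in S}(j+1)>k$, matching the restriction $i_1+\cdots+i_a\leq k$ in the author's statement.
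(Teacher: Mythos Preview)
Your proof is correct and follows essentially the same inclusion--exclusion strategy as the paper: both count $k$-multisets on $\{1,\dots,n\}$ and remove those in which some element $j$ occurs at least $j+1$ times, reindexing via $i=j+1$. The only difference is presentational: the paper builds up the alternating sum level by level and checks that the correction coefficient at each stage is $\pm1$ using the identity $\sum_{k}(-1)^k\binom{n}{k}=0$, whereas you invoke the inclusion--exclusion formula directly together with the standard ``remove the forced core'' bijection, which is cleaner.
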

			\begin{proof}
				Beginning with $\multiset{n}{k}$, the number of $k$ multisets on $\{1,\dots,n\}$
				we subtract the number of multisets not satisfying the condition element $i$ appear no more than $i$ times.
				For $2\leq i_1\leq n+1$, $\multiset{n}{k-{i_1}}$ corresponds to the number of multisets in which there are at lest $i_1$ occurrences of the element $i_1-1$.
				However if we subtract
				\begin{equation}\label{eq:stage1}
					\sum_{\substack{2\leq i_1 \leq n+1 \\ i_1\leq k}}{\multiset{n}{k-i_1}}
				\end{equation}
				from $\multiset{n}{k}$, we do not obtain the desired results because we have counted multiple combinations where more than $i$ of element $i$ occur in the multiset.
				For any $2\leq i_1<i_2 \leq n+1$, in equation (\ref{eq:stage1}), the number of multisets in which elements $i_1-1$ and $i_2-1$
				occur more than $i_1$ and $i_2$ times respectively are counted twice.
				Hence subtracting from $\multiset{n}{k}$, 
				\begin{equation}\label{eq:stage2}
					\sum_{\substack{2\leq i_1 \leq n+1 \\ i_1\leq k}}{\multiset{n}{k-i_1}}-\sum_{\substack{2\leq i_1<i_2 \leq n+1 \\ i_1+i_2\leq k}}{\multiset{n}{k-i_1-i_2}}
				\end{equation}
				counts correctly the number of multisets in which for any $2\leq i_1<i_2 \leq n+1$, only elements $i_1-1$ and $i_2-1$ occur more than $i_1$ and $i_2$ times.
				However equation (\ref{eq:stage2}) still counts multisets in which three or more elements occur more times than their value.
				For any $2\leq i_1<i_2<i_3 \leq n+1$ in equation (\ref{eq:stage1}), the number of multisets in which elements $i_1-1$, $i_2-1$ and $i_3-1$
				occur more than $i_1$, $i_2$ and $i_3$ times respectively are counted $\binom{3}{1}=3$ times.
				In
				\begin{equation*}
					\sum_{\substack{2\leq i_1<i_2 \leq n+1 \\ i_1+i_2\leq k}}{\multiset{n}{k-i_1-i_2}}
				\end{equation*}
				the number of multisets in which elements $i_1-1$, $i_2-1$ and $i_3-1$
				occur more than $i_1$, $i_2$ and $i_3$ times respectively is counted $\binom{3}{2}=3$ times and once in $\multiset{n}{k}$.
				Therefore in order to correct the count on triple occurrences we need to add
				\begin{equation*}
					\sum_{\substack{2\leq i_1<i_2<i_3 \leq n+1 \\ i_1+i_2+i_3\leq k}}{\multiset{n}{k-i_1-i_2-i_3}}
				\end{equation*}
				to equation (\ref{eq:stage2}).
				We continue this processes until we have considered combinations of all $n$ variables.
				At each stage, since $\sum_{k=0}^{n}{(-1)^k\binom{n}{k}}=0$, the multiplicity of the number of terms that need to be corrected is always one,
				hence we obtain the desired result.
			\end{proof}

		\subsection{Multiplicative rules}\label{subsec:MultRule}
			
			In this section we try to understand some of the multiplicative structure of the additive basis given in Theorem \ref{thm:AddBasis}. 
			
			\begin{prop}\label{prop:zeros}
				A representative $\gamma_1^{c_1}\cdots\gamma_{n}^{c_n}$ represents the zero class, if for any $1\leq k \leq n$ and $1\leq i_1 < \cdots < i_k \leq n$, 
				\begin{equation*}
					\sum_{j=1}^{k}{c_{i_j}}>\sum_{j=1}^{k}{n-i_j+1}.
				\end{equation*}
			\end{prop}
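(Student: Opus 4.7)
The plan is to combine the alternative ideal description from Theorem \ref{thm:monomial sum} (equivalently, Proposition \ref{prop:Homogeneous-1}) with a double induction: an outer induction decreasing on $n-k$ (so the base case is $k=n$) and an inner induction on the excess $E := \sum_{j=1}^{k} c_{i_j} - \sum_{j=1}^{k}(n - i_j + 1) \geq 1$. The key reduction tool is the family of relations $h_l^{n-l+1} = 0$, each of which contains the pure power $\gamma_{n-l+1}^l$ as a summand, allowing us to rewrite this power as minus the sum of the remaining degree-$l$ monomials in $\gamma_1, \ldots, \gamma_{n-l+1}$.

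For the base case $k = n$, the condition reduces to $c_1 + \cdots + c_n > n(n+1)/2$. Since each $\gamma_i$ sits in cohomological degree $2$, the monomial has total degree greater than $n(n+1) = \dim SU(n+1)/T^n$, so it vanishes for purely dimensional reasons. For the inductive step when $k<n$, the positivity of the excess forces some $j$ with $c_{i_j} \geq n - i_j + 2$; I would apply the relation $h_{n-i_j+2}^{i_j} = 0$ to substitute for $\gamma_{i_j}^{n-i_j+2}$ inside the monomial. Each resulting term should then be shown to fall into one of two cases: either (a) the substitution produces a new over-count at some index $i'$ with $i' < i_j$, so an enlarged subset satisfies the analogous sum condition and the outer inductive hypothesis applies, or (b) the same subset $\{i_1,\ldots,i_k\}$ still satisfies the sum condition but with strictly smaller excess, so the inner inductive hypothesis applies. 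A judicious use of Remark \ref{rmk:AddBasis} to relabel the variables may clarify which relation and which $j$ to select at each step.

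The main obstacle will be combinatorial bookkeeping through the reduction: each application of $h_{n-i_j+2}^{i_j}=0$ produces a large number of monomial terms, and I need to verify that every one of them is covered by either the outer or the inner inductive hypothesis. Concretely, when the substitution lowers the exponent of $\gamma_{i_j}$ it simultaneously raises exponents of variables with smaller index, and I must check that this exponent redistribution cannot escape the union of cases (a) and (b). I expect the cleanest way to close this is to exhibit a lexicographic monovariant on pairs $(n-k, E)$ that strictly decreases under the substitution, thereby guaranteeing termination of the double induction.
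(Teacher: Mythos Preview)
Your base case is fine, and the pigeonhole step producing some $j$ with $c_{i_j}\ge n-i_j+2$ is correct. The gap is in the inductive step: the pair $(n-k,E)$ is \emph{not} a valid monovariant. Take $n=4$, $k=2$, $\{i_1,i_2\}=\{1,2\}$, and the monomial $\gamma_1^4\gamma_2^4$, so $E=1$. The only admissible choice is $j=2$, and substituting via $h_4^2=0$ replaces $\gamma_2^4$ by $-(\gamma_1^4+\gamma_1^3\gamma_2+\gamma_1^2\gamma_2^2+\gamma_1\gamma_2^3)$. Every resulting term still has $c_1+c_2=8$, hence $E=1$, and since all indices $\le i_j=2$ already lie in the subset there is no new index $i'<i_j$ to adjoin. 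Enlarging by $i'=3$ or $i'=4$ fails too, and shrinking to the subset $\{1\}$ raises $n-k$. So neither your case~(a) nor your case~(b) fires, and the induction stalls. In general this happens whenever $\{1,\ldots,i_j\}\subseteq\{i_1,\ldots,i_k\}$: the substitution redistributes degree entirely within the subset and leaves both $k$ and $E$ fixed.

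The paper sidesteps this with a one-line observation that replaces your induction altogether. Using the $S_n$-symmetry (Remark~\ref{rmk:AddBasis}) it relabels so that the offending indices occupy positions $1,\ldots,k$. The point is then that the reduction algorithm of Theorem~\ref{thm:AddBasis} (which is already known to terminate) uses each relation $h_l^{n-l+1}$ to trade exponent at position $n-l+1$ for exponent at strictly smaller positions; consequently the partial sum $\sum_{l\le k}(\text{exponent at }l)$ is monotone non-decreasing along the entire reduction. Since the hypothesis forces this partial sum to start above the maximum possible for any basis monomial, the reduction can only land on the empty sum, i.e.\ zero. If you want to rescue your direct-substitution approach, the right monovariant is not $(n-k,E)$ but a monomial order refining the partial-sum invariant---for instance the exponent vector read from the largest index downward---which is exactly what the monotonicity argument encodes.
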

			\begin{proof}
				By symmetry of the variable $\gamma_1,\dots,\gamma_n$, the arguments of Theorem \ref{thm:monomial sum} and Theorem \ref{thm:AddBasis}
				can be applied to any permutation of the indices.
				Therefore we take a permutation $\phi\in S_n$ and denote $b_j=\phi(j)$ with $\phi(1)=i_1,\dots,\phi(k)=i_k$.
				Using the augment from Theorem \ref{thm:AddBasis}, the representative $\gamma_{b_1}^{c_{b_1}}\cdots\gamma_{b_n}^{c_{b_n}}$
				can be expressed as a sum of monomials 
				$\gamma_{b_1}^{a_1}\cdots \gamma_{b_{n}}^{a_{n}}$ such that $0\leq a_i \leq n-i$.
				In particular using the method given in the proof of Theorem \ref{thm:monomial sum} if $a_i\leq c_i$ then $a_1+\cdots a_{i-1}\geq c_i-a_i+c_1+\cdots+c_{i-1}$.
				So if $\sum_{j=1}^{k}{c_{i_j}}>\sum_{j=1}^{k}{n-i_j+1}$ then the sum of $\gamma_{b_1}^{a_1}\cdots \gamma_{b_{n}}^{a_{n}}$ must be empty.
				Hence $\sum_{j=1}^{k}{c_{i_j}}\leq\sum_{j=1}^{k}{n-i_j+1}$ or the expression is zero.
			\end{proof}
			
			We denote the representative ${\gamma}_1^{n} \cdots {\gamma}_{n-2}^{2}{\gamma}_{n-1}$
			of the unique $\frac{(n+1)n}{2}$ degree class by $\hat{\gamma}_{\emptyset}$.
			Denote by $\hat{\gamma}_{i}$ the class of ${\gamma}_1^{n}\cdots \gamma_{i+1}^{n-i+2} \gamma_{i}^{n-i} \gamma_{i-1}^{n-i} \cdots {\gamma}_{n-1}$
			in $\frac{\mathbb{Z}[\gamma_1,\dots,\gamma_{n}]}{[\sigma_2,\dots,\sigma_{n+1}]}$.
			That is the unique class of degree $\frac{(n+1)n}{2}-1$ represented by the monomial $\hat{\gamma}_{\emptyset}/\gamma_i$.

			\begin{lem}\label{lem:OneMissing}
				For any $1\leq i,j\leq n$,
				\begin{equation*}
					[\hat{\gamma_i}\gamma_j]=
					\begin{cases}
						\;\;\; [0]  &\mbox{if  } j < i \text{ or } j \geq i+2 
						\\
						\;\; [\hat{\gamma}_{\emptyset}] &\text{if  } j=i
						\\
						-[\hat{\gamma}_{\emptyset}] &\text{if  } j=i+1
						\end{cases}
					.
				\end{equation*}
			\end{lem}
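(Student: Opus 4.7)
The plan is to exploit the Weyl group action on $H^*(SU(n+1)/T^n;\mathbb{Z})$. Regarded as the quotient $\mathbb{Z}[\gamma_1,\dots,\gamma_{n+1}]/[\sigma_1,\dots,\sigma_{n+1}]$, this ring carries a natural action of $W=S_{n+1}$ by permuting the $\gamma_k$, which preserves the symmetric ideal and so descends to an algebra automorphism of the cohomology. The classical fact I will invoke is that each simple transposition $\tau_i=(i,i+1)$ acts on the top-degree class $[\hat{\gamma}_\emptyset]$ by multiplication by $-1$; equivalently, $\hat{\gamma}_\emptyset$ is proportional to the Vandermonde $\prod_{k<l}(\gamma_k-\gamma_l)$ in the quotient, which is manifestly $W$-alternating.

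Case $j=i$ is immediate from the definition $\hat{\gamma}_i=\hat{\gamma}_\emptyset/\gamma_i$. For the case $j=i+1$, I apply $\tau_i$ directly to the monomial $\hat{\gamma}_\emptyset$: in $\hat{\gamma}_\emptyset$ the variables $\gamma_i$ and $\gamma_{i+1}$ carry exponents $n+1-i$ and $n-i$ respectively, so swapping them produces precisely the monomial $\hat{\gamma}_i\gamma_{i+1}$. Combined with the identity $\tau_i[\hat{\gamma}_\emptyset]=-[\hat{\gamma}_\emptyset]$ this gives $[\hat{\gamma}_i\gamma_{i+1}]=-[\hat{\gamma}_\emptyset]$.

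For the remaining cases $j\ne i,i+1$, the key observation is that in the monomial $\hat{\gamma}_i\gamma_j$ both $\gamma_i$ and $\gamma_{i+1}$ appear with the same exponent $n-i$, since multiplying by $\gamma_j$ affects neither of these factors. Hence $\tau_i$ fixes the monomial pointwise. But $\hat{\gamma}_i\gamma_j$ is a top-degree class, and $\tau_i$ acts there by $-1$. This forces $2[\hat{\gamma}_i\gamma_j]=0$, and the torsion-freeness of $H^*(SU(n+1)/T^n;\mathbb{Z})$ afforded by Theorem~\ref{thm:H*SU/T} then yields $[\hat{\gamma}_i\gamma_j]=0$.

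The main obstacle is cleanly justifying the sign $\tau_i[\hat{\gamma}_\emptyset]=-[\hat{\gamma}_\emptyset]$ without importing heavy Weyl group machinery. The cleanest route is a lexicographic leading-term comparison showing that $\hat{\gamma}_\emptyset$ equals $1/|W|$ times the Vandermonde in the quotient, after which the sign change of the Vandermonde under a transposition is manifest. Should a more elementary, purely algebraic route be preferred, an alternative is to establish the $j\ne i,i+1$ case first by reducing $\hat{\gamma}_i\gamma_j$ using the relations $\sigma_\ell=0$ together with $\gamma_k^{n+1}=0$ (the latter following from $\prod_k(x-\gamma_k)=x^{n+1}$ in the quotient), and then to recover the $j=i+1$ case from the identity $\hat{\gamma}_i(\gamma_i+\gamma_{i+1})=-\sum_{k\ne i,i+1}\hat{\gamma}_i\gamma_k$ provided by $\sigma_1=0$.
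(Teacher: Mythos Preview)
Your argument is correct and takes a genuinely different route from the paper. The paper works entirely within the combinatorial framework of Chapter~\ref{sec:CombiSQ}: the case $j<i$ is read off from the vanishing criterion of Proposition~\ref{prop:zeros}, and the cases $j\ge i+1$ are handled by explicitly rewriting $\gamma_j^{n-j+2}$ via the relation $h_j^{n-j+1}\in[\sigma_2,\dots,\sigma_{n+1}]$ (Theorem~\ref{thm:monomial sum}) and then iteratively discarding the resulting terms with Proposition~\ref{prop:zeros}. Your approach instead exploits the $S_{n+1}$-action: once the top class is known to transform by the sign character, all three cases fall out from the single observation that $\tau_i$ either sends the monomial $\hat{\gamma}_\emptyset$ to $\hat{\gamma}_i\gamma_{i+1}$ or fixes the monomial $\hat{\gamma}_i\gamma_j$. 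This is conceptually cleaner and treats $j<i$ and $j\ge i+2$ uniformly, at the price of invoking the sign fact and the torsion-freeness of Theorem~\ref{thm:H*SU/T}; the paper's argument is more hands-on and stays entirely inside the explicit basis and reduction rules already set up, in keeping with the chapter's self-contained combinatorial style.

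One small remark on your justification of the sign: the parenthetical that $[\hat{\gamma}_\emptyset]$ equals $1/|W|$ times the Vandermonde class is not the correct constant, and a leading-term comparison alone does not immediately identify $[V]$ since the non-leading monomials of $V$ still have to be reduced. A slicker justification that avoids computing any constant: the top degree is one-dimensional (Theorem~\ref{thm:AddBasis} together with Proposition~\ref{prop:trireflect}), so $S_{n+1}$ acts on it via either the trivial or the sign character; were it trivial, the symmetric polynomial $\sum_{\sigma\in S_{n+1}}\sigma\hat{\gamma}_\emptyset$ would represent $(n+1)!\,[\hat{\gamma}_\emptyset]\ne 0$, contradicting the fact that positive-degree symmetric polynomials vanish in the quotient.
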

			\begin{proof}
				If $j < i$, then $\hat{\gamma_i}\gamma_j=0$ by Proposition \ref{prop:zeros}.
				If $i = j$ then $\hat{\gamma_i}\gamma_j=\hat{\gamma}_{\emptyset}$ by definition.
				So for the rest of the proof assume $j>i$.
				By Theorem \ref{thm:monomial sum} we have $h_{j}^{n-j+1}\in [\sigma_2,\dots,\sigma_{n+1}]$.
				Hence we may replace $\gamma_j^{n-j+2}$ by
				\begin{equation}\label{eq:replace}
					\gamma_j^{n-j+2}-\sum_{1\leq i_1 \leq \cdots \leq i_{n-j+1} \leq j}{\gamma_{i_1} \cdots \gamma_{i_{n-j+2}}}.
				\end{equation}
				If any of the $i_k$ in equation (\ref{eq:replace}) are greater than $i$ or $i_k\neq j$ for $k \geq 2$,
				then multiplying that term by $\hat{\gamma_i}\gamma_j/\gamma_j^{n-j}$ 
				will result in a representative of the zero class by Proposition \ref{prop:zeros}.
				If $j \geq i+1$, again we may replace $\gamma_j^{n-j+2}$ with the expression in (\ref{eq:replace}).
				By Proposition \ref{prop:zeros} the only possible non-zero summand when this is multiplied by $\hat{\gamma}_i/\gamma_j^{n-j+2}$ are
				\begin{equation*}
				-\hat{\gamma}_i\gamma_{j-1}-\cdots-\hat{\gamma}_{i}\gamma_{i+1}-\hat{\gamma}_{\phi}.
				\end{equation*}
				If $j=i-1$, then this is just $\hat{\gamma}_{\emptyset}$.
				If $j \geq i+2$, then replace $-\hat{\gamma}_i\gamma_{j-1}$ with
				\begin{equation*}
					\hat{\gamma}_i\gamma_{j-2}+\cdots+\hat{\gamma}_{i}\gamma_{i+1}+\hat{\gamma}_{\phi},
				\end{equation*}
				which cancels with the other terms.
			\end{proof}

\newpage
\section{Cohomology of the free loop space of the complete flag manifold of $SU(n)$}\label{sec:FreeLoopSU(n+1)/Tn}
		
		In this chapter we investigate the cohomology of the free loop space of $SU(n+1)/T^n$ by studying the Leary-Serre spectral sequence associated to the free
		loop space fibration of $\Lambda(SU(n+1)/T^n)$.
		In particular in Section \ref{sec:L(SU(3)/T^2)}, we give the algebra structure of the $E_\infty$-page in the case when $n=2$
		and the module structure of $H^*(\Lambda(SU(3)/T^2);\mathbb{Z})$.
		
		\subsection{Differentials in the path space spectral sequence}\label{sec:evalSS}
			
					In this section we study $H^{*}(\Lambda(SU(n+1)/T^n);\mathbb{Z})$ for $n \geq 1$.
			The case when $n=0$ being trivial as $SU(1)$ is a point.
			The approach of the argument is similar to that of \cite{cohololgy_Lprojective},
			in which the cohomology of the free loop spaces of spheres and complex projective space are calculated using spectral sequence techniques. 
			However the details in the case of the complete flag of the special unitary group are considerably more complex.

			For any space $X$, the map $eval \colon Map(I,X) \to X\times X$ is given by $\alpha \mapsto (\alpha(0),\alpha(1))$.
			It can be shown directly that $eval$ is a fibration with fiber $\Omega X$.
			In this section we compute the differentials in the cohomology Serre spectral sequence of this fibration for the case $X=SU(n+1)/T^n$.
			The aim is to compute $H^{*}(\Lambda(SU(n+1)/T^n);\mathbb{Z})$.
			The map $eval \colon \Lambda X \to X$ given by evaluation at the base point of a free loop is also a fibration with fiber
			$\Omega X$.
			This is studied in section \ref{sec:diff} by considering a map of fibrations from the free loop fibration for $SU(n+1)/T^n$ to the evaluation fibration
			and hence the induced map on spectral sequences. For the rest of this section we consider the fibration
			
			\begin{equation}\label{eq:evalfib}
				\Omega (SU(n+1)/T^n) \to Map(I,SU(n+1)/T^n) \xrightarrow{eval} SU(n+1)/T^n\times SU(n+1)/T^n. 
			\end{equation}
			
			By extending the fibration $T^n \to SU(n+1) \to SU(n+1)/T^n$, we obtain the homotopy fibration sequence
			
			\begin{equation}\label{eq:SU/Tfib}
			\Omega(SU(n+1)) \to \Omega(SU(n+1)/T^n) \to T^n \to SU(n+1).
			\end{equation}
			
			It is well known see \cite{CohomologyOmega(G/U)}, that the furthest right map above of the inclusion of the maximal torus into $SU(n+1)$ is null-homotopic.
			Hence there is a homotopy section $T^n \to \Omega(SU(n+1)/T^n)$.
			Therefore, as the fibration $\Omega(SU(n+1)/T^n) \to T^n$ is a principle fibration, so $\Omega(SU(n+1)/T^n) \simeq \Omega(SU(n+1)) \times T^n$.
			Using the K\"{u}nneth formula and Theorem \ref{thm:H*(SU(n))} we obtain the algebra isomorphism
			
			\begin{center}
			$H^*(\Omega(SU(n+1)/T^n);\mathbb{Z}) \cong H^*(\Omega(SU(n+1);\mathbb{Z}) \otimes H^*(T^n;\mathbb{Z})
			\cong \Gamma_{\mathbb{Z}}[x_2,x_4,\dots,x_{2n}] \otimes \Lambda_{\mathbb{Z}}(y_1,\dots,y_n)$,
			\end{center}
			where $\Gamma_{\mathbb{Z}}[x_2,x_4,\dots,x_{2n}]$ is the integral divided polynomial algebra on $x_2,\dots,x_{2n}$
			with $|x_i|=i$ for each $i=2,\dots,2n$.
			$\Lambda(y_1,\dots,y_n)$ is an exterior algebra generated by $y_1,\dots,y_n$
			with $|y_j|=1$ for each $j=1,\dots,n$.
			It is well known that
			\begin{equation*}
				Map(I,SU(n+1)/T^n)\simeq SU(n+1)/T^n,
			\end{equation*}
			therefore by Theorem \ref{thm:H*SU/T} 
			all cohomology algebras of spaces in fibration (\ref{eq:evalfib}) are known.
			By studying the long exact sequence of homotopy groups associated to the fibration $T^n \to SU(n+1) \to SU(n+1)/T^n$,
			we obtain that $SU(n+1)/T^n$ hence $SU(n+1)/T^n \times SU(n+1)/T^n$ are simply connected.
			Therefore the cohomology Serre spectral sequence of fibration (\ref{eq:evalfib}), which we denote by $\{E_r,d^r\}$, 
			converges to $H^*(SU(n+1)/T^n;\mathbb{Z})$ with $E_2$-page $E^{p,q}_2=H^p(SU(n+1)/T^n\times SU(n+1)/T^n;H^q(\Omega(SU(n+1)/T^n);\mathbb{Z}))$, both of which are known.
			In the following arguments we will use the notation
			
			\begin{center}
			$H^*(Map(I,SU(n+1)/T^n);\mathbb{Z}) \cong \frac{\mathbb{Z}[\lambda_1,\dots,\lambda_{n+1}]}{[\sigma^{\lambda}_1,\dots,\sigma^{\lambda}_{n+1}]}$
			\end{center}
			and
			\begin{center}
			$H^*(SU(n+1)/T^n \times SU(n+1)/T^n;\mathbb{Z}) \cong 
			\frac{\mathbb{Z}[\alpha_1,\dots,\alpha_{n+1}]}{[\sigma^{\alpha}_1,\dots,\sigma^{\alpha}_{n+1}]} \otimes
			\frac{\mathbb{Z}[\beta_1,\dots,\beta_{n+1}]}{[\sigma^{\beta}_1,\dots,\sigma^{\beta}_{n+1}]}
			,$
			\end{center}
			where $|\alpha_i|=|\beta_i|=|\lambda_i|=2$ for each $i=1,\dots,n+1$ and $\sigma^{\lambda}_i,\sigma^{\alpha}_i$
			and $\sigma^{\beta}_i$ are the elementary symmetric polynomials in $\lambda_i,\alpha_i$ and $\beta_i$, respectively.
			
			\begin{center}
				\begin{tikzpicture}
					\matrix (m) [matrix of math nodes,
						nodes in empty cells,nodes={minimum width=5ex,
						minimum height=5ex,outer sep=-5pt},
						column sep=1ex,row sep=1ex]{
																									&	 \vdots		&	 \vdots	 							&		 														 & 		  &						   &			 &					& \\
																									&	  2n\;  	& \langle x_{2n} \rangle&		 														 & 		  &						   &    	 &    			& \\
																									&	 \vdots   &  \vdots	 							&		 														 & 		  &						   &   		 &  			  & \\
																									&			6     &   \langle x_6 \rangle &		 														 & 		  & 						 &\dots &   			  & \\
						H^{*}(\Omega(SU(n+1)/T^n;\mathbb{Z})) &			4     &   \langle x_4\rangle  &		 														 & 		  & 						 & 			 &					& \\
																									&			2     &   \langle x_2\rangle  &																 &\dots& 		 				 & 			 &   			  & \\
																									&			      &  			   							&		 														 & 		  & 		 				 & 			 &          & \\
																									&			1			&		\langle y_i\rangle	&	\;\;\;			\lcdot	\;\;\;		 &\lcdot&\;\;\lcdot\;\;& \cdots& \lcdot		& \cdots \\	
																									&		  0     &  			   							& \langle\alpha_i,\beta_i\rangle &\lcdot&\lcdot				 & \cdots&  \lcdot  & \cdots \\						
																									&\quad\strut &     0    							&				2												 &		4	&	 6	 				 & \cdots&     2n   & \cdots \strut \\};
					\draw[-stealth] (m-2-3.south east) -- (m-8-8.north west);
					\draw[-stealth] (m-5-3.south east) -- (m-8-5.north);
					\draw[-stealth] (m-4-3.south east) -- (m-8-6.north);
					\draw[-stealth] (m-6-3.south) -- (m-8-4.north);
					\draw[-stealth] (m-8-3.south east) -- (m-9-4.north west);
					\draw[-stealth] (m-8-4.south east) -- (m-9-5.north west);
					\draw[-stealth] (m-8-5.south east) -- (m-9-6.north west);
					\draw[-stealth] (m-8-7.south east) -- (m-9-8.north west);
				\draw[thick] (m-1-2.east) -- (m-10-2.east) ;
				\draw[thick] (m-10-2.north) -- (m-10-9.north) ;
				\end{tikzpicture}
				\label{fig:evalSS}
				\end{center}
				\begin{center}
					$\;\;\;\;\;\;\;\;\;\;\;\;\;\;\;\;\;\;\;\;\;\;\;\;\;\;\;\;\;\;\;\;\;\;\;\;\;\;\;\;\; H^{*}(SU(n+1)/T^n\times SU(n+1)/T^n;\mathbb{Z})$
				\end{center}
				\begin{center}
				\captionof{figure}{Generators in integral cohomology Leray-Serre spectral sequence $\{E_r,d^r\}$ converging to $H^{*}(Map(I,SU(n+1));\mathbb{Z})$.}
				\end{center}
			
				In the remainder of this section we will describe explicitly the images of differentials shown in Figure \ref{fig:evalSS}
				and show that all other differential not generated by these differentials using the Leibniz rule are zero.
				It will often be useful to use the alternative basis
				\begin{center}
					$v_i=\alpha_i-\beta_i$ and $u_i=\beta_i$
				\end{center}
				for $H^{*}(SU(n+1)/T^n\times SU(n+1)/T^n;\mathbb{Z})$, where $i=1,\dots,n+1$. 
				The following lemma determines completely the $d^2$ differential on $E_2^{*,1}$.	
			
				\begin{lem}\label{lem:E^2_{*,1}d^2}
					With the notation above, in the cohomology Leray-Serre spectral sequence of fibration (\ref{eq:evalfib}),
					there is a choice of basis $y_1,\dots,y_n$ such that
					\begin{center}
							$d^2(y_i)=v_i$
					\end{center}
					for each $i=1,\dots,n$.
				\end{lem}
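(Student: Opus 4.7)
The plan is to exploit the homotopy equivalence $\Map(I,X)\simeq X$ given by inclusion of constant paths, under which the evaluation map $\Map(I,X)\to X\times X$ is identified with the diagonal $\Delta$. The spectral sequence therefore converges to $H^*(X)$, and its edge homomorphism $H^*(X\times X)\to H^*(X)$ coincides with $\Delta^*$, sending both $\alpha_i$ and $\beta_i$ to $\lambda_i$. In particular each $v_i=\alpha_i-\beta_i$ maps to zero in $H^2(X)$, so every $v_i$ must vanish on $E_\infty$. Since $v_i$ lives in $E_2^{2,0}$ and the only differential landing there is $d^2\colon E_2^{0,1}\to E_2^{2,0}$, each $v_i$ must lie in the image of $d^2$.

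Next I would pin down the image of $d^2$ using the path-reversal involution $r\colon \Map(I,X)\to \Map(I,X)$, $r(\gamma)(t)=\gamma(1-t)$. This covers the swap involution $\tau\colon X\times X\to X\times X$ on the base, and on the fiber $\Omega X$ it restricts to loop inversion, which acts as $-1$ on $H^1$ of any connected $H$-space and hence on each $y_i$. Writing $d^2(y_i)=\sum_j\bigl(a_{ij}\alpha_j+b_{ij}\beta_j\bigr)$ and comparing $\tau^*d^2(y_i)$ with $d^2(-y_i)=-d^2(y_i)$, naturality forces $a_{ij}=-b_{ij}$, so $d^2(y_i)=\sum_j a_{ij}v_j$ lies entirely in the $\mathbb{Z}$-span of the $v_j$.

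To finish, observe that $\sum_{j=1}^{n+1}v_j=\sigma_1^{\alpha}-\sigma_1^{\beta}=0$, so $\langle v_1,\dots,v_{n+1}\rangle$ is a free abelian group of rank $n$, matching the rank of $E_2^{0,1}=\langle y_1,\dots,y_n\rangle$. Combined with the surjectivity established in the first paragraph, $d^2$ is a surjection between free abelian groups of equal rank, hence an isomorphism over $\mathbb{Z}$. A change of $\mathbb{Z}$-basis of $\{y_1,\dots,y_n\}$ then arranges $d^2(y_i)=v_i$ for $1\leq i\leq n$.

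The main technical point requiring care is the action of loop reversal on the $y_i$. Under the splitting $\Omega(SU(n{+}1)/T^n)\simeq \Omega SU(n{+}1)\times T^n$ the classes $y_i$ correspond to the standard generators of $H^1(T^n)$, and inversion on $T^n$ acts as $-1$ on $H^1(T^n)$; this is the input that makes the naturality computation work. Everything else in the argument is either rank counting or the convergence of the spectral sequence to $H^*(\Map(I,X))\cong H^*(X)$ via $\Delta^*$.
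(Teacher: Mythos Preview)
Your proof is correct and shares the paper's core idea: identify $\Map(I,X)\simeq X$ so that the edge homomorphism $E_2^{2,0}=H^2(X\times X)\to H^2(\Map(I,X))\cong H^2(X)$ is $\Delta^*$, then finish by rank counting and a change of basis. The difference lies in how you pin down $\operatorname{im}(d^2)=\langle v_1,\dots,v_n\rangle$. You prove the two inclusions separately: the edge homomorphism gives $\langle v_j\rangle\subseteq\operatorname{im}(d^2)$, and then you invoke the path-reversal involution (with loop inversion acting as $-1$ on $H^1(\Omega X)$) to force $d^2(y_i)\in\langle v_j\rangle$. The paper's route is shorter: since $d^2$ is the \emph{only} differential touching $E_*^{2,0}$ and $E_\infty^{2,0}\hookrightarrow H^2(\Map(I,X))$ is always injective, one gets the equality $\operatorname{im}(d^2)=\ker\bigl(E_2^{2,0}\to H^2(\Map(I,X))\bigr)=\ker(\Delta^*)=\langle v_j\rangle$ in one stroke, so your involution step is not needed. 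Your symmetry argument is nonetheless correct and records extra equivariant structure on the spectral sequence that the paper does not use.
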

				
				\begin{proof}
					We have the homotopy commutative diagram
					
					\begin{equation*}\label{fig:evalcd}
						\xymatrix{
							{SU(n+1)/T^n} \ar[r]^(.37){\Delta}								   & {SU(n+1)/T^n\times SU(n+1)/T^n} \ar@{=}[d] \\
							{Map(I,SU(n+1)/T^n)} \ar[r]_(.42){eval} \ar[u]_{p_0} & {SU(n+1)/T^n\times SU(n+1)/T^n} ,}
					\end{equation*}
					where $p_0$, given by $\psi \mapsto \psi(0)$, is a homotopy equivalence and $\Delta$ is the diagonal map.
					As the cup product is induced by the diagonal map $eval^*$ has the same image as the cup product.
					For dimensional reasons, $d^2$ is the only possible non-zero differential ending at any $E_*^{2,0}$ and no non-zero differential have domain in any $E_*^{2,0}$.
					Therefore in order for the spectral sequence to converge to $H^{*}(Map(I,SU(n+1)/T^n))$,
					the image of $d^2 \colon E_2^{0,1} \to E_2^{2,0}$ must be the kernel of the cup product on $H^*(SU(n+1)/T^n\times SU(n+1)/T^n;\mathbb{Z})$, 
					which is generated by $v_1,\dots,v_n$.
				\end{proof}
				
				\begin{rmk}\label{rmk:unique}
				The only remaining differentials on generators left to determine are those with domain in
				$\langle x_2,x_4\dots,x_{2n} \rangle$, on some page $E_r$ for $r \geq 2$.
				For dimensional reasons, the elements $x_2,x_4,\dots,x_{2n}$ cannot be the image of any differential.
				By Lemma \ref{lem:E^2_{*,1}d^2}, the generators $u_1,\dots,u_n$ must survive to the $E_{\infty}$-page,
				so generators $x_2,x_4,\dots,x_{2n}$ cannot.	
				This is due to dimensional reasons combined with the fact that the spectral sequence must converge to $H^*(SU(n+1)/T^n)$.
				Now assume inductively for each $i=1,\dots,n$ that for each $1\leq j<i$, $d^{2j}$ is constructed.
				For dimensional reasons and due to all lower rows except $E_r^{*,2}$ and $E_r^{*,1}$ being annihilated by differentials already determined at lower values of
				$1\leq j<i$, the only possible non-zero differential beginning at $x_{2i}$,
				is $d^{2i}:E_{2i}^{0,2i}\to E_{2i}^{2i,1}$.
				The image of each of the differentials $d^{2i}$ will therefore be a unique class in $E_{2i}^{2i,1}$
				in the kernel of $d^2$ not already contained in the image of any $d^r$ for $r<2i$.
				\end{rmk}
					
				We have $d^2(u_i)=0=d^2(v_i)$ and by Lemma \ref{lem:E^2_{*,1}d^2} we may assume that $d^2(y_i)=v_i$ for each $i=1,\dots,n$.
				All non-zero generators $\gamma \in E_2^{*,1}$ can be expressed in form
				\begin{center}
					$\gamma = y_k u_{i_1} \cdots u_{i_s} v_{j_1} \cdots v_{j_t}$
				\end{center}
				for some $1\leq k \leq n$, $1\leq i_1<\cdots<i_s\leq n$ and $1\leq j_1<\cdots<j_t\leq n$.
				Therefore $d^2(\gamma)$ is zero only if it is contained in
				$[\sigma^{\alpha}_1,\dots,\sigma^{\alpha}_{n+1}, \sigma^{\beta}_1,\dots,\sigma^{\beta}_{n+1}]$.
				Hence it is important to understand the structure of the symmetric polynomials
				$\sigma^{\alpha}_1,\dots,\sigma^{\alpha}_{n+1}, \sigma^{\beta}_1,\dots,\sigma^{\beta}_{n+1}$.
				$\sigma^{\alpha}_1$ and $\sigma^{\beta}_1$ simply express $\alpha_{n+1}$ and $\beta_{n+1}$ in terms of the other generators of the ideal.
				Lemma \ref{lem:n+1elim} describes explicitly what the structure of 
				$\sigma^{\alpha}_2,\dots,\sigma^{\alpha}_{n+1}, \sigma^{\beta}_2,\dots,\sigma^{\beta}_{n+1}$
				is in terms of $\alpha_1,\dots,\alpha_{n}$
				and $\beta_1,\dots,\beta_{n}$.
				
				Using the next two lemmas, we will determine how $\sigma_l^\alpha$ and $\sigma_l^\beta$ lie in the image of $d^2$ and so determine other differentials.
				For each $n \geq 1$, $2 \leq l \leq n+1$ and $1\leq m\leq l$, define element $s_{l,n}^m$ of $E_2^{2l-1,1}$ by
				\begin{equation*}
					s_{l,n}^m=\sum_{\substack{1\leq i_1<\cdots<i_m\leq n\\ 1\leq i_{m+1}<\cdots<i_l\leq n\\ i_j \neq i_{j'} \; for \; j \neq j'}}
					{y_{i_1}v_{i_2}\cdots v_{i_m} u_{i_{m+1}}\cdots u_{i_l}}.
				\end{equation*}
				Define also $S_{l,n}=s_{l,n}^1+\cdots+s_{l,n}^l$.
			
				\begin{lem}\label{lem:term1}
				For each $n \geq 1$, $2 \leq l \leq n+1$ and $1\leq m\leq l$,
				\begin{equation*}
					d^{2}(S_{l,n})
					=\sum_{1\leq i_1<\cdots<i_l\leq n}{\alpha_{i_1}\cdots\alpha_{i_l}}-\sum_{1\leq i_1<\cdots<i_l\leq n}{\beta_{i_1}\cdots\beta_{i_l}}.
				\end{equation*}
				\end{lem}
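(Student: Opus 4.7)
The plan is to apply the Leibniz rule to $s_{l,n}^m$ term by term, reorganise the resulting sum combinatorially, and then identify the total with the expansion of $\sum_I(\prod_{i\in I}\alpha_i-\prod_{i\in I}\beta_i)$ after the substitution $\alpha_i=u_i+v_i$, $\beta_i=u_i$.

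First I would record the Leibniz computation for a single summand. Since $d^{2}(u_i)=0=d^{2}(v_i)$ and $d^{2}(y_i)=v_i$, only the factor $y_{i_1}$ contributes; the sign $(-1)^{|y_{i_1}|}=-1$ in front of $y_{i_1}d^{2}(\omega)$ does not matter because $d^{2}(\omega)=0$. Hence
\begin{equation*}
d^{2}\bigl(y_{i_1}v_{i_2}\cdots v_{i_m}u_{i_{m+1}}\cdots u_{i_l}\bigr)=v_{i_1}v_{i_2}\cdots v_{i_m}u_{i_{m+1}}\cdots u_{i_l}.
\end{equation*}
Summing over the indexing set of $s_{l,n}^m$, and using that the $v_i$ have even degree so reordering them is free, each pair of disjoint subsets $J,K\subset\{1,\dots,n\}$ with $|J|=m$, $|K|=l-m$ contributes exactly one term $\prod_{j\in J}v_j\prod_{k\in K}u_k$ (the distinguished $i_1=\min J$ in the definition matches a unique representative).

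Next I would sum over $m=1,\dots,l$ and collect the resulting sums into a single sum indexed by disjoint pairs $(J,K)$ with $J\neq\emptyset$ and $|J|+|K|=l$:
\begin{equation*}
d^{2}(S_{l,n})=\sum_{\substack{J,K\subset\{1,\dots,n\},\,J\cap K=\emptyset\\ J\neq\emptyset,\;|J|+|K|=l}}\;\prod_{j\in J}v_j\prod_{k\in K}u_k.
\end{equation*}

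Finally, expanding $\prod_{i\in I}\alpha_i=\prod_{i\in I}(u_i+v_i)$ as a sum over subsets $J\subset I$ of the indices at which we select $v_i$, and subtracting $\prod_{i\in I}\beta_i=\prod_{i\in I}u_i$ to remove the $J=\emptyset$ term, gives
\begin{equation*}
\prod_{i\in I}\alpha_i-\prod_{i\in I}\beta_i=\sum_{\emptyset\neq J\subset I}\prod_{j\in J}v_j\prod_{i\in I\setminus J}u_i.
\end{equation*}
Summing this identity over $l$-subsets $I\subset\{1,\dots,n\}$ and re-indexing by $(J,K)=(J,I\setminus J)$ produces exactly the sum just obtained for $d^{2}(S_{l,n})$, completing the proof. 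The main obstacle is purely bookkeeping: verifying that the map $(i_1,\dots,i_l)\mapsto(J,K)$ gives a bijection between the indexing set of $s_{l,n}^m$ and disjoint pairs of sizes $(m,l-m)$, so that no disjoint pair is under- or over-counted when passing from the Leibniz expansion to the symmetric-polynomial identity.
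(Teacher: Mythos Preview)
Your proof is correct, and it takes a genuinely different route from the paper's argument. The paper computes $d^{2}(s_{l,n}^{m})$ and then immediately expands each $v_i=\alpha_i-\beta_i$, $u_i=\beta_i$ to rewrite $d^{2}(s_{l,n}^{m})$ as a sum of monomials $\alpha_{i_1}\cdots\alpha_{i_t}\beta_{i_{t+1}}\cdots\beta_{i_l}$ with coefficients $(-1)^{m-t}\binom{l-t}{m-t}$; summing over $m$, it then uses the alternating binomial identity $\sum_{i}(-1)^{i}\binom{n}{i}=0$ to see that all mixed terms cancel, leaving only the pure $\alpha$ and pure $\beta$ terms. You instead stay in the $(u,v)$-basis, recognise $d^{2}(S_{l,n})$ directly as the sum over disjoint pairs $(J,K)$ with $J\neq\emptyset$, and match this against the tautological expansion of $\prod_{i\in I}(u_i+v_i)-\prod_{i\in I}u_i$. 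Your argument is cleaner and avoids the binomial bookkeeping entirely. One thing the paper's longer computation buys, however, is the content of Remark~\ref{rmk:unique1}: by tracking how each successive $d^{2}(s_{l,n}^{m})$ introduces a new monomial type, the paper establishes that $S_{l,n}$ is (up to sign) the \emph{unique} preimage whose $d^{2}$-image contains the pure symmetric term, and this uniqueness is used downstream in the proof of Theorem~\ref{thm:finaldiff}. Your approach proves the stated identity but does not, as written, yield that uniqueness.
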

				\begin{rmk}\label{rmk:unique1}
				In the course of the proof of the Lemma it is shown that
				\begin{equation*}
				\sum_{1\leq i_1<\cdots<i_l\leq n}{\alpha_{i_1}\cdots\alpha_{i_l}}-\sum_{1\leq i_1<\cdots<i_l\leq n}{\beta_{i_1}\cdots\beta_{i_l}}
				\end{equation*}
				is up to sign the unique generator for elements $E^2_{2l,0}$ in the image of $d^2$ containing either the terms
				$\sum_{1\leq i_1<\cdots<i_l\leq n}{\alpha_{i_1}\cdots\alpha_{i_l}}$ or $\sum_{1\leq i_1<\cdots<i_l\leq n}{\beta_{i_1}\cdots\beta_{i_l}}$.
				\end{rmk}
				
				\begin{proof}
				First note that
				\begin{changemargin}{-6mm}{40mm}
				\begin{flalign*}
					d^{2}(s_{l,n}^m) &= \sum_{\substack{1\leq i_1<\cdots<i_m\leq n\\ 1\leq i_{m+1}<\cdots<i_l\leq n\\ i_\sigma \neq i_{\sigma'} \; for \; \sigma \neq \sigma'}}
					{v_{i_1}v_{i_2}\cdots v_{i_m} u_{i_{m+1}}\cdots u_{i_l}} \\
					&=\sum_{\substack{1\leq i_1<\cdots<i_m\leq n\\ 1\leq i_{m+1}<\cdots<i_l\leq n\\ i_\sigma \neq i_{\sigma'} \; for \; \sigma \neq \sigma'}}
					{(\alpha_{i_1}-\beta_{i_1})\cdots (\alpha_{i_m}-\beta_{i_m}) \beta_{i_{m+1}}\cdots \beta_{i_l}} \\
					&=\sum_{\substack{0\leq t \leq m\\ 1\leq i_1<\cdots<i_m\leq n\\ 1\leq i_{m+1}<\cdots<i_l\leq n\\ i_\sigma \neq i_{\sigma'} \; for \; \sigma \neq \sigma'}}
					{(-1)^{m-t} \alpha_{i_1}\cdots \alpha_{i_t} \beta_{i_{t+1}}\cdots \beta_{i_l}} \\
					&=\sum_{\substack{0\leq t \leq m\\ 1\leq i_1<\cdots<i_t\leq n\\ 1\leq i_{t+1}<\cdots<i_l\leq n\\ i_\sigma \neq i_{\sigma'} \; for \; \sigma \neq \sigma'}}
					{(-1)^{m-t} \binom{l-t}{m-t} \alpha_{i_1}\cdots \alpha_{i_t} \beta_{i_{t+1}}\cdots \beta_{i_l}}.
				\end{flalign*}
				\end{changemargin}
				
				For each $1\leq m \leq l$, element $d^2(s_{l,n}^m)$ contains a term $\alpha_{i_1}\cdots \alpha_{i_t} \beta_{i_{t+1}}\cdots \beta_{i_l}$
				only when $0 \leq t \leq m$.
				None of the $d^2(s_{l,n}^m)$ are zero as they all at least contain a non-zero term of the form $\alpha_{i_1}\beta_{i_{2}}\cdots \beta_{i_l}$
				which is not contained in $[\sigma^\alpha_1,\dots,\sigma^\alpha_{l+1},\sigma^\beta_1,\dots,\sigma^\beta_{l+1}]$.
				The differential $d^{2}$ preserves the indices $i_1,\dots,i_l$.
				Hence the $d^{2}$ image of an element in $E_2^{2l-1,1}$ is given in terms of elements of the form
				\begin{flalign*}
				\sum_{\substack{1\leq i_1<\cdots<i_t\leq n\\ 1\leq i_{t+1}<\cdots<i_l\leq l\\ i_\sigma \neq i_{\sigma'} \; for \; \sigma \neq \sigma'}}
				{\alpha_{i_1}\cdots \alpha_{i_t} \beta_{i_{t+1}}\cdots \beta_{i_l}}
				\end{flalign*}
				if and only if it is a sum of elements of the form $s_{l,n}^m$ for $1\leq m\leq l$.
				As $m$ increases from $1$ to $l$, each successive $d^2(s^m_{l,n})$ contains a new term of the form
				\begin{flalign*}
				\sum_{\substack{1\leq i_1<\cdots<i_m\leq n\\ 1\leq i_{m+1}<\cdots<i_l\leq l\\ i_\sigma \neq i_{\sigma'} \; for \; \sigma \neq \sigma'}}
				{\alpha_{i_1}\cdots \alpha_{i_m} \beta_{i_{m+1}}\cdots \beta_{i_l}},
				\end{flalign*}
				which did not appear in any previous $d^2(s^i_{l,n})$ for $i<m$.
				When $m=l$, this new term is $\sum_{1\leq i_1 < \cdots< i_l \leq n}{\alpha_{i_1},\dots,\alpha_{i_l}}$.
				In order to cancel all terms not of the form $\sum_{1\leq i_1<\cdots<i_l\leq n}{\alpha_{i_1},\dots,\alpha_{i_l}}$
				or $\sum_{1\leq i_1<\cdots<i_l\leq n}{\beta_{i_1},\dots,\beta_{i_l}}$,
				we need a sum $c_1d^2(s^1_{l,n})+\cdots+c_ld^2(s^l_{l,n})$ where $c_1,\dots,c_l \in \mathbb{Z}\setminus\{0\}$.
				Since each successive $d^2(s^m_{l,n})$ contains a new term, the choice of $c_1=1$ uniquely determines $c_2,\dots,c_l$.
				Recall from the calculation at the beginning of the proof that if $d^2(s^m_{l,n})$ contains terms of the form
				\begin{flalign*}
				\sum_{\substack{1\leq i_1<\cdots<i_t\leq n\\ 1\leq i_{t+1}<\cdots<i_l\leq l\\ i_\sigma \neq i_{\sigma'} \; for \; \sigma \neq \sigma'}}
				{\alpha_{i_1}\cdots \alpha_{i_t} \beta_{i_{t+1}}\cdots \beta_{i_l}}
				\end{flalign*}
				and the constant multiplied by each of these terms is $(-1)^{m-t}\binom{l-t}{m-t}$.
				It is well know that the alternating sum of rows greater than $0$ in Pascal's triangle is zero,
				more precisely this is $\sum_{i=0}^{n}{(-1)^{n-i}\binom{n}{i}}=0$ for $n\geq1$.
				Hence $c_2,\dots,c_l$ are also $1$ and therefore $S_{l,n}$ is the unique sum in $s^m_{l,n}$ such that $S_{l,n}$ has no cancellation but $d^2(S_{l,n})$
				can be expressed with a single term of the form 
				$\sum_{1\leq i_1<\cdots<i_l\leq n}{\alpha_{i_1}\cdots\alpha_{i_l}}$, only containing other terms of the form
				$\sum_{1\leq i_1<\cdots<i_l\leq n}{\beta_{i_1}\cdots\beta_{i_l}}$.
				Finally the constant for the $\sum_{1\leq i_1<\cdots<i_l\leq n}{\beta_{i_1}\cdots\beta_{i_l}}$ terms in $d^2(S_{l,n})$ is $-1$ as 
				\begin{center}
				$(-1)^l\binom{l}{0}+(-1)^{l-1}\binom{l}{1}+\cdots+(-1)^i\binom{l}{l-1}=\sum_{i=0}^{l-1}{(-1)^{l-i}\binom{l}{i}}=\sum_{i=0}^{l}{(-1)^{l-i}\binom{l}{i}}-1=-1$.
				\end{center}
				\end{proof}

					For each $n \geq 2$, $2 \leq l \leq n+1$ and $0 \leq m \leq l-2$, define elements $\tilde{s}_{l,n}^m,\tilde{s}_{l,n}^{'m}$ of $E_2^{2l,1}$ by
					\begin{equation*}
						\tilde{s}_{l,n}^m=\sum_{\substack{1\leq k \leq n\\ 1\leq i_1<\cdots<i_m\leq n\\ 1\leq i_{m+1}<\cdots<i_{l-2}\leq n\\
						k \neq i_j \neq i_{j'} \; for \; j \neq j'}}
						{y_{k}v_k v_{i_1}\cdots v_{i_m} u_{i_{m+1}}\cdots u_{i_{l-2}}}, \;\;\;\;
						\tilde{s}_{l,n}^{'m}=\sum_{\substack{1\leq k \leq n\\ 1\leq i_1<\cdots<i_m\leq n\\ 1\leq i_{m+1}<\cdots<i_{l-2}\leq n\\
						k \neq i_j \neq i_{j'} \; for \; j \neq j'}}
						{y_{k}u_k v_{i_1}\cdots v_{i_m} u_{i_{m+1}}\cdots u_{i_{l-2}}}.
					\end{equation*}
					For each $1\leq m \leq l-2$ and $3 \leq l \leq n+1$, define
					\begin{equation*}
						\tilde{s}_{l,n}^{''m}=\sum_{\substack{1\leq k \leq n\\ 1\leq i_1<\cdots<i_m\leq n\\ 1\leq i_{m+1}<\cdots<i_{l-2}\leq n\\
						k \neq i_j \neq i_{j'} \; for \; j \neq j'}}
						{u_k^2 y_{i_1} v_{i_2}\cdots v_{i_m} u_{i_{m+1}}\cdots u_{i_{l-2}}},
					\end{equation*}
					in addition set $\tilde{s}_{2,n}^{''m}=0$.
					Define also $\tilde{S}_{l,n}=\tilde{s}_{l,n}^{0}+\cdots+\tilde{s}_{l,n}^{l-2}$, $\tilde{S}^{'}_{l,n}=\tilde{s}_{l,n}^{'0}+\cdots+\tilde{s}_{l,n}^{'l-2}$
					and $\tilde{S}^{''}_{l,n}=\tilde{s}_{l,n}^{''1}+\cdots+\tilde{s}_{l,n}^{''l-2}$ with $\bar{S}_{l,n}=\tilde{S}_{l,n}+2\tilde{S}^{'}_{l,n}+\tilde{S}^{''}_{l,n}$.
					
					\begin{lem}\label{lem:term2}
					For each $n \geq 1$, $2 \leq l \leq n-1$,
					\begin{flalign*}
						d^2(\bar{S}_{l,n})=d^{2}(\tilde{S}_{l,n}+2\tilde{S}^{'}_{l,n}+\tilde{S}^{''}_{l,n})
						=\sum_{\substack{1\leq i_1<\cdots<i_l\leq n \\ 1 \leq k\leq n, i_k \neq i_{j}}}
						{\alpha_k^2\alpha_{i_1}\cdots\alpha_{i_{l-2}}}
						-\sum_{\substack{1\leq i_1<\cdots<i_l\leq n \\ 1 \leq k\leq n, i_k \neq i_{j}}}
						{\beta_k^2\beta_{i_1}\cdots\beta_{i_{l-2}}}.
					\end{flalign*}
					\end{lem}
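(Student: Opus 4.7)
The plan is to imitate the strategy of Lemma \ref{lem:term1} but with an extra layer of bookkeeping to handle the doubled index. First I would compute $d^2(\tilde{s}_{l,n}^m)$, $d^2(\tilde{s}_{l,n}^{'m})$, and $d^2(\tilde{s}_{l,n}^{''m})$ separately using the Leibniz rule with $d^2(y_k)=v_k$ and $d^2(u_i)=d^2(v_i)=0$. In each case the Leibniz rule simply replaces the single $y$-factor by a $v$-factor, producing a sum indexed by configurations of distinct indices $(k,i_1,\dots,i_{l-2})$. Substituting $v_i=\alpha_i-\beta_i$ and $u_i=\beta_i$ converts each expression into a signed sum of monomials of the form $\alpha^{a}\beta^{b}$ of total degree $l$ with one index appearing squared.

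Next I would collect, for every fixed multi-index $\{i_1<\cdots<i_{l-2}\}$ and every fixed squared index $k$ disjoint from it, the coefficient of each mixed monomial $\alpha_k^{\epsilon_0}\beta_k^{2-\epsilon_0}\prod_j \alpha_{i_j}^{\epsilon_j}\beta_{i_j}^{1-\epsilon_j}$ across the three sums $\tilde{S}_{l,n}$, $2\tilde{S}^{'}_{l,n}$, $\tilde{S}^{''}_{l,n}$. The $\tilde{s}^m$ family contributes $(\alpha_k-\beta_k)^2$ type expansions, $\tilde{s}^{'m}$ contributes an $(\alpha_k-\beta_k)\beta_k$ factor (hence the coefficient $2$), and $\tilde{s}^{''m}$ contributes a pure $\beta_k^2$ factor. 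Summed across $m$ the coefficient of any monomial in which some $\epsilon_j$ differs from the others collapses via the alternating binomial identity $\sum_{t=0}^{l-2}(-1)^{l-2-t}\binom{l-2}{t}=0$ used already in Lemma \ref{lem:term1}, exactly as in the earlier argument; the only configurations surviving are $\epsilon_0=2$ with all $\epsilon_j=1$, and $\epsilon_0=0$ with all $\epsilon_j=0$, giving the announced $\alpha$- and $\beta$-terms with coefficients $+1$ and $-1$.

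The main obstacle will be the interaction between the ``squared index'' $k$ and the ``linear indices'' $i_j$. In $\tilde{s}^m_{l,n}$ and $\tilde{s}^{'m}_{l,n}$ the squared index $k$ comes attached to the $y_k$ factor and therefore participates in the Leibniz expansion as part of the differentiated variable, while in $\tilde{s}^{''m}_{l,n}$ it is already ``pre-squared'' by $u_k^2$ and does not participate. The coefficients $1,2,1$ are forced by requiring the three types of contribution to $\alpha_k\beta_k \alpha_{i_1}\cdots$ cross-terms to cancel: the $(\alpha_k-\beta_k)^2$ expansion already provides a cross-term with coefficient $-2$, which the $2\tilde{S}^{'}_{l,n}$ piece exactly cancels, after which the residual pure $\beta_k^2$-pieces must be balanced by $\tilde{S}^{''}_{l,n}$. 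Getting these three weights to simultaneously kill all non-pure monomials is the only genuinely delicate calculation.

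Finally, as in Remark \ref{rmk:unique1}, I would note that the combination $\bar{S}_{l,n}$ is, up to sign, the unique preimage in $E_2^{2l,1}$ whose $d^2$-image contains the summand $\sum \alpha_k^2\alpha_{i_1}\cdots\alpha_{i_{l-2}}$ and no other $\alpha$-only terms: once one insists that (i) the image lies in the symmetric ideal and (ii) all mixed $\alpha/\beta$ monomials cancel, the relative coefficients $1:2:1$ and the overall sign between the $\alpha$- and $\beta$-halves are determined.
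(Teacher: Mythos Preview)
Your proposal is correct and follows essentially the same approach as the paper's proof. The paper likewise computes $d^2(\tilde{s}^m_{l,n})$, $d^2(\tilde{s}^{'m}_{l,n})$, $d^2(\tilde{s}^{''m}_{l,n})$ separately, sums each over $m$ using the alternating binomial identity from Lemma~\ref{lem:term1} to obtain closed forms for $d^2(\tilde{S}_{l,n})$, $d^2(\tilde{S}^{'}_{l,n})$, $d^2(\tilde{S}^{''}_{l,n})$, and then combines them with the weights $1,2,1$; the only minor presentational difference is that the paper first simplifies each $\tilde{S}$-sum to a closed form before combining, whereas you describe collecting coefficients across all three simultaneously.
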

					\begin{rmk}\label{rmk:unique2}
						In the course of the proof of the Lemma it is shown that
						\begin{flalign*}
							\sum_{\substack{1\leq i_1<\cdots<i_l\leq n \\ 1 \leq k\leq n, i_k \neq i_{j}}}
							{\alpha_k^2\alpha_{i_1}\cdots\alpha_{i_{l-2}}}
							-\sum_{\substack{1\leq i_1<\cdots<i_l\leq n \\ 1 \leq k\leq n i_k \neq i_{j}}}
							{\beta_k^2\beta_{i_1}\cdots\beta_{i_{l-2}}},
						\end{flalign*}
						is up to sign the unique generator for elements $E_2^{2l,0}$ in the image of $d^2$ containing either the terms
						\begin{flalign*}
							\sum_{\substack{1\leq i_1<\cdots<i_l\leq n \\ 1 \leq k\leq n, i_k \neq i_{j}}}
							{\alpha_k^2\alpha_{i_1}\cdots\alpha_{i_{l-2}}}
							 \;\; \textrm{or} \;\; 
							\sum_{\substack{1\leq i_1<\cdots<i_l\leq n \\ 1 \leq k\leq n, i_k \neq i_{j}}}
							{\beta_k^2\beta_{i_1}\cdots\beta_{i_{l-2}}}.
						\end{flalign*}	
					\end{rmk}
				
				\begin{proof}
				The proof of the lemma will in places be similar to the proof of Lemma \ref{lem:term1}, hence in these parts details will be omitted.
				First note that for each $0 \leq m \leq l-2$,
				
				\begin{flalign*}
					d^{2}(\tilde{s}_{l,n}^{m}) &= \sum_{\substack{1\leq i_1<\cdots<i_t\leq n\\ 1\leq i_{t+1}<\cdots<i_{l-2}\leq n\\ 
					0 \leq t \leq m, 0 \leq k \leq n  \\ k \neq i_j \neq i_{j'} \; for \; j \neq j'}}
					{(\alpha_k^2-2\alpha_k\beta_k+\beta_k^2)(-1)^{m-t} \binom{m-t}{l-t-2} \alpha_{i_1}\cdots \alpha_{i_t} \beta_{i_{t+1}}\cdots \beta_{l-2}},
				\end{flalign*}
				
				\begin{flalign*}
					d^{2}(\tilde{s}_{l,n}^{'m}) &= \sum_{\substack{1\leq i_1<\cdots<i_t\leq n\\ 1\leq i_{t+1}<\cdots<i_{l-2}\leq n\\ 
					0 \leq t \leq m, 0 \leq k \leq n  \\ k \neq i_j \neq i_{j'} \; for \; j \neq j'}}
					{(\alpha_k\beta_k-\beta_k^2)(-1)^{m-t} \binom{m-t}{l-t-2} \alpha_{i_1}\cdots \alpha_{i_t} \beta_{i_{t+1}}\cdots \beta_{l-2}},
				\end{flalign*}
				
				\begin{flalign*}
					d^{2}(\tilde{s}_{l,n}^{''m}) &= \sum_{\substack{1\leq i_1<\cdots<i_t\leq n\\ 1\leq i_{t+1}<\cdots<i_{l-2}\leq n\\ 
					0 \leq t \leq m, 0 \leq k \leq n  \\ k \neq i_j \neq i_{j'} \; for \; j \neq j'}}
					{\beta_k^2(-1)^{m-t} \binom{m-t}{l-t-2} \alpha_{i_1}\cdots \alpha_{i_t} \beta_{i_{t+1}}\cdots \beta_{l-2}}.
				\end{flalign*}
				Using the same argument given in Lemma \ref{lem:term1}, we obtain $d^{2}(\tilde{S}_{l,n}), d^{2}(\tilde{S}^{'}_{l,n})$ and $d^{2}(\tilde{S}^{''}_{l,n})$.
				The only difference is for $d^{2}(\tilde{S}_{l,n})$ and $d^{2}(\tilde{S}^{'}_{l,n})$, where we begin with $\tilde{s}_{l,n}^{0}$ and $\tilde{s}_{l,n}^{'0}$
				rather than $\tilde{s}_{l,n}^{1}$ and $\tilde{s}_{l,n}^{'1}$.
				Hence the $\beta_{i_1},\dots,\beta_{i_{l-2}}$ terms give the alternating sum over the entire row of Pascal's triangle, so all such terms cancel.
				Therefore
				\begin{flalign*}
					d^{2}(\tilde{S}_{l,n}) &= \sum_{\substack{1\leq i_1<\cdots<i_{l-2}\leq n\\ 
					0 \leq k \leq n, k \neq i_\sigma}}
					{(\alpha_k^2-2\alpha_k\beta_k+\beta_k^2)\alpha_{i_1}\cdots\alpha_{i_{l-2}}},
				\end{flalign*}
				
				\begin{flalign*}
					d^{2}(\tilde{S}_{l,n}^{'}) &= \sum_{\substack{1\leq i_1<\cdots<i_{l-2}\leq n\\ 
					0 \leq k \leq n, k \neq i_\sigma}}
					{(\alpha_k\beta_k-\beta_k^2)\alpha_{i_1}\cdots\alpha_{i_{l-2}}},
				\end{flalign*}
				
				\begin{flalign*}
					d^{2}(\tilde{S}_{l,n}^{''}) &= \sum_{\substack{1\leq i_1<\cdots<i_{l-2}\leq n\\ 
					0 \leq k \leq n, k \neq i_\sigma}}
					{\beta_k^2(\alpha_{i_1}\cdots\alpha_{i_{l-2}}-\beta_{i_1}\cdots \beta_{i_{l-2}}}).
				\end{flalign*}
				In addition, as Remark \ref{rmk:unique1} was respected in Lemma \ref{lem:term1}, so the statements are maintained in the expressions above.
				Finally calculating $d^2(\tilde{S}_{l,n}+2\tilde{S}^{'}_{l,n}+\tilde{S}^{''}_{l,n})$ using the expressions above proves the lemma.
				\end{proof}

				\begin{thm}\label{thm:finaldiff}
				For each $n \geq 1$ and $2 \leq l \leq n+1$, in the spectral sequence $\{E_n,d^n\}$,
				up to class representative in $E_2^{2l,1}$, we have
				\begin{center}
					$d^{2(l-1)}(x_{2(l-1)})=(1-l)S_{l,n}-\bar{S}_{l,n}$
				\end{center}		
				using the notation preceding Lemmas \ref{lem:term1} and \ref{lem:term2}.
				More precisely, for $3 \leq l \leq n+1$
				\begin{changemargin}{-7mm}{40mm}
				\begin{flalign*}
					d^{2(l-1)}(x_{2(l-1)})= 
					(1-l)\sum_{\substack{1 \leq m \leq l \\ 1\leq i_1<\cdots<i_m\leq n\\ 1\leq i_{m+1}<\cdots<i_{l}\leq n
					\\ i_j \neq i_{j'} \; for \; j \neq j'}}
				{y_{i_1}v_{i_2}\cdots v_{i_m} u_{i_{m+1}}\cdots u_{i_l} 
				-\sum_{\substack{1 \leq m \leq l, 1 \leq k \leq n \\ 1\leq i_1<\cdots<i_m\leq n\\ 1\leq i_{m+1}<\cdots<i_{l-2}\leq n
					\\ k \neq i_j \neq i_{j'} \; for \; j \neq j'}}
				u_k^2 y_{i_1} v_{i_2}\cdots v_{i_m} u_{i_{m+1}}\cdots u_{i_{l-2}}}
					\\ -
					\sum_{\substack{0 \leq m \leq l, 1 \leq k \leq n \\ 1\leq i_1<\cdots<i_m\leq n\\ 1\leq i_{m+1}<\cdots<i_{l-2}\leq n
					\\ k \neq i_j \neq i_{j'} \; for \; j \neq j'}}
					(2{y_{k}u_k v_{i_1}\cdots v_{i_m} u_{i_{m+1}}\cdots u_{i_{l-2}}}+{y_{k}v_k v_{i_1}\cdots v_{i_m} u_{i_{m+1}}\cdots u_{i_{l-2}}})
				\end{flalign*}
				\end{changemargin}
				and
				\begin{flalign*}
					d^{2}(x_{2})=
					-\sum_{1 \leq i_1<i_2 \leq n}{y_{i_1}v_{i_2}}
					-\sum_{1 \leq i_1,i_2 \leq n, i_1 \neq i_2}{y_{i_1}u_{i_2}} 
					-\sum_{1\leq k \leq n}{(2{y_{k}u_k}+{y_{k}v_k})}.
				\end{flalign*}
				
				\end{thm}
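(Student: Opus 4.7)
The plan is to produce, for each $2\leq l\leq n+1$, an explicit class in $E_2^{2l-1,1}$ that lies in $\ker d^2$ but is not in the image of any earlier differential $d^{2i}(x_{2i})$ for $1\leq i<l-1$, and then invoke the uniqueness statement in Remark \ref{rmk:unique} to identify it with $d^{2(l-1)}(x_{2(l-1)})$.

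First, I would use Lemma \ref{lem:n+1elim} to replace the defining relations $\sigma_2^\alpha,\dots,\sigma_{n+1}^\alpha$ and $\sigma_2^\beta,\dots,\sigma_{n+1}^\beta$ of $E_2^{*,0}$ by the alternative generators $\xi_2^\alpha,\dots,\xi_{n+1}^\alpha$ and $\xi_2^\beta,\dots,\xi_{n+1}^\beta$. The point is that $\xi_l^\alpha$ is exactly the combination of the leading $l$th symmetric sum with a factor $(1-l)$ and a sum of squared terms of the form appearing in the statement of Lemma \ref{lem:term2}. Consequently, $\xi_l^\alpha-\xi_l^\beta$ equals the class $0$ in $E_2^{2l,0}$, and combining Lemmas \ref{lem:term1} and \ref{lem:term2} gives
\begin{equation*}
    d^2\bigl((1-l)S_{l,n}-\bar{S}_{l,n}\bigr)=(1-l)\bigl(d^2 S_{l,n}\bigr)-d^2 \bar{S}_{l,n}=\xi_l^\alpha-\xi_l^\beta=0.
\end{equation*}
Hence $(1-l)S_{l,n}-\bar{S}_{l,n}\in\ker d^2$, so it survives to the $E_3^{2l-1,1}$ page.

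Next I would verify that this class survives on all pages up to $E_{2l-1}^{*,*}$. By the inductive description in Remark \ref{rmk:unique}, the only non-trivial differentials landing in the row $E_r^{*,1}$ for $r<2(l-1)$ are the earlier $d^{2i}(x_{2i})$, and by the degree restriction together with the uniqueness statements in Remarks \ref{rmk:unique1} and \ref{rmk:unique2}, the classes representing these lower differentials involve strictly fewer $u,v,y$ factors than those appearing in $(1-l)S_{l,n}-\bar{S}_{l,n}$. So no earlier differential can hit it, and it defines a non-zero class on the $E_{2(l-1)}$-page.

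Finally I would appeal to Remark \ref{rmk:unique}: since the spectral sequence converges to $H^*(SU(n+1)/T^n)$ and the generator $x_{2(l-1)}$ must be annihilated, the only available differential is $d^{2(l-1)}\colon E_{2(l-1)}^{0,2(l-1)}\to E_{2(l-1)}^{2(l-1),1}$, and its image is the unique class (up to choice of representative) in $\ker d^2$ not contained in the image of any $d^{2i}$ for $i<l-1$. That unique class is $(1-l)S_{l,n}-\bar{S}_{l,n}$, yielding the formula stated. The $l=2$ case is handled separately since $\tilde{s}_{2,n}^{''m}=0$ by definition, giving the simpler expression for $d^2(x_2)$.

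The main obstacle I anticipate is the verification of uniqueness: one must carefully argue, using the structure established in Section \ref{subsec:IdeaForms}, that no other combination of basis elements in $E_2^{2l-1,1}$ maps under $d^2$ into the ideal $[\sigma_1^\alpha,\dots,\sigma_{n+1}^\alpha,\sigma_1^\beta,\dots,\sigma_{n+1}^\beta]$ except those coming from earlier differentials plus scalar multiples of $(1-l)S_{l,n}-\bar{S}_{l,n}$. The Remarks \ref{rmk:unique1} and \ref{rmk:unique2} handle this by showing that the two summands of $\xi_l^\alpha-\xi_l^\beta$ each have essentially unique preimages under $d^2$, so their $\mathbb{Z}$-linear combinations in the kernel are pinned down up to scalar, leaving $(1-l)S_{l,n}-\bar{S}_{l,n}$ as the only new surviving class.
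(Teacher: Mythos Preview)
Your approach is the same as the paper's: build the explicit class $(1-l)S_{l,n}-\bar S_{l,n}$, check via Lemmas \ref{lem:term1}, \ref{lem:term2} and \ref{lem:n+1elim} that $d^2$ sends it to $\xi_l^\alpha-\xi_l^\beta=0$, and then invoke Remark \ref{rmk:unique} inductively to pin it down as $d^{2(l-1)}(x_{2(l-1)})$.

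The one imprecise step is your middle paragraph. The claim that earlier differentials ``involve strictly fewer $u,v,y$ factors'' does not by itself exclude $(1-l)S_{l,n}-\bar S_{l,n}$ from the image of a lower $d^{2i}$: via the Leibniz rule one has $d^{2i}(x_{2i}\cdot P)=d^{2i}(x_{2i})\cdot P$ for $P$ a base class of degree $2(l-1-i)$, and such products land in the same position $E^{2(l-1),1}$ with the same number of factors. The paper closes this gap in two moves. First it observes that the only elements of $E_2^{*,1}$ sent \emph{identically} to zero (i.e.\ before quotienting by the symmetric ideal) are those built from $y_\gamma v_{\gamma'}-y_{\gamma'}v_\gamma$; everything else in $\ker d^2$ must map into $[\sigma^\alpha_i,\sigma^\beta_i]$. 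Second, since by induction the image of each earlier $d^{2(i-1)}$ corresponds to $\sigma_i^\alpha-\sigma_i^\beta$, and $\sigma_l^\alpha,\sigma_l^\beta$ cannot be written in terms of $\sigma_1,\ldots,\sigma_{l-1}$, the class $(1-l)S_{l,n}-\bar S_{l,n}$ is genuinely new on the $E_{2(l-1)}$-page. You correctly identify this as the main obstacle in your last paragraph and point to Remarks \ref{rmk:unique1} and \ref{rmk:unique2}; just replace the ``fewer factors'' sentence with the algebraic-independence argument above.
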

				
				\begin{proof}
				The generators $\sigma_1^\alpha$ and $\sigma_1^\beta$ in the ideals $[\sigma^{\alpha}_1,\dots,\sigma^{\alpha}_{n+1}]$ and $[\sigma^{\beta}_1,\dots,\sigma^{\beta}_{n+1}]$
				are $\alpha_1+\cdots+\alpha_{n+1}$ and $\beta_1+\cdots+\beta_{n+1}$, receptively.
				So $\sigma_1^\alpha$ and $\sigma_1^\beta$ just express elements $\alpha_{n+1}$ and $\beta_{n+1}$ in terms of minimal generating sets $\alpha_1,\dots,\alpha_{n}$
				and $\beta_1,\dots,\beta_{n}$ of $\frac{\mathbb{Z}[\alpha_1,\dots,\alpha_{n+1}]}{[\sigma^{\alpha}_1,\dots,\sigma^{\alpha}_{n+1}]}$ and
				$\frac{\mathbb{Z}[\beta_1,\dots,\beta_{n+1}]}{[\sigma^{\beta}_1,\dots,\sigma^{\beta}_{n+1}]}$, respectively.
				Each $\sigma^\alpha_l$ and $\sigma^\beta_l$ has degree $2l$.
				Since each $E_2^{2l,0}$ contains only the elements of $ 
				\frac{\mathbb{Z}[\alpha_1,\dots,\alpha_{n+1}]}{[\sigma^{\alpha}_1,\dots,\sigma^{\alpha}_{n+1}]} \otimes
				\frac{\mathbb{Z}[\beta_1,\dots,\beta_{n+1}]}{[\sigma^{\beta}_1,\dots,\sigma^{\beta}_{n+1}]}$ of degree $2l$,
				so generators $\sigma^\alpha_l$ and $\sigma^\beta_l$ only become relevant to $E_2^{2i,0}$ if $i\geq l$.
				By Lemmas \ref{lem:term1}, \ref{lem:term2} and \ref{lem:n+1elim} we have
				\begin{center}
				$d^{2}((1-l)S_{l,n}-\bar{S}_{l,n})=\sigma_{l,n}^\beta-\sigma_{l,n}^\alpha=0$.
				\end{center}
				Recall from Remark \ref{rmk:unique} that the image of each of the differentials $d^{2i}$, $i \geq 1$ in $E_{2i}^{2i,1}$ will be a unique class
				in the kernel of $d^2$ not already contained in the image of any $d^r$ for $r<2i$.
				The simplicity conditions of Remarks \ref{rmk:unique1} and \ref{rmk:unique2} will ensure that if $(1-l)S_{l,n}-\bar{S}_{l,n}$
				is in the kernel of $d^2$ previously mentioned, then it will be a generator. 
				We now proceed to determine $d^{2(l-1)}(x_{2(l-1)})$ by induction for $2\leq l\leq n+1$.
				First note that the only non-zero elements of $E_2^{*,1}$ mapped identically to zero in $\langle u_1,\dots,u_n,v_1,\dots,v_n \rangle$
				are those obtained from elements of the form
				\begin{center}
				$y_\gamma v_{\gamma'}-y_{\gamma'}v_\gamma$
				\end{center}
				for some $1\leq \gamma < \gamma' \leq n$.
				Since $\sigma^\alpha_l$ and $\sigma^\beta_l$ only become relevant to $E_2^{0,2i}$ if $i\geq l$, the element $(1-l)S_{l,n}-\bar{S}_{l,n}$
				is not contained in the image of $d^{2i}$ for $i<2l$.
				For $l=2$, the only relevant $\sigma^\alpha_i,\sigma_i^\beta$ are $\sigma_2^\alpha,\sigma_2^\beta$.
				Since $-S_{1,n}-\bar{S}_{1,n}$ is not a sum containing any terms of the form $y_\gamma v_{\gamma'}-y_{\gamma'}v_\gamma$,
				so $d^{2}(x_{2})$ is $-S_{1,n}-\bar{S}_{1,n}$ up to sign.
				For $l>2$, by induction and the Leibniz rule, the images of differentials $d^{2(i-1)}$ for $2 \leq i<l$,
				correspond to $\sigma^\alpha_i,\sigma^\beta_i$ for $2\leq i<l$ 
				or $y_\gamma v_{\gamma'}-y_{\gamma'}v_\gamma$ for some $1\leq \gamma < \gamma' \leq n$
				and $\sigma^\alpha_l,\sigma^\beta_l$ cannot be expressed in terms of $\sigma^\alpha_1,\dots,\sigma^\alpha_{l-1},\sigma^\beta_1,\dots,\sigma^\beta_{l-1}$.
				Hence $(1-l)S_{l,n}-\bar{S}_{l,n}$ must be $d^{2(l-1)}(x_{2(l-1)})$ up to a choice of class representative and sign.
				Therefore by changing the sign of $x_{2l}$ if necessary we obtain 
				$d^{2(l-1)}(x_{2(l-1)})=(1-l)S_{l,n}-\bar{S}_{l,n}$.
				\end{proof}

			\subsection{Differentials for the free loop spectral sequence}\label{sec:diff}
			
			Throughout the following arguments we consider the map $\phi$ of fibrations between the free loop fibration of $SU(n+1)/T^n$ for $n \geq 1$ 
			and the evaluation fibration studied in section \ref{sec:evalSS},
			given by the following commutative diagram
			
			\begin{equation*}\label{fig:fibcd}
						\xymatrix{
							{\Omega(SU(n+1)/T^n)} \ar[r]^(.5){} \ar[d]^(.45){id} & {\Lambda(SU(n+1)/T^n)} \ar[r]^{eval} \ar[d]^(.45){exp}  & {SU(n+1)/T^n} \ar[d]^(.45){\Delta} \\
							{\Omega(SU(n+1)/T^n)} \ar[r]^(.45){}   							 & {Map(I,SU(n+1)/T^n)} 	 \ar[r]^(.425){eval}  					 & {SU(n+1)/T^n\times SU(n+1)/T^n} ,}
			\end{equation*}
			
			where $\exp$ is given on elements by $\exp(\alpha)(t)=\alpha(e^{2\pi i t})$.
			As $SU(n+1)/T^{n}$ is simply connected, the free loop fibration induces a cohomology Leray-Serre spectral sequence $\{ \bar{E}_r,\bar{d}^r \}$.
			Hence $\phi$ indices a map of spectral sequences $\phi^*:\{ E_r,d^r \} \to \{ \bar{E}_r,\bar{d}^r \}$. 
			More precisely for each $r\geq 2$ and $a,b \in \mathbb{Z}$, we have the commutative diagram
			\begin{equation}\label{fig:phicd}
						\xymatrix{
							{E_r^{a,b}} \ar[r]^(.4){d^r} \ar[d]^(.46){\phi^*} & {E_r^{a+r,b-r+1}} \ar[d]^(.46){\phi^*}  \\
							{\bar{E}_r^{a,b}} \ar[r]^(.4){\bar{d}^r}   			& {\bar{E}_r^{a+r,b-r+1}} 	  			 	  ,}
					\end{equation}
			where $\phi^*$ for each successive $r$ is the induced map on the homology of the previous page, beginning as the map induced on the tensor on the $E_2$-pages
			by the maps $id \colon \Omega(SU(n+1)/T^n)\to\Omega(SU(n+1)/T^n)$ and $\Delta \colon SU(n+1)/T^n \to SU(n+1)/T^n \times SU(n+1)/T^n$.
			For the rest of the section we will use the notation
			\begin{center}
			$H^*(\Omega(SU(n+1)/T^n);\mathbb{Z})\cong \Gamma_{\mathbb{Z}}(x'_2,x'_4,\dots,x'_{2n})\otimes\Lambda_{\mathbb{Z}}(y'_1,\dots,y'_{n}),$
			\end{center}
			\begin{center}
			$\;\;\; H^*(SU(n+1)/T^n;\mathbb{Z})\cong \frac{\mathbb{Z}[\gamma_1,\dots,\gamma_{n+1}]}{[\sigma^\gamma_1,\dots,\sigma_{n+1}^\gamma]}$,
			\end{center}
			where $|y'_i|=1, |\gamma_j|=2, |x'_{2i}|=2i$ for each $1\leq i\leq n,1\leq j\leq n+1$ and $\sigma^\gamma_1,\dots,\sigma_{n+1}^\gamma$
			are a basis of the symmetric functions on $\gamma_i$.
			Now we determine all the differentials in $\{ \bar{E}_r,\bar{d}^r \}$.	
			
			\begin{thm}\label{thm:allDiff}
			For each $n\geq 1$, the only non-zero differentials on generators of the $\bar{E}_2$-page of $\{ \bar{E}_r,\bar{d}^r \}$ are up to class representative and sign, 
				\begin{equation*}
					\bar{d}^{2}(x'_{2})
					=-\sum_{1 \leq i_1,i_2 \leq n, i_1 \neq i_2}{y'_{i_1}\gamma_{i_2}}-\sum_{1\leq k \leq n}{2{y'_{k}\gamma_k}}
				\end{equation*}
				and for $3 \leq l \leq n+1$,
				\begin{align*}
					&\bar{d}^{2(l-1)}(x'_{2(l-1)})
					= \\
					&(1-l)\sum_{\substack{1\leq i_1<\cdots<i_{l-1}\leq n
					\\ 1\leq k\leq n,\; i_j \neq k}}
					{y'_{k}\gamma_{i_1}\cdots \gamma_{i_{l-1}}} 
					-\sum_{\substack{1\leq i_1<\cdots<i_{l-3}\leq n
					\\ 1\leq k,k'\leq n,\; i_j \neq k \neq k'}}
					{y'_{k}\gamma_{k'}^2\gamma_{i_1}\cdots\gamma_{i_{l-3}}}
					-2
					\sum_{\substack{1\leq i_1<\cdots<i_{l-2}\leq n
					\\ 1\leq k\leq n,\; i_j \neq k}}
					{y'_k \gamma_k \gamma_{i_1}\cdots\gamma_{i_{l-2}}}
					.
				\end{align*}
			\end{thm}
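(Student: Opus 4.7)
The plan is to exploit the map of fibrations $\phi$ defined in the paragraph preceding the theorem, which yields a morphism of spectral sequences $\phi^* \colon \{E_r, d^r\} \to \{\bar{E}_r, \bar{d}^r\}$. Since $\phi^*$ is induced on $E_2$-pages by $id^*$ on $H^*(\Omega(SU(n+1)/T^n))$ together with $\Delta^* \colon H^*(SU(n+1)/T^n \times SU(n+1)/T^n) \to H^*(SU(n+1)/T^n)$, I compute directly that $\phi^*(y_i) = y'_i$, $\phi^*(x_{2k}) = x'_{2k}$ and, crucially, $\phi^*(\alpha_i) = \phi^*(\beta_i) = \gamma_i$, so that $\phi^*(u_i) = \gamma_i$ while $\phi^*(v_i) = 0$. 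This last observation is the engine that drives the proof: every term of the differentials in Theorem \ref{thm:finaldiff} that contains any factor $v_j$ will be killed by $\phi^*$.

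Next I restrict attention to where the only possible non-trivial differentials can live. The algebra generators of $\bar{E}_2^{*,*}$ are $\gamma_i \in \bar{E}_2^{2,0}$, $y'_i \in \bar{E}_2^{0,1}$ and $x'_{2k} \in \bar{E}_2^{0,2k}$. The classes $\gamma_i$ must be permanent cycles since they lie on the base axis. For $y'_i$, the only possible non-zero differential is $\bar{d}^2(y'_i) \in \bar{E}_2^{2,0}$; by naturality applied to diagram \eqref{fig:phicd} and Lemma \ref{lem:E^2_{*,1}d^2}, $\bar{d}^2(y'_i) = \phi^*(d^2(y_i)) = \phi^*(v_i) = 0$. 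By exactly the dimensional argument of Remark \ref{rmk:unique} adapted to $\bar{E}_r$ (noting that the $\gamma_i$ generate the only non-zero columns to the right of the vertical axis below row $1$), the only possibly non-trivial differential on $x'_{2(l-1)}$ is $\bar{d}^{2(l-1)} \colon \bar{E}_{2(l-1)}^{0,2(l-1)} \to \bar{E}_{2(l-1)}^{2(l-1),1}$.

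To compute these, I again use naturality: $\bar{d}^{2(l-1)}(x'_{2(l-1)}) = \phi^*(d^{2(l-1)}(x_{2(l-1)}))$, where the right-hand side is given by Theorem \ref{thm:finaldiff}. I then apply $\phi^*$ term-by-term to the formula $(1-l)S_{l,n} - \bar{S}_{l,n}$. In $S_{l,n} = s^1_{l,n} + \cdots + s^l_{l,n}$, only $s^1_{l,n}$ survives $\phi^*$ since all other $s^m_{l,n}$ contain factors $v_{i_2}, \ldots, v_{i_m}$, yielding the first sum in the theorem statement. Similarly, in $\bar{S}_{l,n} = \tilde{S}_{l,n} + 2\tilde{S}'_{l,n} + \tilde{S}''_{l,n}$, every $\tilde{s}^m_{l,n}$ contains a factor $v_k$, so $\phi^*(\tilde{S}_{l,n}) = 0$; among the $\tilde{s}^{'m}_{l,n}$ only $\tilde{s}^{'0}_{l,n}$ survives (giving the $2y'_k \gamma_k \cdots$ term after the factor $2$); and among the $\tilde{s}^{''m}_{l,n}$ only $\tilde{s}^{''1}_{l,n}$ survives (giving the $y'_k \gamma_{k'}^2 \cdots$ term). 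The case $l = 2$ is handled identically using the explicit formula for $d^2(x_2)$.

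The main obstacle in this plan is justifying that naturality may be applied on the $E_{2(l-1)}$-page rather than only on $E_2$, which requires knowing that $\phi^*(x_{2(l-1)})$ represents $x'_{2(l-1)}$ on the later page. This follows from the fact (established above and by the already-determined lower differentials in $\{E_r, d^r\}$ and $\{\bar{E}_r, \bar{d}^r\}$) that both $x_{2(l-1)}$ and $x'_{2(l-1)}$ are permanent cycles up through page $2(l-1) - 1$, together with the standard functoriality of the induced maps on successive pages of a morphism of spectral sequences. A secondary concern is to confirm that no additional non-zero differentials appear off the algebra generators; this is automatic once the generator differentials are known, since the Leibniz rule then propagates them throughout the spectral sequence.
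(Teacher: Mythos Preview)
Your proposal is correct and follows essentially the same approach as the paper's proof: both use the induced morphism of spectral sequences $\phi^*$, identify $\phi^*(u_i)=\gamma_i$ and $\phi^*(v_i)=0$, show $\bar d^2(y'_i)=\phi^*(v_i)=0$, and then compute $\bar d^{2(l-1)}(x'_{2(l-1)})=\phi^*\bigl((1-l)S_{l,n}-\bar S_{l,n}\bigr)$ by discarding every summand containing a $v$-factor. Your extra care in naming exactly which $s^m_{l,n}$, $\tilde s^{'m}_{l,n}$, $\tilde s^{''m}_{l,n}$ survive and in justifying naturality on the higher pages only makes the argument more explicit than the paper's version.
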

			
			\begin{proof}
				Throughout the proof it may be useful to refer to Figure \ref{fig:freeSS}, showing differentials in the spectral sequence.
				The identity $id \colon \Omega(SU(n+1)/T^n)\to\Omega(SU(n+1)/T^n)$ induces the identity map on cohomology.
				The diagonal map $\Delta \colon SU(n+1)/T^n \to SU(n+1)/T^n \times SU(n+1)/T^n$ induces the cup product on cohomology.
				Hence by choosing generators in $\{ \bar{E}_r,\bar{d}^r \}$, we may assume that
				\begin{center}
				$\phi^*(y_i)=y'_i,\;\;\phi^*(x_i)=x'_i\;\;$and$\;\;\phi^*(\alpha_i)=\gamma_i=\phi^*(\beta_i)=\phi^*(u_i),\;\;$so$\;\;\phi^*(v_i)=0$.
				\end{center}
				For dimensional reasons, the only possibly non-zero differential on generators ${y'}_i$ in $\{ \bar{E}_r,\bar{d}^r \}$ is $\bar{d}^{2}$.
				However for each $1\leq i\leq n$ using commutative diagram (\ref{fig:phicd}) and Lemma \ref{lem:E^2_{*,1}d^2}, we have
				\begin{center}
					$\bar{d}^2(y'_i)=\bar{d}^2(\phi^*(y_i))=\phi^*(d^2(y_i))=\phi^*(v_i)=0$.
				\end{center}
				Hence all elements of $\bar{E}_2^{(*,1)}$ and $\bar{E}_2^{(*,0)}$ survive to $\bar{E}_{\infty}$, unless they are in the image of some differential
				$\bar{d}^r$ for $r\geq 2$.
				Using commutative diagram (\ref{fig:phicd}) and Theorem \ref{thm:finaldiff}, we have up to class representative and sign
				\begin{equation*}
					\bar{d}^{2}(x'_{2})=\phi^*(d^2(x_2))=\phi^*(-S_{2,n}-\bar{S}_{2,n})
					=-\sum_{1 \leq i_1,i_2 \leq n, i_1 \neq i_2}{y'_{i_1}\gamma_{i_2}}-\sum_{1\leq k \leq n}{2{y'_{k}\gamma_k}}
				\end{equation*}
				and for $3 \leq l \leq n+1$,
				\begin{flalign*}
					\bar{d}^{2(l-1)}(x'_{2(l-1)})=\phi^*(d^2(x_{2(l-1)}))=\phi^*((1-l)S_{l,n}-\bar{S}_{l,n})
				\end{flalign*}
				\begin{flalign*}
					=(1-l)\sum_{\substack{1\leq i_1<\cdots<i_{l-1}\leq n
					\\ 1\leq k\leq n,\; i_j \neq k}}
					{y'_{k}\gamma_{i_1}\cdots \gamma_{i_{l-1}}} 
					-\sum_{\substack{1\leq i_1<\cdots<i_{l-3}\leq n
					\\ 1\leq k,k'\leq n,\; i_j \neq k \neq k'}}
					{y'_{k}\gamma_{k'}^2\gamma_{i_1}\cdots\gamma_{i_{l-3}}}
					-2
					\sum_{\substack{1\leq i_1<\cdots<i_{l-2}\leq n
					\\ 1\leq k\leq n,\; i_j \neq k}}
					{y'_k \gamma_k \gamma_{i_1}\cdots\gamma_{i_{l-2}}}
					.
				\end{flalign*}		
				All differentials on generators $\gamma_i$, for each $1\leq i \leq n+1$, are zero for dimensional reasons.
			\end{proof}

			\begin{center}
				\begin{tikzpicture}
					\matrix (m) [matrix of math nodes,
						nodes in empty cells,nodes={minimum width=5ex,
						minimum height=5ex,outer sep=-5pt},
						column sep=1ex,row sep=1ex]{
																									&	 \vdots		&	 \vdots	 							 &		 												& 		  &						   &			 &					& \\
																									&	  2n\;  	& \langle x'_{2n} \rangle&		 												& 		  &						   &    	 &    			& \\
																									&	 \vdots   &  \vdots	 							 &		 												& 		  &						   & 			 & 				  & \\
																									&			6     &   \langle x'_6 \rangle &		 												& 		  & 						 &\dots &   			  & \\
						H^{*}(\Omega(SU(n+1)/T^n);\mathbb{Z}) &			4     &   \langle x'_4\rangle  &		 												& 			& 						 & 			 &					& \\
																									&			2     &   \langle x'_2\rangle  &													  & \dots& 		 				 & 			 &   			  & \\
																									&			      &  			   							 &		 											  & 		  & 		 				 & 			 &          & \\
																									&			1			&		\langle y'_i\rangle	 &			\lcdot			 				  &\lcdot &\lcdot				 & \cdots& \lcdot		& \cdots \\	
																									&		  0     &  			   							 & \langle\gamma_i\rangle 	  &\lcdot &\lcdot				 & \cdots&  \lcdot  & \cdots \\
																									&\quad\strut&     0    							 &				2										&		4	  &	 6	 				 & \cdots&     2n   & \cdots \strut \\};
					\draw[-stealth] (m-2-3.south east) -- (m-8-8.north);
					\draw[-stealth] (m-4-3.south east) -- (m-8-6.north);
					\draw[-stealth] (m-5-3.south east) -- (m-8-5.north);
					\draw[-stealth] (m-6-3.south) -- (m-8-4.north);

				\draw[thick] (m-1-2.east) -- (m-10-2.east) ;
				\draw[thick] (m-10-2.north) -- (m-10-9.north) ;
				\end{tikzpicture}
				\label{fig:freeSS}
				\end{center}
				\begin{center}
					$\;\;\;\;\;\;\;\;\;\;\;\;\;\;\;\;\;\;\;\;\;\;\;\;\;\;\;\;\;\;\;\;\;\;\;\;\;\;\;\;\; H^{*}(SU(n+1)/T^n;\mathbb{Z})$
				\end{center}
				\begin{center}
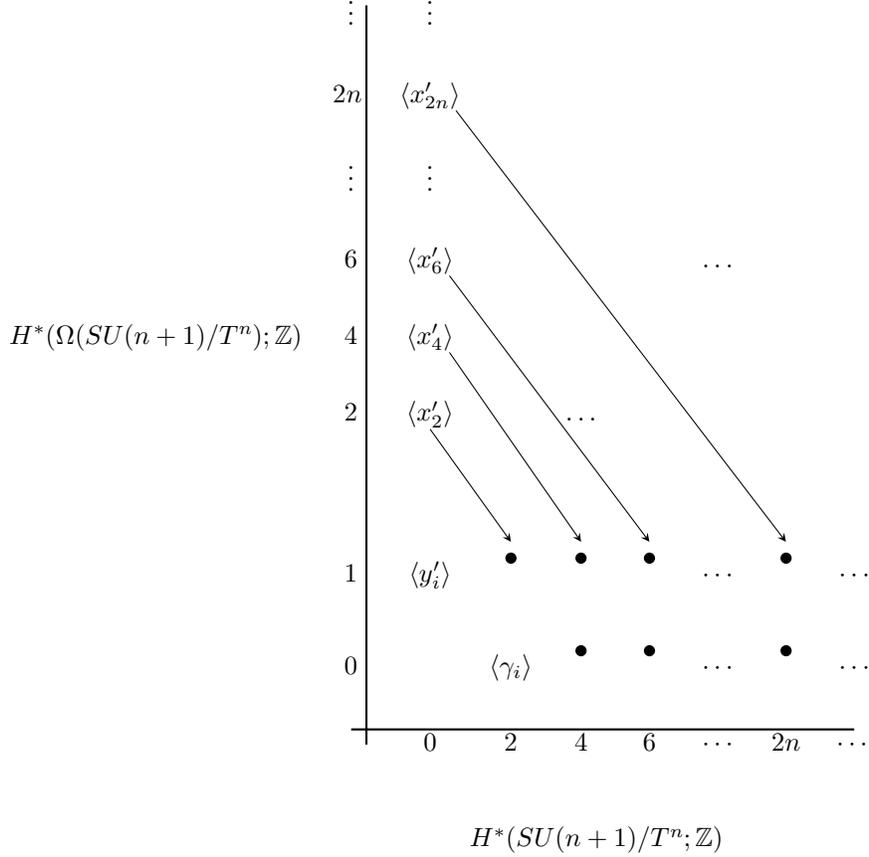

				\captionof{figure}{Generators in integral cohomology Leray-Serre spectral sequence $\{\bar{E}_r,\bar{d}^r\}$ converging to $H^{*}(\Lambda(SU(n+1)/T^n);\mathbb{Z})$.}
				\end{center}

		\subsection{Basis}\label{sec:basis}
			
			By considering a basis of $\mathbb{Z}[\gamma_1,\dots,\gamma_n]$ that resembles the image of the $d^2$ differential in Theorem \ref{thm:allDiff},
			it becomes easier to study the $E_3$-page of the spectral sequence.
			
			\begin{rmk}\label{rmk:TildeBasis}
				In $\mathbb{Z}[\gamma_1,\dots,\gamma_{n}]$, let $\bar{\gamma}=\gamma_1+\cdots+\gamma_n$ and $\tilde{\gamma}_i=\bar{\gamma}+\gamma_i$ for each $1\leq i\leq n$. 
				We may rearrange the standard basis $\gamma_1,\dots,\gamma_n$ of $\mathbb{Z}[\gamma_1,\dots,\gamma_{n}]$ to $\gamma_1,\dots,\gamma_{n-1},\bar{\gamma}$.
				Then rearrange to $\tilde{\gamma}_1,\dots,\tilde{\gamma}_{n-1},\bar{\gamma}$,
				by adding $\bar{\gamma}$ to all other basis elements.
				Notice that the replacement $\gamma_i \mapsto \tilde{\gamma}_i$ for $1\leq i \leq n-1$, $\gamma_n \mapsto \bar{\gamma}$
				could have been chosen $\gamma_j \mapsto \bar{\gamma}$ for any $1\leq j \leq n$ and $\gamma_i \mapsto \tilde{\gamma}_i$ for any $i\neq j$ instead.
				
				Replacing $\bar{\gamma}$ by $(n+1)\bar{\gamma}-\tilde{\gamma}_1-\cdots-\tilde{\gamma}_{n-1}$ gives $\tilde{\gamma}_n$, 
				hence $\tilde{\gamma}_1,\dots,\tilde{\gamma}_{n}$ forms a rational basis.
			\end{rmk}
			
			\begin{prop}
				Using the notion of (\ref{eq:SipleRedusingHomogenious}),
				we can rewrite $h^{n-l+2}_i$ for each $3\leq l \leq n$ in the basis of Remark \ref{rmk:TildeBasis} as
				\begin{equation*}
					h^{n}_2=\sum_{\substack{0\leq k \leq 2 \\ 1\leq i_j \leq n-1}}
					{(-1)^{2-k} \binom{n+1}{2-k} \tilde{\gamma}_{i_1}\cdots\tilde{\gamma}_{i_{k}}\bar{\gamma}^{2-k}}
				\end{equation*}
				and
				\begin{equation*}
					h^{n-l+2}_l=\sum_{\substack{0\leq k \leq l \\ 1\leq i_j \leq n-l+2}}
					{(-1)^{l-k} \binom{n+1}{l-k} \tilde{\gamma}_{i_1}\cdots\tilde{\gamma}_{i_{k}}\bar{\gamma}^{l-k}}
				\end{equation*}
			\end{prop}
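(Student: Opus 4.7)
The strategy is to substitute $\gamma_i = \tilde{\gamma}_i - \bar{\gamma}$ into $h_l^{n-l+2}$ and expand. The cleanest route is to first establish a general shift identity for complete homogeneous polynomials:
\begin{equation*}
h_l(z_1 - c, \ldots, z_m - c) = \sum_{k=0}^{l} (-1)^{l-k} \binom{m+l-1}{l-k} c^{l-k} h_k(z_1, \ldots, z_m),
\end{equation*}
which can be derived from the generating-function identity $\prod_{i=1}^{m}(1-x_i t)^{-1} = \sum_{l\geq 0} h_l(x_1,\ldots,x_m) t^l$ by setting $x_i = z_i - c$, making the substitution $s = t/(1-ct)$, and expanding $(1-ct)^{-(m+k)}$ via the negative binomial series; extracting the coefficient of $t^l$ gives the stated formula. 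Alternatively one can prove it directly by applying the binomial theorem to each factor $(z_i - c)^{a_i}$ in the monomial expansion of $h_l$ and then collapsing the inner sum via the Vandermonde-type identity $\sum_{c_1 + \cdots + c_m = l-k} \prod_i \binom{c_i + b_i}{c_i} = \binom{l+m-1}{l-k}$, valid whenever $\sum_i b_i = k$.

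Specializing the shift identity with $z_i = \tilde{\gamma}_i$, $c = \bar{\gamma}$, and $m = n - l + 2$ makes $m+l-1 = n+1$, so the binomial coefficient becomes $\binom{n+1}{l-k}$. Expanding $h_k(\tilde{\gamma}_1, \ldots, \tilde{\gamma}_{n-l+2}) = \sum_{1 \leq i_1 \leq \cdots \leq i_k \leq n-l+2} \tilde{\gamma}_{i_1} \cdots \tilde{\gamma}_{i_k}$ then recovers exactly the claimed formula. For $l \geq 3$ the bound $n-l+2 \leq n-1$ guarantees that only the integer-basis variables $\tilde{\gamma}_1, \ldots, \tilde{\gamma}_{n-1}$ (together with $\bar{\gamma}$) appear in the output, so this case is complete.

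The remaining case $h_2^n$ requires more care, because the direct specialization with $m = n$ produces a formula involving $\tilde{\gamma}_n$, whereas the statement restricts to $1 \leq i_j \leq n-1$. The bridge is the identity $\sum_{i=1}^{n} \tilde{\gamma}_i = (n+1)\bar{\gamma}$ from Remark~\ref{rmk:TildeBasis}, which yields $\tilde{\gamma}_n = (n+1)\bar{\gamma} - \sum_{i<n}\tilde{\gamma}_i$. Substituting this into the linear contribution $\binom{n+1}{1}\bar{\gamma}(\tilde{\gamma}_1 + \cdots + \tilde{\gamma}_n)$ and into the expansion $h_2(\tilde{\gamma}_1, \ldots, \tilde{\gamma}_n) = h_2(\tilde{\gamma}_1, \ldots, \tilde{\gamma}_{n-1}) + \tilde{\gamma}_n \sum_{i<n}\tilde{\gamma}_i + \tilde{\gamma}_n^2$, the $\tilde{\gamma}_n$-terms should cancel and leave precisely the stated formula with coefficients $\binom{n+1}{2-k}$ and summation range $1 \leq i_j \leq n-1$. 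Verifying that this final algebraic reorganization lands exactly on the clean shape $\binom{n+1}{2}\bar{\gamma}^2 - \binom{n+1}{1}\bar{\gamma}\sum_{i<n}\tilde{\gamma}_i + h_2(\tilde{\gamma}_1, \ldots, \tilde{\gamma}_{n-1})$, rather than picking up spurious cross-terms, is the main obstacle, though it is a short and explicit calculation.
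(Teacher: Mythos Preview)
Your approach is correct and takes a genuinely different route from the paper. The paper proceeds by brute-force expansion: for $l=2$ it substitutes $\gamma_i=\tilde{\gamma}_i-\bar{\gamma}$ (for $i<n$) together with the special expression $\gamma_n=n\bar{\gamma}-\sum_{j<n}\tilde{\gamma}_j$, and then separately tallies the multiplicity of each monomial type $\bar{\gamma}^2$, $\tilde{\gamma}_k\bar{\gamma}$, $\tilde{\gamma}_k^2$, $\tilde{\gamma}_{k_1}\tilde{\gamma}_{k_2}$ across the four pieces of the expansion; for $l\geq 3$ it writes the multiplicity of a general monomial $\tilde{\gamma}_{i_1}^{a_1}\cdots\tilde{\gamma}_{i_k}^{a_k}\bar{\gamma}^b$ as an explicit double sum and proves it equals $(-1)^b\binom{n+1}{b}$ by induction on $n$. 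Your shift identity $h_l(z_1-c,\ldots,z_m-c)=\sum_{k}(-1)^{l-k}\binom{m+l-1}{l-k}c^{l-k}h_k(z)$ bypasses all of this bookkeeping: for $l\geq 3$ the result drops out in one line after setting $m=n-l+2$, and for $l=2$ your two-step plan (apply the identity with $m=n$, then eliminate $\tilde{\gamma}_n$ via $\sum_i\tilde{\gamma}_i=(n+1)\bar{\gamma}$) does go through cleanly --- writing $S=\sum_{i<n}\tilde{\gamma}_i$, one has $\tilde{\gamma}_nS+\tilde{\gamma}_n^2=(n+1)^2\bar{\gamma}^2-(n+1)\bar{\gamma}S$, and the $(n+1)^2\bar{\gamma}^2$ terms cancel against the linear contribution, leaving exactly $\binom{n+1}{2}\bar{\gamma}^2-\binom{n+1}{1}\bar{\gamma}S+h_2(\tilde{\gamma}_1,\ldots,\tilde{\gamma}_{n-1})$. (The generating-function substitution should be $s=t/(1+ct)$ rather than $t/(1-ct)$, but the identity you state is correct.) Your argument is considerably more conceptual; the paper's is elementary and self-contained but labours over combinatorics that your identity absorbs.
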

			\begin{proof}
				First note that in the basis of Remark \ref{rmk:TildeBasis} we can rewrite the original basis in terms of the new one
				\begin{equation}\label{eq:BaseExpression}
					\gamma_i=\tilde{\gamma_i}-\bar{\gamma} \text{  for  } 1\leq i\leq n-1, \;\;\; \gamma_n=n\bar{\gamma}-\sum_{i=1}^{n-1}{\tilde{\gamma_i}}.
				\end{equation}
				When $l=2$ using (\ref{eq:BaseExpression})
				\begin{flalign}
					h_2^{n}&=\sum_{a=0}^{2}{\big{(}(n\bar{\gamma}-\sum_{j=1}^{n-1}{\tilde{\gamma}_j})^{2-a}
					\sum_{1\leq i_1\leq i_2 \leq n-1}{\prod^a_{k=1}{(\tilde{\gamma}_{i_k}-\bar{\gamma})}}\big{)}} \nonumber \\
					&= (n\bar{\gamma}-\sum_{j=1}^{n-1}{\tilde{\gamma}_j})^{2}
						+\sum^{n-1}_{a=1}{(n\bar{\gamma}-\sum_{j=1}^{n-1}{\tilde{\gamma}_j})(\tilde{\gamma}_a-\bar{\gamma})}
						+\sum^{n-1}_{a=1}{(\tilde{\gamma}_{a}-\bar{\gamma})^2}
						+\sum_{1\leq i_1 < i_2 \leq n-1}{(\tilde{\gamma}_{i_1}-\bar{\gamma})(\tilde{\gamma}_{i_2}-\bar{\gamma})}. \label{eq:l=2}
				\end{flalign}
				For $1\leq k,k_1, k_2\leq n-1$, $k_1 \neq k_2$,
				we consider the terms of the form 
				\begin{equation*}
					\bar{\gamma}^2, \; \tilde{\gamma}_k\bar{\gamma}, \; \tilde{\gamma}_k^2, \; \tilde{\gamma}_{k_1}\tilde{\gamma}_{k_2}
				\end{equation*}
				in tern and count their occurrences in the summands of (\ref{eq:l=2}).
				In total $n^2$ element of the form $\bar{\gamma}^2$ are produced by the first summand of (\ref{eq:l=2}),
				minus $n(n-1)$ times in the second, $n-1$ in the third and $\binom{n-1}{2}$ in the last.   
				Hence in total
				\begin{equation*}
					n^2-n(n-1)+(n-1)+\binom{n-1}{2}=n+\binom{n-1}{1}+\binom{n-1}{2}=\binom{n}{1}+\binom{n}{2}=\binom{n+1}{2}.
				\end{equation*}
				In total $-2n$ elements of the form $\tilde{\gamma}_k\bar{\gamma}$ are produced in the first summand of (\ref{eq:l=2}),
				$2n-1$ in the second, minus $2$ in the third and $2-n$ in the last.
				Hence in total
				\begin{equation*}
					-2n+(2n-1)-2+(2-n)=n+1=\binom{n+1}{1}.
				\end{equation*}
				The terms $\tilde{\gamma}_k^2$ are produced once in the first summand of (\ref{eq:l=2}), once in the third and negative once in the second,
				hence once in total.
				The terms $\tilde{\gamma}_{k_1}\tilde{\gamma}_{k_2}$ are produced twice in the first summand,
				minus twice in the the second and once in the last,
				hence once in total.
				Therefore the conditions of the proposition are satisfied.
				
				For $l\geq 3$ using (\ref{eq:BaseExpression})
				\begin{equation}\label{eq:l>3}
					h^{n-l+2}_l=\sum_{1\leq i_1\leq \cdots\leq i_l \leq n-l+2}{\prod_{k=1}^{l}{(\tilde{\gamma}_{i_k}-\bar{\gamma})}}.
				\end{equation}
				For any choice of $1\leq i_1\leq \cdots \leq i_k \leq n-l+2$ and non-negative integers $b,a_1,\dots,a_k$ such that $b+a_1+\cdots+a_k=l$,
				terms of the form
				\begin{equation}\label{eq:ProdChoice}
					\tilde{\gamma}_{i_1}^{a_1}\cdots\tilde{\gamma}_{i_k}^{a_k}\bar{\gamma}^{b}
				\end{equation}
				describe up to multiplicity all possible summand in the expansion of equation (\ref{eq:l>3}).
				Define $h^{n-l+2}_l\{ \tilde{\gamma}_{i_1}^{a_1}\cdots\tilde{\gamma}_{i_k}^{a_k}\bar{\gamma}^{b} \}$ to be the multiplicity of the summand containing
				$\tilde{\gamma}_{i_1}^{a_1}\cdots\tilde{\gamma}_{i_k}^{a_k}\bar{\gamma}^{b}$ in the expansion of equation (\ref{eq:l>3}).
				We will show that if $h^{n-l+2}_l$ has of the form of equation (\ref{eq:l>3}) for all $n+1\geq l \geq 2$ satisfies the statement of the proposition
				for $3\geq l\geq n$. 
				In particular
				\begin{equation}\label{eq:BorckekenExpress}
					h^{n-l+2}_l\{ \tilde{\gamma}_{i_1}^{a_1}\cdots\tilde{\gamma}_{i_k}^{a_k}\bar{\gamma}^{b} \} = (-1)^{b} \binom{n+1}{b}
				\end{equation}
				where $k+b=l$,
				which would complete the proof of the proposition.
				
				Considering each summand of equation (\ref{eq:l>3}) in tern and counting the number of 
				$\tilde{\gamma}_{i_1}^{a_1}\cdots\tilde{\gamma}_{i_k}^{a_k}\bar{\gamma}^{b}$
				produced in each product, we obtain
				\begin{equation*}
					h^{n-l+2}_l\{ \tilde{\gamma}_{i_1}^{a_1}\cdots\tilde{\gamma}_{i_k}^{a_k}\bar{\gamma}^{b} \} =
					(-1)^b
					\sum_{\theta=0}^{b}{\multiset{n-l+2-k}{b-\theta}\sum_{\substack{\alpha_1+\cdots+\alpha_k=\theta \\ \alpha_j\geq 0}}
					{\prod^{\theta}_{\beta=1}{\binom{a_\beta+\alpha_\beta}{\alpha_\beta}}}}.
				\end{equation*}
				We proceed by induction on $n$ and will prove (\ref{eq:BorckekenExpress}) for all $n\geq 1$ and $2\leq l\leq n+1$.
				When $n=1$, the only valid value of $l$ is $2$ and $h^{n-l+2}_{l}=(\tilde{\gamma}_1-\bar{\gamma})^2$ whose expansions satisfies (\ref{eq:BorckekenExpress}). 
				Assume that (\ref{eq:BorckekenExpress}) holds for all $\phi\leq n$.
				It is clear that $h^{n-l+1}_{l}\{ \bar{\gamma}^{n+1} \}=(-1)^{n+1}$ and $h^{n-l+1}_{l}\{ \tilde{\gamma}_{i_1}^{a_1}\cdots\tilde{\gamma}_{i_k}^{a_k} \}=1$
				for any choice of $a_1,\dots,a_{k}$ since in the expansion of equation (\ref{eq:BorckekenExpress}) there would be only one way to obtain the element.
				For $1\leq b \leq n$, by induction
				\begin{equation}\label{eq:Case(n,b)}
					\binom{n}{b}
					=\sum_{\theta=0}^{b}{\multiset{n-l+1-k}{b-\theta}\sum_{\substack{\alpha_1+\cdots+\alpha_k=\theta \\ \alpha_j\geq 0}}
					 {\prod^{\theta}_{\beta=1}{\binom{a_\beta+\alpha_\beta}{\alpha_\beta}}}}
				\end{equation}
				and
				\begin{equation}\label{eq:Case(n,b-1)}
					\binom{n}{b-1}=(-1)^{b-1}h^{n-l+2}_{l-1}\{ \tilde{\gamma}_{i_1}^{a_1}\cdots\tilde{\gamma}_{i_k}^{a_k}\bar{\gamma}^{b-1} \}
					=\sum_{\theta=0}^{b-1}{\multiset{n-l+2-k}{b-1-\theta}\sum_{\substack{\alpha_1+\cdots+\alpha_k=\theta \\ \alpha_j\geq 0}}
					 {\prod^{\theta}_{\beta=1}{\binom{a_\beta+\alpha_\beta}{\alpha_\beta}}}}.
				\end{equation}
				For each $0\leq\theta\leq b-1$ the sum of values from (\ref{eq:Case(n,b)}) and (\ref{eq:Case(n,b-1)}) corresponds to the $\theta$ summand in the expression for  
				$h^{n-l+2}_l\{ \tilde{\gamma}_{i_1}^{a_1}\cdots\tilde{\gamma}_{i_k}^{a_k}\bar{\gamma}^{b} \}$,
				since the binomial expressions agree and the multi set expression sum to the correct result.
				The only reaming summand in $h^{n-l+2}_l\{ \tilde{\gamma}_{i_1}^{a_1}\cdots\tilde{\gamma}_{i_k}^{a_k}\bar{\gamma}^{b} \}$ is the one corresponding to $\theta=b$.
				However this is same as that in (\ref{eq:Case(n,b-1)}) because $\multiset{n-l+2-k}{0}=1=\multiset{n-l+1-k}{0}$ and the binomial parts agree.
			\end{proof}

		\subsection{Pre-quotient spectral sequence}\label{sec:PreQuotient}
			
			In this section we simplify the problem of studying the $E_3$-page of $\{ E_r, d^r \}$ by considering the differential bigraded algebra
			$E_2$ with differential $d^2$, factored thorough the polynomial algebra, removing the quotient by symmetric ideal.
			In section \ref{sec:RationalPre} we consider a differential bigraded algebra that turns out to be a rational version of the this differential bigraded algebra.
			In the rational case the problem is further simplified and so is more easily dealt with.
			Then in section \ref{sec:IntegralPre} we extend the rational result to the integral situation.
			
			\subsubsection{Rational pre-quotient spectral sequence}\label{sec:RationalPre}
			
			Given a sequence indexed by natural numbers $i_1,\dots,i_j$, we denote by $i_1,\dots,\hat{i_s},\dots,i_j$ the same sequence with $i_s$ missing. 
			In the free commutative graded algebra $\Lambda(y_1,\dots,y_n)$ for any $1\leq i_1<\cdots<i_j\leq n$, denote by $\hat{y}_{i_1,\dots,i_j}$
			the elements of $\Lambda(y_1,\dots,y_n)$ given by the multiplication in ascending order of indices of all elements $y_k$ except $y_{i_1},\dots,y_{i_j}$.
			
			Let $(E,d)$ be a differential bigraded algebra with $E^{p,q}=A^p\otimes B^q$, where $A$ and $B$ are graded algebras.  
			Given elements $x_1,\dots,x_n\in E^2_{0,q}=B^q$, we will want to refer to all elements in the row $E^{*,q}$ involving generators $x_1,\dots,x_n$ and
			hence we denote by $E^{p,q}(x_1,\dots,x_n)$ the graded algebra $A^p \otimes \langle x_1,\dots,x_n \rangle^q$
			and let $H^*E^{p,q}(x_1,\dots,x_n)$ be the image of the inclusion of $E^{p,q}(x_1,\dots,x_n)$ into the homology of $(E,d)$.
			Similarly we may extend this notation to as spectral sequence where the second pages statistics the initial condition. 
			
			\begin{lem}\label{lem:1to1spec}
					For any $n\geq 1$, let $A=\mathbb{Z}[\gamma_1,\dots,\gamma_n]$ and $B=\Lambda_\mathbb{Z}(y_1,\dots,y_n)\otimes\Gamma_{\mathbb{Z}}(x)$
					be the graded algebras with $|\gamma_i|=2=|x|$ and $|y_i|=1$.
					For each integer $i\geq 1$, denote by $x_i$ the element of $\Gamma_{\mathbb{Z}}(x)$ such that $x^i = i!x_i$.
					There is a differential bigraded algebra $(E,d)$
					with $E^{p,q}=A^p\otimes B^q$, differential of bidegree $(2,-1)$ given by $d(x)=y_1\gamma_1-y_2\gamma_2+\dots+(-1)^{n+1}y_n\gamma_n$.
					The homology of $(E,d)$ is given by
					\begin{align*}
						H^*E^{0,n+2m}(x_m y_1\cdots y_n)&\cong\mathbb{Z}, \\
						H^*E^{p,n-j}(\hat{y}_{i_1,\dots,i_j})&\cong\mathbb{Z}^{\sum_{k=0}^{n-j}{(-1)^k\binom{n}{j+k}\supermultiset{n}{p-k}}},\\
						H^*E^{p,0}(1)&\cong\mathbb{Z}^{\supermultiset{n}{p}}
					\end{align*}
					for each $m,p\geq 0$, $1\leq j\leq n-1$, $1\leq i_1<\cdots<i_j\leq n$ and all other elements are trivial.
			\end{lem}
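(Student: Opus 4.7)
The plan is to exploit the explicit multiplicative form of the differential and reduce everything to a single Koszul-type calculation. Using the Leibniz rule for divided powers one obtains $d(x_m\omega) = x_{m-1}\Theta\omega$ for each $\omega \in R := \Lambda_{\mathbb{Z}}(y_1,\dots,y_n)\otimes\mathbb{Z}[\gamma_1,\dots,\gamma_n]$, where $\Theta := \sum_{i=1}^n (-1)^{i+1}y_i\gamma_i \in R$. Graded antisymmetry of the $y_i$'s immediately yields $\Theta^2 = 0$, hence $d^2 = 0$. Writing $E = \bigoplus_{m\geq 0} x_m R$ as $\mathbb{Z}$-modules, the differential restricts to multiplication by $\Theta$ from $x_m R$ to $x_{m-1} R$, and vanishes on $R = x_0 R$. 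Therefore $H^*(E,d)$ splits as $R/\Theta R$ at the $x_0$ summand and $\ker(\Theta\cdot\colon R\to R)/\Theta R$ at each $x_m$ summand for $m \geq 1$.

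The heart of the proof is the following concentration claim: decomposing by $y$-degree $R = \bigoplus_{k=0}^n R_k$, the resulting sequence $R_0 \xrightarrow{\Theta} R_1 \xrightarrow{\Theta} \cdots \xrightarrow{\Theta} R_n$ has vanishing homology in $y$-degrees $0,\dots,n-1$, while at $y$-degree $n$ its homology is a copy of $\mathbb{Z}$ represented by $y_1\cdots y_n$ at polynomial degree $0$. I propose to prove this by induction on $n$. The base case $n = 1$ is direct. For the inductive step, filter $R$ by $\gamma_1$-degree; the associated graded differential is the truncation $-y_2\gamma_2 + \cdots + (-1)^{n+1}y_n\gamma_n$, which after reindexing coincides, up to an overall sign, with the $(n-1)$-variable version of $\Theta$. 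The inductive hypothesis collapses the $E_1$ page of the resulting spectral sequence to a $\mathbb{Z}[\gamma_1]$-module generated at each $\gamma_1$-power by the classes of $y_2\cdots y_n$ and $y_1 y_2\cdots y_n$. The $d_1$ differential, coming from the $y_1\gamma_1$ piece of $\Theta$, sends $\gamma_1^a y_2\cdots y_n$ to $\gamma_1^{a+1} y_1 y_2\cdots y_n$ and annihilates $\gamma_1^a y_1 y_2\cdots y_n$, so that the only survivor is the class of $y_1\cdots y_n$ at $\gamma_1$-degree $0$.

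The rank computations then follow combinatorially. For each $m \geq 1$ the $x_m$ summand contributes $\mathbb{Z}$ represented by $x_m y_1\cdots y_n$, giving the first stated isomorphism. For the $x_0$ summand, vanishing of middle homology forces $\ker(\Theta)|_{R_{k-1}^{p-1}} = I_{k-1}^{p-1}$ (where $I_k^p$ denotes the image of $\Theta$ in $y$-degree $k$ and polynomial degree $p$), so that the short exact sequences $0 \to I_{k-1}^{p-1} \to R_{k-1}^{p-1} \xrightarrow{\Theta} R_k^p \to R_k^p/I_k^p \to 0$ produce the recursion $\operatorname{rank}(I_k^p) = \operatorname{rank}(R_{k-1}^{p-1}) - \operatorname{rank}(I_{k-1}^{p-1})$. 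Iterating and simplifying yields
\begin{equation*}
\operatorname{rank}(R_k^p/I_k^p) = \sum_{k'=0}^{k}(-1)^{k'}\binom{n}{k-k'}\supermultiset{n}{p-k'}.
\end{equation*}
Setting $k = n - j$ and using $\binom{n}{n-j-k'} = \binom{n}{j+k'}$ recovers the middle formula, the boundary case $k = 0$ gives $H^*E^{p,0}(1) \cong \mathbb{Z}^{\supermultiset{n}{p}}$, and the case $k = n$ combined with Lemma~\ref{lem:combino} shows $R_n^p/I_n^p$ vanishes for $p \geq 1$ and equals $\mathbb{Z}$ for $p = 0$, matching the $m = 0$ subcase of the first isomorphism.

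The principal obstacle will be the concentration claim of the second paragraph, which is morally a statement of Koszul self-duality for the regular sequence $\gamma_1,\dots,\gamma_n$ in $\mathbb{Z}[\gamma_1,\dots,\gamma_n]$. Keeping the sign conventions consistent through the inductive step, both in identifying the truncated $\Theta''$ with the $(n-1)$-variable operator and in tracking the $d_1$ differential on the surviving classes, will require some care. Once that concentration is established the remaining rank manipulations reduce to mechanical applications of the identities for binomial and multiset coefficients already collected in Section~\ref{subsec:MultiSet}.
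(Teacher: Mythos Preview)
Your approach is correct and takes a genuinely different route from the paper. The paper builds an auxiliary CW-complex whose cells are indexed by the generators $x_m\hat{y}_{i_1,\dots,i_j}P$, observes that the differential matches the simplicial boundary map, and then argues geometrically about connected components and contractibility to extract the homology, with several low-$j$ cases handled separately by hand. Your argument instead recognises multiplication by $\Theta$ on $R=\Lambda(y)\otimes\mathbb{Z}[\gamma]$ as a Koszul-type operator and proves the concentration claim by induction on $n$ via a filtration spectral sequence. This is cleaner and more conceptual: it avoids the ad hoc CW construction and the case analysis, and makes transparent that the answer is governed by regularity of the sequence $\gamma_1,\dots,\gamma_n$. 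The paper's route has the minor advantage of producing explicit cycle representatives along the way, which are reused in Theorem~\ref{thm:SymFree}, but those can easily be read off from your description as well.

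One small point to patch: your recursion gives only the free rank of $R_k^p/I_k^p$, not its full module structure. To conclude $R_k^p/I_k^p\cong\mathbb{Z}^{\mathrm{rank}}$ for $k\leq n-1$ you should observe that, by the concentration claim, $I_k^p=\ker(\Theta\colon R_k^p\to R_{k+1}^{p+1})$, so $R_k^p/I_k^p$ embeds into the free module $R_{k+1}^{p+1}$ and is therefore itself free. For $k=n$ the concentration claim already computes $R_n^p/I_n^p$ directly, so your appeal to Lemma~\ref{lem:combino} there is a consistency check rather than an independent argument.
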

			\begin{proof}
					For $m\geq 1$, due to the divide polynomial structure on $\Gamma_{\mathbb{Z}}(x)$,
					\begin{equation}\label{eq:difdiv}
							d(x_m)=\frac{1}{m!}d(x^m)= \frac{1}{(m-1)!} x^{m-1} d(x)= x_{m-1}d(x).
					\end{equation}
					Algebraa $E$ is generated additively by elements of the form $x_m\hat{y}_{i_1,\dots i_j}P$
					with $m\geq 0$, $0\leq j \leq n$, $1\leq i_1<\cdots<i_j\leq n$ and $P\in \mathbb{Z}[\gamma_1,\dots,\gamma_n]$.
					For $1\leq j \leq n$ and $m\geq 1$,
					\begin{flalign}\label{eq:boundary}
						\begin{split}
								d(x_m\hat{y}_{i_1,\dots,i_j}P)
								&=x_{m-1}d^2(x)\hat{y}_{i_1,\dots,i_j}P \\
								&=\sum_{t=1}^j{(-1)^{i_t+1}(-1)^{i_t+t-2}x_{m-1}\hat{y}_{i_1,\dots,\hat{i}_t,\dots,i_j}\gamma_{i_t}P} \\
								&=\sum_{t=1}^j{(-1)^{t-1}x_{m-1}\hat{y}_{i_1,\dots,\hat{i}_t,\dots,i_j}\gamma_tP},
						\end{split}
					\end{flalign}
					where the additional $(-1)^{i_t+t-2}$ sign changes come from reordering the $y_i$.
					The generator $y_t$ swaps places with $y_i$, $i_t-1$ times for $i<t$ changing the sign each time,
					however $t-1$ of these $y_i$ are missing.
					
					Ignoring $x_m,x_{m-1},\gamma_t$ and $P$ in (\ref{eq:boundary})
					and thinking of $\hat{y}_{i_1,\dots,i_j}$ as simplices in an $n$ vertex simplicial complex, $d$ is the usual boundary map.
					In particular, this implies that the differential and hence the differential bigraded algebra is well defined. 
					With this idea in mind, we construct the following CW-complex $X$.
					For each $m\geq 0$, $1\leq j \leq n$, $1\leq i_1<\cdots<i_j\leq n$ and $P\in \mathbb{Z}[\gamma_1,\dots,\gamma_n]$, there is a corresponding cell of dimension $j-1$
					and one additional zero-cell $*$.
					For each cell of dimension $\geq 1$, if $m=0$ the attaching map for the boundary of the cell will be $*$, as $d$ of these element in $E$ is zero.
					For $m\geq 1$, the attaching map is given by the $d$ in (\ref{eq:boundary}) tacking the cell as a simplex of corresponding dimension.
					
					For $j\geq3$, every $x_m\hat{y}_{i_1,\dots,i_j}P$ has the image of its $d$ differential represented in $X$.
					So for $j\geq3$, a non-zero element in the homology of $(E,d)$ corresponds to an element in  $H_j(X;\mathbb{Z})$. 
					
					First consider the cells corresponding to generators
					\begin{equation*}
						x_m\hat{y}_{i_1,\dots,i_j}P,
					\end{equation*}
					where $2\leq j\leq n$, $0\leq m \leq j-1$, $1\leq i_1<\cdots<i_j\leq n$ and $P\in\mathbb{Z}[\gamma_1,\dots,\gamma_n]$.
					In this case some boundary component of the cell will be attached to $*$.
					If $m>j-1$ or $j=1$, then the cell is not connected to $*$ since all 0-cells in its boundary are not $*$
					and all cells with $*$ in their boundary have $*$ as their only 0-cell in the boundary.
					Let $X_*$ be the connected component of $*$ in $X$.
					
					Now consider cells corresponding to generators
					\begin{equation}\label{eq:StandarForm}
						x_m\hat{y}_{i_1,\dots,i_j}\gamma_{a_1}\cdots\gamma_{a_b}P,
					\end{equation}
					where $2\leq j \leq n$, $0 \leq b \leq n$, $1\leq i_1<\cdots<i_j\leq n$, $1\leq a_1<\cdots<a_b\leq n$, $i_s\neq a_t$,
					$P\in\mathbb{Z}[\gamma_{i_1},\dots,\gamma_{i_j},\gamma_{a_1},\dots,\gamma_{a_b}]$
					and $m>j-1$.
					Notice that all elements can be expressed uniquely in this form.
					In this form the corresponding cell is contained in the boundary of the unique $(b+j)$-cell corresponding to $x_{m+b}\hat{y}_{i_1,\dots,i_j,a_1,\dots,a_b}P$.
					Therefore all connected component other than $X_*$ are contractable.
					Hence for $j\geq3$, the only non-trivial elements in $H_j(X;\mathbb{Z})$ correspond to cycles in $H_j(X_*;\mathbb{Z})$.
					
					All cells of $X_*$ correspond to an element of the form of (\ref{eq:StandarForm}) but with $m\leq j-1$.
					Again each such cell is contained in the boundary of a unique $(b+j)$-cell corresponding to $x_{m+b}\hat{y}_{i_1,\dots,i_j,a_1,\dots,a_b}P$.
					Each such top cell is a simplex whose $j-m-1$ faces have been identified to $*$.
					Hence all homology classes of $X$ are generated by cells whose boundary is exactly $*$.
					These correspond to generators of the form
					\begin{equation*}
						\hat{y}_{i_1,\dots,i_j}P,
					\end{equation*}
					where $j\geq 2$, $1\leq i_1<\cdots<i_j\leq n$ and $P\in \mathbb{Z}[\gamma_1,\dots,\gamma_n]$.
					For $j\geq 3$, at $E^{p,n-j}(\hat{y}_{i_1,\dots,i_j})$ there are $\binom{n}{j}$ possible choices for $i_1,\dots,i_j$ and $\multiset{n}{p}$ choices for $P$.
					However if $j\leq n-1$, there are $\binom{n}{j+1}\multiset{n}{p-1}$
					cells of dimension one higher whose boundary contain cells corresponding to generators of the form 
					$x\hat{y}_{i_1,\dots,i_{j+1}}P$, where $P$ has degree $p-1$.
					Again if $j\leq n-2$, in one dimension higher there are $\binom{n}{j+2}\multiset{n}{p-2}$ cells with boundary contained in the previous cells
					and so on until the top cells in dimension $n-1$.
					The lemma is now proved for all elements containing a multiple of $\hat{y}_{i_1,\dots,i_j}$ when $j\geq 3$.
					It remains to be deduced what happens to generators with $0\leq j\leq 2$.
					
					For $m\geq 1$, $1\leq i\leq n$ and $P\in \mathbb{Z}[\gamma_1,\dots,\gamma_n]$, using (\ref{eq:boundary})
					\begin{equation*}
						d(x_m\hat{y}_iP)=x_{m-1}\gamma_i P.
					\end{equation*}
					Therefore the kernel of $d$ on generators of the form $x_m\hat{y}_iP$ is generated by elements of the form $x_m(\hat{y}_i \gamma_j-\hat{y}_j \gamma_i)P$
					for some $1\leq j \leq n$ and $j\neq i$.
					Again from (\ref{eq:boundary}) this is exactly the image of generators of the form $x_{m+1}\hat{y}_{i,j}P$.
					Therefore the only elements that may survive in the homology of $(E,d)$ are generated by those of the form
					\begin{center}
						$\hat{y}_{i_1,i_2}P$, $\hat{y}_i P$ or $x_m y_1\dots y_n P$
					\end{center}
					for $m\geq 1$, $1\leq i_1<i_2 \leq n$, $1\leq i\leq n$ and $P\in \mathbb{Z}[\gamma_1,\dots,\gamma_n]$.
					The generators of the form $\hat{y}_{i_1,i_2}P$ correspond to 1-cells in $X_*$
					and since they are not affected by $\hat{y}_{i_1,\dots,i_j}P$ for $j\leq 2$ they can be dealt with in the same way we did for $j\geq 3$.
					At $(E_{2}^{n-j}(\hat{y}_i))^p$ there are $\binom{n}{1}\multiset{n}{p}$ generators of the form $\hat{y}_i P$.
					The image of $d$ is generated by $\binom{n}{2}\multiset{n}{p-1}$ elements of the form $d(x_1\hat{y}_{i_1,i_2})P$.
					In $X$ there are $\binom{n}{1}\multiset{n}{p}$ 2-cells in $X\setminus X_*$
					whose boundary lie in cells corresponding to the generators of the form $x_1\hat{y}_{i_1,i_2}$ and so on as in previous cases.
					Finally at $E^{p,n-j}(x_m y_1\cdots y_n)^p$ there are $\binom{n}{0}\multiset{n}{p}$ generators of the form $x_m y_1\cdots y_n P$.
					The image of $d$ is generated by $\binom{n}{1}\multiset{n}{p-1}$ elements of the form $d(x_{m+1}\hat{y}_{i})P$ and so on as in previous cases.
					Hence $E^{n+2m,p}(x_m y_1\cdots y_n)^p\cong \mathbb{Z}^{\sum_{k=0}^{n}{(-1)^k\binom{n}{k}\supermultiset{n}{p-k}}}$.
					However for $p\geq 1$, by Lemma \ref{lem:combino}, we have $\sum_{k=0}^{n}{(-1)^k\binom{n}{k}\multiset{n}{p-k}}=0$.
					
			\end{proof}
			
			\subsubsection{Integral pre-quotient spectral sequence}\label{sec:IntegralPre}
			
			We now continue the study of the cohomology Leray-Serre spectral sequence associated to the free loop fibration of $SU(n+1)/T^n$ for $n\geq 2$ 
			we began in section \ref{sec:diff}.
			We now refer to the Serre spectral sequence associated to the free loop fibration as $(E_r,d^r)$.
			In addition to simplify notation,
			we remove the notation required to differentiate elements in the free loop spectral sequence from those of the path space spectral sequence, letting

			\begin{center}
					$H^*(\Omega(SU(n+1)/T^n);\mathbb{Z})= \Gamma_{\mathbb{Z}}(x_2,x_4,\dots,x_{2n})\otimes\Lambda_{\mathbb{Z}}(y_1,\dots,y_{n}),$
			\end{center}
			\begin{center}
					$\;\;\; H^*(SU(n+1)/T^n;\mathbb{Z})= \frac{\mathbb{Z}[\gamma_1,\dots,\gamma_{n+1}]}{[\sigma^\gamma_1,\dots,\sigma_{n+1}^\gamma]}$,
			\end{center}
			
			where $|y_i|=1, |\gamma_j|=2, |x_{2i}|=2i$ for each $1\leq i\leq n,1\leq j\leq n+1$ and $\sigma^\gamma_1,\dots,\sigma_{n+1}^\gamma$
			the elementary symmetric polynomials in $\gamma_i$. 
			Recall that in Theorem \ref{thm:allDiff} all differentials of $(E_r,d^r)$ were determined.
			In particular by choosing the sign of our generators, we may assume
			\begin{equation}\label{eq:d^2}
				d^2(x_2)
				=\sum_{i=1}^n(-1)^{i+1}{y_i(\gamma_1+\cdots+\hat{\gamma}_i+\cdots+\gamma_n+2\gamma_i)}.
			\end{equation}
			
			To begin with we ignore the symmetric quotient by the ideal $[\sigma_1,\dots,\sigma_n]$ and study the differential bigraded algebra
			$(\bar{E},D)$, with
			\begin{center}
				$\bar{E} = H^*(\Omega(SU(n+1)/T^n);\mathbb{Z}) \otimes \frac{\mathbb{Z}(\gamma_1,\dots,\gamma_{n+1})}{[\sigma^\gamma_1]}
				= H^*(\Omega(SU(n+1)/T^n);\mathbb{Z}) \otimes \mathbb{Z}(\gamma_1,\dots,\gamma_n)$
			\end{center}
			and $D$ is defined as $d^2$. 
			
			\begin{thm}\label{thm:SymFree}
				The homology of $(\bar{E},D)$, as a module is given by
					\begin{align*}
						H^*\bar{E}^{0,n+2m+\dim(X)}((x_2)_m X y_1\cdots y_n)&\cong\mathbb{Z}, \\
						H^*\bar{E}^{p,n+2m+\dim(X)}((x_2)_m X y_1\cdots y_n)&\cong\mathbb{Z}_{n+1} \; for \; p>0, \\
						H^*\bar{E}^{p,n-j+\dim(X)}(X \hat{y}_{i_1,\dots,i_j})&\cong\mathbb{Z}^{\sum_{k=0}^{n-j}{(-1)^k\binom{n}{j-k}\supermultiset{n}{p-k}}},\\
						H^*\bar{E}^{p,\dim(X)}(X)&\cong\mathbb{Z}^{\supermultiset{n}{p}}
					\end{align*}
				for each $m\geq 1$, $p\geq 0$, $1\leq j\leq n-1$, $1\leq i_1<\cdots<i_j\leq n$, $X\in\langle \Gamma_\mathbb{Z}[x_4,\dots,x_{2n}] \rangle$ a monomial
				and all other elements trivial.	
			\end{thm}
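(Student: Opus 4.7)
The plan is to reduce Theorem \ref{thm:SymFree} to Lemma \ref{lem:1to1spec} by first decoupling the divided powers on $x_4,\dots,x_{2n}$ via Künneth, and then reinterpreting the remaining differential in the tilde-basis of Remark \ref{rmk:TildeBasis}. By Theorem \ref{thm:allDiff} the differential $D=d^2$ is zero on every $x_{2k}$ for $k \geq 2$, so the subalgebra $\Gamma_\mathbb{Z}[x_4,\dots,x_{2n}]$ is a $D$-trivial tensor factor of $\bar{E}$. Writing
\[
\bar{E} = \bar{E}_0 \otimes \Gamma_\mathbb{Z}[x_4,\dots,x_{2n}], \qquad \bar{E}_0 = \Gamma_\mathbb{Z}[x_2] \otimes \Lambda_\mathbb{Z}(y_1,\dots,y_n) \otimes \mathbb{Z}[\gamma_1,\dots,\gamma_n],
\]
and using that $\Gamma_\mathbb{Z}[x_4,\dots,x_{2n}]$ is $\mathbb{Z}$-free, the Künneth theorem gives
\[
H^*(\bar{E}, D) \cong H^*(\bar{E}_0, D) \otimes \Gamma_\mathbb{Z}[x_4,\dots,x_{2n}].
\]
The monomial factor $X$ and the degree shift by $\dim(X)$ in the statement account exactly for this second tensor factor.

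Next, I would rewrite $D(x_2)$ in the tilde-basis $\tilde{\gamma}_i = \bar{\gamma} + \gamma_i$ of Remark \ref{rmk:TildeBasis}: a direct manipulation of (\ref{eq:d^2}) gives
\[
D(x_2) = \sum_{i=1}^{n} (-1)^{i+1} y_i \tilde{\gamma}_i,
\]
which is exactly the differential in Lemma \ref{lem:1to1spec} after relabelling. Consequently, if $\bar{E}'_0 = \Gamma_\mathbb{Z}[x_2] \otimes \Lambda_\mathbb{Z}(y_1,\dots,y_n) \otimes \mathbb{Z}[\tilde{\gamma}_1,\dots,\tilde{\gamma}_n]$, viewed as a sub-dgba of $\bar{E}_0$ with $\tilde{\gamma}_1,\dots,\tilde{\gamma}_n$ treated as free polynomial generators, then Lemma \ref{lem:1to1spec} computes $H^*(\bar{E}'_0, D)$ outright and supplies the torsion-free part of the theorem. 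The remaining content is the $\mathbb{Z}_{n+1}$ contribution from the inclusion $\mathbb{Z}[\tilde{\gamma}_1,\dots,\tilde{\gamma}_n] \hookrightarrow \mathbb{Z}[\gamma_1,\dots,\gamma_n]$, which I would extract from the short exact sequence $0 \to \bar{E}'_0 \to \bar{E}_0 \to Q \to 0$ and the resulting long exact sequence in $D$-homology.

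The main obstacle is the integral analysis. The change-of-basis matrix from $\{\tilde{\gamma}_i\}$ to $\{\gamma_i\}$ is $I + J$ (the $n\times n$ matrix with $2$'s on the diagonal and $1$'s off-diagonal), whose Smith normal form is $\mathrm{diag}(1,\dots,1,n+1)$; an elementary induction on polynomial degree then shows that $\mathbb{Z}[\gamma_1,\dots,\gamma_n]_p/(\tilde{\gamma}_1,\dots,\tilde{\gamma}_n)_p$ is cyclic of order $n+1$ for every $p \geq 1$. Combined with the boundary computation
\[
D\bigl((x_2)_{m+1} \hat{y}_i\bigr) = (x_2)_m\, \tilde{\gamma}_i\, y_1\cdots y_n,
\]
this yields precisely the $\mathbb{Z}_{n+1}$ torsion in the top row $H^*\bar{E}^{p,n+2m+\dim(X)}((x_2)_m X y_1\cdots y_n)$ for $p > 0$. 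The delicate part is verifying that the middle rows $H^*\bar{E}^{p,n-j+\dim(X)}(X \hat{y}_{i_1,\dots,i_j})$ pick up no new torsion; this should follow because the presence of $\hat{y}_{i_1,\dots,i_j}$ with $j \geq 1$ leaves enough $y$-factors free to absorb the $I+J$ denominators via the Leibniz rule, mimicking the CW-complex model used in the proof of Lemma \ref{lem:1to1spec}.
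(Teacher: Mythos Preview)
Your overall architecture matches the paper's: decouple $\Gamma_\mathbb{Z}[x_4,\dots,x_{2n}]$ by K\"unneth, rewrite $D(x_2)=\sum_i(-1)^{i+1}y_i\tilde\gamma_i$, compute the Smith normal form $\mathrm{diag}(1,\dots,1,n+1)$ of the change of basis, and read off the $\mathbb{Z}_{n+1}$ in the top row from the fact that the image of $D$ there is $(x_2)_m y_1\cdots y_n$ times the ideal $(\tilde\gamma_1,\dots,\tilde\gamma_n)\mathbb{Z}[\gamma]_{p-1}$. All of this is correct and is essentially what the paper does.

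The gap is exactly where you flag it: the middle rows. Your heuristic ``enough $y$-factors free to absorb the $I+J$ denominators via the Leibniz rule'' is not a proof, and the short-exact-sequence route through $Q=\bar E_0/\bar E'_0$ is harder than it looks. Note first a mismatch: the quotient relevant to $Q$ is $\mathbb{Z}[\gamma]/\mathbb{Z}[\tilde\gamma]$ (subring, not ideal), and in degree $p$ this is \emph{not} cyclic of order $n+1$. Diagonalising the linear change of basis as $u_1,\dots,u_{n-1},v\mapsto u_1,\dots,u_{n-1},(n+1)v$ and passing to symmetric powers gives $\bigoplus_{k=0}^{p}\bigl(\mathbb{Z}/(n+1)^k\bigr)^{\multiset{n-1}{p-k}}$, so $Q$ carries substantial higher $(n+1)$-torsion and a nontrivial induced differential. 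Computing $H^*(Q,D)$ and the connecting homomorphism is then a genuine job, not a formality, and you have not done it.

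The paper avoids $Q$ entirely and handles the middle rows by two separate direct arguments. For $m\ge 1$ and $1\le j\le n-1$ (where the claim is $H^*=0$) it shows the integral image of $D$ is saturated: given $A=\sum_{i_1<\cdots<i_j} a_{i_1,\dots,i_j}\,D\bigl((x_2)_m\hat y_{i_1,\dots,i_j}P\bigr)$ with rational $a_{\cdot}$ and $A$ integral, one compares the coefficients of $\hat y_{k_1,\dots,k_{j-1}}\gamma_{k_j}P$ and of $\hat y_{k_1,\dots,k_{j-1}}\gamma_{k_1}P$. The first gives $2a_{k_1,\dots,k_j}+\sum_{b}a_{k_1,\dots,k_{j-1},b}\in\mathbb{Z}$ (the $2$ from the diagonal entry of $\tilde\gamma_{k_j}$), the second gives the unweighted sum in $\mathbb{Z}$, and their difference forces $a_{k_1,\dots,k_j}\in\mathbb{Z}$. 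For $m=0$ (where the claim is a torsion-free cokernel) the paper writes the matrix of $D$ in the \emph{integral} basis $\tilde\gamma_1,\dots,\tilde\gamma_{n-1},\bar\gamma$ of Remark~\ref{rmk:TildeBasis}, blocks it according to the power of $\bar\gamma$ and whether $n$ appears among the missing $y$-indices, and observes that each diagonal block is literally a matrix of the differential from Lemma~\ref{lem:1to1spec} in one fewer variable; a block Smith-normal-form reduction then shows only $1$'s and $0$'s occur on the diagonal. Neither of these mechanisms is visible in your proposal, and they are the real content of the theorem.
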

			\begin{proof}
				Consider the homomorphism of abelian groups $f\colon \mathbb{Z}[\gamma_1,\dots,\gamma_n]\to\mathbb{Z}[\gamma_1,\dots,\gamma_n]$ given by
				\begin{equation*}
					\gamma_i\mapsto\gamma_1+\cdots\hat{\gamma_i}+\cdots+\gamma_n+2\gamma_i.
				\end{equation*}
				For simplicity we use the notation $\tilde{\gamma_i}=\gamma_1+\cdots\hat{\gamma_i}+\cdots+\gamma_n+2\gamma_i$.
				The matrix with respect to basis $\gamma_1,\dots,\gamma_n$ of $f$ is given by the top left hand $n\times n$ matrix below.
				\begin{center}
					$
					\begin{bmatrix}
						2 &   							&   		 &  							 &   \\
							&   							&   		 & \text{\huge{1}} &   \\
							&   							& \ddots &   							 &   \\
							& \text{\huge{1}} &   		 &   							 &   \\
							&  								&   		 &   							 & 2
					\end{bmatrix}
					\to
					\begin{bmatrix}
						1 			&   	\cdots			& \cdots &  			1				 & 2 			 \\
						\vdots  &   		2					&   		 & \text{\huge{1}} & 1 			 \\
						\vdots  &   							& \ddots &   							 & \vdots  \\
						1 			& \text{\huge{1}} &   		 &   			2				 & \vdots  \\
						2				&  			1					& \cdots &   	\cdots			 & 1
					\end{bmatrix}
					\to
					\begin{bmatrix}
						1 			&   		1					& \cdots &  			1				 & 2 			 \\
						0  			&   		1					&   		 & \text{\huge{0}} & -1 		 \\
						\vdots  &   							& \ddots &   							 & \vdots  \\
						0 			& \text{\huge{0}} &   		 &   			1				 & -1  		 \\
						0				&  			-1				& \cdots &   		 -1				 & -3
					\end{bmatrix}
					\to
					\begin{bmatrix}
						1 			&   							& 			 &  							 & n 		   \\
										&   	\ddots			&   		 & \text{\huge{0}} & -1		   \\
										&   							& \ddots &   							 & \vdots  \\
										& \text{\huge{0}} &   		 &   			1				 & -1  		 \\
						0				&  		\cdots			& \cdots &   		 	0				 & -(n+1)
					\end{bmatrix}
					\to
					\begin{bmatrix}
						1 			&   							& 			 &  							 & n 		   \\
										&   	\ddots			&   		 & \text{\huge{0}} & 0 		   \\
										&   							& \ddots &   							 & \vdots  \\
										& \text{\huge{0}} &   		 &   			1				 & 0   		 \\
						0				&  		\cdots			& \cdots &   		 	0				 & -(n+1)
					\end{bmatrix}
					\to
					\begin{bmatrix}
						1 &   							&   		 &  							 &   \\
							&   	\ddots			&   		 & \text{\huge{0}} &   \\
							&   							& \ddots &   							 &   \\
							& \text{\huge{0}} &   		 &   			1				 &   \\
							&  								&   		 &   							 & n+1
					\end{bmatrix}
					$
				\end{center}
				Obtain the second matrix from the first matrix by swapping the first and last rows.
				Obtain the third matrix from the second by eliminating all entries in the first column except the first, by row operations using the top row.
				Obtain the fourth matrix from the third by row operations on the middle $n-2$ rows to eliminate the $1$'s and $-1$'s in the top and bottom rows.
				Obtain the fifth matrix from the fourth by using column operations on the middle $n-2$ columns to eliminate the $-1$'s in the final column.
				Finally obtain the sixth matrix from the fifth by subtracting $n$ times the first column from the last and changing the sign on the final row.
				
				Over a field of characteristic $0$, $f$ would be an isomorphism of vector spaces.
				Hence considering $(\bar{E},D)$ with coefficients in $\mathbb{Q}$ instead of $\mathbb{Z}$,
				up to multiplication by a factor in $\Gamma_{\mathbb{Q}}[x_4,x_6,\dots,x_{2n}]$,
				the homology of $(\bar{E},D)$ is described exactly as the one in Lemma \ref{lem:1to1spec},
				since rationally the $D$ differential is the same up to isomorphism $f$.
				
				Integrally the image and kernel of $D$ are finite, so $D$ still has the same rank as the differential in Lemma \ref{lem:1to1spec}.
				In particular, consider the case of generators of the form
				\begin{equation*}
					(x_2)_m\hat{y}_{i_1,\dots,i_j}P,
				\end{equation*}
				for $m\geq 1$, $1\leq j\leq n-1$ and
				$P\in\mathbb{Z}[{\gamma}_1,\dots,{\gamma}_n]\otimes\Gamma_\mathbb{Z}[x_4,\dots,x_{2n}]$.
				The image of $D$ is a subgroup of the kernel.
				Using (\ref{eq:boundary}), 
				the image of $D$ from the span of such elements and is of the form
				\begin{equation}\label{eq:ImageSpan}
					\langle D(x_m\hat{y}_{i_1,\dots,i_j}P) \rangle
					=\langle\sum_{t=1}^j{(-1)^{t-1}x_{m-1}\hat{y}_{i_1,\dots,\hat{i}_t,\dots,i_j}\tilde{\gamma}_{i_t}P}\rangle,
				\end{equation}
				where we consider $2\leq j\leq n$.
				Since the elements of the $\bar{E}$ are additivity generated as
				$\langle x_{m-1}\hat{y}_{i_1,\dots,i_{j-1}}\tilde{\gamma}_tP \rangle_{\mathbb{Z}}$,
				both the image and kernel are subgroups.
				We will show that
				\begin{equation}\label{eq:linSpans}
				\resizebox{1\hsize}{!}{$
					\langle x_{m-1}\hat{y}_{i_1,\dots,i_{j-1}}\tilde{\gamma}_tP \rangle_{\mathbb{Z}}
					\bigcap
					\langle\sum_{t=1}^j{(-1)^{t-1}x_{m-1}\hat{y}_{i_1,\dots,\hat{i}_t,\dots,i_j}\tilde{\gamma}_{i_t}P}\rangle_{\mathbb{Q}}
					=
					\langle\sum_{t=1}^j{(-1)^{t-1}x_{m-1}\hat{y}_{i_1,\dots,\hat{i}_t,\dots,i_j}\tilde{\gamma}_{i_t}P}\rangle_{\mathbb{Z}},$}
				\end{equation}
				where for ring $R$, $\langle z_1,\dots,z_a \rangle_{R}$ means the linear span of elements $z_1,\dots,z_a$ as an $R$-module. 
				(\ref{eq:linSpans}) implies that the kernel must be equal to the image.
				
				Take any element
				$A\in\langle x_{m-1}\hat{y}_{i_1,\dots,i_{j-1}}\tilde{\gamma}_tP \rangle_{\mathbb{Z}}
				\bigcap\langle\sum_{t=1}^j{(-1)^{t-1}x_{m-1}\hat{y}_{i_1,\dots,\hat{i}_t,\dots,i_j}\tilde{\gamma}_{i_t}P}\rangle_{\mathbb{Q}}$.
				Then there are $a_{i_1,\dots,i_j}\in \mathbb{Q}$ such that
				\begin{equation*}
					A=\sum_{1\leq i_1<\cdots<i_j\leq n}a_{i_1,\dots,i_j}\sum_{t=1}^j{(-1)^{t-1}x_{m-1}\hat{y}_{i_1,\dots,\hat{i}_t,\dots,i_j}\tilde{\gamma}_{i_t}P}
					\in \langle x_{m-1}\hat{y}_{i_1,\dots,i_{j-1}}\tilde{\gamma}_{i_t}P \rangle_{\mathbb{Z}}.
				\end{equation*}
				We will show that for any choice of $1\leq k_1 <\cdots<k_{j}\leq n$, $a_{k_1,\dots,k_j}\in \mathbb{Z}$.
				Since $j \geq 2$,
				we can consider the non-empty set $B=\{ (i_1,\dots,i_j) | 1\leq i_1<\cdots<i_j\leq n,\; \{k_1,\dots,k_{j-1}\} \subseteq \{i_1,\dots,i_j\} \}$.
				Note that $\sum_{t=1}^j{(-1)^{t-1}x_{m-1}\hat{y}_{i_1,\dots,\hat{i}_t,\dots,i_j}\tilde{\gamma}_{i_t}P}$
				contains a term with $\hat{y}_{k_1,\dots,k_{j-1}}$ if and only if $(i_1,\dots,i_j)\in B$.
				Assume that we have chosen the signs of $x_{m-1}\hat{y}_{i_1,\dots,\hat{i}_t,\dots,i_j}\tilde{\gamma}_{i_t}P$
				so that $(-1)^{t-1}x_{m-1}\hat{y}_{i_1,\dots,\hat{i}_t,\dots,i_j}\tilde{\gamma}_{i_t}P$ 
				have positive sign and change the signs on the $a_{i_1,\dots,i_j}$ accordingly. 
				
				Recall that $\tilde{\gamma_t}=\gamma_1+\cdots+\hat{\gamma_t}+\cdots+\gamma_n+2\gamma_t$.
				So for each $(i_1,\dots,i_j)\in B\setminus(k_1,\dots,k_j)$,
				$\pm\sum_{t=1}^j{(-1)^{t-1}x_{m-1}\hat{y}_{i_1,\dots,\hat{i}_t,\dots,i_j}\tilde{\gamma}_{i_t}P}$
				contains a unique term $x_{m-1}\hat{y}_{k_1,\dots,k_{j-1}}\tilde{\gamma}_{k_j}P$.
				\newline
				$\pm\sum_{t=1}^j{(-1)^{t-1}x_{m-1}\hat{y}_{k_1,\dots,\hat{k}_t,\dots,k_j}\tilde{\gamma}_{k_t}P}$
				contains a unique term $2x_{m-1}\hat{y}_{k_1,\dots,k_{j-1}}\tilde{\gamma}_{k_j}P$.
				Therefore
				\begin{equation}\label{eq:WightedSum}
					2a_{k_1,\dots,k_j}+\sum_{b\in B\setminus(k_1,\dots,k_j)}{a_b}\in \mathbb{Z}.
				\end{equation}
				In addition for each $(i_1,\dots,i_j)\in B$, since $j \geq 2$,
				$\pm\sum_{t=1}^j{(-1)^{t-1}x_{m-1}\hat{y}_{i_1,\dots,\hat{i}_t,\dots,i_j}\tilde{\gamma}_{i_t}P}$
				contains a unique term $x_{m-1}\hat{y}_{k_1,\dots,k_{j-1}}\tilde{\gamma}_{k_1}P$.
				Therefore
				\begin{equation}\label{eq:LevelSum}
					\sum_{b\in B}{a_b}\in \mathbb{Z}.  
				\end{equation}
				Subtracting (\ref{eq:LevelSum}) from (\ref{eq:WightedSum}) gives $a_{k_1,\dots,k_j}\in \mathbb{Z}$.
				
				It remains to deduce what effect $D^2$ has on generators of the form
				\begin{equation*}
					(x_2)_m P, \; \hat{y}_{i_1,\dots,i_j}P \text{  and  } (x_2)_m y_1\cdots y_n P
				\end{equation*}
				for $m\geq 0$, $1\leq j\leq n-1$, $1\leq i_1<\cdots,i_j\leq n$ and
				$P\in\mathbb{Z}[\gamma_1,\dots,\gamma_n]\otimes\Gamma_\mathbb{Z}[x_4,\dots,x_{2n}]$.
				
				Considering generators $(x_2)_m P$ for $m>0$.
				Since the equivalent elements in Lemma \ref{lem:1to1spec} are not contained in the kernel of $D$,
				the kernel is zero rationally therefore must also be zero integrally.
				If $m=0$, then all elements are sent to zero by $D$ and there are $\multiset{n}{p}$ in each horizontal dimension $p$.
				
				In the case $m=1$ the image of $D$ in $\langle \hat{y}_{i_1,\dots,i_j}P\rangle$ will be the same as in (\ref{eq:ImageSpan}).
				We will show that the quotient by the image still contains no torsion, hence has the same structure as Lemma \ref{lem:1to1spec}.
				For each $j\geq 1$, suppose $P$ is of the degree $p$ and $P'$ is of degree $p-1$ in their $\mathbb{Z}[\gamma_1,\dots,\gamma_n]$ components,
				with $P,P'\in \mathbb{Z}[\gamma_1,\dots,\gamma_{n}]\otimes\Gamma(x_4,x_6,\dots,x_{2n})$ monomials.
				After a choice of basis, the differential $D$ whose image lies in $\langle \hat{y}_{i_1,\dots,i_j}P\rangle$
				is represented by a matrix whose rows represent the image of a basis of the domain and columns a basis of the co-domain. 
				The quotient of the co-domain by the image is torsion free if and only if the the integral Smith normal form of this matrix
				has only ones and zeros on the leading diagonal.
				
				Recall from Remark \ref{rmk:TildeBasis} that 
				\begin{center}
					$\mathbb{Z}[\gamma_1,\dots,\gamma_{n}]=\mathbb{Z}[\tilde{\gamma}_1,\dots,\tilde{\gamma}_{n-1},\bar{\gamma}]$ and
					$\mathbb{Q}[\gamma_1,\dots,\gamma_{n}]=\mathbb{Q}[\tilde{\gamma}_1,\dots,\tilde{\gamma}_{n}]$.
				\end{center}
				
				Choosing the rational basis of the domain $\{x_2 \hat{y}_{i_{1},\dots,i_{j+1}}\tilde{\gamma}_{a_1}\cdots\tilde{\gamma}_{a_{p-1}} \}$,
				where $1\leq a_1\leq \cdots \leq a_{p-1} \leq n$
				and the rational basis $\{ \hat{y}_{i_{1},\dots,i_{j}}\tilde{\gamma}_{a_1}\cdots \tilde{\gamma}_{a_{p}} \}$ for the image,
				where $1\leq a_1\leq \cdots \leq a_{p} \leq n$.
				With this choice of basis the image of the differential are the same as that of ${d}$ in Lemma \ref{lem:1to1spec} given in equation (\ref{eq:boundary}),
				when $\gamma_i$ are replaced by $\tilde{\gamma}_i$.
				As there is no torsion in Lemma \ref{lem:1to1spec},
				using integral row and column operations the matrix corresponding to these basis can be brought to the smith normal form
				with only ones and zeros on the leading diagonal.
				
				Now choose a basis of the image using the integral basis of Remark \ref{rmk:TildeBasis} generated by
				\begin{equation*}
					x_2\hat{y}_{i_{1},\dots,i_{j+1}}\tilde{\gamma}_{a_1}\cdots\tilde{\gamma}_{a_{p-k}}\bar{\gamma}^{k}
				\end{equation*}
				with $0\leq k \leq p$ for the domain and
				\begin{equation*}
					\hat{y}_{i_{1},\dots,i_{j}}\tilde{\gamma}_{a_1}\cdots\tilde{\gamma}_{a_{p-k'-1}}\bar{\gamma}^{k'}
				\end{equation*}
				with $1\leq k' \leq p-1$ for the co-domain.
				Rearrange the rows and columns of the matrix corresponding to these bases such that
				the columns of the form $\hat{y}_{i_1,\dots,i_j}P$ for $i_k\neq n$ are on the left
				and the columns of the form $\hat{y}_{i_1,\dots,i_{j-1},n}P$ are on the right.
				The rows of the form $x_2 \hat{y}_{i_1,\dots,i_{j+1}}P'$ are at the top and the rows of the form $x_2 \hat{y}_{i_1,\dots,i_j,n}P'$ are at the bottom.
				The sub-matrix in the intersection of rows $x_2 \hat{y}_{i_1,\dots,i_{j+1}}P'$ and columns $\hat{y}_{i_1,\dots,i_{j-1},n}P$ is zero
				because none of the $i_k$ are equal to $n$, hence the image of the differential contains no summand divisible by a $\hat{y}_{i_1,\dots,i_{j-1},n}$.
				Label the remaining three sub-matrices $A$, $B$ and $C$ as in the diagram below.
				\[
				\begin{blockarray}{ccc}
					 & \hat{y}_{i_1,\dots,i_j}P & \hat{y}_{i_1,\dots,i_{j-1},n}P \\
					\begin{block}{c(c|c)}
						x_2\hat{y}_{i_1,\dots,i_{j+1}}P' & {A} & {0} \\
						\cline{2-3}
						x_2\hat{y}_{i_1,\dots,i_j,n}P' 	 & {B} & {C} \\
					\end{block}
				\end{blockarray}
				\]
				The sub-matrix $A$ can be further broken down as a diagonal sum as follows
				\[
				\begin{blockarray}{cccccc}
					 & \hat{y}_{i_1,\dots,i_j}\tilde{P} & \hat{y}_{i_1,\dots,i_j}\tilde{P}\bar{\gamma} & \cdots & \hat{y}_{i_1,\dots,i_j}\tilde{P}\bar{\gamma}^{p-2} & \hat{y}_{i_1,\dots,i_j}\tilde{P}\bar{\gamma}^{p-1} \\
					\begin{block}{c(ccccc)}
						x_2\hat{y}_{i_1,\dots,i_{j+1}}\tilde{P}						 			 	& {A}_0  & {0}	 & \cdots	& {0}				& {0}	\\
						x_2\hat{y}_{i_1,\dots,i_{j+1}}\tilde{P}\bar{\gamma} 			& {0}	   & {A}_1 & 				& {0}				& {0}	 \\
						\vdots																			 		 			 		& \vdots & 			 & \ddots & 					& \vdots			\\
						x_2\hat{y}_{i_1,\dots,i_{j+1}}\tilde{P}\bar{\gamma}^{p-2} & {0}	   & 	{0}  & 				& {A}_{d-2} & {0}		 \\
						x_2\hat{y}_{i_1,\dots,i_{j+1}}\tilde{P}\bar{\gamma}^{p-1} & {0}	   & 	{0}  & \cdots & {0}				& {A}_{d-1} \\
					\end{block}
				\end{blockarray}
				\]
				where $\tilde{P}$ is some monomial in $\mathbb{Z}[\tilde{\gamma}_1,\dots,\tilde{\gamma}_{n-1}]\otimes\Gamma[x_2,x_4,\dots,x_{2n}]$.
				The sub-matrix in the intersection of 
				$x_2\hat{y}_{i_1,\dots,i_{j+1}}\tilde{P}\bar{\gamma}^{a}$
				and
				$\hat{y}_{i_1,\dots,i_{j}}\tilde{P}\bar{\gamma}^{b}$
				for $a\neq b$
				will be zero since the image of $D$ on $x_2\hat{y}_{i_1,\dots,i_{j+1}}$ in the basis $\hat{y}_{i_1,\dots,i_j}P$
				will not be divisible by $\bar\gamma$,
				hence summands in the image of $d^2$ on $x_2\hat{y}_{i_1,\dots,i_{j+1}}\tilde{P}\bar{\gamma}^{a}$ will each contain a multiple of exactly $\bar{\gamma}^a$.
				After dividing the $A_k$ by $\bar{\gamma}^k$, each $A_k$
				is the same as the matrix with respect to the rational basis $\tilde{\gamma}_1,\dots,\tilde{\gamma}_{n-1}$
				if we reduce the value of $n$ by 1 and the degree of the polynomial components in $\mathbb{Z}[\tilde{\gamma}_1,\dots\tilde{\gamma}_{n-1}]$ by $k$.
				
				Similarly the sub-matrices $C$ is the same as a diagonal sum of matrices with respect to the rational basis with rows interchanged
				$x_2\hat{y}_{i_1,\dots,i_j,n}P\mapsto x_2\hat{y}_{i_1,\dots,i_j}P$
				and columns interchanged $\hat{y}_{i_1,\dots,i_{j-1},n}P\mapsto \hat{y}_{i_1,\dots,i_{j-1}}P$.
				
				Hence there exists integral row and column operations on the whole matrix that bring $A$ and $C$ to the Smith normal form
				with only ones and zeros on the leading diagonal.
				
				Every row of the form $x_2\hat{y}_{i_1,\dots,i_j,n}P$ has a non-zero entry in $C$.
				Every row reduced to zero while putting $C$
				into its Smith normal form corresponds to an element of the image of $d^2$.
				Using the previous part of the proof, we know that the kernel of $D$ on $x_2\hat{y}_{i_1,\dots,i_j,n}P$ is exactly the image of $D$
				whose image is the previous domain.
				Given a row in $C$ that was in the kernel implies it is the image of some element of the form $x_2\sum{\hat{y}_{i_1,\dots,i_{j}}\tilde{P}}$
				of the previous differential under the correspondence used to obtain the Smith normal form.
				In this case for some $1\leq k\leq p-1$, the image of $x_2\sum{\hat{y}_{i_1,\dots,i_{j},n}\tilde{P}\bar{\gamma}^k}$
				under $d^2$ is the row inducing this row of $C$ in the larger matrix.
				Hence corresponding row in the larger matrix will still be in the image of $D$ and therefore in the kernel.
				So the whole row in the matrix can be is reduced to zero not just the row in $C$.
				Any remaining entries in $B$ can then be reduced to zero by column operations cancelling them with using a column in $C$.
				Therefore $B$ is reduced to zero, while $A$ and $C$ are reduced to the Smith normal form with only ones and zeros on the leading diagonal. 
				Hence the whole matrix has a Smith normal form with only ones and zeros on the leading diagonal
				so has the same Smith normal form as that with respect to the rational basis.

				Finally consider generators of the form $(x_2)_m y_1\cdots y_n P$. Their image under $D$ is also zero.
				When $\deg(P)=0$, there is no differential with image in $(x_2)_m y_1\cdots y_n P$, so it contributes a copy of $\mathbb{Z}$ to the homology of $\bar{E}$.
				Note that up to sign for $m'\geq 1$, $1\leq i\leq n$ and
				$P'\in\mathbb{Z}[\gamma_1,\dots,\gamma_n]\otimes\Gamma_\mathbb{Z}[x_4,\dots,x_{2n}]$ a monomial,
				\begin{equation*}
					D((x_2)_{m'}\hat{y}_i P)=(x_2)_{{m'}-1}\tilde{\gamma}_i P'.
				\end{equation*}
				In particular $D((x_2)_{m'}(\hat{y}_i-\hat{y}_j)P)=(x_2)_{m'-1}(\gamma_i-\gamma_j)P'$.
				So for fixed $\deg{P}\geq 1$ on in the homology of $(\bar{E},D)$, all elements of the form $(x_2)_m y_1\cdots,y_n P$ become identified.
				The number of terms in $D((x_2)_{m'}\hat{y}_i P)$ is the number of terms in $\tilde{\gamma}_i$, is $n+1$.
				So the elements $(x_2)_m y_1\cdots,y_n P$ contribute a copy of $\mathbb{Z}_{n+1}$ in the homology of $(\bar{E},D)$.
			\end{proof}
			
		\subsection{Third page}\label{sec:Third}
			
			We now turn our attention to applying the results of sections \ref{sec:basis}, \ref{sec:PreQuotient}
			and  \ref{subsec:IdeaForms}
			to produce information about the spectral sequence $\{ E_r,d^r \}$. 
			Determining The $E_3$-page everywhere would be difficult, however in special cases the problem is considerably simplified.
			Throughout this section assume $X\in \Gamma_{\mathbb{Z}}[x_4,\dots,x_{2n}]$ is a monomial.
			
			\begin{thm}\label{thm:BottemRow}
				For each $m\geq 0$,
				\begin{align*}
					E_3^{0,n+2m+\dim(X)}((x_2)_m X y_1\cdots y_n)&\cong\mathbb{Z}, \\
					E_3^{p,n+2m+\dim(X)}((x_2)_{m}X y_1\cdots y_n)&\cong\mathbb{Z}_{\gcd(\binom{n+1}{1},\dots,\binom{n+1}{p})} \; \text{for} \; p>0.
				\end{align*}
			\end{thm}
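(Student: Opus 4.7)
The plan is to identify $E_3^{p,n+2m+\dim(X)}((x_2)_m X y_1\cdots y_n)$ with a cokernel that can be evaluated via the tilde basis of Remark \ref{rmk:TildeBasis}. Write $R = H^*(SU(n+1)/T^n;\mathbb{Z})$ and let $R^p$ denote its part of $\gamma_i$-polynomial degree $p$, so the sub-module of interest is $R^p\cdot (x_2)_m X y_1\cdots y_n$.

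First I would check that on this sub-module the outgoing $d^2$ vanishes: each $d^2(y_i) = 0$ by Theorem \ref{thm:allDiff}, so $y_1\cdots y_n$ is a $d^2$-cocycle, and any contribution from $d^2((x_2)_m)=(x_2)_{m-1}d^2(x_2)$ is killed because $d^2(x_2)$ is a sum of terms $y_i\tilde\gamma_i$ and $y_i\cdot y_1\cdots y_n = 0$. A parity argument on the vertical degree then shows that the only incoming $d^2$'s reach this sub-module from the $\hat y_i(x_2)_{m+1}X$ row one step above. Expanding with Theorem \ref{thm:allDiff} and using that $\hat y_i\cdot y_j = 0$ unless $j=i$ gives
\begin{equation*}
d^2\bigl(\hat y_i (x_2)_{m+1} X P'\bigr) = \pm y_1\cdots y_n\,(x_2)_m X\,\tilde\gamma_i P', \qquad P'\in R^{p-1},
\end{equation*}
so the sub-quotient collapses to $R^p/(\tilde\gamma_1,\ldots,\tilde\gamma_n)R^{p-1}$.

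Second I would evaluate this quotient algebraically. The relation $\tilde\gamma_i = \bar\gamma+\gamma_i = 0$ forces $\gamma_i = -\bar\gamma$ for every $i$; back-substituting into $\bar\gamma = \sum_i\gamma_i$ yields $(n+1)\bar\gamma = 0$, so
\begin{equation*}
\mathbb{Z}[\gamma_1,\ldots,\gamma_n]/(\tilde\gamma_1,\ldots,\tilde\gamma_n) \cong \mathbb{Z}[\bar\gamma]/(n+1)\bar\gamma.
\end{equation*}
By Remark \ref{remk:SymQotForms} the symmetric ideal in the numerator is generated by the complete homogeneous polynomials $h_l^{n-l+2}$ for $2\leq l\leq n+1$. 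Since $h_l^{n-l+2}$ is homogeneous of degree $l$ in $n-l+2$ variables and $h_l^{n-l+2}(1,\ldots,1) = \binom{n+1}{l}$, the substitution $\gamma_i = -\bar\gamma$ sends it to $(-1)^l\binom{n+1}{l}\bar\gamma^l$. Thus in degree $p$ the quotient becomes $\mathbb{Z}\bar\gamma^p$ modulo the subgroup generated by $(n+1)\bar\gamma^p$ and by $\binom{n+1}{l}\bar\gamma^p$ for $2\leq l\leq p$, which is precisely $\mathbb{Z}/\gcd(\binom{n+1}{1},\ldots,\binom{n+1}{p})$ (with the convention that $\binom{n+1}{l}=0$ for $l>n+1$, forcing the group to be trivial once $p\geq n+1$).

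The step that will require the most care is the parity and bidegree check guaranteeing that no row other than the $\hat y_i$ row contributes an incoming $d^2$ to this sub-module, so that the image really is the full degree-$p$ piece of the ideal $(\tilde\gamma_1,\ldots,\tilde\gamma_n)R$ and no further identifications are missed. Once that is secured, the tilde basis of Remark \ref{rmk:TildeBasis} reduces the problem to a single-variable graded ring and the identity $h_l^{n-l+2}(1,\ldots,1) = \binom{n+1}{l}$ delivers the answer; the case $p=0$ is immediate because the ideal contributes nothing in degree zero.
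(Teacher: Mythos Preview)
Your proof is correct and follows essentially the same strategy as the paper: identify the incoming $d^2$ as multiplication by the $\tilde\gamma_i$, pass to a single-generator quotient, then impose the symmetric ideal generators $h_l^{n-l+2}$ and read off the binomial coefficients $\binom{n+1}{l}$.

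The one genuine difference is in how the single-generator quotient is established. The paper invokes Theorem~\ref{thm:SymFree} (the integral pre-quotient computation, whose proof is quite involved) to assert that $H^*\bar E^{p,\ast}((x_2)_mXy_1\cdots y_n)\cong\mathbb{Z}_{n+1}$ for $p>0$, and then separately observes the exchange relation $d^2((x_2)_{m+1}X(\hat y_i-\hat y_j))=(x_2)_mX(\gamma_i-\gamma_j)$ to see that all monomials of a given degree are identified. Your route is more direct and self-contained: you compute $\mathbb{Z}[\gamma_1,\ldots,\gamma_n]/(\tilde\gamma_1,\ldots,\tilde\gamma_n)\cong\mathbb{Z}[\bar\gamma]/((n+1)\bar\gamma)$ from the explicit relation $\sum_i\tilde\gamma_i=(n+1)\bar\gamma$, bypassing Theorem~\ref{thm:SymFree} entirely for this particular row. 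That is a real simplification.

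One small remark on the point you flag as needing care: the reason only the $\hat y_i$ row contributes is not really a parity argument but an exterior-algebra one. Since $d^2(x_2)=\sum_j\pm y_j\tilde\gamma_j$, the image $d^2((x_2)_aX'y_IP')$ lands in the $y_1\cdots y_n$-submodule precisely when some $y_jy_I=\pm y_1\cdots y_n$, i.e.\ when $y_I=\hat y_j$ for exactly one $j$; and in that case the \emph{entire} image lies there, because the other terms $y_{j'}\hat y_j$ vanish for $j'\neq j$. So there is no risk of a partial contribution from some other row whose $d^2$-image merely has a component in the submodule.
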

			\begin{proof}
				By Theorem \ref{thm:SymFree}, in the differential bigraded algebra $(\bar{E},D)$, which is the same as $\{ E_2,d^2\}$ 
				before quotienting out by the symmetric ideal 
				\begin{align*}
					\bar{E}_{3}^{0,n+2m+\dim(X)}((x_2)_m X y_1\cdots y_n)&\cong\mathbb{Z}, \\
					\bar{E}_{3}^{p,n+2m+\dim(X)}((x_2)_m X y_1\cdots y_n)&\cong\mathbb{Z}_{n+1} \; \text{for} \; p>0.
				\end{align*}
				Since the smallest degree of $\sigma_2,\dots,\sigma_n$ is degree $2$,
				$\bar{E}_{3}^{0,n+2m+dim(X)}$ will remain unchanged after tacking the symmetric quotient. 
				Recall from (\ref{eq:d^2}) that for any $1\leq i \leq n$
				\begin{equation*}
					d^2((x_2)_{m+1}X\hat{y}_{i})=(x_2)_mX\tilde{\gamma}_i.
				\end{equation*}
				In particular this implies that for any $1\leq i,j \leq n$
				\begin{equation}\label{eq:exchange}
					d^2((x_2)_{m+1}X(\hat{y}_i-\hat{y}_j))=(x_2)_mX(\gamma_i-\gamma_j).
				\end{equation}
				Hence in the quotient by the image of the differential there is at most one generator, as all generators of the from $(x_2)_mX\gamma_i$ are identified.
				This remains true for all $E_3^{p,n+2m+dim(X)}((x_2)_{m}X y_1\cdots y_n)$ when $p>0$ since (\ref{eq:exchange})
				can be multiplied by any element of $\mathbb{Z}[\gamma_1,\dots,\gamma_n]$.
				Consequently in the quotient by the image of the differential the expressions for $\sigma_1,\dots,\sigma_{n+1}$
				can be identifies with an expression in just one generator.
				Such an expression would consist of the number of summands in $\sigma_i$ times a generator for each $1\leq i \leq n+1$.
				By Remark \ref{remk:SymQotForms}, we may assume that $\sigma_2,\dots,\sigma_{n+1}=h_2^{n},\dots,h_{n+1}^1$.
				The number of summand in $h_j^{n-j+2}$ is $\multiset{n-j+2}{j}=\binom{n+1}{j}$.
				Tacking into account the degrees of $h_2^{n-1},\dots,h_n^1$, we arrive at the statement of the theorem.
				
			\end{proof}

			\begin{thm}\label{thm:LastColumn}
				For each $2 \leq k\leq n$, $1\leq i_1 <\cdots< i_k \leq n$, $1\leq i \leq n$ and $m\geq 0$
				\begin{align*}
					E_3^{(n+1)n/2,2m+\dim(X)}((x_2)_mX)													&	\cong		\mathbb{Z}, \\
					E_3^{(n+1)n/2,n-1+2m+\dim(X)}((x_2)_mXy_i)										&	\cong		\mathbb{Z}_{n+1}, \\
					E_3^{(n+1)n/2,k+2m+\dim(X)}((x_2)_mXy_{i_1}\cdots y_{i_k})		&	\cong		0.
				\end{align*}
			\end{thm}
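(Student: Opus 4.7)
The plan is to compute $E_3$ as the homology of $d^2$ restricted to the top horizontal column. The starting observation is that every element of this column is a multiple of $\hat{\gamma}_\emptyset$, and by Proposition \ref{prop:zeros} we have $\gamma_i\hat{\gamma}_\emptyset = 0$ for every $i$. Hence $\tilde{\gamma}_k\hat{\gamma}_\emptyset = 0$, and since by Theorem \ref{thm:allDiff} the differential $d^2(x_2)$ is (up to sign) a combination of the $y_k\tilde{\gamma}_k$, the outgoing differential $d^2$ on the top column vanishes identically. Therefore $E_3^{(n+1)n/2,\,*} = E_2^{(n+1)n/2,\,*}/\mathrm{image}(d^2)$, and I only need to analyze the $d^2$-images landing in this column.

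Next I identify the sources of those images. Since $d^2$ has bidegree $(2,-1)$, sources lie in the column with horizontal polynomial degree $(n+1)n/2 - 1$, which by Lemma \ref{lem:OneMissing} is spanned by $\hat{\gamma}_1,\dots,\hat{\gamma}_n$. Applying the Leibniz rule to an element of the form $(x_2)_{m+1} X y_L \hat{\gamma}_j$ gives $(x_2)_m\, d^2(x_2)\, X y_L \hat{\gamma}_j$, and Lemma \ref{lem:OneMissing} together with the relation $\gamma_{n+1} = -\sigma_1$ (used to compute $\tilde{\gamma}_s\hat{\gamma}_n$) yields a compact description of each image: for $j<n$, $\tilde{\gamma}_s\hat{\gamma}_j$ equals $\hat{\gamma}_\emptyset$ when $s=j$, $-\hat{\gamma}_\emptyset$ when $s=j+1$, and zero otherwise; while $\tilde{\gamma}_s\hat{\gamma}_n$ equals $\hat{\gamma}_\emptyset$ for $s<n$ and $2\hat{\gamma}_\emptyset$ for $s=n$.

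Case 1 is immediate: because $d^2(x_2)$ always introduces a factor $y_s$, no image can land in the $y$-free sector, so the single generator $(x_2)_m X\hat{\gamma}_\emptyset$ survives and $E_3 \cong \mathbb{Z}$. For case 2, the $n$ sources $(x_2)_{m+1} X\hat{\gamma}_j$ yield images $-(x_2)_m X(y_j-y_{j+1})\hat{\gamma}_\emptyset$ for $j<n$ and $-(x_2)_m X(y_1+\cdots+y_{n-1}+2y_n)\hat{\gamma}_\emptyset$ for $j=n$; the first $n-1$ relations identify all the $y_i$ in the quotient, and then the $j=n$ relation becomes $(n+1)y \equiv 0$, giving $E_3 \cong \mathbb{Z}_{n+1}$ after reduction to Smith normal form.

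For case 3 I will show that every $k$-subset $I=\{i_1<\cdots<i_k\}$ with $k\geq 2$ gives a class $y_{i_1}\cdots y_{i_k}\hat{\gamma}_\emptyset$ that is zero in $E_3$, using two families of relations. From sources $(L,j)$ with $j\in L$, $j+1\notin L$, and $j<n$, the image collapses to the single term $y_{L\cup\{j+1\}}\hat{\gamma}_\emptyset$, so any $k$-subset containing two consecutive integers is zero. From sources with $j,j+1\notin L$ and $j<n$, the image is $\pm(y_{L\cup\{j\}}-y_{L\cup\{j+1\}})\hat{\gamma}_\emptyset$, so shifting one index upward by one preserves the class in the quotient. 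For any such $I$, repeatedly applying the shift relation to $i_1$ moves it up through consecutive integers until $i_1+1=i_2$, at which point $I$ contains two consecutive integers and is zero by the first relation. The main obstacle will be to keep careful track of signs in the exterior algebra during the shift argument and to verify that the asymmetric $j=n$ relation provides only redundant information once the $j<n$ relations are exploited; these should be manageable by systematic index bookkeeping.
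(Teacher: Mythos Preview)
Your proposal is correct and follows essentially the same strategy as the paper: observe that the outgoing $d^2$ from the top column vanishes (the paper cites Theorem~\ref{thm:AddBasis} rather than Proposition~\ref{prop:zeros}, but either works), then analyze the incoming $d^2$ by computing $\tilde{\gamma}_s\hat{\gamma}_j$ via Lemma~\ref{lem:OneMissing}, obtaining for $j<n$ exactly the two relation types you describe (kill-on-consecutive-pair and shift-by-one), and then chase any subset to one containing a consecutive pair.

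The only real difference is notational and organizational. The paper works throughout with the complementary $\hat{y}_{i_1,\dots,i_j}$ notation (missing indices) and restricts its general computation of $\tilde{\gamma}_s\hat{\gamma}_b$ to $b\leq n-1$, handling the $\hat{\gamma}_n$ source separately only in the $\mathbb{Z}_{n+1}$ case; you instead compute $\tilde{\gamma}_s\hat{\gamma}_n$ explicitly upfront, which makes case~2 more transparent and confirms directly that the $j=n$ source is redundant in case~3. Your sign worry is harmless for case~3 since you are proving the quotient is zero: the shift relation $y_{L\cup\{j\}}\equiv \pm y_{L\cup\{j+1\}}$ suffices regardless of sign. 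One small point worth stating explicitly (the paper uses it implicitly): the classes $\hat{\gamma}_1,\dots,\hat{\gamma}_n$ span the penultimate-degree part of $H^*(SU(n+1)/T^n)$, so taking sources only of the form $\hat{\gamma}_j$ captures the entire image of $d^2$; this follows from Lemma~\ref{lem:OneMissing} (the pairing $\gamma_j\hat{\gamma}_i$ has invertible bidiagonal matrix) together with the Poincar\'e duality dimension count.
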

			\begin{proof}
				By Theorem \ref{thm:AddBasis}
				any element in $E_2^{(n+1)n/2,*}$ is always in the kernel of $d^2$, since the domain of the differentiate will be zero.
				Hence we consider the quotient of $E_2^{(n+1)n/2,-}$ by the image of $d^2$.
				Using (\ref{eq:boundary}) in the proof of Lemma \ref{lem:1to1spec}, for any $1\leq b\leq n$
				\begin{equation*}
					d^2((x_2)_{m}X\hat{y}_{{i_1},\dots,{i_j}}\hat{\gamma}_b)=(x_2)_{m-1}X\sum_{a=1}^{j}{(-1)^a\hat{y}_{i_1,\dots\hat{i}_a,\dots,i_j}\tilde{\gamma}_a\hat{\gamma}_b}.
				\end{equation*}
				Recall from Lemma \ref{lem:OneMissing}, that for any $1\leq i,j\leq n$
				\begin{equation*}
					\gamma_j\hat{\gamma_i}=
					\begin{cases}
							\;\;\; [0]  &\mbox{if  } j < i \text{ or } j \geq i+2 
							\\
							\;\; [\hat{\gamma}_{\emptyset}] &\text{if  } j=i
							\\
							-[\hat{\gamma}_{\emptyset}] &\text{if  } j=i+1
							.
					\end{cases}
				\end{equation*}
				Therefore for any $1\leq i,\leq n$ and $1 \leq j \leq n-1$
				\begin{equation*}
					\tilde{\gamma_j}\hat{\gamma_i}=
					\begin{cases}
							\;\;\; [0]  &\mbox{if  } j < i \text{ or } j \geq i+2 
							\\
							\;\; [\hat{\gamma}_{\emptyset}] &\text{if  } j=i
							\\
							-[\hat{\gamma}_{\emptyset}] &\text{if  } j=i+1
							.
					\end{cases}
				\end{equation*}
				Hence we deduce that for each $2\leq j \leq n$, $1\leq i_1<\cdots<i_j\leq n$, $m\geq 1$ and $1\leq b \leq n-1$ such that $b\neq i_k,n$ for any $1\leq k \leq j$,
				\begin{align*}
					d^2&([{x_2}_mX\hat{y}_{i_1,\dots,i_k,b,i_{k+2},\dots,i_j}\hat\gamma_b])
					\\
					\\
					&=
					\begin{cases}
							\;\; [\hat{\gamma}_{\emptyset}(-1)^{k}(\hat{y}_{i_1,\dots,i_j}-\hat{y}_{i_1,\dots,i_{k-2},b,i_k,\dots,i_j})] &\text{if  } i_{k}=b+1 \text{ for some } 1\leq k\leq j
							\\
							\;\; [\hat{\gamma}_{\emptyset}(-1)^{k}\hat{y}_{i_1,\dots,i_j})] &\text{if  } i_{k}\neq b+1 \text{ for some } 1\leq k\leq j.
					\end{cases}
				\end{align*}
				Therefore for $j\leq n-2$, in the quotient by the image, if there exits $1\leq b\leq n-1$ such that for all $1\leq k \leq j$,
				$i_k\neq b$ and $i_k \neq b+1$ then $[\hat{y}_{i_1,\dots,i_j}]=0$.
				If not, then there exists a smallest $1\leq b\leq n-1$ such that for any $1\leq k \leq j$, $i_k\neq b$ in which case
				\begin{equation}\label{eq:step}
				[\hat{y}_{i_1,\dots,i_j}]=[\hat{y}_{i_1,\dots,i_{k},b,i_{k+1},\dots,i_j}].
				\end{equation}
				We can think of this as moving up the position of the missing integer in sequence $i_1,\dots,i_j$. 
				Since we assume $j\leq n-2$, there are at least $2$ integers between $1$ and $n$ that do not occur in the sequence $i_1,\dotso,i_j$.
				The index $b$ was chosen to be the smallest such integer so if in $i_1,\dots,i_j$ does not have two missing elements next to each other by repeated application of
				(\ref{eq:step}),
				$[\hat{y}_{i_1,\dots,i_j}]$=$[\hat{y}_{s_1,\dots,s_j}]$ where $s_1,\dots,s_j$ does have two consecutive gaps and therefore $[\hat{y}_{i_1,\dots,i_j}]=0$.
				Hence $[\hat{\gamma}_{i_1,\dots,i_j}]=0$ for any choice of $i_1,\dots,i_j$.
				Which proves that $E_3^{(n+1)n/2,k+2m+\dim(X)}((x_2)_mXy_{i_1}\cdots y_{i_k})	=	0$ 
				since $\hat{\gamma}_{i_1,\dots,i_j}$ span $E_3^{(n+1)n/2,k+2m+\dim(X)}((x_2)_mXy_{i_1}\cdots y_{i_k})$.
				
				For each $1\leq i \leq n$,
				\begin{equation*}
					d^2((x_2)_mX\hat{\gamma}_i)=
					\hat{\gamma}_{\emptyset}\sum_{1\leq j\leq n, j\neq i}{(-1)^j\hat{y}_{1,\dots,\hat{j},\dots,n}+2(-1)^i\hat{y}_{1,\dots,\hat{i},\dots,n}},
				\end{equation*}
				Therefore, for each $1\leq i <j \leq n$
				\begin{equation*}
					d^2((x_2)_mX{\hat{\gamma}_i-\hat{\gamma}_j})
					=\hat{\gamma}_{\emptyset}((-1)^j\hat{y}_{1,\dots,\hat{j},\dots,n}+(-1)^i\hat{y}_{1,\dots,\hat{i},\dots,n}).
				\end{equation*}
				Hence $E_3^{(n+1)n/2,n-1+2m+dim(X)}((x_2)_mXy_i)$ has a single generator of which each $(x_2)_mX{\hat{\gamma}_i}$ is a representative.
				As the number of summands in the image of the differential on each generator $(x_2)_mX\hat{\gamma_i}$ of $E_2^{(n+1)n/2-2,2m+dim(X)}((x_2)_mX)$
				is $n+1$, the generator of $E_3^{(n+1)n/2,n-1+2m+dim(X)}((x_2)_mXy_i)$ is torsion and has multiplicity $n+1$.
				A generator of $E_2^{(n+1)n/2,2m+dim(X)}((x_2)_mX)$ is not in the image of any differential hence survives to the next page.
			\end{proof}

		\subsection{Free loop cohomology of $SU(3)/T^2$}\label{sec:L(SU(3)/T^2)}
		
			When $n=0$, $SU(n+1)/T^n$ is a point and when $n=1$, it has the homotopy type of $S^2$.
			Hence in the first case the free loops cohomology is trivial and in the second the cohomology ring is known.
			We now use some of the tools developed in Sections \ref{subsec:BettiNum}, \ref{subsec:IdeaForms}
			and Section \ref{sec:diff}, \ref{sec:PreQuotient} and \ref{sec:Third}
			to study $H^*(\Lambda(SU(3)/T^{2});\mathbb{Z})$.
			
			\begin{thm}\label{thm:L(SU(3)/T^2)}
				
				The integral algebra structure of the $E_\infty$-page of the Leray-Serre spectral sequence associated to the free loop space fibration of $\Lambda(SU(3)/T^2)$
				is $A/I$, where
				\begin{equation*}
					A=\Lambda_{\mathbb{Z}}(\gamma_i,\; (x_4)_m,\; y_i,\; (x_2)_m(y_1(\gamma_1+\gamma_2)-y_2\gamma_2),\; (x_2)_my_2(\gamma_1^2-\gamma_1\gamma_2),\;
					(x_2)_m\gamma_1^2\gamma_2)
				\end{equation*}
				and
				\begin{align*}
					I=[(x_2)_1^m-m!(x_2)_m,\; (x_4)_1^m-m!(x_4)_m,\; \gamma_1^2+\gamma_2^2+\gamma_1\gamma_2,\;
					\gamma_1^3,\; y_1(2\gamma_1+\gamma_2)-y_2(\gamma_1+2\gamma_2),\; \\
					3(x_2)_m(y_1\gamma_1^2+y_2\gamma_2^2),\; (x_2)_my_1y_2(\gamma_1-\gamma_2),\; 3(x_2)_my_1y_2\gamma_1,\; (x_2)_my_1y_2\gamma_1^2\gamma_2]
				\end{align*}
				where $m\geq 1$,
				$|\gamma_i|=2$, $|y_i|=1$, $|(x_2)_k|=2k$ and $|(x_4)_k|=4k$ for $1\leq i \leq n$ and $1\leq k$.
				Furthermore all additive extension problems are trivial, hence the algebra has the same module structure as $H^*(\Lambda(SU(3)/T^{2});\mathbb{Z})$.
			\end{thm}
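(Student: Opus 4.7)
The plan is to apply the general spectral sequence results of Sections \ref{sec:diff}--\ref{sec:Third} to the case $n=2$, where the $E_2$-page is
$$E_2^{*,*} \cong \Gamma_\mathbb{Z}[x_2,x_4] \otimes \Lambda_\mathbb{Z}(y_1,y_2) \otimes \frac{\mathbb{Z}[\gamma_1,\gamma_2,\gamma_3]}{[\sigma_1,\sigma_2,\sigma_3]}.$$
Theorem \ref{thm:allDiff} specializes to $d^2(x_2) = -y_1(2\gamma_1+\gamma_2) - y_2(\gamma_1+2\gamma_2)$ and $d^4(x_4) = -y_1(\gamma_2^2+2\gamma_1\gamma_2) - y_2(\gamma_1^2+2\gamma_1\gamma_2)$, with $d^r$ trivial on every other algebra generator. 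By the Leibniz rule, $d^3$ is therefore identically zero, so $E_3 = E_4$, and the only remaining computation is $E_5 = \ker d^4 / \mathrm{im}\, d^4$. The argument then proceeds in three stages: assemble $E_3$, compute the effect of $d^4$, and verify that $E_5 = E_\infty$ with split filtration.

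For the first stage, I would glue together the known rows. The axis $E_3^{*,0} = H^*(SU(3)/T^2;\mathbb{Z})$ is unchanged (using Lemma \ref{lem:n+1elim} to rewrite $[\sigma_1,\sigma_2,\sigma_3]$ as $[\gamma_1^2+\gamma_1\gamma_2+\gamma_2^2,\ \gamma_1^2\gamma_2+\gamma_1\gamma_2^2]$, which gives the relations $\gamma_1^2+\gamma_2^2+\gamma_1\gamma_2$ and $\gamma_1^3$ in the final presentation). Theorem \ref{thm:BottemRow} computes the row containing $(x_2)_m X y_1 y_2$, yielding $\mathbb{Z}$ at $p=0$ and $\mathbb{Z}_3$ for $p>0$, and Theorem \ref{thm:LastColumn} describes the rightmost column $p = 3$. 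The intermediate rows $E_3^{*,1}$ and $E_3^{*,2}$ require explicit kernel/cokernel computations for $d^2$; these are controlled by Theorem \ref{thm:SymFree}, which describes the pre-quotient homology, together with the reduction by the symmetric ideal. The surviving classes will be exactly the listed generators $y_i$, $(x_2)_m(y_1(\gamma_1+\gamma_2) - y_2\gamma_2)$, $(x_2)_m y_2(\gamma_1^2-\gamma_1\gamma_2)$, and $(x_2)_m\gamma_1^2\gamma_2$, together with the torsion relations $y_1(2\gamma_1+\gamma_2)=y_2(\gamma_1+2\gamma_2)$ and the divisibility-by-$3$ relations coming from the $n+1 = 3$ in the top row.

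For the second stage, I would apply $d^4$ using Leibniz to all products involving $(x_4)_m$. The image produces the relations $3(x_2)_m(y_1\gamma_1^2 + y_2\gamma_2^2) = 0$ and the remaining $y_1 y_2$-torsion relations listed in $I$. After $d^4$, the only candidate non-trivial higher differentials are the $d^r$ for $r \geq 5$, but there are only four non-zero rows ($q = 0,1,2,3$ modulo the divided-power shifts), and bidegree parity combined with Theorem \ref{thm:SS}(2) shows no further differential can be non-zero. Hence $E_5 = E_\infty$, and it has the stated presentation $A/I$.

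Finally, I would resolve the additive extensions. Every torsion class on $E_\infty$ is a $\mathbb{Z}_3$-class, so each extension is governed by an $\mathrm{Ext}^1(\mathbb{Z}_3, -)$-obstruction. I would rule these out by comparing with the mod-$3$ spectral sequence (where everything becomes a free $\mathbb{F}_3$-module) via Corollary \ref{cor:UniversalModp}, deducing that the $\mathbb{Z}$-rank in each total degree must match the $\mathbb{F}_3$-dimension minus the number of $3$-torsion summands, which forces all extensions to split. The hard part of the whole argument will be the bookkeeping in the intermediate rows of $E_3$: carefully identifying a minimal set of representatives for $\ker d^2/\mathrm{im}\, d^2$ after imposing the symmetric relations, and then propagating them through multiplication by $(x_2)_m$ and $(x_4)_m$ without double-counting or missing a relation.
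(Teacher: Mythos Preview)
Your overall strategy matches the paper's: compute $E_3$ row by row using the $d^2$ formula, then handle $d^4$, then resolve extensions by comparison with the mod-$3$ spectral sequence. The extension argument via Corollary \ref{cor:UniversalModp} is exactly what the paper does.

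However, you miss the paper's key simplification and misattribute several relations. The paper observes directly that
\[
d^2\bigl([(\gamma_1+\gamma_2)x_2]\bigr)
= [y_1(2\gamma_1^2+3\gamma_1\gamma_2+\gamma_2^2)+y_2(\gamma_1^2+3\gamma_1\gamma_2+2\gamma_2^2)]
= [y_1(\gamma_1^2+2\gamma_1\gamma_2)+y_2(\gamma_2^2+2\gamma_1\gamma_2)]
= d^4([x_4]),
\]
the second equality obtained by subtracting $y_i(\gamma_1^2+\gamma_2^2+\gamma_1\gamma_2)$. Thus the $E_2$-representative of $d^4(x_4)$ already lies in the image of $d^2$, so $d^4$ is \emph{identically zero} on $E_4$ and $E_3=E_\infty$. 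Your plan to ``apply $d^4$ using Leibniz'' and extract the relation $3(x_2)_m(y_1\gamma_1^2+y_2\gamma_2^2)=0$ from the image of $d^4$ is therefore mistaken: that relation, along with all the $y_1y_2$-torsion relations in $I$, arises entirely from the image of $d^2$ (specifically from $d^2$ applied to classes of the form $(x_2)_{a+1}(x_4)_b\gamma_i$ and $(x_2)_{a+1}(x_4)_b y_i\gamma_j$). If you carry out your Stage~1 carefully you will find $d^4$ has nothing left to do, and your Stage~2 collapses. This is not fatal to the outcome, but it reorganizes the bookkeeping substantially and explains why the paper never needs an $E_5$ computation.
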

			
			\begin{proof}
				We consider the cohomology Leray-Serre spectral $\{ E_r,d^r \}$ sequence associated to the free loop fibration of $SU(n+1)$ studies in section \ref{sec:diff},
				in the case $n=2$, that is
				\begin{equation*}
					\Omega(SU(3)/T^2)\to\Lambda(SU(3)/T^2)\to SU(3)/T^2.
				\end{equation*}
				The cohomology of the base space $SU(3)/T^2$ is $\frac{\mathbb{Z}[\gamma_1,\gamma_2]}{[\sigma_2,\sigma_3]}$,
				where $|\gamma_1|=2=|\gamma_2|$.
				By Remark \ref{remk:SymQotForms} we may replace
				$\sigma_2$ with $\gamma_1^2+\gamma_2^2+\gamma_1\gamma_2$ and $\sigma_3$ with $\gamma_1^3$.
				Noteing also by symmetry that $\gamma_2^3\in[\sigma_2,\sigma_3]$
				and that $\gamma_1^2\gamma_2+\gamma_1\gamma_2^2=\gamma_1\sigma_2-\sigma_3\in[\sigma_2,\sigma_3]$.
				
				The cohomology of the fibre $\Omega(SU(3)/T^2)$ is $\Lambda(y_1,y_2)\otimes\Gamma[x_2,x_4]$
				where $|y_1|=1=|y_2|$, $|x_2|=2$ and $|x_4|=4$.
				In particular $\Lambda(y_1,y_2)$ is an exterior algebra
				and 
				\begin{equation*}
					\Gamma[x_2,x_4]=\frac{\mathbb{Z}[(x_2)_1,(x_2)_2,\dots,(x_4)_1,(x_4)_2,\dots]}{[(x_2)_1^m-m!(x_2)_m, (x_4)_1^m-m!(x_4)_m]}
				\end{equation*}
				is a divided polynomial algebra, where $(x_2)_1=x_2$ and $(x_4)_1=x_4$.
				Hence elements on the $E_2$-page of the spectral sequence are generated additively by representative elements
				of the form
				\begin{equation*}
					(x_2)_a(x_4)_bP, \;\; (x_2)_a(x_4)_by_iP, \;\; (x_2)_a(x_4)_by_1y_2P
				\end{equation*}
				where $0\leq a,b$, $1\leq i \leq n$ and $P\in \mathbb{Z}[\gamma_1,\gamma_2]$ is a monomial of degree between $0$ and $3$.
				By Theorem, \ref{thm:allDiff} the only non-zero differentials are $d^2$ and $d^4$,
				which are non-zero only on generators $x_2$ and $x_4$ respectively.
				The differentials up to sign are given by
				\begin{equation*}
					d^2([x_2])=[y_1(\gamma_1+2\gamma_2)+y_2(2\gamma_1+\gamma_2)], \;\;\;
					d^4([x_4])=[y_1(\gamma_1^2+2\gamma_1\gamma_2)+y_2(\gamma_2^2+2\gamma_1\gamma_2)].
				\end{equation*}
				However, 
				\begin{align*}
					&d^2([(\gamma_1+\gamma_2)x_2]) \\
					&=[y_1(2\gamma_1^2+3\gamma_1\gamma_2+\gamma_2^2)+y_2(\gamma_1^2+3\gamma_1\gamma_2+2\gamma_2^2)] \\
					&=[y_1(\gamma_1^2+2\gamma_1\gamma_2)+y_2(\gamma_2^2+2\gamma_1\gamma_2)] \\
					&=d^4([x_4])
				\end{align*}
				where the second equality is given by subtracting element of the symmetric ideal $y_i(\gamma_1^2+\gamma_2^2+\gamma_1\gamma_2)$ for $i=1,2$, 
				from $y_1(2\gamma_1^2+3\gamma_1\gamma_2+\gamma_2^2)+y_2(\gamma_1^2+3\gamma_1\gamma_2+2\gamma_2^2)$.
				Hence $d^4$ is trivial, and the spectral sequence converges by the third page.
				The generators $\gamma_i$, $x_4$ and $y_i$ occur in $E_2^{*,0}$ and are always in the kernel of the differentials,
				so are generators of the $E_\infty$-page.
				The relations $x_2^m-m!(x_2)_m$, $x_4^m-m!(x_4)_m$ from the divide polynomial algebra in $H^*(\Omega(SU(3)/T^2);\mathbb{Z})$
				and $\gamma_1^2+\gamma_2^2+\gamma_1\gamma_2$, $\gamma_1^3$ generators of the symmetric ideal in $H^*(SU(3)/T^2;\mathbb{Z})$
				will also be relations in $H^*(\Lambda(SU(3)/T^2);\mathbb{Z})$,
				so are generators of the ideal $I$.
				
				We choose the opposite sign on $y_1$ so that
				\begin{equation}\label{eq:diff2}
					d^2([x_2])=[y_2(2\gamma_1+\gamma_2)-y_1(\gamma_1+2\gamma_2)],
				\end{equation}
				which means that 
				\begin{equation}\label{eq:diff2y}
					d^2([x_2y_1])=[y_1y_2(2\gamma_1+\gamma_2)], \;\; d^2([x_2y_2])=[y_1y_2(\gamma_1+2\gamma_2)].
				\end{equation}
				
				We first consider the image and kernel of the differential $d^2$ on elements of the $E^2$-page of the form
				\begin{equation*}
					[(x_2)_a(x_4)_by_1y_2P].
				\end{equation*}
				By Theorem \ref{thm:BottemRow}, on the $E_3$-page when the degree of $P$ is zero all generators of the $E_2$-page survive.
				When the degree of $P$ is the $1$ or $2$ the only non-trivial elements are $3$-torsions generated by the class of any non-trivial representative from the $E_2$-page
				and when the degree of $p$ is $3$ all elements represent the trivial element.
				Hence on the $E_\infty$-page
				requires generator of the form
				\begin{equation*}
					(x_2)_ay_1y_2
				\end{equation*}
				and the ideal $I$ should contain generators of the form
				\begin{equation*}
					y_1y_2(\gamma_1-\gamma_2)(x_2)_a,\;\; 3y_1y_2(x_2)_a,\;\; y_1y_2(x_2)_a\gamma_1^2\gamma_2.
				\end{equation*}
				It remains to deduce the kernel of $d^2$ with codomain in $\langle (x_2)_a(x_4)_by_1y_2P \rangle$.
				
				By Theorem \ref{thm:SymFree}, before the quotient of the symmetric ideal on subgroups of $E_2^{p,q}$ where $d^2$ is non-trivial for both the differentials
				\begin{align*}
													&d^2\colon E_2^{p-2,q+1} \to E_2^{p,q} \\
					\text{and} \;\; &d^2\colon E_2^{p,q} \to E_2^{p+2,q-1}
				\end{align*} 
				the kernel of $d^2$ is exactly the image of $d^2$.
				Hence kernel elements that can be represented by a non-trivial element on the $E_3$-page are those that have image under $d^2$
				of summands dividable by non-trivial element of the symmetric ideal.
				
				For elements of the form $[(x_2)_a(x_4)_by_1y_2P]$, when the degree of $P$ is $0$ or $1$, the kernel of $d^2$ quotient the image of $d^2$ must be trivial
				since the degree of the first generator of the symmetric ideal has degree $2$.
				When the degree of $P$ is $2$, by (\ref{eq:diff2y}), the image of $d^2$ is generated by
				\begin{align*}
					d^2([(x_2)_{a+1}(x_4)_by_1\gamma_1])=[(x_2)_{a}(x_4)_by_1y_2(2\gamma_1^2+\gamma_1\gamma_2)], \\ 
					d^2([(x_2)_{a+1}(x_4)_by_1\gamma_2])=[(x_2)_{a}(x_4)_by_1y_2(2\gamma_1\gamma_2+\gamma_2^2)], \\
					d^2([(x_2)_{a+1}(x_4)_by_2\gamma_1])=[(x_2)_{a}(x_4)_by_1y_2(\gamma_1^2+2\gamma_1\gamma_2)], \\
					d^2([(x_2)_{a+1}(x_4)_by_2\gamma_2])=[(x_2)_{a}(x_4)_by_1y_2(\gamma_1\gamma_2+2\gamma_2^2)].
				\end{align*}
				The rank of the codomain is $\triset{2}{2}=2$ and the dimension of the domain $2\triset{2}{1}=4$.
				We know that the dimension of the image is $2$, so by the rank nullity theorem the dimension of the kernel must be $2$.
				By (\ref{eq:diff2}), $[(x_2)_{a+1}(x_4)_b(y_2(2\gamma_1+\gamma_2)-y_1(\gamma_1+2\gamma_2))]$ is the image of the previous differential and so is in the kernel.
				Since $d^2([(x_2)_{a+1}(x_{4})_b(y_1(\gamma_1+\gamma_2)-y_2\gamma_2)])=h_2^2$, $[(x_2)_{a+1}(x_{4})_b(y_1(\gamma_1+\gamma_2)-y_2\gamma_2)]$
				can be taken to be the other generator of the kernel.
				Hence  
				\begin{equation*}
					(y_1(\gamma_1+\gamma_2)-y_2\gamma_2)(x_2)_m
				\end{equation*}
				is a generator of $H^*(\Lambda(SU(3)/T^2);\mathbb{Z})$.
				When the degree of $P$ is $3$, by (\ref{eq:diff2y}) the image of $d^2$ is generated by
				\begin{align}
					d^2([(x_2)_{a+1}(x_4)_by_1\gamma_1^2])			&=[(x_2)_{a}(x_4)_by_1y_2(\gamma_1^3+2\gamma_1^2\gamma_2)]				&=&[2(x_2)_{a}(x_4)_by_1y_2\gamma_1^2\gamma_2]					,\label{1}\\ 
					d^2([(x_2)_{a+1}(x_4)_by_1\gamma_1\gamma_2])&=[(x_2)_{a}(x_4)_by_1y_2(\gamma_1^2\gamma_2+2\gamma_1\gamma_2^2)]&=&[-(x_2)_{a}(x_4)_by_1y_2\gamma_1^2\gamma_2]					,\label{2}\\
					d^2([(x_2)_{a+1}(x_4)_by_1\gamma_2^2])			&=[(x_2)_{a}(x_4)_by_1y_2(\gamma_1\gamma_2^2+2\gamma_2^3)]				&=&[-(x_2)_{a}(x_4)_by_1y_2\gamma_1^2\gamma_2]					,\label{3}\\
					d^2([(x_2)_{a+1}(x_4)_by_2\gamma_1^2])			&=[(x_2)_{a}(x_4)_by_1y_2(2\gamma_1^3+\gamma_1^2\gamma_2)]				&=&[(x_2)_{a}(x_4)_by_1y_2\gamma_1^2\gamma_2]					 ,\label{4}\\
					d^2([(x_2)_{a+1}(x_4)_by_2\gamma_1\gamma_2])&=[(x_2)_{a}(x_4)_by_1y_2(2\gamma_1^2\gamma_2+\gamma_1\gamma_2^2)]&=&[(x_2)_{a}(x_4)_by_1y_2\gamma_1^2\gamma_2]					 ,\label{5}\\
					d^2([(x_2)_{a+1}(x_4)_by_2\gamma_2^2])			&=[(x_2)_{a}(x_4)_by_1y_2(2\gamma_1\gamma_2^2+\gamma_2^3)]        &=&[-2(x_2)_{a}(x_4)_by_1y_2\gamma_1^2\gamma_2] 				  \label{6}
				\end{align}
				where the last equalities are given by adding an element of the symmetric ideal to the representatives.
				Using the numbering of the equations to represent the generators in the domain of $d^2$, we may take the kernel to be generated by
				\begin{equation*}
					(\ref{6})+(\ref{1}),\;\;(\ref{5})+(\ref{3}),\;\;(\ref{4})+(\ref{2}),\;\;(\ref{4})-(\ref{5}),\;\;(\ref{4})+(\ref{5})+(\ref{6}).
				\end{equation*}
				The symmetric ideal in the domain is generated by
				\begin{equation*}
					(\ref{1})+(\ref{2})+(\ref{3}),\;\;(\ref{4})+(\ref{5})+(\ref{6}).
				\end{equation*}
				By (\ref{eq:diff2}), the image of the previous differential is generated by
				\begin{align*}
					d^2([(x_2)_{a+1}(x_4)_b\gamma_1])
					&=[(x_2)_{a}(x_4)_b(y_2(2\gamma_1^2+\gamma_1\gamma_2)-y_1(\gamma_1^2+2\gamma_1\gamma_2))] \\
					&=[(x_2)_{a}(x_4)_b(y_2(\gamma_1^2-\gamma_2^2)-y_1(\gamma_1\gamma_2-\gamma_2^2))] \\
					&=(\ref{6})+(\ref{1})-(\ref{4})-(\ref{2}), \\
					d^2([(x_2)_{a+1}(x_4)_b\gamma_2])
					&=[(x_2)_{a}(x_4)_b(y_2(2\gamma_1\gamma_2+\gamma_2^2)-y_1(\gamma_1\gamma_2+2\gamma_2^2))]\\
					&=[(x_2)_{a}(x_4)_b(y_2(\gamma_1\gamma_2-\gamma_1^2)-y_1(\gamma_2^2-\gamma_1^2))] \\
					&=(\ref{5})+(\ref{3})-(\ref{6})-(\ref{1}).
				\end{align*}
				Hence the quotient of the kernel by the image is given by
				\begin{equation*}
					\resizebox{1\hsize}{!}{$
					\frac{\langle(\ref{6})+(\ref{1}),\;\;(\ref{5})+(\ref{3}),\;\;(\ref{4})+(\ref{2}),\;\;(\ref{4})-(\ref{5}),\;\;(\ref{4})+(\ref{5})+(\ref{6})\rangle}
					{\langle(\ref{1})+(\ref{2})+(\ref{3}),\;(\ref{4})+(\ref{5})+(\ref{6}),
					\;(\ref{6})+(\ref{1})-(\ref{4})-(\ref{2}),\;(\ref{5})+(\ref{3})-(\ref{6})-(\ref{1})\rangle}.$}
				\end{equation*}
				Subtracting $(\ref{6})+(\ref{1})$ from $(\ref{5})+(\ref{3})$, adding $(\ref{6})+(\ref{1})$ to $-((\ref{4})+(\ref{2}))$ in the generators of the kernel
				and adding $(\ref{1})+(\ref{2})+(\ref{3})$, $(\ref{5})+(\ref{3})-(\ref{6})-(\ref{1})$ and $-((\ref{6})+(\ref{1})-(\ref{4}))$ 
				to $(\ref{4})+(\ref{5})+(\ref{6})$ in the generators of the image gives
				\begin{equation*}
					\resizebox{1\hsize}{!}{$
					\frac{\langle(\ref{6})+(\ref{1}),\;(\ref{5})+(\ref{3})-(\ref{6})-(\ref{1}),
					\;(\ref{6})+(\ref{1})-(\ref{4})-(\ref{2}),\;(\ref{4})-(\ref{5}),\;(\ref{4})+(\ref{5})+(\ref{6})\rangle}
					{\langle(\ref{1})+(\ref{2})+(\ref{3}),\;3((\ref{6})+(\ref{1})),
					\;(\ref{6})+(\ref{1})-(\ref{4})-(\ref{2}),\;(\ref{5})+(\ref{3})-(\ref{6})-(\ref{1})\rangle}.$}
				\end{equation*}
				Therefore the kernel of $d^2$ is generated by
				\begin{align*}
					(\ref{6})+(\ref{1}) \;\; \text{and} \;\; (\ref{4})-(\ref{5}).
				\end{align*}
				Recall that $[(y_1(\gamma_1+\gamma_2)-y_2\gamma_2)(x_2)_m]$ generated the kernel when the degree of $P$ was 2.
				Notice that 
				\begin{align*}
					[\gamma_2(y_1(\gamma_1+\gamma_2)-y_2\gamma_2)(x_2)_m]
					&=[(y_1(\gamma_1\gamma_2+\gamma_2^2)-y_2\gamma_2^2)(x_2)_m] \\
					&=[-y_1\gamma_1^2-y_2(\gamma_2^2))(x_2)_m] \\
					&=-(\ref{6})+(\ref{1})
				\end{align*}
				hence the generator $(\ref{6})+(\ref{1})$ is algebraically redundant.
				Assuming all torsion on the $E^3$-page of the spectral sequence remains in the cohomology algebra, the algebra will contain the generator
				\begin{equation*}
					[y_2(\gamma_1^2-\gamma_1\gamma_2)(x_2)_m]
				\end{equation*}
				and $I$ contains the generator
				\begin{equation*}
					3(y_1\gamma_1^2+y_2\gamma_2^2)(x_2)_m.
				\end{equation*}
				
				Next we consider elements of the $E^2$-page of the form
				\begin{equation*}
					[(x_2)_a(x_4)_by_iP].
				\end{equation*}
				We have already considered the case when $a\geq1$ and $\deg(P)\leq 2$
				by studying the quotient of the kernel of $d^2$ on elements of the form $[(x_2)_a(x_4)_by_1y_2P]$.
				When $a=0$ or $\deg(p)=3$ all elements of the form $[(x_2)_a(x_4)_by_iP]$ are in the kernel of $d^2$.
				It remains to deduce the quotient of such generators by the image of $d^2$ and the kernel of $d^2$ whose codomain lies in the span of such elements.
				
				When the degree of $P$ is $0$, $[(x_4)_by_i]$ is not in the image of $d^2$, so services to the third page.
				However $(x_4)_by_i$ is already a product of generators $(x_4)_m$ and $y_i$.
				When the degree of $P$ is $1$, the image of $d^2$ on $x_2(x_4)_b$ is given by (\ref{eq:diff2y}).
				Since the image is spanned by just the one generator $[(x_4)_b(y_2(2\gamma_1+\gamma_2)-y_1(\gamma_1+2\gamma_2))]$,
				the kernel is trivial and the quotient is generated by $[y_2\gamma_1(x_4)_b],[y_2\gamma_2(x_4)_b]$ and $[y_1\gamma_2(x_4)_b]$
				all of which are products of $(x_4)_m,y_i$ and $\gamma_i$. 
				In addition
				\begin{equation*}
					y_2(2\gamma_1+\gamma_2)-y_1(\gamma_1+2\gamma_2)
				\end{equation*}
				is a generator of $I$.
				Since the image of $d^2$ and the symmetric ideal are in $I$ and $(x_4)_m,y_i,\gamma_i$ are generators of the algebra
				any generator of $I$ not containing an $(x_2)_m$ term is redundant. 
				When the degree of $P$ is $2$, by (\ref{eq:diff2y}) the image of $d^2$ with codomain in $[(x_4)_by_iP]$ is generated by
				\begin{align*}
					d^2([x_2(x_4)_b\gamma_1])
					&=[(y_2(2\gamma_1^2+\gamma_1\gamma_2)-y_1(\gamma_1^2+2\gamma_1\gamma_2))(x_4)_b] \\
					&=[(y_2(2\gamma_1^2+\gamma_1\gamma_2)-y_1(\gamma_1\gamma_2-\gamma_2^2))(x_4)_b], \\
					d^2([x_2(x_4)_b\gamma_2])
					&=[(y_2(2\gamma_1\gamma_2+\gamma_2^2)-y_1(\gamma_1\gamma_2+2\gamma_2^2))(x_4)_b] \\
					&=[(y_2(\gamma_1\gamma_2-\gamma_1^2)-y_1(\gamma_1\gamma_2+2\gamma_2^2))(x_4)_b].
				\end{align*}
				Subtracting the second generator from the first gives 
				\begin{equation*}
					[3(y_1\gamma_1^2+y_2\gamma_2^2)(x_4)_b].
				\end{equation*}
				Hence the generators of the image are independent and the kernel of $d^2$ with codomain in $\langle [(x_4)_by_iP] \rangle$ is trivial.
				In addition the quotient by the image is isomorphic to $\mathbb{Z}^2\oplus \mathbb{Z}_3$ as a group
				and assuming all $3$-torsion survives the cohomology algebra already contained all necessary generators and relations.
				When the degree of $P$ is $3$, by Theorem \ref{thm:LastColumn} the quotient by the image of $d^2$ is isomorphic to $\mathbb{Z}_3$,
				generated by any $[(x_2)_a(x_4)_by_i\gamma_1^2\gamma_2]$ or $-[(x_2)_a(x_4)_by_i\gamma_1\gamma_2^2]$.
				However
				\begin{equation*}
					[(x_2)_a(x_4)_b\gamma_2(y_1\gamma_1^2+y_2\gamma_2^2)]=[(x_2)_a(x_4)_b(y_1\gamma_1^2\gamma_2+y_2\gamma_2^3]=[(x_2)_a(x_4)_by_i\gamma_1^2\gamma_2].
				\end{equation*}
				So
				$[(x_2)_a(x_4)_by_i\gamma_1^2\gamma_2]$ is contained on the $E_\infty$-page.
				Since $\triset{2}{2}=2=2\triset{2}{3}$ the kernel of $d^2$ with codomain in $\langle [(x_2)_a(x_4)_by_iP] \rangle$ is trivial.

				All necessary generators and relations are already contained in the algebra.
				
				Finally elements of the form $[(x_2)_m(x_4)_bP]$ in the $E_3$-page are all trivial,
				since the kernel of $d^2$ on elements of the form $[(x_2)_a(x_4)_by_1yP]$ was always trivial.
				Elements of the form $[(x_4)_bP]$ survive to the third page and are already included 
				on the $E_\infty$-page
				as a product of generators $(x_4)_m$ and $\gamma_i$.
				
				All the torsion on the $E_\infty$-page of the spectral sequence is $3$ torsion.
				In order to resolve any extension problems that arise, we will consider the spectral sequence $\{ E_r, d^r \}$ over the field of order $3$.
				
				None of the generators in the integral spectral sequence are divisible by $3$,
				hence in the modulo $3$ spectral sequence all of the integral generators remain non-trivial. 
				In addition when the kernel of $d^2$ at $E_2^{p,q}$ is all of $E_2^{p,q}$, the rank plus rank of the torsion in the integral spectral sequence
				must be greater than or equal to the rank in the modulo $3$ spectral sequence.
				So in these cases the rank in modulo $3$ spectral sequence is exactly the rank plus the rank of the torsion in the integral case.
				Hence it remains to consider the kernels of the $d^2$ differential in the cases when the integral kernel is not the entire domain.
				By the rank nullity theorem, the rank of the image plus the nullity, the dimension of the kernel is the dimension of the domain.
				
				When considering the spectral sequence modulo $3$ the rank of any differential is the same as in the integral case
				when the quotient of the preceding kernel by the image contains no torsion.
				When integral $3$-torsion exists, there is are generators of the image which are $3$ times a generators of the kernel.
				In the modulo $3$ spectral sequence these generators of the image are now generators of the kernel.
				Hence in the modulo $3$ sepulchral sequence the rank is reduced by the dimension of the integral torsion and the nullity increased by the same number.
				
				Since the modulo $3$ spectral sequence has coefficients in a field, there are no extension problems.
				As the the total degree of the $d^2$ differential is $-1$ and $E_3=E_\infty$, $\dim(H^i(SU(3)/T^2;\mathbb{Z}_3))$
				is the sum of the ranks of the total degree $i$ coordinated of the integral $E_3$-page plus the sum of the torsion rank in total degrees $i$ and $i+1$.
				By Corollary \ref{cor:UniversalModp}, 
				the modulo $3$ cohomology algebra is only consistent with the case when all torsion on the $E_\infty$-page of the spectral sequence
				is contained in the integral cohomology module
				Therefore all additive extension problems are resolved and all the torsion elements in the spectral sequence are present in the integral cohomology.
			\end{proof}

\newpage
\section{Cohomology of the free loop space of the complete flag manifold of $Sp(n)$}\label{sec:FreeLoopSp(n)/Tn}
	
	In this chapter we apply the method used in Chapter \ref{sec:FreeLoopSU(n+1)/Tn} to study the free loop cohomology of $SU(n+1)/T^n$
	and apply them to study the free loops cohomology of $Sp(n)/T^n$.
	The Lie groups $Sp(n)$ is simply connected, hence $Sp(n)/T^n$ is too.
	In addition the integral cohomology of $Sp(n)$ like that of $SU(n)$ has no torsion,
	so the process of adapting the methods is relatively straightforward.
	However these properties are not shared by the other simple Lie groups,
	meaning that generalising the arguments of Chapter \ref{sec:FreeLoopSU(n+1)/Tn} to their cases would require more work.
	
	\subsection{Differentials in the path space spectral sequence}	
		
		Just as in Section \ref{sec:FreeLoopSU(n+1)/Tn}
		we begin by studying the cohomology Leray-Serre spectral sequence associated to the fibration
		\begin{equation}\label{eq:SpMapFib}
			\Omega (Sp(n)/T^n) \to Map(I,Sp(n)/T^n) \xrightarrow{eval} Sp(n)/T^n\times Sp(n)/T^n,
		\end{equation}
		where $eval \colon Map(I,Sp(n)/T^n)\to Sp(n)/T^n\times Sp(n)/T^n$ is given by $\alpha\mapsto (\alpha(0),\alpha(1))$ and $Map(I,Sp(n)/T^n)\simeq Sp(n)/T^n$.
		By the same reasoning as for $\Omega(SU(n+1)/T^n)$,
		\begin{equation*}
			\Omega(Sp(n)/T^n)\simeq \Omega Sp(n) \times T^n.
		\end{equation*}
		Using the K\"{u}nneth formula and Theorem \ref{thm:H*(Sp(n))}, we obtain the algebra isomorphism
		\begin{equation*}
			H^*(\Omega(Sp(n)/T^n);\mathbb{Z})
			\cong \Gamma_{\mathbb{Z}}[x_2,x_6,\dots,x_{4n-2}] \otimes \Lambda_{\mathbb{Z}}(y_1,\dots,y_n),
		\end{equation*}
		where $\Gamma_{\mathbb{Z}}[x_2,x_4,\dots,x_{4n-2}]$ is the integral divided polynomial algebra on variables $x_2,x_6,\dots,x_{4n-2}$
		with $|x_i|=i$ for each $i=2,6,\dots,4n-2$
		and $\Lambda(y_1,\dots,y_n)$ is an exterior algebra generated by $y_1,\dots,y_n$
		with $|y_j|=1$ for each $j=1,\dots,n$.
		The cohomology of $Sp(n)/T^n$ is given in Theorem \ref{thm:H*Sp/T}, as
		\begin{equation*}
			H^*(Sp(n)/T^n;\mathbb{Z})=\frac{\mathbb{Z}[\gamma_1,\dots,\gamma_n]}{[\sigma_1^{\lambda^2},\dots,\sigma_{n}^{\lambda^2}]},
			\end{equation*}
		where $|\gamma_i|=2$ and $\sigma_1^{\lambda^2},\dots,\sigma_n^{\lambda^2}$ are the elementary symmetric polynomials in $\gamma_1^{2},\dots,\gamma_n^2$.
		In this section we use the notation
		\begin{equation*}
			H^*(Map(I,Sp(n)/T^n);\mathbb{Z}) = \frac{\mathbb{Z}[\lambda_1,\dots,\lambda_n]}{[{\sigma^{\lambda^2}_1},\dots,{\sigma^{\lambda^2}_n}]}
		\end{equation*}
		and
		\begin{equation*}
			H^*(Sp(n)/T^n \times Sp(n)/T^n;\mathbb{Z}) = 
			\frac{\mathbb{Z}[\alpha_1,\dots,\alpha_n]}{[{\sigma^{\alpha^2}_1},\dots,{\sigma^{\alpha^2}_{n+1}}]} \otimes
			\frac{\mathbb{Z}[\beta_1,\dots,\beta_n]}{[{\sigma^{\beta^2}_1},\dots,{\sigma^{\beta^2}_{n+1}}]}
		\end{equation*}
		for the cohomology of the base space and fiber of fibration (\ref{eq:SpMapFib}).
		Where $|\lambda_1|=|\alpha_i|=|\beta_i|=2$ for $1\leq i \leq n$ and ${\sigma^{\lambda^2}_i}$, ${\sigma^{\alpha^2}_i}$ and ${\sigma^{\beta^2}_i}$
		are the complete homogeneous symmetric polynomials in variables
		$\lambda_1^2,\dots,\lambda_n^2$, $\alpha_1^2,\dots,\alpha_n^2$ and $\beta_1^2,\dots,\beta_n^2$ respectively.
		Denote by $\{ E^r,d^r \}$ the cohomology Leray-Serre spectral sequence associated to fibration (\ref{eq:SpMapFib}).
		We again use the altenative basis
		\begin{equation*}
			v_i=\alpha_i-\beta_i\;\; \text{and} \;\; u_i=\beta_i
		\end{equation*}
		for $1\leq i\leq n$.
		For exactly the same reasons as Lemma \ref{lem:E^2_{*,1}d^2}, we get an equivalent lemma in case of $\{ E_r,d^r \}$
		
		\begin{lem}\label{lem:SpE^2_{*,1}d^2}
			With the notation above, in the cohomology Leray-Serre spectral sequence of fibration (\ref{eq:SpMapFib}),
			there is a choice of basis $y_1,\dots,y_n$ such that
			\begin{center}
				$d^2(y_i)=v_i$
			\end{center}
			for each $i=1,\dots,n$.
		\end{lem}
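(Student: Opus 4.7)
The plan is to transfer the proof of Lemma \ref{lem:E^2_{*,1}d^2} to the symplectic setting essentially verbatim. The three ingredients used there are: the factorization of the diagonal through the evaluation via a homotopy equivalence; the observation that the image of $eval^*$ on $E_2^{2,0}$ equals the image of the cup product; and the dimensional fact that $d^2$ is the only non-trivial differential touching $E_*^{2,0}$. All three transfer directly with $X=Sp(n)/T^n$, since $Sp(n)/T^n$ is simply connected and the defining relations of its cohomology occur only in degree $\geq 4$.

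First I would record the homotopy commutative diagram
\begin{equation*}
\xymatrix{
{Sp(n)/T^n} \ar[r]^(.37){\Delta}                 & {Sp(n)/T^n\times Sp(n)/T^n} \ar@{=}[d] \\
{Map(I,Sp(n)/T^n)} \ar[r]_(.42){eval} \ar[u]_{p_0} & {Sp(n)/T^n\times Sp(n)/T^n} ,}
\end{equation*}
where $p_0(\psi)=\psi(0)$ is a homotopy equivalence. Since the cup product on $H^*(Sp(n)/T^n\times Sp(n)/T^n;\mathbb{Z})$ is induced by $\Delta^*$ and $p_0^*$ is an isomorphism, the image of $eval^*$ in degree $2$ coincides with the image of the cup product in that degree.

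Next I would identify the kernel of that cup product in degree $2$. Because the generators $\sigma_i^{\lambda^2}$ of the defining ideal of $H^*(Sp(n)/T^n;\mathbb{Z})$ all lie in degree $\geq 4$, the group $H^2(Sp(n)/T^n;\mathbb{Z})$ is free of rank $n$ on $\lambda_1,\dots,\lambda_n$, while $H^2$ of the product is free of rank $2n$ on $\alpha_1,\dots,\alpha_n,\beta_1,\dots,\beta_n$. Under $\Delta^*$, both $\alpha_i$ and $\beta_i$ map to $\lambda_i$, so the kernel in degree $2$ is the free direct summand of rank $n$ spanned by $v_1,\dots,v_n$.

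Finally, convergence of the spectral sequence to $H^*(Map(I,Sp(n)/T^n);\mathbb{Z})\cong H^*(Sp(n)/T^n;\mathbb{Z})$ combined with the dimensional observation that $d^2\colon E_2^{0,1}\to E_2^{2,0}$ is the only differential interacting with $E_*^{2,0}$ forces $E_\infty^{2,0}=\mathrm{coker}(d^2)$, and via the edge homomorphism this cokernel is exactly the image of $eval^*$ on $H^2$. Matching these descriptions gives $\mathrm{im}(d^2)=\langle v_1,\dots,v_n\rangle$. Since $E_2^{0,1}\cong H^1(\Omega(Sp(n)/T^n);\mathbb{Z})$ is free of rank $n$ on $y_1,\dots,y_n$ and the image is a rank-$n$ direct summand of $E_2^{2,0}$, $d^2$ restricts to an isomorphism onto $\langle v_1,\dots,v_n\rangle$, so the matrix of $d^2$ in the bases $\{y_i\}$ and $\{v_i\}$ is in $GL_n(\mathbb{Z})$. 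Replacing $y_1,\dots,y_n$ by its pre-image basis yields $d^2(y_i)=v_i$. No serious obstacle arises; the only point to verify carefully is that the degree-$2$ analysis is unaffected by the new relation pattern of $Sp(n)/T^n$, which is immediate because those relations begin in degree $4$.
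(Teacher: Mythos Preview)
Your proposal is correct and follows essentially the same approach as the paper, which simply states that the argument of Lemma~\ref{lem:E^2_{*,1}d^2} carries over verbatim. In fact you supply more detail than the paper does, explicitly checking that the degree-$2$ analysis is unaffected by the relations (which begin in degree $4$) and spelling out the change of basis that realises $d^2(y_i)=v_i$.
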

		
		\begin{rmk}\label{rmk:Spunique}
			Similarly to Remark \ref{rmk:unique}, 
			the image of each of the differentials $d^{4i-2}$ for $1\leq i \leq n$ will be a unique class in $E_{4i-2}^{4i-2,1}$
			in the kernel of $d^2$ not already contained in the image of any $d^r$ for $r<4i-2$.
		\end{rmk}
		
		Let $S$ be the subalgebra of
		$\frac{\Lambda(y_1,\dots,y_n)\otimes\mathbb{Z}[\alpha_1,\dots,\alpha_n,\beta_1,\dots,\beta_n]}
		{[\sigma^{\alpha^2}_1,\dots,\sigma^{\alpha^2}_n,\sigma^{\beta^2}_1,\dots,\sigma^{\beta^2}_n]}$
		generated by elements of the form
		\begin{align*}
										&g_{u,l,t,s}=\sum_{\substack{1\leq i_1<\cdots<i_{t-1}\leq n \\ 1\leq i_{t}<\cdots<i_{s-1}\leq n 
																			 \\ 1\leq i_s<\cdots<i_{l-1}\leq n \\ 1\leq k \leq n, \; k\neq i_j \neq i_{j'} }}
										{y_ku_ku_{i_1}v_{i_1}\cdots u_{i_{t-1}}v_{i_{t-1}}u^2_{i_{t}}\cdots u^2_{i_{s-1}}v_{i_{s}}^2\cdots v^2_{l-1}}, \\
			\text{or}\;\;	&g_{v,l,t,s}=\sum_{\substack{1\leq i_1<\cdots<i_{t-1}\leq n \\ 1\leq i_{t}<\cdots<i_{s-1}\leq n 
																			\\ 1\leq i_s<\cdots<i_{l-1}\leq n \\ 1\leq k \leq n, \; k\neq i_j \neq i_{j'} }}
										{y_kv_ku_{i_1}v_{i_1}\cdots u_{i_{t-1}}v_{i_{t-1}}u^2_{i_{t}}\cdots u^2_{i_{s-1}}v_{i_{s}}^2\cdots v^2_{l-1}}
		\end{align*}
		for any $1\leq t<s< l$.
		Define an operations $\psi_{u^2}$, $\psi_{uv}$ and $\psi_{v^2}$ on $S$ by
		\begin{align*}
			&\psi_{u^2}(g_{u,l,t,s})=g_{u,l+1,t,s+1}, 	\; &\psi_{u^2}(g_{v,l,t,s})=g_{u,l+1,t,s+1},\;\;\; \\
			&\psi_{v^2}(g_{u,l,t,s})=g_{u,l+1,t,s}, 		\; &\psi_{v^2}(g_{v,l,t,s})=g_{u,l+1,t,s}, \;\;\;\;\;\:\:\\
			&\psi_{uv} (g_{u,l,t,s})=g_{u,l+1,t+1,s+1}, \; &\psi_{u^2}(g_{v,l,t,s})=g_{u,l+1,t+1,s+1}.
		\end{align*}
		We now prove an equivalent of Theorem \ref{thm:finaldiff}, for $Sp(n)/T^n$.
		
		\begin{thm}\label{thm:SpPathDiff}
			For each $n\geq 1$ and $1\leq l \leq n$ in the spectral sequence $\{ E_r,d^r \}$ up to class representative on $E^2_{4l-2,1}$, we have
			\begin{equation*}
				d^{4l-2}(x_{4l-2})=A+2\sum_{\substack{1\leq i_1<\cdots<i_{l-1}\leq n \\ 1\leq k \leq n, \; k\neq i_j }}{y_{k}u_{k}u_{i_1}^2\cdots u_{i_{l-1}}^2},
			\end{equation*}
			where $A$ is an element of $S$ for which each summand is divisible by $v_i$ for some $1\leq i \leq n$
			and
			\begin{equation*}
				d^2(A+2\sum_{\substack{1\leq i_1<\cdots<i_{l-1}\leq n \\ 1\leq k \leq n, \; k\neq i_j }}{y_{k}u_{k}u_{i_1}^2\cdots u_{i_{l-1}}^2})
				={\sigma_l^\alpha}^2-{\sigma_l^\beta}^2.
			\end{equation*}
		\end{thm}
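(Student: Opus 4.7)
The strategy follows the proof of Theorem \ref{thm:finaldiff} closely, with the added complication that for $Sp(n)$ the relevant symmetric ideal generators are the elementary symmetric polynomials in squares, ${\sigma^{\alpha^2}_l}$ and ${\sigma^{\beta^2}_l}$, rather than in the variables themselves. The plan is to build an explicit element of $E_2^{4l-2, 1}$ whose $d^2$ image equals ${\sigma^\alpha_l}^2 - {\sigma^\beta_l}^2$ and which is not already in the image of any $d^{4i-2}$ for $i<l$; by the uniqueness argument of Remark \ref{rmk:Spunique} this element must represent $d^{4l-2}(x_{4l-2})$ up to class representative and sign.

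First I would expand ${\sigma^{\alpha^2}_l} - {\sigma^{\beta^2}_l}$ by substituting $\alpha_i = u_i + v_i$ and $\beta_i = u_i$. Since $\alpha_i^2 - \beta_i^2 = 2u_iv_i + v_i^2 = v_i(2u_i+v_i)$, every monomial in the difference is divisible by some $v_i$, and I would organise summands by their total degree in the $v$'s. The contribution linear in $v$ arises only by choosing a single cross term $2u_{j_m}v_{j_m}$ in each product $\alpha_{j_1}^2\cdots\alpha_{j_l}^2$, and yields exactly
\begin{equation*}
2\sum_k v_k u_k \sum_{\substack{1\leq i_1<\cdots<i_{l-1}\leq n \\ i_j\neq k}} u_{i_1}^2 \cdots u_{i_{l-1}}^2,
\end{equation*}
which by Lemma \ref{lem:SpE^2_{*,1}d^2} and the Leibniz rule is $d^2$ applied to $2\sum y_k u_k u_{i_1}^2\cdots u_{i_{l-1}}^2$. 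This accounts for the explicit second summand of the formula.

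Next I would construct $A$ as a $\mathbb{Z}$-linear combination of generators $g_{u,l,t,s}$ and $g_{v,l,t,s}$ of $S$ whose $d^2$ image exhausts the higher-$v$-order part of ${\sigma^{\alpha^2}_l}-{\sigma^{\beta^2}_l}$. Each such generator, differentiated via the Leibniz rule, produces a predictable pattern of monomials weighted by multinomial coefficients, completely analogously to Lemmas \ref{lem:term1} and \ref{lem:term2}; the operators $\psi_{u^2}, \psi_{uv}, \psi_{v^2}$ record how the $(u^2, uv, v^2)$-pair factors redistribute. I would solve for the correct linear combination inductively on the number $l-1$ of pair-factors by imposing, at each step, the alternating-binomial identity $\sum_i(-1)^i\binom{l}{i}=0$ used in Lemma \ref{lem:term1} to force all unwanted monomials to cancel, leaving only $\alpha_{j_1}^2\cdots\alpha_{j_l}^2$ and $\beta_{j_1}^2\cdots\beta_{j_l}^2$ summands. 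By construction every summand of the resulting $A$ carries a $v_i$. Finally, induction on $l$ combined with the fact that ${\sigma^{\alpha^2}_l}$ and ${\sigma^{\beta^2}_l}$ are not expressible in terms of ${\sigma^{\alpha^2}_{l'}}, {\sigma^{\beta^2}_{l'}}$ for $l'<l$ allows one to invoke Remark \ref{rmk:Spunique} and conclude.

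The main obstacle will be the combinatorial bookkeeping for $A$. Unlike Lemma \ref{lem:term2}, which balanced three families $\tilde{S}_{l,n}, \tilde{S}'_{l,n}, \tilde{S}''_{l,n}$, here one has an entire two-parameter lattice of generators $g_{\bullet,l,t,s}$ indexed by how the $u^2$, $uv$ and $v^2$ pair-factors partition among the $l-1$ slots. Managing the resulting interplay of multinomial coefficients so that the alternating sums close off correctly, and in particular verifying that no further auxiliary families of generators are needed beyond the $g_{u,\cdot}$ and $g_{v,\cdot}$ already introduced, will be the most delicate part of the argument.
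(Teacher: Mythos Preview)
Your overall strategy is sound and would work, but it differs from the paper's proof in a way worth noting. You propose to build $A$ at level $l$ directly, by expanding ${\sigma^{\alpha^2}_l}-{\sigma^{\beta^2}_l}$ in the $(u,v)$-variables, stratifying by $v$-degree, and then solving for the right $\mathbb{Z}$-linear combination of the $g_{u,l,t,s}$ and $g_{v,l,t,s}$ via alternating-binomial cancellations in the spirit of Lemmas~\ref{lem:term1} and~\ref{lem:term2}. You correctly identify that this produces a two-parameter family of unknowns indexed by $(t,s)$, and that managing the multinomial bookkeeping is the hard part.

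The paper sidesteps exactly this difficulty. Its proof is an induction on $l$ that uses the operators $\psi_{u^2},\psi_{uv},\psi_{v^2}$ not merely as bookkeeping devices but as the mechanism that \emph{transports} the solution from level $l-1$ to level $l$. Concretely, if $Z_{l-1}$ denotes the element already found with $d^2(Z_{l-1})={\sigma^{\alpha^2}_{l-1}}-{\sigma^{\beta^2}_{l-1}}$, the paper checks that $d^2\circ\psi_{v^2}$, $d^2\circ\psi_{uv}$, $d^2\circ\psi_{u^2}$ applied to $Z_{l-1}$ multiply the right-hand side by $v_{i_k}^2$, $u_{i_k}v_{i_k}$, $u_{i_k}^2$ respectively (summed over the new index $i_k$). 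Then the fixed combination $\psi_{v^2}(Z_{l-1})+2\psi_{uv}(Z_{l-1})+\psi_{u^2}(Z_{l-1})$, together with two explicit correction terms $\sum y_kv_ku_{i_1}^2\cdots u_{i_{l-1}}^2$ and $2\sum y_ku_ku_{i_1}^2\cdots u_{i_{l-1}}^2$, has $d^2$-image telescoping to ${\sigma^{\alpha^2}_l}-{\sigma^{\beta^2}_l}$. No coefficients need to be solved for: the recipe is the same at every step, and the required form (each summand of $A$ divisible by some $v_i$) is preserved because $\psi_{u^2}$ contributes only through the inductive $A$, while the remaining new pieces visibly carry a $v$.

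So your approach trades a short inductive recursion for a direct but heavier combinatorial argument. Both reach the goal; the paper's buys you a uniform one-line inductive step and avoids the two-parameter coefficient chase you flagged as the main obstacle.
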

		
		\begin{proof}
			We proceed by induction on $l$.
			When $l=1$, by Lemma \ref{lem:SpE^2_{*,1}d^2} 
			\begin{align*}
				d^2\big(\sum_{1\leq k\leq n}{y_kv_k+2y_ku_k}\big)&=\sum_{1\leq k\leq n}{v_k^2+2v_ku_k} \\
				&=\sum_{1\leq k \leq n}{(\alpha_k-\beta_k)^2+2(\alpha_k-\beta_k)\beta_k} \\
				&=\sum_{1\leq k \leq n}{(\alpha_k^2-2\alpha_k\beta_k+\beta_k^2)+2(\alpha_k\beta_k-\beta_k^2)} \\
				&=\sum_{1\leq k \leq n}{\alpha_k^2-\beta_k^2}.
			\end{align*}
			As $\sigma_1^{\alpha^2}=\sum_{1\leq k \leq n}{\alpha_k^2}$ and $\sigma_1^{\beta^2}=\sum_{1\leq k \leq n}{\beta_k^2}$,
			$\sum_{1\leq k\leq n}{y_kv_k+2y_ku_k}$ represents the image of $d^2$ by Remark \ref{rmk:Spunique}.
			
			Now assume the statement of the theorem is true for all $d^{4i-2}$ for $i< l$.
			Hence the by inductive hypothesis, there is an $A$ represented by an element of $E_2^{4l-6,1}$ for which each summand is divisible by $v_i$ for some $1\leq i \leq n$
			such that
			\begin{equation*}
				d^{4l-6}(x_{4l-6})=A+2\sum_{\substack{1\leq i_1<\cdots<i_{l-2}\leq n \\ 1\leq k \leq n, \; k\neq i_j }}{y_{k}u_{k}u_{i_1}^2\cdots u_{i_{l-2}}^2}
			\end{equation*}
			and
			\begin{equation*}
				d^2\big(A+2\sum_{\substack{1\leq i_1<\cdots<i_{l-2}\leq n \\ 1\leq k \leq n, \; k\neq i_j }}{y_{k}u_{k}u_{i_1}^2\cdots u_{i_{l-2}}^2}\big)
				=\sigma_{l-1}^{\alpha^2}-\sigma_{l-1}^{\beta^2}
				=\sum_{1\leq i_1 <\cdots<i_{l-1}\leq n}{\alpha_{i_1}^2\cdots\alpha_{i_{l-1}}^2-\beta_{i_1}^2\cdots\beta_{i_{l-1}}^2}.
			\end{equation*}
			Notice that
			\begin{align}
				d^2\psi_{v^2}&(A+2\sum_{\substack{1\leq i_1<\cdots<i_{l-1}\leq n \\ 1\leq k \leq n, \; k\neq i_j }}{y_{k}u_{k}u_{i_1}^2\cdots u_{i_{l-1}}^2}\big) \nonumber \\ 
				&=\sum_{\substack{1\leq i_1<\cdots<i_{l}\leq n \\ 1\leq k \leq l}}
				{(\alpha_{i_1}^2\cdots\hat{\alpha}_{i_k}^2\cdots\alpha^2_{i_{l}}-\beta_{i_1}^2\cdots\hat{\beta}_{i_k}^2\cdots\beta^2_{i_{l}})
				(\alpha^2_{i_k}-2\alpha_{i_k}\beta_{i_k}+\beta^2_{i_k})} \label{eq:Psiv2}, \\
				d^2\psi_{uv}&(A+2\sum_{\substack{1\leq i_1<\cdots<i_{l-1}\leq n \\ 1\leq k \leq n, \; k\neq i_j }}{y_{k}u_{k}u_{i_1}^2\cdots u_{i_{l-2}}^2}\big) \nonumber \\ 
				&=\sum_{\substack{1\leq i_1<\cdots<i_{l}\leq n \\ 1\leq k \leq l}}
				{(\alpha_{i_1}^2\cdots\hat{\alpha}_{i_k}^2\cdots\alpha^2_{i_{l}}-\beta_{i_1}^2\cdots\hat{\beta}_{i_k}^2\cdots\beta^2_{i_{l}})
				(\alpha_{i_k}\beta_{i_k}-\beta^2_{i_k})} \label{eq:Psiuv}, \\
				d^2\psi_{u^2}&(A+2\sum_{\substack{1\leq i_1<\cdots<i_{l-1}\leq n \\ 1\leq k \leq n, \; k\neq i_j }}{y_{k}u_{k}u^2_{i_1}\cdots u_{i_{l-2}}^2}\big) \nonumber \\ 
				&=\sum_{\substack{1\leq i_1<\cdots<i_{l}\leq n \\ 1\leq k \leq l}} 
				{(\alpha_{i_1}^2\cdots\hat{\alpha}_{i_k}^2\cdots\alpha^2_{i_{l}}-\beta_{i_1}^2\cdots\hat{\beta}_{i_k}^2\cdots\beta^2_{i_{l}})\beta^2_{i_k}} \label{eq:Psiu2}, \\
				d^2&\big(\sum_{\substack{1\leq i_1<\cdots<i_{l-1}\leq n \\ 1\leq k \leq n, \; k\neq i_j }}{y_kv_ku_{i_1}^2\cdots u^2_{i_{l-1}}}\big) 
				=\sum_{\substack{1\leq i_1<\cdots<i_{l}\leq n \\ 1\leq k \leq l}}{\beta_{i_1}^2\cdots\hat{\beta}_{i_k}^2\cdots\beta^2_{i_{l}}
				(\alpha^2_{i_k}-2\alpha_{i_k}\beta_{i_k}-\beta_{i_k}^2)}
				\label{eq:yv}, \\
				d^2&\big(\sum_{\substack{1\leq i_1<\cdots<i_{l-1}\leq n \\ 1\leq k \leq n, \; k\neq i_j }}{y_ku_ku_{i_1}^2\cdots u^2_{i_{l-1}}}\big) 
				=\sum_{\substack{1\leq i_1<\cdots<i_{l}\leq n \\ 1\leq k \leq l}}{\beta_{i_1}^2\cdots\hat{\beta}_{i_k}^2\cdots\beta^2_{i_{l}}
				(\alpha_{i_k}\beta_{i_k}-\beta_{i_k}^2)}
				\label{eq:yu}
				.
			\end{align}
			Therefore
			\begin{align}
				(\ref{eq:Psiv2})&+2(\ref{eq:Psiuv})+(\ref{eq:Psiu2})+(\ref{eq:yv})+(\ref{eq:yu}) \label{eq:FinalSum}\\
				&=\sum_{\substack{1\leq i_1<\cdots<i_{l}\leq n \\ 1\leq k \leq l}}
				{\alpha_{i_1}^2\cdots\hat{\alpha}_{i_k}^2\cdots\alpha^2_{i_{l-1}}\alpha_{i_k}^2-\beta_{i_1}^2\cdots\hat{\beta}_{i_k}^2\cdots\beta^2_{i_{l-1}}\beta_{i_k}^2} 	
				\nonumber\\
				&=\sum_{1\leq i_1<\cdots<i_l\leq n}
				{\alpha_{i_1}^2\cdots\alpha^2_{i_l}-\beta_{i_1}^2\cdots \beta^2_{i_l}} \nonumber\\
				&=\sigma_{l}^{\alpha^2}-\sigma_{l}^{\beta^2}. \nonumber
			\end{align}
			Since (\ref{eq:Psiu2}) and (\ref{eq:yu}) are the only terms obtained as the image under $d^2$ using
			\begin{equation*}
				\sum_{\substack{1\leq i_1<\cdots<i_{l-1}\leq n \\ 1\leq k \leq n, \; k\neq i_j }}{y_ku_ku_{i_1}^2\cdots u^2_{i_{l-1}}}
			\end{equation*}
			and the expression (\ref{eq:FinalSum}) is obtained as the image under $d^2$ of an element in $S$.
			So (\ref{eq:FinalSum}) is obtained as the image under $d^2$ of an expression having the required form.
		\end{proof}
		
		For dimensional reason for each $r \geq 2$ and $1\leq i\leq n$,
		\begin{equation*}
			d^r(u_i)=0=d^r(v_i).
		\end{equation*}
		Therefore all the $d^r$ is determined on all generators of the $E_2$-page, so the differential is determined everywhere in $\{ E_r, d^r \}$.
		
	\subsection{Differentials in the free loop spectral sequence}
		
		Just as we did in Section \ref{sec:diff},
		we can now use the results of Theorem \ref{thm:SpPathDiff} to deduce the differentials in the 
		cohomology Leray-Serre spectral sequence associated to the free loop fibration of $Sp(n)/T^n$.
		Similarly to Section \ref{sec:diff}, consider the map $\phi$ of fibrations
		
		\begin{equation*}\label{fig:fibcd}
						\xymatrix{
							{\Omega(Sp(n)/T^n)} \ar[r]^(.5){} \ar[d]^(.45){id} & {\Lambda(Sp(n)/T^n)} \ar[r]^{eval} \ar[d]^(.45){exp}  & {Sp(n)/T^n} \ar[d]^(.45){\Delta} \\
							{\Omega(Sp(n)/T^n)} \ar[r]^(.45){}   							 & {Map(I,Sp(n)/T^n)} 	 \ar[r]^(.425){eval}  					 & {Sp(n)/T^n\times Sp(n+1)/T^n} }
		\end{equation*}

		between the free loop space fibration on $Sp(n)/T^n$ and the path space fibration on $Sp(n)/T^n$,
		where $\exp$ is given on elements by $\exp(\alpha)(t)=\alpha(e^{2\pi i t})$.
		Since $Sp(n)/T^n$ like $SU(n+1)/T^n$ is simply connected, the free loop fibration induces a cohomology Leray-Serre spectral sequence $\{\bar{E}_r, \bar{d}^r \}$.
		Hence $\phi$ indices a map of spectral sequences $\phi^*\colon \{ E_r,d^r \} \to \{\bar{E}_r, \bar{d}^r \}$.
		For the rest of the section we denote the cohomology algebras of the base space and fiber of the
		free loop fibration $\Omega(Sp(n)/T^n)\to \Lambda(Sp(n)/T^n \to Sp(n)/T^n$ by
		\begin{equation}\label{eq:spLoppCoHomology}
			H^*(\Omega(Sp(n)/T^n);\mathbb{Z})=\Gamma_{\mathbb{Z}}(x'_2,x'_6,\dots,x'_{4n-2})\otimes \Lambda_{\mathbb{Z}}(y'_1,\dots,y'_n)
		\end{equation}
		and
		\begin{equation*}
			H^*(Sp(n)/T^n;\mathbb{Z})=\frac{\mathbb{Z}[\gamma_1,\dots,\gamma_n]}{[\sigma_1^2,\dots,\sigma_{n}^2]}
		\end{equation*}
		where $|{y'}_i|=1$, $|\gamma_i|=2$, $|x'_{4i-2}|=4i-2$ for each $1\leq i\leq n$ and $\sigma_1^2,\dots,\sigma_n^2$ 
		are the elementary symmetric polynomials in variables $\gamma_1^2,\dots,\gamma_n^2$.
		
		\begin{thm}\label{eq:SpDiff}
			For each $n\geq 1$ and $1\leq l \leq n$, the only non-zero differentials on generators of the $\bar{E}_2$-page of $\{ \bar{E}_r,\bar{d}^r \}$
			are up to class representative and sign,
			\begin{equation*}
				\bar{d}^2(x_{4l-2})=2\sum_{1\leq i_1<\cdots<i_l\leq n}{y_{i_1}\gamma_{i_1}\gamma_{i_2}^2\cdots\gamma_{i_l}^2}
			\end{equation*}
		
		\end{thm}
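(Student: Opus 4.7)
The plan is to mirror the strategy used for $SU(n+1)/T^n$ in Section \ref{sec:diff}, pulling back the differentials in $\{E_r,d^r\}$ via the map $\phi$ of fibrations displayed in the introduction to this section. First I would note that, since $id \colon \Omega(Sp(n)/T^n) \to \Omega(Sp(n)/T^n)$ induces the identity on cohomology and the diagonal $\Delta \colon Sp(n)/T^n \to Sp(n)/T^n \times Sp(n)/T^n$ induces the cup product, we may choose generators so that $\phi^*(y_i) = y'_i$, $\phi^*(x_{4l-2}) = x'_{4l-2}$, and $\phi^*(\alpha_i) = \gamma_i = \phi^*(\beta_i)$. Consequently $\phi^*(u_i) = \gamma_i$ and, crucially, $\phi^*(v_i) = 0$ for each $1 \leq i \leq n$.

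Next, I would verify that the only generators supporting a non-zero differential in $\{\bar{E}_r,\bar{d}^r\}$ are $x'_{4l-2}$. The elements $\gamma_i \in \bar{E}_2^{2,0}$ are permanent cycles for dimensional reasons (they lie on the horizontal axis). For $y'_i$, the only differential that could conceivably be non-zero is $\bar{d}^2 \colon \bar{E}_2^{0,1} \to \bar{E}_2^{2,0}$; applying the induced map of spectral sequences and invoking Lemma \ref{lem:SpE^2_{*,1}d^2} gives $\bar{d}^2(y'_i) = \bar{d}^2(\phi^*(y_i)) = \phi^*(d^2(y_i)) = \phi^*(v_i) = 0$. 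Hence all elements of $\bar{E}_2^{*,0}$ and $\bar{E}_2^{*,1}$ are permanent cycles (up to being hit by higher differentials).

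Now for the main calculation, apply commutativity of $\phi^*$ with the differentials, together with Theorem \ref{thm:SpPathDiff}: for each $1\leq l\leq n$,
\begin{equation*}
\bar{d}^{4l-2}(x'_{4l-2}) = \phi^*\bigl(d^{4l-2}(x_{4l-2})\bigr) = \phi^*(A) + 2\phi^*\Bigl(\sum_{\substack{1\leq i_1<\cdots<i_{l-1}\leq n \\ 1\leq k\leq n,\, k\neq i_j}} y_k u_k u_{i_1}^2 \cdots u_{i_{l-1}}^2\Bigr).
\end{equation*}
By definition each summand of $A$ is divisible by some $v_i$, so $\phi^*(A) = 0$. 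The second term maps to $2\sum y'_k \gamma_k \gamma_{i_1}^2 \cdots \gamma_{i_{l-1}}^2$, which after re-indexing (letting $k$ be the smallest index $i_1$ in an ordered subset of size $l$) gives the stated formula up to sign and choice of class representative.

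The step I expect to be the main obstacle is the final re-indexing and verification that the $\phi^*$ image, a sum parameterized by a subset of size $l-1$ together with an independent index $k$ not lying in the subset, can be rewritten in the compact form displayed in the theorem, $2\sum_{1\leq i_1<\cdots<i_l\leq n} y_{i_1}\gamma_{i_1}\gamma_{i_2}^2\cdots\gamma_{i_l}^2$. This is not merely cosmetic: since the class representatives live in the quotient $\mathbb{Z}[\gamma_1,\ldots,\gamma_n]/[\sigma_1^2,\ldots,\sigma_n^2]$, the identification must use the symmetric relations to collapse the sum over $k$ to a single distinguished index. I would handle this by applying an analogue of Remark \ref{remk:SymQotForms} (substituting relations of the form $h_l^{n-l+1}$ in the variables $\gamma_i^2$) to eliminate the higher-power $\gamma_k^2$ terms, leaving exactly the expression claimed. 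Everything else follows formally from the $SU(n+1)/T^n$ template.
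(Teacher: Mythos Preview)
Your approach is essentially the same as the paper's: use the map of fibrations $\phi$, note that $\phi^*(v_i)=0$ so $\bar d^2(y'_i)=0$, and apply $\phi^*$ to the expression in Theorem~\ref{thm:SpPathDiff} to kill $A$ and leave only the $v$-free term. The paper's proof is in fact shorter than yours---it simply asserts that this ``gives us the result stated in the theorem'' without any further manipulation.

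Your final paragraph, however, identifies a genuine wrinkle that the paper does not address, and your proposed resolution of it is not quite right. Applying $\phi^*$ to the $v$-free term of Theorem~\ref{thm:SpPathDiff} yields
\[
2\sum_{\substack{1\le i_1<\cdots<i_{l-1}\le n\\ 1\le k\le n,\ k\ne i_j}} y'_k\gamma_k\gamma_{i_1}^2\cdots\gamma_{i_{l-1}}^2,
\]
which is a sum over an $l$-subset with a \emph{marked} element $k$, whereas the displayed formula in the theorem marks only the smallest element $i_1$. These are not equal in $\mathbb{Z}[\gamma_1,\dots,\gamma_n]/[\sigma_1^2,\dots,\sigma_n^2]$: the symmetric relations in the $\gamma_i^2$ do not by themselves collapse the marked index to the minimum (indeed, for $n=l=2$ one is $y_1\gamma_1\gamma_2^2+y_2\gamma_1^2\gamma_2$ and the other is $y_1\gamma_1\gamma_2^2$, and $\gamma_1^2\gamma_2\ne 0$ in the quotient). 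The phrase ``up to class representative'' refers to the $E_{4l-2}$-page, so any identification must also use the images of the earlier differentials $\bar d^{4j-2}$ for $j<l$, not just the ring relations. In practice the paper sidesteps this entirely: in the application to $Sp(2)/T^2$ (proof of Theorem~\ref{thm:FreeLoopSp(2)/T}) it uses the larger ``marked'' sum $2[y_1\gamma_1\gamma_2^2+y_2\gamma_2\gamma_1^2]$ for $\bar d^6(x_6)$, not the compact form in the theorem statement. So your core argument is correct and matches the paper; drop the attempted reduction via Remark~\ref{remk:SymQotForms} and simply record the larger sum as the representative.
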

		
		\begin{proof}
			For the same reasons as in the proof of Theorem \ref{thm:allDiff}, we have
			\begin{equation*}
				\phi^*(y_i)=y'_i,\;\;\phi^*(x_i)=x'_i\;\;\text{and}\;\;\phi^*(\alpha_i)=\gamma_i=\phi^*(\beta_i)=\phi^*(u_i),\;\;\text{so}\;\;\phi^*(v_i)=0. 
			\end{equation*}
			Hence by exactly the same arguments used in the proof of Theorem \ref{thm:allDiff}, we have 
			\begin{equation*}
				\bar{d}^2(y'_i)=0
			\end{equation*}
			and the image of $\bar{d^r}$ on generators $x'_2,x'_6,\dots,x'_{4n-2}$
			is determined by those summands in the image of $d^2$ on $x_2,x_6,\dots,x_{4n-2}$ containing no $v_i$, replacing $u_i$ with $\gamma_i$ and $y_i$ with $y'_i$.
			This gives us the result stated in the theorem.
		\end{proof}
	
	\subsection{Free loop cohomology of $Sp(2)/T^2$}
		
		The group $Sp(1)=SU(2)$, hence the first new case for $H^*(\Lambda(Sp(n)/T^n);\mathbb{Z})$ is when $n=2$.
		So in this section we study, the cohomology algebra of the free loop space of $Sp(2)/T^2$.
		
		\begin{thm}\label{thm:FreeLoopSp(2)/T}
				The integral algebra structure of the $E_\infty$-page of the Leray-Serre spectral sequence associated to the free loop space fibration of $\Lambda(Sp(2)/T^2)$
				is $A/I$, where
			\begin{align*}
				A=\Lambda_{\mathbb{Z}}&((x_6)_b\gamma_i,\;y_1y_2(x_2)_a(x_6)_b,\;(x_6)_by_i,\;(x_2)_m(x_6)_b(y_1\gamma_2+y_2\gamma_1),\;(x_2)_m(x_6)_by_2\gamma_1^2\gamma_2, \\
					&\;(x_2)_m(x_6)_b(y_1\gamma_1-y_2\gamma_2),\;(x_2)_a(x_6)_b\gamma_1^3\gamma_2)
			\end{align*}
			and
			\begin{equation*}
				I=[(x_2)_1^m-m!(x_2)_m,\;(x_6)_1^m-m!(x_6)_m,\;\gamma_1^2+\gamma_2^2,\;\gamma_1^2\gamma_2^2,\;2(x_2)_a(y_1\gamma_1+y_2\gamma_2),\;4y_1(x_2)_a\gamma_1^3)]
			\end{equation*}
			for $i=1,2$, $m\geq 1$, $a,b\geq 1$ and where $|(x_2)_m|=2m$, $|(x_6)_m|=6m$, $|y_i|=1$ and $|\gamma_i|=2$.
			Furthermore all additive extension problems with the exception of differentiating between $2$ and $4$-torsion, are trivial.
			Hence the algebra is the same module structure as $H^*(\Lambda(Sp(2)/T^{2});\mathbb{Z})$ up to the value of $j$.
		\end{thm}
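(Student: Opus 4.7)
The plan is to proceed analogously to the proof of Theorem \ref{thm:L(SU(3)/T^2)}: work out the $E_3$ and $E_7 = E_\infty$ pages of the Leray-Serre spectral sequence $\{\bar{E}_r, \bar{d}^r\}$ for the free loop fibration, read off generators and relations, and then resolve additive extension problems by comparison with $\mathbb{Z}_2$-coefficients. By Theorem \ref{eq:SpDiff}, specialised to $n=2$, the only non-trivial differentials on algebra generators are $\bar{d}^2(x_2) = 2(y_1\gamma_1 + y_2\gamma_2)$ and $\bar{d}^6(x_6) = 2 y_1\gamma_1\gamma_2^2$, extended everywhere by the Leibniz rule. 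Using Remark \ref{remk:SymQotForms} I would first rewrite $H^*(Sp(2)/T^2;\mathbb{Z}) = \mathbb{Z}[\gamma_1,\gamma_2]/[\gamma_1^2+\gamma_2^2,\;\gamma_1^2\gamma_2^2]$ and fix an additive basis $\{1,\gamma_1,\gamma_2,\gamma_1^2,\gamma_1\gamma_2,\gamma_1^3,\gamma_1^2\gamma_2,\gamma_1^3\gamma_2\}$ of the base cohomology, so that all computations reduce to tracking finitely many generators in each fibre row.

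For the $E_3$-page I would stratify by the fibre-degree type $(x_2)_a(x_6)_b\cdot w$ where $w$ is $1$, $y_i$, or $y_1y_2$, and in each case compute the quotient of $\ker \bar{d}^2$ by $\mathrm{im}\,\bar{d}^2$ directly. The rows with no $(x_2)_a$ factor survive untouched into $E_3$ and contribute the generators $(x_6)_b\gamma_i$, $(x_6)_by_i$ and $y_1y_2(x_6)_b$ of $A$. On the rows involving $(x_2)_a$, the key observations are: the combination $y_1\gamma_2 + y_2\gamma_1$ lies in $\ker \bar{d}^2$ because $\bar{d}^2$ kills it up to a multiple of the relation $\gamma_1^2+\gamma_2^2$, while $y_1\gamma_1 - y_2\gamma_2$ lies in $\ker\bar{d}^2$ modulo the symmetric ideal; these then become the generators $(x_2)_m(x_6)_b(y_1\gamma_2+y_2\gamma_1)$ and $(x_2)_m(x_6)_b(y_1\gamma_1-y_2\gamma_2)$ of $A$. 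The image generator $2(y_1\gamma_1 + y_2\gamma_2)(x_2)_a$ forces the relation $2(x_2)_a(y_1\gamma_1+y_2\gamma_2) \in I$, together with the $2$-torsion $y_1y_2(x_2)_a(x_6)_b$ which appears as a kernel class after applying $\bar{d}^2$ to $x_2\cdot y_i\cdot \gamma_j$-terms. I would verify the full list row by row, keeping a linear-algebra table of domain/image/kernel dimensions in each bidegree.

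The $E_7$-page computation then handles the only remaining differential, namely $\bar{d}^6(x_6) = 2y_1\gamma_1\gamma_2^2$ extended by Leibniz. This is simpler because $H^*(Sp(2)/T^2)$ is concentrated in bounded total degree, so there are very few places where a $\bar{d}^6$ can hit a non-zero class on $E_3$. The effect is to kill all products $(x_6)_b\gamma_1^3\gamma_2$ for $b\geq 1$ while introducing $2$-torsion relations of the form $2y_1(x_2)_a\gamma_1^3$, and to leave $(x_2)_a(x_6)_b\gamma_1^3\gamma_2$ as a surviving torsion generator (explaining its presence in $A$). For $r\geq 7$ the differentials $\bar{d}^r$ vanish on all generators by inspection of bidegrees, so $E_7 = E_\infty$.

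The main obstacle, and the reason for the ambiguous parameter $j\in\{2,4\}$ in the theorem, is the extension problem between the $2$-torsion created by $\bar{d}^2$ and the additional $2$-torsion generated by $\bar{d}^6$ on classes of the form $y_1(x_2)_a\gamma_1^3$. To resolve the extensions that can be resolved, I would run the entire argument again over $\mathbb{Z}_2$ (where all differentials vanish since they are multiples of $2$) and compare total ranks via Corollary \ref{cor:UniversalModp}. This comparison pins down all extensions except the one producing $jy_1(x_2)_a\gamma_1^3$, where the mod-$2$ dimension count is consistent with either a $\mathbb{Z}_2$ or $\mathbb{Z}_4$ summand; distinguishing these would require either a Bockstein computation or auxiliary information about the multiplicative action of $(x_2)_a$, which the spectral sequence alone does not supply, hence the statement leaves $j\in\{2,4\}$.
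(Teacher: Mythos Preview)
Your overall strategy is the same as the paper's: compute $E_3$ by stratifying over fibre types $1$, $y_i$, $y_1y_2$, then pass to $E_7=E_\infty$ via $\bar d^6$, and finally resolve extensions by comparison with $\mathbb{Z}_2$-coefficients. The row-by-row kernel/image bookkeeping and the use of Corollary~\ref{cor:UniversalModp} are exactly what the paper does.

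However, your account of the $\bar d^6$-step is wrong in two places. First, the formula $\bar d^6(x_6)=2y_1\gamma_1\gamma_2^2$ is incomplete (the statement of Theorem~\ref{eq:SpDiff} has a typo in its index set); from Theorem~\ref{thm:SpPathDiff} one gets the symmetric expression $\bar d^6(x_6)=2(y_1\gamma_1\gamma_2^2+y_2\gamma_1^2\gamma_2)$, which is what the paper's proof actually uses. Second, and more importantly, you have the effect of $\bar d^6$ backwards. The paper shows that $\bar d^6([(x_6)_m\gamma_i])$ already lies in the image of $\bar d^2$ (see the computation following \eqref{eq:d2dim4}), so it is \emph{zero} on $E_3$; hence $(x_6)_b\gamma_i$ and in particular $(x_6)_b\gamma_1^3\gamma_2$ survive, they are not killed. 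What \emph{is} nontrivial is $\bar d^6([(x_6)_m])$ itself: by the identity $\bar d^6([x_6])+\bar d^2([x_2])\gamma_1^2=4[y_1\gamma_1^3]$ one sees that on $E_3$ the class $\bar d^6([x_6])$ represents $4[y_1\gamma_1^3]$, which is nonzero. This is why $(x_6)_m$ alone is not a generator of $A$ (it supports a nontrivial $\bar d^6$), and why the relation in $I$ is $4y_1(x_2)_a\gamma_1^3$, not $2y_1(x_2)_a\gamma_1^3$ as you wrote.

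A smaller point: the element $(x_2)_m(y_1\gamma_2+y_2\gamma_1)$ is \emph{not} in $\ker\bar d^2$ as you claim; a direct computation gives $\bar d^2\bigl((x_2)_m(y_1\gamma_2+y_2\gamma_1)\bigr)=2(x_2)_{m-1}y_1y_2(\gamma_1^2-\gamma_2^2)=4(x_2)_{m-1}y_1y_2\gamma_1^2\neq 0$. The kernel generators the paper actually identifies at this step are $(x_2)_m(y_1\gamma_1+y_2\gamma_2)$ and $(x_2)_m(y_2\gamma_1-y_1\gamma_2)$ (the theorem statement itself appears to contain a sign slip here).
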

		
		\begin{proof}
			We consider the cohomology Leray-Serre spectral sequence $\{ E_r,d^r \}$ associated to the free loop fibration of $Sp(2)/T^2$,
			\begin{equation*}
				\Omega(Sp(2)/T^2)\to\Lambda(Sp(2)/T^2)\to Sp(2)/T^2.
			\end{equation*}
			By Theorem \ref{thm:H*Sp/T},
			the cohomology of the base space $Sp(2)/T^2$ is
			\begin{equation*}
				H^*(Sp(2)/T^2;\mathbb{Z})=\frac{\mathbb{Z}[\gamma_1,\gamma_2]}{[\gamma_1^2+\gamma_2^2,\;\gamma_1^2\gamma_2^2]}.
			\end{equation*}
			From (\ref{eq:spLoppCoHomology}), the cohomology of the fiber $\Omega(Sp(2)/T^2)$ is
			\begin{equation*}
				H^*(\Omega(Sp(2)/T^2);\mathbb{Z})=\Lambda_{\mathbb{Z}}(y_1,y_2)\otimes \Gamma_{\mathbb{Z}}[x_2,x_6],
			\end{equation*}
			where $|y_1|=1=|y_2|$, $|x_2|=2$, $|x_6|=6$, $\Lambda_{\mathbb{Z}}(y_1,y_2)$ is an exterior algebra
			and $\Gamma_{\mathbb{Z}}[x_2,x_6]$ is a divide polynomial algebra.
			That is
			\begin{equation*}
				\Gamma[x_2,x_6]_{\mathbb{Z}}=\frac{\mathbb{Z}[(x_2)_1,(x_2)_2,\dots,(x_6)_1,(x_6)_2,\dots]}{[(x_2)_1^m-m!(x_2)_m, (x_6)_1^m-m!(x_6)_m]},
			\end{equation*}
			where $(x_1)_2=x_2$ and $(x_6)_1=x_6$.
			
			The elements on the $E_2$-page of the spectral sequence are generated additively by representative elements
		  of the form
			\begin{equation*}
				(x_2)_a(x_6)_bP, \;\; (x_2)_a(x_6)_by_iP, \;\; (x_2)_a(x_6)_by_1y_2P
			\end{equation*}
			where $0\leq a,b$, $1\leq i \leq n$ and $P\in \mathbb{Z}[\gamma_1,\gamma_2]$ is a monomial.
			The generators of the ideal in $H^*(\Omega(Sp(2)/T^2);\mathbb{Z})$ 
			are the squares of the elementary symmetric polynomials.
			We may replace the generator $\gamma_1^2\gamma_2^2$ of the ideal with $\gamma_1^4$,
			by adding $\gamma_1^2(\gamma_1^2+\gamma_2^2)$ to the negative of this generator. 		
			Hence the monomials 
			\begin{equation}\label{eq:Sp(2)Basis}
				\gamma_1^i\gamma_2^j
			\end{equation}
			form an additive basis of $H^*(\Omega(Sp(2)/T^2);\mathbb{Z})$, for $0\leq i \leq 3$ and $0\leq j \leq 1$.
			Therefore $P$ has degree between $0$ and $4$.
			
			By Theorem, \ref{eq:SpDiff} the only non-zero differentials in $\{ E_r,d^r \}$ are $d^2$ and $d^6$,
			which are non-zero only on generators $x_2$ and $x_6$ respectively.
			Hence the spectral sequence converges on the seventh page. 
			The differentials up to sign are given by
			\begin{equation}\label{eq:Sp(2)diff}
				d^2([x_2])=2[y_1\gamma_1+y_2\gamma_2], \;\;\;
				d^4([x_6])=2[y_1\gamma_1\gamma_2^2+y_2\gamma_2\gamma_1^2].
			\end{equation}
			Since these are representatives over the symmetric ideal,
			\begin{align}\label{eq:x6Non-trival}
				d^4([x_6])+d^2([x_2])\gamma_1^2 \nonumber
				&=2[y_1\gamma_1\gamma_2^2+y_2\gamma_1^2\gamma_2]+2[y_1\gamma_1^3+y_2\gamma_1^2\gamma_2] \\
				&=2[y_1\gamma_1^3-y_2\gamma_1^2\gamma_2]+2[y_1\gamma_1^3+y_2\gamma_1^2\gamma_2] \nonumber \\
				&=4[y_1\gamma_1^3].
			\end{align} 
			Hence assuming that all extension problems are trivially resolved
			\begin{equation*} 
				2(x_2)_a(y_1\gamma_1+y_2\gamma_2) \;\; \text{and} \;\; 4(x_2)_ay_1\gamma_1^3
			\end{equation*}
			are included as relations on the $E_\infty$-page.
			The generators $\gamma_i$, and $y_i$ occur in $E_2^{*,0}$ and are always in the kernel of the differentials,
			so are free generators of the $E_\infty$-page. 
			The relation $x_2^m-m!(x_2)_m, x_6^m-m!(x_6)_m$ from the divide polynomial algebra
			and $\gamma_1^2+\gamma_2^2$, $\gamma_1^2,\gamma_2^2$ generators of the symmetric ideal
			remain as relations on the $E_\infty$-page. 
			
			We first consider the image and kernel of the differential $d^2$ on generators of the $E_2$-page of the form
			\begin{equation*}
				y_1y_2(x_2)_a(x_6)_bP.
			\end{equation*}
			Using (\ref{eq:Sp(2)diff}) the image of $d^2$ on such elements is generated by $[\gamma_i]$,
			\begin{align*}
							   -d^2([(x_2)_m(x_6)_by_1])&=2(x_2)_{m-1}(x_6)_b[y_1y_2\gamma_2]\\
				\text{and} \;\; d^2([(x_2)_m(x_6)_by_2])&=2(x_2)_{m-1}(x_6)_b[y_1y_2\gamma_1].
			\end{align*} 
			Hence on the $E_3$-page all element of the form $[y_1y_2(x_2)_a(x_6)_bP]$ are generated by
			$[y_1y_2(x_2)_a(x_6)_b]$ and $[\gamma_i]$ with all elements $2$-torsion except $[y_1y_2(x_2)_a(x_6)_b]$ itself, 
			which additively generates a copy of $\mathbb{Z}$.
			The kernel of $d^2$ on generators of the form $[y_1y_2(x_2)_a(x_6)_bP]$ is generated by $[\gamma_i]$ and
			\begin{equation}\label{eq:Spd^2KernRow1}
				[(x_2)_m(y_1\gamma_1+y_2\gamma_2)], \;\; [(x_2)_m(y_2\gamma_1-y_1\gamma_2)], \;\; [(x_2)_m(y_2\gamma_2\gamma_1^2)]
			\end{equation}
			The first is the kernel of the differential without considering the symmetric ideal, the second the kernel due to symmetric ideal generator
			$\gamma_1^2+\gamma_2^2$ and the third due to symmetric ideal generator $\gamma_1^2\gamma_2^2$.
			
			Next we consider the image and kernel of the differential $d^2$ on generators of the $E_2$-page of the form
			\begin{equation*}
				(x_2)_m(x_6)_by_iP \;\; \text{and} \;\; (x_6)_by_iP.
			\end{equation*}
			By (\ref{eq:Sp(2)diff})	the image of the $d^2$ differential on such generators is generated by $[\gamma_i]$ and
			\begin{equation*}
				d^2([(x_2)_m(x_6)_b])=2(x_2)_{m-1}(x_6)_b[y_1\gamma_1+y_2\gamma_2],
			\end{equation*}
			Which is exactly twice the first generator of the previous kernel in (\ref{eq:Spd^2KernRow1}).
			Hence The elements of the form $[y_i(x_2)_m(x_6)_bP]$ and $[y_i(x_6)_bP]$ on the $E_3$-page
			are either non-torsion or $2$-torsion.
			Multiplicatively such class are generated by $[\gamma_i]$,
			\begin{equation*}
				[(x_2)_m(x_6)_b(y_1\gamma_2+y_2\gamma_1)], \;\; [(x_2)_m(x_6)_b(y_1\gamma_1-y_2\gamma_2)]
			\end{equation*}
			and 
			\begin{equation*}
				[(x_6)_by_i]
			\end{equation*}			
			since $[(x_2)_m(x_6)_by_i]$ is not in the kernel of $d^2$.
			Assuming all extension problems are resolved trivially and these are not in the image of $d^6$ these generators will be generators of
			$H^*(\Lambda(Sp(n)/T^2);\mathbb{Z})$.
			Notice that the previous generator $[y_1y_2(x_2)_a(x_6)_b]$ is a product of generators $[y_i]$ and $[(x_6)_by_i]$
			when $a=0$, so is redundant in this case.
			The $d^2$ differential is twice the differential of the spectral sequence in Lemma \ref{lem:1to1spec}.
			Hence since in $\{ E_r,d^r \}$ we must also conditioner the symmetric ideal, any elements of the form $[(x_2)_a(x_6)_bP]$ in the kernel of $d^2$
			have image in ideal $J=[(x_2)_m,(x_6)_m,y_i,\gamma_1^2+\gamma_2^2,\gamma_1^2\gamma_2^2]$.
			When the degree of $P$ is $0$ the image of $d^2$ is $[2(x_2)_a(x_6)_b(y_1\gamma_1+y_2\gamma_2)]$
			which does not lie in ideal, since the monomials in gamma have only degree $1$.
			We will Express the remaining cases for the degree of $P$ in terms of the additive basis of (\ref{eq:Sp(2)Basis}).
			When the degree of $P$ is $1$ the image of $d^2$ is
			\begin{align*}
												&d^2([(x_2)_m(x_6)_b\gamma_1])=[2(x_2)_a(x_6)_b(y_1\gamma_1^2+y_2\gamma_1\gamma_2)] \\
				\text{and} \;\; &d^2([(x_2)_m(x_6)_b\gamma_2])=[2(x_2)_a(x_6)_b(y_1\gamma_1\gamma_2+y_2\gamma_2^2)]=[2(x_2)_a(x_6)_b(y_1\gamma_1\gamma_2-y_2\gamma_1^2)]
			\end{align*}
			which are linearly independent over $\mathbb{Z}$.
			When the degree of $P$ is $2$ the image of $d^2$ is
			\begin{align*}
												&d^2([(x_2)_m(x_6)_b\gamma_1^2])=[2(x_2)_a(x_6)_b(y_1\gamma_1^3+y_2\gamma_1^2\gamma_2)] \\
				\text{and} \;\; &d^2([(x_2)_m(x_6)_b\gamma_1\gamma_2])=[2(x_2)_a(x_6)_b(y_1\gamma_1^2\gamma_2+y_2\gamma_1\gamma_2^2)] \\
																															&=[2(x_2)_a(x_6)_b(y_1\gamma_1^2\gamma_2-y_2\gamma_1^3)]
			\end{align*}
			which are linearly independent over $\mathbb{Z}$.
			When the degree of $P$ is $3$ the image of $d^2$ is
			\begin{align}\label{eq:d2dim4}
												&d^2([(x_2)_m(x_6)_b\gamma_1^3])=[2(x_2)_a(x_6)_b(y_1\gamma_1^4+y_2\gamma_1^3\gamma_2)]=[(x_2)_a(x_6)_by_2\gamma_1^3\gamma_2] \nonumber\\
				\text{and} \;\; &d^2([(x_2)_m(x_6)_b\gamma_2\gamma_1^2])=[2(x_2)_a(x_6)_b(y_1\gamma_1^3\gamma_2+y_2\gamma_1^2\gamma_2^2)]
																																=[2(x_2)_a(x_6)_by_1\gamma_1^3\gamma_2] 
			\end{align}
			which are linearly independent over $\mathbb{Z}$.
			So the image of $d^2$ does not lie in $J$ till the degree of $P$ is $4$ and $d^2$ is trivial.
			Therefore on the $E_3$-page the only non-trivial element of the form $[(x_2)_a(x_6)_bP]$ is $[(x_2)_a(x_6)_b\gamma_1^3\gamma_2]$.
			Assuming these generators are not in the image of $d^6$, they will be generators of the $E_\infty$-page.
			
			When the image of $d^6$ lies in the span of $[(x_2)_a(x_6)_by_1y_2P]$ and $[\gamma_i]$, by (\ref{eq:Sp(2)diff}) the image $d^6$ is generated by
			\begin{align*}
											 - & d^6([(x_2)_a(x_6)_my_1])=2(x_2)_a(x_6)_{m-1}[y_1y_2\gamma_2\gamma_1^2] \\
				\text{and} \;\;  & d^6([(x_2)_a(x_6)_my_2])=2(x_2)_a(x_6)_{m-1}[y_1y_2\gamma_1\gamma_2^2]
			\end{align*}
			which is exactly the same as the image of $d^2$.
			Hence $d^6$ is always trivial in this case.
			The image of $d^6$ in lying in the span, of $[(x_2)_a(x_6)_by_1y_2P]$ and $[\gamma_i]$, is
			the image of generators $[(x_2)_a(x_6)_m]$ and $[(x_2)_a(x_6)_m\gamma_i]$.
			In the case when $a\geq 1$, these generators are trivial on the $E_6$-page.
			In (\ref{eq:x6Non-trival}) we have already shown that the image of $[(x_6)_m]$ is non-trivial.
			The image of $[(x_6)_m\gamma_i]$ is generated by $[\gamma_i]$,
			\begin{align*}
											   & d^6([(x_6)_m\gamma_1])=2(x_6)_{m-1}[y_1\gamma_1^2\gamma_2^2+y_2\gamma_2\gamma_1^3]=2[y_2\gamma_2\gamma_1^3] \\
				\text{and} \;\;  & d^6([(x_6)_m\gamma_2])=2(x_6)_{m-1}[y_1\gamma_1\gamma_2^3+y_2\gamma_2^2\gamma_1^2]=-2[_1\gamma_1^3\gamma_2^2]
			\end{align*}
			which by (\ref{eq:d2dim4}) is already in the image of $d^2$.
			Hence $d^6([(x_6)_m\gamma_i])$ is trivial.
			Therefore assuming all extension problems are resolved triviality,
			$(x_6)_m$ is not a generator of $A$ but may appear on the $E_\infty$-page
			as a multiple of any other generator.
			
			All torsion on the $E_\infty$-page of $\{ E_r,d^r \}$ is a power of $2$,
			hence we consider the spectral sequence $\{ E_r,d^r \}$ over the field of characteristic $2$.
			Since the only non-zero differentials $d^2$ and $d^6$ have bidegree $(2,-1)$ and $(6,-5)$ respectively, 
			for exactly the same reasons as for the modulo $3$ spectral sequence in Theorem \ref{thm:L(SU(3)/T^2)},
			all torsion on the $E_\infty$-page services the addative extension problems over $\mathbb{Z}$.
			The only remaining additive extension problem is weather the $4$-torsion generated by $[(x_2)_ay_1\gamma^3_1]$ on the $E_\infty$,
			is $2$-torsion or $4$-torsion in $H^*(\Lambda Sp(2)/T^2;\mathbb{Z})$.
		\end{proof}

\newpage
\section{Appendix}
		
		In Section \ref{sec:Third},
		we studied the third page of the Leray-Serre spectral sequence $\{ E_r,d^r \}$ associated to the free loop fibration
		\begin{equation*}
			\Omega(SU(n+1)/T^n)\to\Lambda(SU(n+1)/T^n)\to SU(n+1)/T^n.
		\end{equation*}
		A consequence of Theorem \ref{thm:SymFree} is that the elements in 
		\begin{equation*}
			E_3^{p,n-j+2m+\dim(X)}((x_2)_mX\hat{y}_{i_1,\dots,i_j})
		\end{equation*}
		for $0\leq j \leq n-1$, $m\geq 1$ and $X$ a monomial in $\Gamma_{\mathbb{Z}}(x_4,x_6,\dots,x_{2n})$,
		are trivial unless they are contained in the kernel of a $d^2$ differential with image divisible by a non-trivial element of the symmetric ideal. 
		For the remaining cases when $m=0$ or $j \neq n$, where all elements were in the kernel of the $d^2$ differential, there is a lot of structure left on the $E_3$-page.
		Theorems \ref{thm:BottemRow} and \ref{thm:LastColumn}
		solved the general problem of finding these quotients in the relatively simple cases but elsewhere the problem is more complicated. 
		
		In this appendix we demonstrate how to construct an algorithm to obtain the torsion on the $E_3$-page at 
		\begin{equation}\label{eq:Positions}
			E_3^{p,n-j+\dim(X)}(X\hat{y}_{i_1,\dots,i_j}) \;\; \text{and} \;\; E_3^{p,n+2m+\dim(X)}({(x_2)}_mXy_1\cdots y_n)
		\end{equation}
		for $1\leq j \leq n-1$, $m\geq 0$ and $X$ a monomial in $\Gamma_{\mathbb{Z}}(x_4,x_6,\dots,x_{2n})$.
		These are elements obtained from the $E_2$-page of the spectral sequence where the kernel of $d^2$ is trivial but the image is not.
		While this does not take into account any of elements in a non-trivial kernel of $d^2$, we demonstrate interesting patterns in the torsion 
		which currently cannot be supported by a theorem.
		
		To achieve this we first in Section \ref{sec:ImageMatrix}, construct an algorithm that given a coordinate corresponding to one in $(7.1)$,
		output a matrix whose rows correspond to the image of the $d^2$ differential and generators of symmetric ideal.
		The torsion at this coordinate can then be found by finding the integer Smith normal form of this matrix, which we discuses in Section \ref{sec:NormalForm}.
		Unfortunate the size of the matrices means that a straightforward algorithm for computing the Smith normal form will only produce results for cases that 
		could have been computed by hand, since the integers in intermediary forms of the matrix during the normalization procedure become
		too large or too small for the computer to cope with.
		
		In order to over come this in Subsection \ref{sec:RankedNormal}
		we describe an algorithm found in \cite{SmithForm}, which reduces the matrix in a more intelligent fashion, keeping entries closes to $0$.
		This is effective at the expense of computation time.
		This approach produces many more results, 
		however eventually the matrices become so large that the computer cannot produce the Smith normal form in a reasonable amount of time.
		
		Another approach attempted in Subsection \ref{sec:ModuloNormal} is to compute the Smith normal form of the matrix modulo a prime.
		This is the computationally most effective technique, however this method will not detect the multiplicity of the torsion as a power of the prime. 
		
		In the finale section, Section \ref{sec:results} we present the results of our algorithms and discuss the patterns observed.
		
		\subsection{Image matrix}\label{sec:ImageMatrix}
			
			In this section we present an algorithm to produce a matrix $X$ associated with the image of the $d^2$
			differential at a particular coordinate on the $E_2$-page of the spectral sequence.
			In subsection \ref{sec:structure} we describe the structure of $X$ in terms matrices $E^{i,x},E_i$ and $F$.
			In subsection \ref{sec:Subset}, \ref{sec:SubMulti} and \ref{sec:FixedSubmultiset} we present the algorithms to produce matrices $E,E_i$ and $F$.
			Finally in subsection \ref{sec:Diffmatrix} we present the algorithms that produces $X$.
			Throughout this section we use Proposition \ref{prop:Homogeneous-1} and assume that the symmetric ideal is generated by
			complete homogeneous symmetric polynomials $h_1,\dots,h_n$ in $n$ variables.
			
			\subsubsection{Matrix structure}\label{sec:structure}
			
				For $n\geq 1$, $0\leq x\leq n(n+1)/2$ and $1\leq y \leq n$ the matrix $X$ to be produced by our algorithm will have the following form
				
					\[
						\begin{blockarray}{cc}
																									& \hat{y}_{i_1,\dots,i_y}X\tilde{P} \\
							\begin{block}{c(c)}
								x_2\hat{y}_{i_1,\dots,i_{y+1}}XP	& {A}  \\
								\cline{2-2}
								h_iX\bar{P}												& {B}	 \\ 
							\end{block}
						\end{blockarray}
					\]

				where $P,\tilde{P},\bar{P}\in \mathbb{Z}[\gamma_1,\dots,\gamma_n]$ have degrees $x+1$, $x$, $x-\deg{h_i}$ respectively, $1\leq i \leq \max(x,n+1)$,
				$1\leq i_1<\cdots<i_{y+1}\leq n$ and $1\leq {i}_1<\cdots<{i}_y\leq n$.
				The matrix $A$ has rows representing the image of the $d^2$ differential
				and $B$ is the matrix whose rows representing a spanning set of of the symmetric ideal in degree $x$.
				The ordering on the basis of elements of $P,\tilde{P},\bar{P}$ and $\hat{y}_{i_1,\dots,i_y}$ is not important as long as the same order is consistently used.
				
				Let $F$ be the $\binom{n}{y-1}$ by $\binom{n}{y}$ matrix whose rows represent size $y-1$ subset of an $n$ set
				and whose columns represent size $y$ subset of an $n$ set,
				with an entry $1$ if the size $y-1$ subset is contained in the size $y$ subset and $0$ otherwise.
				
				Recall that there is a bijection between monomials in $n$ variables of a given degree and multisets of the same size.
				For $0\leq i \leq x-1$, let $E^{i,x}$ be the matrix whose rows represent size $i$ submultisets of an $n$ set
				and whose columns represent size $x$ submultisets of an $n$ set,
				with an entry $1$ if the size $i$ submultiset is contained in the size $x$ submultiset and $0$ otherwise.
				Note that if $i=0$ then $E^{i,x}$ will be a $1$ by $\multiset{n}{x}$ matrix of ones since the empty multiset is contained in all multisets.  
				Let $E_i$ for $1\leq i \leq n$, be the matrix whose rows represent size $x$ submultiset of an $n$ set containing at least one $i$
				and columns represent size $x$ submultiset of an $n$ set,
				with an entry $1$ if the submultisets are equal and $0$ otherwise.
				The matrix $E^{i,x}$ is a $\multiset{n}{i}$ by $\multiset{n}{x}$ matrix and $E_i$ is a $\multiset{n}{x-1}$ by $\multiset{n}{x}$ matrix.
				
				From equation (\ref{eq:boundary}) Lemma \ref{lem:1to1spec}, we have
				\begin{equation*}
					d^2(x_2\hat{y}_{i_1,\dots,i_{y-1}}XP)=\sum_{t=1}^{y-1}{(-1)^{t+1}\hat{y}_{i_1,\dots,\hat{i}_t,\dots,i_y}X\gamma_tP}.
				\end{equation*}
				Hence matrix $A$ can be further broken down into $\multiset{n}{x-1}$ by $\multiset{n}{x}$ sub matrices $A_{i_1,\dots,i_{y+1}}^{{i'}_1,\dots,{i'}_y}$
				corresponding to rows $x_2\hat{y}_{i_1,\dots,i_{y+1}}X$ and columns $\hat{y}_{{i'}_1,\dots,{i'}_y}X$
				for $1\leq i_1<\cdots<i_{y+1}\leq n$ and $1\leq {i'}_1<\cdots<{i'}_y\leq n$ where
				\begin{equation*}
					A_{i_1,\dots,i_{y+1}}^{{i'}_1,\dots,{i'}_y}=
					\begin{cases}
								0 		&\mbox{if  }  \{ {i'}_1,\dots,{i'}_y \} \nsubseteq \{ i_1,\dots,i_{y+1} \}
								\\
								(-1)^{j+1}E^{x-1,x}+E_i &\text{if  }  \{ {i'}_1,\dots,{i'}_y,i \} = \{ i_1,\dots,i_{y+1} \} \text{ and } i=i_j
								.
						\end{cases}
				\end{equation*}
				The position of the non-zero $A_{i_1,\dots,i_{y+1}}^{{i'}_1,\dots,{i'}_y}$ is determined with respect to ${i'}_1,\dots,{i'}_y$ and $i_1,\dots,i_{y+1}$
				by non-zero entries of the matrix $F$.
				
				The matrix $B$ can be further broken down into the diagonal sum
				
				\[
					\begin{blockarray}{ccccccc}
						 &  &  & \hat{y}_{i_1,\dots,i_y} &  & & \\
						\begin{block}{c(ccccc)c}
										& {B'}   & {0}	 & \cdots	& {0}	 & {0}	&\\
										& {0}	   & {B'}  & 				& {0}	 & {0}	 &\\
							{h_iX}& \vdots & 			 & \ddots & 		 & \vdots	&\\
										& {0}	   & 	{0}  & 				& {B'} & {0}		 &\\
										& {0}	   & 	{0}  & \cdots & {0}	 & {B'} 		&.\\
						\end{block}
					\end{blockarray}
					\]
				
				Where $B'$ is given by
				
				\[
					\begin{blockarray}{ccc}
						&  & \\
						\begin{block}{c(c)c}
							{h_2\bar{P}} 						 & {E^{x-2,x}}   				&   \\
							{h_3\bar{P}} 						 & {E^{x-3,x}}	   			&   \\
							\vdots									 & \vdots 							&   \\
							{h_{\max(x,n+1)}\bar{P}} & {E^{x-\max(x,n+1),x}}& . \\
						\end{block}
					\end{blockarray}
					\]
			
			\subsubsection{Subset matrix}\label{sec:Subset}
			
				In this section we present an algorithm that will produce an array of two matrices, $E\{ 2 \}$ which is the matrix $F$ described in Subsection \ref{sec:ImageMatrix}
				and an $\binom{n}{y}$ by $n$ non-negative integer matrix $E\{ 1 \}$,
				where rows represent $y$ element subsets of an $n$ set and columns the elements of the $n$ set.
				Matrix $E\{ 1 \}$ has a zero entry if the set element corresponding to the column is contained in the set
				otherwise $E\{ 1 \}$ has positive integer entry, the position (in the ordering of the basis) of the corresponding $y+1$ element subset obtained by 
				adding the element corresponding to the column to the $y$ subset.
				
				The steps in the algorithm are as follows.
				\begin{enumerate}
					\item
					If $y=0$, then output $E\{ 1 \}$ as a vertical $n$-vector of ones and $E\{ 2 \}$ as a row $n$-vector of $1$ to $n$, then terminate the algorithm.
					\item
					If $y\neq 0$, then generate two matrices $p$ and $q$ whose rows are all $y+1$ and $y$ subsets of an $n$ set respectively.
					\item
					Set  $E\{ 1 \}$ and  $E\{ 2 \}$ to be zero matrices of the correct size.
					\item
					For each row $i$ of the matrix $p$ compare with row $a$ of $q$ with a column element $j$ removed.
					If they are equal set coordinate $(i,j)$ of $E\{ 2 \}$ equal to one
					and coordinate $(i,a)$ of $E\{ 1 \}$ to be the $j^{\text{th}}$ element of row $a$ in $q$.  
				\end{enumerate}
				The Matlab program "Subsets$(n,y)$" to implement the procedure is given below.
				
				\begin{lstlisting}[frame=single]
				
function E = Subsets(n,y)
 
E={0};%defines E as an array
 
%check for exceptional first for empty set case otherwise proceeds with the general case
 
if y==0
 
    E{2}=ones(n,1);
    E{1}=transpose(1:n);
    
else
    w=nchoosek(n,y);%sets the width of the matrix E{1}
    h=nchoosek(n,y+1);%sets the height of E{1} and E{2}
    P=zeros(h,y+1); %records the position of subsets intersections, will eventually be E{1}
    D=zeros(h,w); %records the rows of h at which an intersection  occurs, matrix will eventually be E{2}
    p = nchoosek(1:n,y+1);%list of all y+1 subset of set {1,2,dots,n}
    q = nchoosek(1:n,y);%list of all y subset of set {1,2,dots,n}
       for i = 1:h %i corresponds to row of the matrix p
           r=y(i,:);%selects i-th row of  p (d+1 subset of n set) and stores as r
           for j=1:y+1 %j corresponds to which element is removed from the y+1 subset of the n set
               rtemp=zeros(1,y);
               %the next two loops store in rtemp the row of p missing the j th entry 
               for a=1:j-1
                   rtemp(a)=r(a);
               end;
               for a=j+1:y+1
                   rtemp(a-1)=r(a);
               end;
               for a=1:w %check rtemp (d+1 subset of n set without entry j) to see which row of q (y subset of n set) it i and records this information into P and D
                   if rtemp==q(a,:)
                       P(i,j)=r(j);%records corresponding row of q(y subset of an n set)
                       D(i,a)=1;   %1 placed in the row corresponding to y subset of n set column corresponding to y+1 subset of n set
                       break %end "a" loop since there is only one case to find
                   end;
               end;
           end;
       end;
    
    %the completed matrices are recorded as E{1} and E{2}
    E{1}=P;
    E{2}=D;
    
end;
				
				\end{lstlisting}

			\subsubsection{Submultiset matrix}\label{sec:SubMulti}
			
				In this subsection we present an algorithm to produce the matrices $E^{i,x}$ defined in Subsection \ref{sec:ImageMatrix}.
				Before this we require a algorithm to produce for $1\leq d \leq n$, a $\multiset{n}{d}$ by $n$ non-negative integer matrix
				whose rows represent multisets of size $d$ from an $n$ set and columns the elements of the $n$ set.
				This is the same problem as forming a non-negative integer matrix whose rows are all $n$ vectors with row sum $d$.
				The following Matlab program which can be found at \cite{MultisetPage} achieves this.
				
				\begin{lstlisting}[frame=single]
function M = Multiset(n,d)

%d The required sum (dimension)
%n The number of elements in the rows (number of variables)
%produces a matrix of all n-vector in non-negative integers whose sum is d
%with rows representing monmials in n varaibles of degree d

   d=d+n;

   c = nchoosek(2:d,n-1);
   m = size(c,1);
   M = zeros(m,n);
   for ix = 1:m
     M(ix,:) = diff([1,c(ix,:),d+1]);
   end;

   M=M-ones(size(M,1),size(M,2));
				\end{lstlisting}
			
				Now we present and algorithm that outputs $E^{i,x}$.
				Given $n\geq 1$ and $1\leq a \leq b$, the program outputs an array of matrices $C\{ i+1\}$ for $i$ between $a$ and $b$.
				Where each $C{i+1}$ is an $\binom{n+i-1}{i}$ by $\binom{n+b-1}{b}$ matrix
				with rows corresponding to size $i$ multiset of an $n$ set and
				columns size $b$ multisets of an n-set.
				Each $C\{ i+1 \}$ has entry $1$ if the size $i$ multiset is contained in the size b multiset and is $0$ otherwise.
				The steps in the algorithm are as follows.
					\begin{enumerate}
						\item
							Using the previous function, generate for each value $i$ between and including  $a$ and $b$, generate an $\multiset{n}{i}$ by $i$
							matrix $B\{ i+1 \}$ of $i$ multisets of an $n$ set.
						\item
							Form for each $i$ between and including $a$ and $b$ create zero matrices matrices $C\{ i+1 \}$ of size $\multiset{n}{i}$ by $\multiset{n}{b}$.
						\item
							For each $i$ between and inducing $a$ to $b$ do,
							for $j$ from $1$ to $\multiset{n}{i}$ and $k$ from $1$ to $\multiset{n}{b}$,
							in position $(j,k)$ of $C\{ i+1 \}$ put a $1$ if multiset on row $j$ of $B\{ i+1 \}$ is contained in the multiset on row $k$ of $B\{ b+1 \}$.
					\end{enumerate}
			
				The Matlab program "Submultiset$(a,b,n)$" to implement the procedure is given below.
				
				\begin{lstlisting}[frame=single]
function [C] = Submultiset(a,b,n)

B={0};%defines B to be an array

%Assigns to B{i} the positive integer matrix whose rows represent all multiset of size i of an n set
for i=a+1:b+1  
    B{i}=Multiset(n,i-1);
end;

C={0};%defines C to be an array

%vector l stores the size of matrices B{i} in column i
l=0:b;
for i=a+1:b+1
    l(i)=size(B{i},1);
end;

%creates an array of zero matrices C of correct size for output 
for i=a+1:b+1
    C{i}=zeros(l(i),l(b+1)); 
end;

%for each i place a 1 at position (j,k) of C{i} if the jth size i-1 multiset is contained in the kth size b multiset
for i=a+1:b+1
    for j=1:l(i)
        for k=1:l(b+1)
            C{i}(j,k)=all((B{b+1}(k,:)-B{i}(j,:))>=0);
        end;
    end;
end;
				\end{lstlisting}

			\subsubsection{Fixed element submultiset matrix}\label{sec:FixedSubmultiset}
			
				In this subsection we present an algorithm to produce matrices $C\{ i \}$, which are the martrices $E_i$ defied in Subsection \ref{sec:structure},
				for each $1 \leq i \leq x$.
				This is a $\multiset{n}{x-1}$ by $\multiset{n}{x}$ matrix whose rows represent size $x$ multisets of an $n$ set containing  at least one of element $i$. 
				and columns represent size $x$ multisets of an $n$ set.
				The matrix has an entry $1$ is the multiset of row is equal to the the multiset of the column.  
				The steps in the algorithm are as follows.
					\begin{enumerate}
						\item
							Using the function "Multiset" of Subsection \ref{sec:SubMulti},
							generate an $\multiset{n}{x}$ by $x$ matrix $W$ of size $x$ multisets of an $n$ set
							an generate an $\multiset{n}{x-1}$ by $x-1$ matrix $H$ of size $x-1$ multisets of an $n$ set.
						\item
							For each $i$ from $1$ to $n$ create a $\multiset{n}{x-1}$ by $\multiset{n}{x}$ matrix $M$ of zeros.
						\item
							For each size $x-1$ multiset $j$ of $H$ add in addition element $i$ and check to see which which size $x$ multiset $k$ of $W$ it is.
							Change element $(j,k)$ of $M$ to a $1$.
						\item
							Record the current $M$ at $C\{ i \}$ before moving to the next $i$.
					\end{enumerate}
			
				The Matlab program FixedSubmultiset$(x,n)$ to implement the procedure is given below.
				
				\begin{lstlisting}[frame=single]
function [C] = FixedSubmultiset(x,n)
 
C{1}=0;%defines C as an array
 
%Assigns to W the positive integer matrix whose rows represent all multiset of
size x in of n set
W = Multiset(n,x);
 
%Assigns to H the positive integer matrix whose rows represent all multiset of
size x-1 in of n set
H = monomials(n,x-1);
 
h=size(H,1);
w=size(W,1);
 
for i=1:n%i is the element of the n set that will be included into each size x-1 multiset
    M=zeros(h,w);%creates a zero matrix of the correct size
    Htemp=H;
    for j=1:h
        Htemp(j,i)=Htemp(j,i)+1;%add in the extra element i to each row j of H
        for k=1:w
            if Htemp(j,:)==W(k,:)%test to see which sixe x multiset the new multiset is
                M(j,k)=1;        %and records the result with a 1 in the correct column
            end;
        end;
    end;
    C{i}=M;%records the final matrix as C{i}
end;
				\end{lstlisting}

			\subsubsection{Differential matrix}\label{sec:Diffmatrix}
				
				In this final subsection we present an algorithm using the programs of Subsection \ref{sec:Subset}, \ref{sec:SubMulti} and \ref{sec:FixedSubmultiset}
				given $n\geq 2$, $x\geq 1$ and $y\geq 0$ to produce a matrix $A$ which is the one described in Subsection \ref{sec:structure}.
				The steps in the algorithm are as follows.
					\begin{enumerate}
						\item
							Calculate the number of generators in the symmetric ideal $sl$ by setting $sl=\min(x,,n+1)$. 
						\item
							Generate in array $C\{ i+1 \}$ the "Submultiset" matrices for $i$ between $x-sl$ and $x$ for a set of size $n$.
						\item
							Create the part of the output matrix $A$ corresponding to the symmetric ideal as a matrix $B$ by
							for each $i$ between $x-sl$ and $x$ stacking the $C\{ i+1 \}$ on top of each over and forming a diagonal sum of $\binom{n}{y}$ of these matrices.
						\item
							Generate for $i$ between $1$ and $n$ an array of matrices $E \{ i \}$ the "FixedSubmultiset" matrices for multisets of size $x$ and set of size $n$.
						\item
							Generate in a matrices $M\{1\}$ and $M\{2\}$, the "Subsets" matrix for value $x$ and set of size $n$.
						\item
							Create a $\binom{n}{y+1}\multiset{n-1}{x-1}$ by $\binom{n}{y}\multiset{n}{x}$ zero matrix $A$ to hold the image of the $d^2$ differential.
						\item
							For each row of $M\{2\}$ set or a value $k$ starting at $0$,
							moving along rows the row for each entry $(i,a)$ of $M\{2\}$ that is a $1$ increase the value of $k$ by $1$.
							Each time the value of $k$ increases place in $A$ with its top left had entry at position $((i+1)\multiset{n}{x},(a-1)\multiset{n}{x})$,
							a copy of $(-1)^{k+1}(C\{ x \}+E\{ M\{ 1 \}(i,y+1-k) \})$.
						\item
							Extend $A$ by stacking it on top of the matrix $B$, to form the final output.
					\end{enumerate}
			
				The Matlab program "DifferentialMatrix$(n,x,y)$" to implement the procedure is given below.
			
				\begin{lstlisting}[frame=single]
function [A] = DifferentialMatrix(n,x,y)
 
%forms a zero matrix A of the correct size, height h width w
 
sl=min([x,n+1]); %sl is the number of generators in the symmetric ideal of dgree less than or equal to n
 
%s will hold in each entry the number of multiples of h_i by a monomial for i=2 to the minimum of x and n+1
s=zeros(sl,1);
for i=2:sl
    s(i)=nchoosek(n,y)*nchoosek(n+x-i-1,x-i)+s(i-1);%number of monomials of degrre x-i
end;
 
 
h=nchoosek(n,y+1)*nchoosek(n+x-2,x-1)+s(sl);%the total height of the outputs matrix
 
wHat=nchoosek(n,y);%the number of \hat{y_{i_1,\dots,i_y}} in total
 
wMon=nchoosek(n+x-1,x);%the number of monomials of degree x in n variables
 
w=wMon*wHat;%the total width of the output matrix
 
ns=h-s(sl);%ns is the number of generators of the image as rows in the matrix
 
A=zeros(ns,w);%creates a zero matrix of the correct size
 
%First place the symmetric function rows at the bottom of the matrix
 
C=Submultiset(x-sl,x,n);
 
D=zeros(1,wMon);%creates zero row of the same width as the C
 
for i=2:sl%i represents the degree of the symmetric generator
    D=[D; C{x+1-i}];%stacks submultset matrices for different generators
end;
 
D=D([2:size(D,1)],[1:wMon]);%removes zero row
B=D;
 
for i=2:wHat%repeats the matrix D for each \hat{Y}_{i_1,\dots,i_y} along diagonal
   B=blkdiag(B,D);
end;
 
%place the d2 image rows
 
E=FixedSubmultiset(x,n);
 
Ctemp=C{x};
hMon=size(Ctemp,1);
 
M=Subsets(n,y);
hHatsType=M{2};
hHatsPosition=M{1};
hHats=size(hHatsType,1);
 
for i=1:hHats
    temph=(i-1)*hMon;%record the top row-1 of the current position being considered
    temp=0;
    for a=1:wHat
        tempw=(a-1)*wMon;%record the left most column-1 of the current position being considered
        if hHatsType(i,a)==1
            Etemp=E{hHatsPosition(i,y+1-temp)};
            A([temph+1:temph+hMon],[tempw+1:tempw+wMon])=(-1)^(temp)*(Ctemp+Etemp);
            temp=temp+1;
        end;
    end;
end;
 
A=[A;B];%combines the image matrix vertically with the symmetric ideal matrix
				\end{lstlisting}

		\subsection{Normal form}\label{sec:NormalForm}
		
			The integral Smith normal form of an integral matrix $M$ is the unique diagonal matrix $N$ obtained from $M$ by integral row an column operations
			such that entries on the leading diagonal are non-negative integers in decreasing order of size.
			The most straightforward process to obtain matrix $N$ from matrix $M$ is as follows.
			\begin{enumerate}
				\item
					Set the current position at the top left hand entry of the matrix.
				\item
					Compute $R$, the greatest common divisor the the row containing the current position.
				\item
					Use integral column operation to reduce the current position to $R$ and then all other entries on that row to $0$.
				\item
					Compute $C$ the greatest common divisor the the column containing the current position.
				\item
					Use integral row operation to reduce the current position to $C$ and then all other entries on that column to $0$.
				\item
					Repeat steps $2$, $3$, $4$ and $5$ with the current position at each entry on the lending diagonal in turn.
				\item
					Reorder the leading diagonal with the largest values first.
			\end{enumerate}
			
			In Subsection \ref{sec:RankedNormal} we discuss how the procedure can be improved to avoid very large or very small values occurring during it implementation.
			In Subsection \ref{sec:ModuloNormal} we show how to adapted the procure to compute the Smith normal form modulo $p$, for some prime $p$.
		
			\subsubsection{Ranked normal form}\label{sec:RankedNormal}
				
				In this subsection we describe a procedure from \cite{SmithForm} which improves the elementary procedure outlined at the begging of the section.
				The the main problem that can occur during the implementation of an algorithm computing the Smith normal form is at a intermediary stages 
				the entries of the matrix become too large or too small for the computer to handle, causing rounding errors or a crash.
				The idea of the solution is rather than just reducing the matrix along the leading diagonal, before performing the row and column
				reductions move to the current position to the entry of the matrix
				which after the reduction, will minimise the maximal magnitude of entries in the matrix.
				
				Suppose we have a matrix $M=(m_{i,j})$, on which we want to perform steps $2$, $3$, $4$ and $5$ in the process above from a position
				that minimises the magnitude of values in the resulting matrix.
				For each column $m_{*,1}$ and $m_{*,j}$ of $M$, step $3$ repeats the process of replacing column $m_{*,1}$ with $x_1m_{*,1}+x_2m_{*,j}$
				and column $m_{*,j}$ with $m_{1,j}\gcd(m_{1,1},m_{1,j})m_{*,1}-m_{1,1}\gcd(m_{1,1},m_{1,j})m_{*,j}$,
				where $x_1,x_2\in \mathbb{Z}$ are such that $\gcd(m_{1,1},m_{1,j})=x_1m_{1,1}+x_2m_{1,j}$.
				Hence if the first $k$ columns have first value $m_{1,1},\dots,m_{1,k}$ such that for each $l$ less than $k$,
				$\gcd(m_{1,1},\dots,m_{1,l-1})>\gcd(m_{1,1},\dots,m_{1,l})$.
				Then after $k$ interactions the first column is
				\begin{equation*}
					m_{*,1}\prod_{t=1}^{k-1}x_{2t-1}+\sum^k_{l=2}\big( m_{*,l}x_{2(l-1)}\prod^{k-1}_{t=l} x_{2l-1} \big)
				\end{equation*}
				where $x_{2l-1}$ and $x_{2l}$ are such that $x_{2l-1}\gcd(m_{1,1},\dots,m_{1,l-1})+x_{2l}m_{1,l}=\gcd(m_{1,1},\dots m_{1,l})$.
				These values are then used in subsequent steps, so if they become large entry in the matrix become cumulatively large over those subsequence steps.
				
				Given a vector $X$ such that $X\cdot m_{*,1}=\gcd(m_{1,1},\dots,m_{m,1})$,
				in general we would like to minimize
				\begin{equation*}
					\max_{i,j}|m_{i,j}-\frac{X\cdot m_{*,j}}{X \cdot m_{*,1}}m_{i,1}|
				\end{equation*}
				which we call the pivot value on the first column of $M$.
				Clearly we could calculate this pivot value for any column of $M$.
				We could also calculate in the same way a pivot value for the rows of $M$
				and multiply the pivot value for each column by the pivot value for row of each entry.
				This gives us the matrix of the same size as $M$ which we call the pivot value matrix.
				The entries with the smallest values in the pivot value matrix should be the best candidates to use as the current positions in our standard 
				Smith normal form procedure.
				Hence given one such value in $M$ we move this row and column to be the first row and column in the matrix and perform steps $2$, $3$, $4$ and $5$ above. 
				For a compete description of the procedure see \cite{SmithForm}.
				
				The Matlab program "PivotMinNomal$(A)$" implements the procedure to calculate the Smith normal form of a matrix $A$ using the improved method above.
				The function "PivotMinNomal$(A)$" call upon "PivotValue$(A)$" which computes the pivot value matrix of a given matrix $A$,
				which in turn calls upon function "VecGCD$(V)$" that given an integer vector $V$ computes using the Euclidean algorithm the greatest common divisor $G$
				of the values of $V$ and vector of integers $X$ whose scalar product with $V$ if $G$.
				
				\begin{lstlisting}[frame=single]
function [A] = PivotMinNomal(A)
 
%Given a matrix A finds its Smith normal form in a way that attempts to minimise the magnitude of intermediary values 
 
[y,x]=size(A);%records the size of A
 
max=min(x,y);%size of the leading diagonal
 
for i=1:max
 
    null=1;
    
    for a=i:x %check to see if all remaining entries are zero
        for b=i:y
            if A(b,a)
                null=0;
                break
            end;
        end;
        if null
        else
            break
        end;
    end;
    
    if null
        break
    end;
    
    B=zeros(y-i+1,x-i+1);
    
    %takes B the part of the matrix which we still need to reduce
    for a=i:x
        for b=i:y
            B(b-i+1,a-i+1)=A(b,a);
        end;
    end;
       
    B=PivotValue(B);
    
    MinPiv=[1,1,inf];
    
    for a=1:size(B,1) %finds non-zero value with smallest pivot value
        for b=1:size(B,2)
            if A(i+a-1,i+b-1)
                if B(a,b)<MinPiv(3)
                    MinPiv=[a,b,B(a,b)];
                end;
            end;
        end;
    end;
    
    p=MinPiv(1);
    q=MinPiv(2);
    
    A(:,[i,q+i-1])=A(:,[q+i-1,i]);
    A([i,p+i-1],:)=A([p+i-1,i],:);
    
    %now perform GCD reduction on the first row column for the top left position.
    in=1;
    
    while in
       
        
        if A(i,i)<0
            A(i,:)=-1*A(i,:);
        end;
 
        for a=i+1:y
            if A(a,i)<0
                A(a,:)=-1*A(a,:);
            end;
            A(a,:)=A(a,:)-floor(A(a,i)/A(i,i))*A(i,:);
        end;
 
        for a=i+1:x
            if A(i,a)<0
                A(:,a)=-1*A(:,a);
            end;
            A(:,a)=A(:,a)-floor(A(i,a)/A(i,i))*A(:,i);
        end;
        
        %check to see if all first row and column are zero except top left.
        out=1;
        
        for a=i+1:y
            if A(a,i)
                out=0;
            end;
        end;
        
        for a=i+1:x
            if A(i,a)
                out=0;
            end;
        end;
        
        if out
           break 
        end;
        
        %finds new pivot in fist row or column and repeat reduction
        
        B=zeros(y-i+1,x-i+1);
    
        for a=i:x
            for b=i:y
                B(b-i+1,a-i+1)=A(b,a);
            end;
        end;
        
        B=PivotValue(B);
        
        V=B(:,1).';
        
        H=B(1,:);
        
        piv=[1,1,inf];
        
        p=abs(A(i,:));
        q=abs(A(:,i)).';
        
        P=p(1);
        
        for a=2:size(p,2)
            if P<p(a)
                P=p(a);
            end;
        end;
        
        for a=1:size(q,2)
            if P<q(a)
                P=q(a);
            end;
        end
        
        temp=P;
        
        U=0;
        
        for a=i:x
            if abs(A(i,a))==temp
                U=U+1;
            end;
        end;
        
        for a=i+1:y
            if abs(A(a,i))==temp
                U=U+1;
            end;
        end;
        
        for a=1:size(V,2) %find lowest pivot value
            if A(a+i-1,i)
                if V(a)<piv(3)
                    if temp>abs(A(a+i-1,i))
                        piv=[1,a,V(a)];
                    else
                        if U>1
                            piv=[1,a,V(a)];
                        end;
                    end;
                end;
            end;
        end;
    
        for a=1:size(H,2)
            if A(i,a+i-1)
                if H(a)<piv(3)
                    if temp>abs(A(i,a+i-1))
                        piv=[0,a,H(a)];
                    else
                        if U>1
                            piv=[0,a,H(a)];
                        end;
                    end;
                end;
            end;
        end;
   
        if piv(1)
            A([i,piv(2)+i-1],:)=A([piv(2)+i-1,i],:);
        else
            A(:,[i,piv(2)+i-1])=A(:,[piv(2)+i-1,i]);
        end;
        
    end;
    
end;
 
%rearranges elements on diagonal smallest towards top left.
swap=1;
 
while swap
    swap=0;
    for i=1:max-1
        if A(i+1,i+1)==0
            break
        end;
        if A(i,i)>A(i+1,i+1)
            temp=A(i,i);
            A(i,i)=A(i+1,i+1);
            A(i+1,i+1)=temp;
            swap=1;
        end;
    end;
end;
				\end{lstlisting}
				
				\begin{lstlisting}[frame=single]
function [P] = PivotValue(A)
 
%Given matrix A outputs its matrix P of pivot values 
 
[y,x]=size(A);%records size of A
 
P=zeros(y,x);%output matrix of the correct size
 
%computes value for columns
for k=1:x
    
    [Xc,Gc] = VecGCD(A(:,k).');%computes gcd for current column
 
    if Gc%checks the column was not a zero vector
    
        %if the first value of gcd scalar vector is zero changes it to an equivalent vector where the first entry in non-zero
        if Xc(1)==0
            temp=(A(1,k))/Gc;
            Xc=Xc*(temp+1);
            Xc(1)=-1;
        end;
 
         %computes the values of the matrix if this column were pivot
        ColVal=zeros(y,x);
        for i=1:y
            for j=1:x
                ColVal(i,j)=abs(A(i,j)-((dot(Xc,A(:,j)))/(dot(Xc,A(:,k)))*A(i,k)));
            end;
        end;
 
        temp=max(max(ColVal));%maximum value in column the pivot matrix
 
        P(:,k)=P(:,k)+temp*ones(y,1);%records max value in the corresponding column of P
    
    else
    
        P(:,k)=P(:,k)+inf*ones(y,1);%records zero column as infinite pivot value
        
    end;
    
end;
 
%computes value for rows
for k=1:y
     
     [Xr,Gr] = VecGCD(A(k,:));%computes gcd for current row
 
     if Gr%checks the row was not a zero vector
        
         %if the first value of gcd scalar vector is zero changes it to an equivalent vector where the first entry in non-zero
         if Xr(1)==0
             temp=(A(k,1))/Gr;
             Xr=Xr*(temp+1);
             Xr(1)=-1;
         end;
        
         %computes the values of the matrix if this column were pivot
         RowVal=zeros(y,x);
         for i=1:y
             for j=1:x
                 RowVal(i,j)=abs(A(i,j)-((dot(Xr,A(i,:)))/(dot(Xr,A(k,:)))*A(k,j)));
             end;
         end;
 
         temp=max(max(RowVal));%the maximum value in the pivot matrix for this row
 
         %multiplies the row of P by this max value
         for a=1:x
             P(k,a)=P(k,a)*temp;
         end;
     
     else
     
         for a=1:x
             P(k,a)=inf;%records zero row as infinite pivot value
         end;
         
     end;
         
 end;

				\end{lstlisting}
				
				\begin{lstlisting}[frame=single]
function [X,G] = VecGCD(V)
 
%given a vector V outputs gcd G and vector of scalers X whose scalar product
%with V is G
 
s=size(V,2);%number of elements in V
 
P=eye(s);%for recording intermediary values for X
 
minV=[1,inf];
 
temp=0;
 
neg=zeros(1,s);%for recording sign changes
 
%ensures V is non-negative integer vector and vectors where the sign changes
for i=1:s
    if V(i)<0
        V(i)=-V(i);
        neg(i)=1;
    end;
end;
 
%first checks for exceptional case when V is the zero vector
if V==zeros(1,s)
    X=zeros(1,s);
    G=0;
else
%computes G and X using Euclidean algorithm
while minV(1)
 
    minV=[0,inf];
 
    %finds the smaes value in V
    for i=1:s
        if V(i)
            if V(i)<minV(2)
                minV=[i,V(i)];
            end;
        end;
    end;
    
    %if the minimum positive value is unchanged this is the gcd and the procdure terminates 
    if temp==minV(2)
        G=minV(2);
        X=P(minV(1),:);
        break
    end;
    
    %reduce the vector v modulo its minimum value and records what was done in P
    if minV(1)
        for i=1:s
            if i==minV(1)
            else
                f=floor(V(i)/minV(2));
                V(i)=V(i)-f*minV(2);
                P(i,:)=P(i,:)-f*P(minV(1),:);
            end;
        end;
    end;
    
    temp=minV(2);
 
end;
end;
 
%assigns the correct sign to elements of X
for i=1:s
    X(i)=X(i)*(-1)^(neg(i));
end;

				\end{lstlisting}

			\subsection{Modulo $p$ normal form}\label{sec:ModuloNormal}
				
			In this subsection we present an algorithm to compute the Smith normal form of a matrix $A$ modulo a prime $p$.
			Since the entries on the leading diagonal of a matrix in Smith Normal form are $0$, $1$ or a prime power,
			the entries on the leading diagonal of a matrix in Smith normal form with entries modulo $p$ will be either $0$ or $1$.
			Hence the important information in the matrix is the number of ones on the leading diagonal.
			Our algorithm will roundly follow the steps detailed at the beginning of the section with the following exceptions.
			\begin{itemize}
				\item
					each time the current position changes and at the end of the algorithm the whole matrix is reduced modulo $p$.
				\item
					The reduction of the current position to the greatest common divisor of its row and column is performed simultaneously.
				\item
					At the end of the procedure only the number of ones on the leading diagonal is output.
			\end{itemize}
			The Matlab program "ModuloNomalForm$(A,p)$" to implement the procedure is given below.
				
				\begin{lstlisting}[frame=single]
function [U] = ModuloNomalForm(A,p)
 
U=0;
 
h=size(A,1);%hight of A
w=size(A,2);%width of A
 
L=min(h,w);%the size of the leading diagonal
 
%The normal form procedure moves the current position along the leading diagonal
for a=1:L
    
    %reduces the matrix to it simplest integral representatives modulo p
    for i=a:h
        for j=a:w
            if A(i,j)>0
                A(i,j)=A(i,j)-floor(A(i,j)/p)*p;
            else
                A(i,j)=A(i,j)-floor(A(i,j)/p)*p;
            end;
        end;
    end;
    
    done=1;
    
    %checks to see if the current row and column are zero and if so proceeds to the next position on the leading diagonal
    if A(a,:)==zeros(1,w)
        if A(:,a)==zeros(h,1)
            done=0;
        end;
    end;
    
    %Use integral row and column operations to reduces the current position to the greatest common devisor of its the row, then all other entries to zero 
    while done
        
        %moves the smallest positive integer in the current row or column to the current position
        Low=[A(a,a),a,0];
                
        if Low(1)
        else
             Low(1)=inf;
        end;
                
        for i=a+1:h
            if A(i,a)
                if A(i,a)<Low(1)
                    Low=[A(i,a),i,0];
                end;
            end;
        end;
                
        for i=a+1:w
            if A(a,i)
                if A(a,i)<Low(1)
                    Low=[A(a,i),i,1];
                end;
            end;
        end;
                
        if Low(3)
            A(:,[a,Low(2)])=A(:,[Low(2),a]);
        else
            A([a,Low(2)],:)=A([Low(2),a],:);
        end;
                
        done=0;
        
        %reduces all non-zero entries in the current column by the integer in current position
        for i=a+1:h
            if A(i,a)
                A(i,:)=A(i,:)-floor(A(i,a)/A(a,a))*A(a,:);
            end;
            if A(i,a)
                done=1;
            end;
        end;
        
        %reduces all non-zero entries in the current row by the integer in the current position
        for i=a+1:w
            if A(a,i)
                A(:,i)=A(:,i)-floor(A(a,i)/A(a,a))*A(:,a);
            end;
            if A(a,i)
                done=1;
            end;
        end;
        
        %if no reductions took place then move the current position to the next position on the leading diagonal otherwise repeat from finding the smallest entry
        
    end;
        
end;
 
%reduces the final diagonal from of the matrix modulo p
for i=1:L
    if A(i,i)>0
        A(i,i)=A(i,i)-floor(A(i,j)/p)*p;
    else
        A(i,i)=A(i,i)+floor(A(i,i)/p)*p;
    end;
end;
 
temp=0;
 
%counts the number of non-zero entries on the leading diagonal of the normal form matrix
for i=1:L
    if A(i,i)
        temp=temp+1;
    end;
end;
 
U=w-temp;%outputs the umber of non-zero entries on the leading diagonal of the normal form matrix
				\end{lstlisting}

		\subsection{Results}\label{sec:results}
			
			In this section we present the our findings on the torsion of the $E_3$-page aided by a computer.
			We do this in the case of element of the form
			\begin{equation}\label{eq:Positions}
				E_3^{p,n-j+\dim(X)}(X\hat{y}_{i_1,\dots,i_j}) \;\; \text{and} \;\; E_3^{p,n+2m+\dim(X)}({(x_2)}_mXy_1\cdots y_n)
			\end{equation}
			for $1\leq j \leq n-1$,$m\geq 0$ and $X$ a monomial in $\Gamma_{\mathbb{Z}}(x_4,x_6,\dots,x_{2n})$.	
			The integral results from running the algorithms in Section \ref{sec:ImageMatrix} and Subsection \ref{sec:RankedNormal} for $n=2,3$ and $4$ are as follows.
			By Theorems \ref{thm:BottemRow} and \ref{thm:LastColumn} the bottom row and final column can be filled in without the aid of the computer.
			
			\begin{table}[ht]
			\centering
			\caption{Part of the $E^3$-page of the spectral sequence converging to $H^*(\Lambda(SU(3)/T^2);\mathbb{Z})$}
			\begin{tabular}{c|cccc}
			$\langle \hat{y}_{i_1}X \rangle$ 	& $\mathbb{Z}^2$	& $\mathbb{Z}^3$ & $\mathbb{Z}^2\oplus\mathbb{Z}_3$ & $\mathbb{Z}_3$ \\
			$\langle y_1y_2X \rangle$ 				&	$\mathbb{Z}$ 		& $\mathbb{Z}_3$ & $\mathbb{Z}_3$ 									&    $0$
			\end{tabular}
			\end{table}
			
			\begin{table}[ht]
			\centering
			\caption{Part of the $E^3$-page of the spectral sequence converging to $H^*(\Lambda(SU(4)/T^3);\mathbb{Z})$}
			\begin{tabular}{c|ccccccc}
			$\langle \hat{y}_{i_1}X \rangle$			& $\mathbb{Z}^3$ & $\mathbb{Z}^8$ & $\mathbb{Z}^{12}$ & $\mathbb{Z}^{13}$ & $\mathbb{Z}^9\oplus\mathbb{Z}_2$
																						& $\mathbb{Z}^4\oplus\mathbb{Z}_2$ & $\mathbb{Z}_4$ \\
			$\langle \hat{y}_{i_1,i_2}X \rangle$	& $\mathbb{Z}^3$ & $\mathbb{Z}^6$ & $\mathbb{Z}^7\oplus\mathbb{Z}_2$ & $\mathbb{Z}^6\oplus\mathbb{Z}_2\oplus\mathbb{Z}_4$ 
																						& $\mathbb{Z}^3\oplus\mathbb{Z}_2\oplus\mathbb{Z}_4$ & $\mathbb{Z}\oplus\mathbb{Z}_2$ & 0 \\
			$\langle y_1y_2y_3X \rangle$					& $\mathbb{Z}$ 	 & $\mathbb{Z}_4$ & $\mathbb{Z}_2$ & $\mathbb{Z}_2$ & $0$ & $0$ & $0$
			\end{tabular}
			\end{table}

			\begin{table}[ht]
			\centering
			\caption{Part of the $E^3$-page of the spectral sequence converging to $H^*(\Lambda(SU(5)/T^4);\mathbb{Z})$}
			\resizebox{1\hsize}{!}{$
			\begin{tabular}{c|lllllllllll}
			$\langle \hat{y}_{i_1}X \rangle$		 	 &$\mathbb{Z}^4$&$\mathbb{Z}^{15}$&$\mathbb{Z}^{32}$&$\mathbb{Z}^{51}$&$\mathbb{Z}^{65}$&$\mathbb{Z}^{68}$&$\mathbb{Z}^{58}$
																							 &$\mathbb{Z}^{40}\oplus\mathbb{Z}_5$&$\mathbb{Z}^{21}\oplus\mathbb{Z}_5$&$\mathbb{Z}^7\oplus\mathbb{Z}_5$&$\mathbb{Z}_5$\\
			$\langle \hat{y}_{i_1,i_2}X \rangle$		 &$\mathbb{Z}^6$&$\mathbb{Z}^{20}$&$\mathbb{Z}^{39}$&$\mathbb{Z}^{58}$&$\mathbb{Z}^{69}\oplus\mathbb{Z}_5$&?&?&?&?&?&0 \\
			$\langle \hat{y}_{i_1,i_2,i_3}X \rangle$ &$\mathbb{Z}^4$&$\mathbb{Z}^{10}$&$\mathbb{Z}^{16}\oplus\mathbb{Z}_5$&$\mathbb{Z}^{21}\oplus\mathbb{Z}_5^2$
																							 &$\mathbb{Z}^{23}\oplus\mathbb{Z}_5^3$&?&?&?&?&?&0 \\
			$\langle y_1y_2y_3y_4X \rangle$					 &$\mathbb{Z}$&$\mathbb{Z}_5$&$\mathbb{Z}_5$&$\mathbb{Z}_5$&$\mathbb{Z}_5$& 0 & 0 & 0 & 0 & 0 & 0 
			\end{tabular}
			$}
			\end{table}

			Each row of the table corresponds to a row of the spectral sequence divisible by the generators in the first column,
			but not divisible by $(x_2)_m$ for any $m\geq 1$ in any row except the bottom one.
			Rows ordered by the number of generators $y_i$ present with all $y_i$ present in the bottom row and one less in each row above it.
			Recall $\hat{y}_{i_1,\dots,i_j}=\frac{y_1\cdots y_n}{y_{i_1}\cdots y_{i_j}}$ for some $1\leq j \leq n-1$ and $1\leq i_1<\cdots<i_j\leq n$.
			The columns represent all the potentially non-zero entries on those rows, ordered by degree.
			That is even degree between and including $0$ and $(n+2)(n+1)/2$.
			
			Just recording the torsion in case $n=2$ and $n=3$ gives the following two tables.
			
			\begin{table}[ht]
			\centering
			\caption{Multiplicity of torsion on the $E^3$ of the spectral sequence converging to $H^*(\Lambda(SU(3)/T^2);\mathbb{Z})$}
			\label{n=2}
			\begin{tabular}{c|cccc}
			$\langle \hat{y}_{i_1}X \rangle$ 	&			-				& 			-			 	 & $\mathbb{Z}_3$ & $\mathbb{Z}_3$ \\
			$\langle y_1y_2X \rangle$ 				&			-				& $\mathbb{Z}_3$ & $\mathbb{Z}_3$ &    -
			\end{tabular}
			\end{table}
			
			\begin{table}[ht]
			\centering
			\caption{Multiplicity of torsion on the $E^3$ of the spectral sequence converging to $H^*(\Lambda(SU(4)/T^3);\mathbb{Z})$}
			\label{n=3}
			\begin{tabular}{c|ccccccc}
			$\langle \hat{y}_{i_1}X \rangle$			& - &  			-				&  				-			 &  							-									& $\mathbb{Z}_2$ 									 & $\mathbb{Z}_2$ & $\mathbb{Z}_4$ \\
			$\langle \hat{y}_{i_1,i_2}X \rangle$	& - &  			-				& $\mathbb{Z}_2$ & $\mathbb{Z}_2\oplus\mathbb{Z}_4$ & $\mathbb{Z}_2\oplus\mathbb{Z}_4$ & $\mathbb{Z}_2$ &  			-				 \\
			$\langle y_1y_2y_3X \rangle$					& - & $\mathbb{Z}_4$& $\mathbb{Z}_2$ & $\mathbb{Z}_2$ 									& 							-									 &  		-					&				-
			\end{tabular}
			\end{table}
			
			In table \ref{n=3} the result when $n=3$ are given. 
			
			Notices there is a symmetry in the table where if we remove the first column, the bottom and top rows are the reverse of each-over and the middle row
			is symmetric about its center.
			
			For large $n$ the an increasingly large matrix is used which greatly increase the time necessary to compute the smith normal form.
			We can use the modulo-p an algorithm to compute results over over a finite field of order prime $p$
			by replacing each coordinate of the matrix with is representative $0,\dots,p-1$ modulo $p$ after each step of the smith normal form algorithm.
			In this case a simpler algorithm can the used as the numbers in the matrix will never be larger than $p$ reducing the execution time.
			It can be shown that any torsion occurring on the $E^3$ page of the spectral sequence will be a divisor of $n+1$.
			\newline
			by...
			\newline
			Hence we can obtaining the rank of a matrix of a corresponding of the spectral sequence modulo a prime co-prime to $n+1$
			and subtracting this from the the result modulo 
			a prime divisor of $n+1$ will give us the multiplicity of the torsion at that position.
			Computing modulo a prime would allows us to obtain the multiplicity of the torsion,
			at the expenses of knowing the exact degree of the torsion away from a prime $n+1$.   
			Table (\ref{n=4}), contains the multiplicities of torsion on the $E^3$ page, when $n=4$.
			
			\begin{table}[ht]
			\centering
			\caption{Multiplicity of $5$-torsion on the $E^3$ of the spectral sequence converging to $H^*(\Lambda(SU(5)/T^4);\mathbb{Z})$}
			\label{n=4}
			\begin{tabular}{c|lllllllllll}
			$\langle \hat{y}_{i_1}X \rangle$				 & 0 & 0 & 0 & 0 & 0 & 0 & 0 & 1 & 1 & 1 & 1 \\
			$\langle \hat{y}_{i_1,i_2}X \rangle$		 & 0 & 0 & 0 & 0 & 1 & 2 & 3 & 3 & 2 & 1 & 0 \\
			$\langle \hat{y}_{i_1,i_2,i_3}X \rangle$ & 0 & 0 & 1 & 2 & 3 & 3 & 2 & 1 & 0 & 0 & 0 \\
			$\langle y_1y_2y_3y_4X \rangle$					 & 0 & 1 & 1 & 1 & 1 & 0 & 0 & 0 & 0 & 0 & 0 
			\end{tabular}
			\end{table}

			The symmetry in the torsion continues in table (\ref{n=4}), in addition the multiplicity of the torsion continues to increases in the center of the table 
			suggesting that these observations may continue to be true for larger $n$.

\newpage
\bibliographystyle{plain}
\bibliography{Ref}

\end{document}